\theoremstyle{plain}
\newtheorem{lemma}{Lemma}[section]
\newtheorem{prp}[lemma]{Proposition}
\newtheorem{thm}[lemma]{Theorem}
\newtheorem{crl}[lemma]{Corollary}
\newtheorem{question}[lemma]{Question}
\newtheorem{claim}[lemma]{Claim}
\newtheorem{dfn}[lemma]{Definition}
\newcommand{\cl}{\mathrm{cl}}
\newcommand{\power}{\mathcal{P}}
\newcommand{\ga}{\alpha}
\newcommand{\gb}{\beta}
\newcommand{\gw}{\omega}
\newcommand{\cof}{\mathtt{cof}}
\title{On the Isbell Problem}
\author[J. Cancino-Manr\'irquez]{J. Cancino-Manr\'iquez}
\address[Jonathan Cancino-Manr\'{i}quez]{Institute of Mathematics, Czech Academy of Sciences \\ \ \v{Z}itn\'{a} 25, 110 00 Praha 1, Czech Republic}
\address{Departamento de Matem\'aticas, Facultad de Ciencias, Universidad Nacional Aut\'onoma de M\'exico. Circuito Exterior S/N, Ciudad Universitaria, CDMX, 04510, M\'exico}
\email{cancino@math.cas.cz,jcancino@ciencias.unam.mx,mhacajoh@gmail.com}
\author[Jind\v{r}ich Zapletal]{J. Zapletal}
\address{Department of Mathematics, 358 Little Hall, University of Florida, Gainesville FL32611}
\email{zapletal@ufl.edu}
\subjclass[2020]{ 03E35, 03E04, 03E05}
\keywords{Tukey ordering, Tukey types of ultrafilters, Tukey top ultrafilter, nowhere dense ultrafilter, ideal, ultrafilter, forcing}
\begin{document}

\begin{abstract}
    We present three models concerning Tukey types of ultrafilters on $\omega$. The first model is built via a countable support iteration and we show there is no basically generated ultrafilter in such model. The second and third models are built upon different and novel techniques, and in such models all ultrafilters are Tukey top, thus providing an answer to the Isbell problem. In all models there is no $\mathsf{nwd}$-ultrafilter.
\end{abstract}

\maketitle

\section{Introduction.}\label{introduction}

After a long oblivion period  - around 40 years -, Tukey types of ultrafilters on the natural numbers have been extensively investigated in the last two decades. J. R. Isbell, in his paper \cite{Isbell} from 1965, shortly mentioned the question on the number of non-equivalent Tukey types for ultrafilters on $\omega$ as Problem 2: when we consider an ultrafilter ordered by reverse inclusion, how many Tukey types of orderings $(\mathcal{U},\supseteq)$ do exist, where $\mathcal{U}$ is a non-principal ultrafilter on $\omega$? He also provided an example of ultrafilter having maximal cofinal type. Such ultrafilters, nowadays known as Tukey top, are those whose cofinal type is equivalent to $([\mathfrak{c}]^{<\omega},\subseteq)$. Let us briefly recall the Tukey ordering. Let $\mathbb{Q}=(Q,\leq_{\mathbb{Q}})$  and $\mathbb{P}=(P,\leq_{\mathbb{P}})$ be two directed partial orderings, we say that $\mathbb{Q}$ is Tukey reducible to $\mathbb{P}$ if there is mapping $\varphi:P\to Q$ which sends cofinal sets in $\mathbb{P}$ to cofinal sets in $\mathbb{Q}$, in which case we write $\mathbb{Q}\leq_T\mathbb{P}$; $\mathbb{P}$ and $\mathbb{Q}$ are Tukey equivalent if $\mathbb{Q}\leq_T\mathbb{P}$ and $\mathbb{P}\leq_T\mathbb{Q}$. Isbell's problem, in its most fundamental aspect - the existence of a non-Tukey top ultrafilter just on the basis of $\mathsf{ZFC}$ - has remained as an open question. The question called the attention of mathematicians after D. Milovich published a paper \cite{milovich} in which he addressed the problem under additional assumptions to $\mathsf{ZFC}$, such as Martin's Axiom or Jensen's diamond.
A few years later, N. Dobrinen and S. Todor\v{c}evi\'c joined the endeavour with their paper \cite{dt1}, providing an extensive research on Tukey types of ultrafilters on $\omega$ and introducing the most prominent class of ultrafilters which are not Tukey top, the basically generated ultrafilters, which are not even Tukey above $[\omega_1]^{<\omega}$. This class of ultrafilters includes all the known types of ultrafilters which are not Tukey above $[\omega_1]^{<\omega}$. The research has been actively extended as can be seen in papers such as \cite{dobrinen_1,dobrinen_2,dt1,dobrinen_todorcevic_2,dobrinen_todocevic_3,dobrinen_trujillo_mijares,dilip_shelah,dilip_stevo,milovich}.

Most of the research in this direction has focused on ultrafilters with small Tukey type, or the structure of Tukey order among ultrafilters under additional assumptions to $\mathsf{ZFC}$, and little has been done regarding big Tukey types of ultrafil\-ters. It is not hard to prove that any non $p$-point ultrafilter is Tukey above $(\omega^\omega,\leq^*)$, where $\leq^*$ is the almost dominating relation. From this it follows that consistently, all ultrafilters are Tukey above $(\omega^\omega,\leq^*)$: any model with no $p$-points suffices, and the existence of such models is widely known(see \cite{Wimmers},\cite{silver_p_points}). This remark allows us to conceive the possibility of all ultrafilters having a big Tukey type in the sense that consistently all of them may have a common lower bound. On the other hand, there have been just a few examples of ultrafilters being Tukey top, which are essentially the constructions from \cite{Isbell}, \cite{dow_zhou_noetherian}, \cite{milovich}, \cite{dt1}, and more recently, the authors of \cite{tom_fanxin} have achieved a different construction. In this paper we address two problems on big Tukey types of ultrafilters:
\begin{enumerate}
    \item The consistent non-existence of basically generated ultrafilters.
    \item The consistency of all ultrafilters being Tukey top.
\end{enumerate}

The basic result from which these two problems are approached, is the existence, for any uncountable cardinal $\kappa\leq \mathfrak{c}$, of an ideal on $2^\kappa$ which is critical for having Tukey type above $[\kappa]^{<\omega}$, that is, there is an ideal $\mathscr{I}_\kappa$ on $2^\kappa$ such that for any ultrafilter $\mathcal{U}$, $[\kappa]^{<\omega}\leq_T\mathcal{U}$ if and only if $\mathscr{I}_\kappa\leq_K\mathcal{U}^*$, where $\leq_K$ is the Kat\v{e}tov order. These ideals are closely related to the ideal of nowhere dense subsets of $2^\kappa$(with the product topology), so it is natural to ask if Shelah's forcing to destroy nowhere dense ultrafilters can be reformulated to blow up the Tukey type of an ultrafilter. As we will see, this is indeed the case. The main results of this article are the following:
\begin{thm}\label{thm1}
    It is relatively consistent with $\mathsf{ZFC}$ that all ultrafilters are Tukey above $[\omega_1]^{<\omega}$.
\end{thm}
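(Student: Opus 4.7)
The plan is a countable support iteration $\langle P_\alpha, \dot Q_\alpha : \alpha<\omega_2\rangle$ of length $\omega_2$ over a ground model of $\mathsf{CH}$. By the critical ideal characterization quoted above, it suffices to arrange that in $V^{P_{\omega_2}}$ every ultrafilter $\mathcal{U}$ on $\omega$ admits a Katětov reduction $\mathscr{I}_{\omega_1}\leq_K \mathcal{U}^*$. Standard bookkeeping produces an enumeration of $\omega_2$-many nice $P_{\omega_2}$-names for ultrafilter bases arising in intermediate extensions, and at stage $\alpha$ the iterand $\dot Q_\alpha$ is chosen to seal such a reduction for the $\alpha$-th name $\dot{\mathcal{U}}_\alpha$.

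For the iterand itself I would reformulate Shelah's forcing for destroying nowhere dense ultrafilters: conditions are pairs $(s,A)$ where $s$ is a finite partial function from $\omega$ to $2^{\omega_1}$ and $A\in\dot{\mathcal{U}}_\alpha$ is a promise, with extension $(t,B)\le(s,A)$ requiring $s\subseteq t$, $B\subseteq A$, and $\dom(t)\setminus\dom(s)\subseteq A$, plus whatever local combinatorial constraint is needed to force the generic $\dot f_\alpha=\bigcup s$ to satisfy $\dot f_\alpha^{-1}[X]\notin\dot{\mathcal{U}}_\alpha$ for every $X\in\mathscr{I}_{\omega_1}$. The concrete description of $\mathscr{I}_{\omega_1}$ (close to the nowhere dense ideal on $2^{\omega_1}$) should allow such a local formulation. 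One then proves a pure decision lemma for $\dot Q_\alpha$, yielding properness together with a strong preservation property such as $\omega^\omega$-bounding or a Laver-like property.

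The main obstacle is preservation along the iteration. Two things must be preserved: $\omega_1$, and --- more delicately --- the statement ``$\dot f_\beta$ witnesses $\mathscr{I}_{\omega_1}\leq_K\dot{\mathcal{U}}_\beta^*$'' for every earlier stage $\beta<\alpha$. Preservation of previously sealed Katětov reductions is not automatic: later iterands extend $\dot{\mathcal{U}}_\beta$ and could in principle add a set $X\in\mathscr{I}_{\omega_1}$ with $\dot f_\beta^{-1}[X]\in\dot{\mathcal{U}}_\beta$. The fix is to identify a preservation property of the iterand --- roughly, ``preserves $\mathscr{I}_{\omega_1}$-positivity of the generic preimage against every previously added $\dot f_\beta$'' --- and to verify that it propagates through countable support iteration by a Shelah-style preservation theorem. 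The bookkeeping is then closed by the usual reflection argument: under $\mathsf{CH}$ and properness of $P_{\omega_2}$, every ultrafilter in the final extension is caught by some $P_\alpha$-name with $\alpha<\omega_2$, which completes the proof.
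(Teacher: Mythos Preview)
Your overall architecture---a countable support iteration of length $\omega_2$ that seals a Kat\v{e}tov reduction for each ultrafilter name and preserves earlier seals to the final model---matches the paper exactly. The gaps are in the two places you yourself flag as uncertain. First, the iterand: a Mathias-style forcing with finite stems $s\colon\omega\rightharpoonup 2^{\omega_1}$ and a single promise $A\in\mathcal{U}$ does not carry enough side information. To force $\dot f^{-1}[X_{\vec\nu}]\in\mathcal{U}^*$ you need a set in $\mathcal{U}$ on which the values of $f$ are \emph{committed} to avoid $X_{\vec\nu}$, but under your extension relation future conditions may assign arbitrary points of $2^{\omega_1}$ on $A$, so no condition can force this. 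The paper's iterand $\mathbb{P}_{\omega_1}(\mathcal{U})$ replaces the finite stem by an infinite side condition: a condition is (a coherent countable-support product of) pairs $(h,\vec E)$ where $\vec E$ is a $\mathcal{U}$-partition into $\mathcal{U}^*$-pieces and $h\colon\omega\setminus\mathrm{dom}(\vec E)\to 2$; extensions must respect the block structure, so each block $E_k$ eventually receives a constant value. This is precisely the ``local combinatorial constraint'' you leave unspecified, and it is what makes the key density argument work: a fusion run as a play of the filter game $\mathcal{G}^{\mathcal P}(\mathcal{U})$ produces a condition whose blocks can each be sent into a prescribed basic open $[\nu_{k_n}]$, so an entire $\mathcal{U}$-set lands outside $X_{\vec\nu}$.

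Second, the operative preservation property is not $\omega^\omega$-bounding or a Laver-type condition per se but \emph{Cohen preservation}. The paper proves $\mathbb{P}_{\omega_1}(\mathcal{U})$ has the Sacks property (via the same fusion and game), which implies Cohen preservation and is preserved under countable support iteration of proper forcings. The payoff is clean: once $\mathbb{P}_{\omega_1}(\mathcal{U})$ has added its generic $\varphi\colon\omega\to 2^{\omega_1}$, \emph{any} further proper Cohen preserving forcing keeps $\varphi^{-1}[X_{\vec\nu}]\in\mathcal{U}^*$ for every $\vec\nu$ in the final model, simply because Cohen preservation traps each new $\vec\nu$ below some ground-model $\vec\nu_0$ that was already handled. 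This single iterable property replaces the bespoke preservation theorem you anticipate needing. One minor point: the bookkeeping uses $\diamondsuit(S)$ on $S=\{\alpha<\omega_2:\mathrm{cof}(\alpha)=\omega_1\}$ rather than plain $\mathsf{CH}$, so that each ultrafilter in the extension is guaranteed to reflect to an ultrafilter at some stage of the iteration.
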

\begin{thm}\label{thm_2}
    It is relatively consistent that all ultrafilters are Tukey top. Moreover, we can get the Continuum to be arbitrarily large.
\end{thm}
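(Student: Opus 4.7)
The plan is to reformulate Shelah's forcing for destroying nowhere dense ultrafilters so that, instead of killing such ultrafilters, it adds a Kat\v{e}tov reduction of the critical ideal $\mathscr{I}_{\mathfrak{c}}$ into the dual ideal $\mathcal{U}^*$ of a prescribed filter on $\omega$. By the critical ideal result recalled in the introduction, $\mathscr{I}_{\mathfrak{c}}\leq_K \mathcal{U}^*$ is equivalent to $[\mathfrak{c}]^{<\omega}\leq_T\mathcal{U}$, and the trivial reverse inequality $\mathcal{U}\leq_T[\mathfrak{c}]^{<\omega}$ always holds since $|\mathcal{U}|\leq\mathfrak{c}$. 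Thus, once a Kat\v{e}tov reduction has been forced for some filter $\mathcal{F}$, every ultrafilter extending $\mathcal{F}$ becomes Tukey top, using monotonicity of $\leq_K$ in its second argument under filter extension.

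First, I would define, for a filter $\mathcal{F}$ and a cardinal $\kappa\leq\mathfrak{c}$, a forcing $\mathbb{P}(\mathcal{F},\kappa)$ whose conditions are finite partial approximations of a function $f\colon\omega\to 2^\kappa$, together with finitely many side sets drawn from $\mathcal{F}$, arranged so that the generic $f$ satisfies $f^{-1}[A]\in\mathcal{F}^*$ for every $A\in\mathscr{I}_\kappa$ appearing in the extension. The definition is modelled on Shelah's original forcing, and I would verify that $\mathbb{P}(\mathcal{F},\kappa)$ preserves $\mathcal{F}$ as a filter, does not collapse cardinals, and enjoys an iterable preservation property (properness or a tailored variant thereof).

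Second, I would iterate this forcing of length $\kappa$, where $\kappa\geq\aleph_2$ is the targeted value of the continuum, arranged so as to secure $\mathfrak{c}=\kappa$ in the end model. A bookkeeping function enumerates names for filters appearing in the intermediate models; at stage $\alpha$ we pick the corresponding filter $\dot{\mathcal{F}}_\alpha$ and force with $\mathbb{P}(\dot{\mathcal{F}}_\alpha,\kappa)$, thereby adding a witness $f_\alpha$ to $\mathscr{I}_\kappa\leq_K\dot{\mathcal{F}}_\alpha^*$. A standard catch-your-tail argument, assuming $\kappa^{<\kappa}=\kappa$, guarantees that every ultrafilter in the final model extends one of the $\dot{\mathcal{F}}_\alpha$'s.

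Third, and this is the main obstacle, a preservation theorem must be proved showing that each witness $f_\alpha$ continues to witness $\mathscr{I}_\kappa\leq_K\dot{\mathcal{F}}_\alpha^*$ through all later stages of the iteration. Concretely, no later stage may add a new $\mathscr{I}_\kappa$-small set $A$ whose preimage $f_\alpha^{-1}[A]$ falls outside $\mathcal{F}_\alpha^*$, nor may later stages enlarge $\mathcal{F}_\alpha$ in a way that corrupts the reduction. This step is precisely where the \emph{novel techniques} advertised in the abstract are required, and is expected to parallel, without simply reusing, the familiar preservation theorems for proper forcing (e.g., the preservation of P-points). Once the preservation theorem is in place, the conclusion is immediate: any ultrafilter $\mathcal{U}$ in the final model extends some $\dot{\mathcal{F}}_\alpha$ by the bookkeeping, whence $\mathscr{I}_\kappa\leq_K\mathcal{U}^*$ by monotonicity, and therefore $[\mathfrak{c}]^{<\omega}\leq_T\mathcal{U}$, i.e.\ $\mathcal{U}$ is Tukey top. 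Since $\kappa$ was chosen arbitrarily subject to $\kappa^{<\kappa}=\kappa$, the final value of the continuum can be made arbitrarily large, yielding the moreover clause.
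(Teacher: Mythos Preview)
Your outline misses the central architectural idea and, as written, would not go through.

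First, the ``catch your tail'' step fails. An ultrafilter has size $\mathfrak{c}=\kappa$, so in a length-$\kappa$ iteration there is no stage $\alpha<\kappa$ at which a given ultrafilter of the final model is already present to be caught by the bookkeeping. The paper circumvents this entirely: it first runs an $\omega_2$-length countable support Sacks iteration, then forces with a carefully chosen suborder $\mathbb{Q}_\kappa$ of a countable support \emph{product} (not iteration) of the forcings $\mathbb{P}_\kappa(\mathcal{F})$, one factor for each suitable filter $\mathcal{F}$ guessed by a $\diamondsuit(S)$-sequence along the Sacks iteration. The point is that $\mathbb{Q}_\kappa$ is $\aleph_2$-c.c., so by the Wimmers trick every ultrafilter in the final model contains a $2^\omega$-saturated filter from the Sacks model; such a filter reflects to a suitable filter at some stage $\alpha_\beta$, which the diamond sequence anticipated, and the factor $\mathbb{P}_\kappa(\dot{\mathcal{U}}_{\alpha_\beta})$ already handled it. So the catching is done by chain condition plus reflection to suitable filters, not by bookkeeping over a long iteration.

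Second, the single-step forcing is not a finite-approximation poset. Conditions in $\mathbb{P}_\kappa(\mathcal{F})$ carry an entire $\mathcal{F}$-partition of $\omega$ and a countable support in $\kappa$; the forcing has the Sacks property and is therefore Cohen preserving. Your finite conditions would add Cohen reals and destroy precisely the Cohen-preservation that the whole preservation scheme rests on: the paper's iteration theorem (Sections~\ref{cohen_properness_notion}--\ref{A_preservation_theorem}) is a preservation theorem for the new notion of \emph{Cohen proper} pairs $(\mathbb{P},\mathbb{Q})$, equivalent to $\sigma$-properness of $\mathbb{Q}$ together with Cohen preservation of the quotient, and it is Cohen preservation of the tail that guarantees the generic Kat\v{e}tov reduction added at stage $\alpha_\beta$ survives to the final model. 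The ``novel techniques'' are exactly this Cohen-proper iteration framework (or, alternatively, the square-based reduced product of Section~\ref{squares_and_products}), and they are not a variant of the standard proper-forcing preservation theorems you allude to.
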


Theorem \ref{thm1} is built via a countable support iteration of proper forcings resulting from a reformulation of Shelah's forcing from \cite{shelah_nwd}. Theorem \ref{thm_2} is a more demanding task and needs the development of new forcing techniques. In all the models we construct, there is no $\mathsf{nwd}$-ultrafilter, and therefore, Theorem \ref{thm_2} gives an answer to Problem 3 from \cite{brendle_problems_continuum_large} of J. Brendle on the consistency of the non-existence of nowhere dense ultrafilters and the continuum being larger than $\omega_2$. We produce two models in which all ultrafilters are Tukey top. In both constructions, we first make a forcing extension by an $\omega_2$-length countable support iteration of the Sacks forcing. Then we define suborder of carefully chosen countable support product; this is the step in which both constructions differ: one construction is made through a generalization of countable support iterations of strongly proper forcings\footnote{Here we refer to Shelah's strongly proper forcing notion, instead of the more well known Mitchell's strongly proper forcing notion; see Definition \ref{strongly_proper_def} and the paragraph before it.} having one additional property, while the other relies on the existence of square sequences and one theorem of Woodin about the universe $L[S]$, where $S$ is a sequence of ordinals. However, forcings of the kind $\mathbb{P}*\dot{\mathbb{Q}}$, where $\mathbb{P}$ is strongly proper and $\dot{\mathbb{Q}}$ is Cohen preseving, play a central role in both construction. Both constructions produce forcings which preserve $\omega_1$, are $\aleph_2$-c.c. and Cohen preserving. 

The purpose of the Sacks iteration is to make the size of the Continuum $\aleph_2$ and produce a family of special filters which we name as \emph{suitable filters}. These filters appear along the iteration of the Sacks forcing. Let us call this fixed family of filters $\mathscr{F}$. Any $2^\omega$-saturated filter is an example of a suitable filter. Roughly speaking, for a fixed cardinal $\kappa>\omega_1$ and one suitable filter $\mathcal{F}\in\mathscr{F}$, we define a forcing $\mathbb{P}_{\kappa}(\mathcal{F})$ which makes sure that any ultrafilter extending $\mathcal{F}$ has Tukey type at least $[\kappa]^{<\omega}$ (the definition of $\mathbb{P}_\kappa(\mathcal{F})$ actually depends on the intermediate stage of the Sacks iteration in which it appears); moreover, $\mathbb{P}_\kappa(\mathcal{F})$ forces $\mathcal{F}$ to have this property in any further Cohen preserving extension. Then, in the Sacks model, we consider a suborder of the countable support product of the forcings $\mathbb{P}_\kappa(\mathcal{F})$ with $\mathcal{F}\in\mathscr{F}$, let us call it $\mathbb{P}$ by now. The forcing $\mathbb{P}$, forces the Continuum to have size $\kappa$, has the $\aleph_2$-c.c., is Cohen preserving and adds a generic for each forcing $\mathbb{P}_\kappa(\mathcal{F})$, where $\mathcal{F}\in\mathscr{F}$; thus, $\mathbb{P}$ forces that any ultrafilter extending one of the filters in $\mathscr{F}$ is Tukey top. In the first construction, this suborder is build along the iteration of the Sacks forcing, while the second construction is build after the Sacks iteration. Then we make use of a simple but powerful property which was first noted in \cite{Wimmers}: if $\mathbf{P}$ is a c.c.c forcing, then any ultrafilter from $V^{\mathbf{P}}$ contains a saturated filter from the ground model. More generally, if $\mathbf{P}$ is $2^\omega$-c.c. then any ultrafilter from $V^{\mathbf{P}}$ contains a $2^\omega$-saturated filter from the ground model. Thus, $\mathbb{P}$, having the $2^\omega$-c.c., forces that any ultrafilter from $V^{\mathbb{P}}$ contains a $ 2^\omega$-saturated filter from the Sacks model. It turns out that each one of these $2^\omega$-saturated filters contains one of the suitable filters from the family $\mathscr{F}$. Therefore, any ultrafilter in the forcing extension by $\mathbb{P}$ contains a suitable filter from $\mathscr{F}$. Thus, any ultrafilter in $V^{\mathbb{P}}$ is Tukey top.

The structure of the paper is as follows: section 2 introduces the basic results on
the critical ideals. These ideals pose the question on the possibility of non-nowhere
dense ultrafilters being Tukey top (or at least non basically generated), which we
answer by providing a construction of a basically generated ultrafilter which is not
nowhere dense. Section 3 introduces a kind of filters generalizing the notion of
saturated filters and which will be of importance for the construction of our forcings from sections \ref{no_basically_generated}, \ref{one_tukey_type_start} and \ref{squares_and_products}. Section 4 is dedicated to the development of the basic forcing
$\mathbb{P}_\kappa(\mathcal{U})$, its properties and the proof of Theorem 1.1.  Section \ref{one_tukey_type_start} introduces the first version of the poset we use to prove Theorem 1.2 and
proves the forcing has the $\aleph_2$-c.c. Section \ref{cohen_properness_notion} introduces the notion of \emph{Cohen properness} which is the central property of our iterations, and prove some equivalent formulations of it. In section \ref{iteration_cohen_properness} we introduce one way to iteration Cohen proper forcings and in section \ref{A_preservation_theorem} we prove these iterations preserve the notion of Cohen properness. Section \ref{getting_all_together} completes the construction of the first model for Theorem 1.2. Section \ref{squares_and_products} is dedicated to the construction of the second model for Theorem 1.2; this construction is based on the existence of square sequences and one theorem of Woodin, and provides an existence theorem of a reduced product (that is, a suborder of a countable support product) which is interesting by itself. The paper finishes with a section of remarks and open questions.

\section{Some basic remarks.}\label{basic_remarks}

In this section we introduce basic results serving as motivation for the rest of the paper. Recall from the introduction that given two directed partial orders $\mathbb{Q}=(Q\leq_\mathbb{Q})$ and $\mathbb{P}=(P,\leq_{\mathbb{P}})$, we write $\mathbb{Q}\leq_T\mathbb{P}$ if there is a map $f:P\to Q$ sending $\leq_\mathbb{P}$-cofinal subsets of $\mathbb{P}$ to $\leq_\mathbb{Q}$-cofinal subsets of $\mathbb{Q}$. The next equivalence of this definition is well known. As notation to be fixed along the paper, given a function $f:X\to Y$, we define $rng(f)=\{f(x):x\in X\}$.

\begin{lemma}
Let $\mathbb{P}$ and $\mathbb{Q}$ be directed partial orders. The following are equivalent:
\begin{enumerate}
    \item $\mathbb{Q}\leq_T\mathbb{P}$.
    \item There is a map $f:Q\to P$ sending $\leq_\mathbb{Q}$-unbounded subsets of $Q$ to $\leq_\mathbb{P}$-unbounded subsets of $P$.
\end{enumerate}
\end{lemma}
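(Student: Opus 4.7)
The plan is to prove both implications by constructing appropriate maps, with the only nontrivial content concentrated in a preliminary observation about how a cofinal-preserving map must behave on tails of $\mathbb{P}$.

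For $(1)\Rightarrow(2)$, suppose $\varphi\colon P\to Q$ sends cofinal subsets of $\mathbb{P}$ to cofinal subsets of $\mathbb{Q}$. I would first establish the auxiliary fact that for every $q\in Q$ there exists $p_q\in P$ such that $\varphi(p')\geq_{\mathbb{Q}}q$ for every $p'\geq_{\mathbb{P}} p_q$. Indeed, otherwise the set $\{p\in P:\varphi(p)\not\geq_{\mathbb{Q}}q\}$ would be cofinal in $\mathbb{P}$ (each $p$ has a witness $p'\geq_{\mathbb{P}} p$ in it), so by cofinal-preservation its $\varphi$-image would be cofinal in $\mathbb{Q}$, producing some $p$ in the set with $\varphi(p)\geq_{\mathbb{Q}}q$, a contradiction. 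Choose such a $p_q$ for each $q$ and set $f(q):=p_q$. To verify that $f$ sends unbounded sets to unbounded sets, take $Y\subseteq Q$ with $f[Y]$ bounded by some $p_0\in P$: for any $p'\geq_{\mathbb{P}} p_0$ and every $q\in Y$ we have $p'\geq_{\mathbb{P}} f(q)=p_q$, so $\varphi(p')\geq_{\mathbb{Q}}q$. Hence $\varphi(p')$ is an $\leq_{\mathbb{Q}}$-upper bound of $Y$, i.e., $Y$ is bounded; contrapositively, $f$ is as required.

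For $(2)\Rightarrow(1)$, suppose $f\colon Q\to P$ sends unbounded to unbounded. For each $p\in P$, let $A_p:=\{q\in Q:f(q)\leq_{\mathbb{P}} p\}$; then $f[A_p]$ is contained in the principal downset of $p$ and is therefore bounded, so the hypothesis on $f$ forces $A_p$ itself to be bounded in $\mathbb{Q}$. Let $\varphi(p)\in Q$ be any upper bound of $A_p$. To see that $\varphi$ sends cofinal to cofinal, fix a cofinal $X\subseteq P$ and an arbitrary $q_0\in Q$; pick $p\in X$ with $p\geq_{\mathbb{P}} f(q_0)$, so that $q_0\in A_p$ and hence $q_0\leq_{\mathbb{Q}} \varphi(p)$. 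Since $q_0$ was arbitrary, $\varphi[X]$ is cofinal in $\mathbb{Q}$.

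The only step that requires any thought is the preliminary observation in $(1)\Rightarrow(2)$ extracting, from a mere cofinal-to-cofinal map, the stronger property that $\varphi$ eventually dominates every prescribed $q\in Q$; without this, one cannot directly reverse the direction of the map. The remainder is a routine unpacking of definitions, using the axiom of choice only to pick witnesses $p_q$ and $\varphi(p)$.
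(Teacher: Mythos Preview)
Your proof is correct; both directions are argued cleanly, and the key observation in $(1)\Rightarrow(2)$---that a cofinal-preserving map must eventually dominate each fixed $q$---is exactly what is needed. The paper itself omits the proof as well known, so there is no argument to compare against; your write-up is the standard one.
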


\begin{lemma}
Let $\mathcal{U}$ be a non-principal ultrafilter on $\omega$. Then $[\kappa]^{<\omega}\leq_T\mathcal{U}$ if and only there is $\{A_\alpha:\alpha\in\kappa\}\subseteq\mathcal{U}$ such that for all $X\in[\kappa]^{\omega}$, $\bigcap_{\alpha\in X}A_\alpha\notin \mathcal{U}$.    
\end{lemma}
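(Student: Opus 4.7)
The plan is to use the unbounded-sets reformulation from the preceding lemma, after noting the concrete meaning of ``unbounded'' in the two posets at hand. In $([\kappa]^{<\omega},\subseteq)$ a set $E$ is bounded iff $\bigcup E$ is contained in some finite set, so $E$ is unbounded iff $\bigcup E$ is infinite. In $(\mathcal{U},\supseteq)$ a set $D\subseteq\mathcal{U}$ is bounded iff some $B\in\mathcal{U}$ satisfies $B\subseteq A$ for every $A\in D$, which (since $\mathcal{U}$ is upward closed in the subset order on $\cP(\omega)$) is equivalent to $\bigcap D\in\mathcal{U}$. Hence $D$ is unbounded iff $\bigcap D\notin\mathcal{U}$. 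With these two observations the equivalence becomes almost mechanical.

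For the $(\Leftarrow)$ direction, suppose $\{A_\alpha:\alpha\in\kappa\}\subseteq\mathcal{U}$ witnesses the right-hand side. Define $g:[\kappa]^{<\omega}\to\mathcal{U}$ by $g(s)=\bigcap_{\alpha\in s}A_\alpha$ (with $g(\emptyset)=\omega$); this lands in $\mathcal{U}$ because $s$ is finite. If $E\subseteq[\kappa]^{<\omega}$ is unbounded then $X:=\bigcup E$ is infinite, and
\[
\bigcap_{s\in E}g(s)=\bigcap_{s\in E}\bigcap_{\alpha\in s}A_\alpha=\bigcap_{\alpha\in X}A_\alpha\notin\mathcal{U},
\]
so $g(E)$ is unbounded in $\mathcal{U}$. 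By the previous lemma this yields $[\kappa]^{<\omega}\leq_T\mathcal{U}$.

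For the $(\Rightarrow)$ direction, invoke the previous lemma to obtain $g:[\kappa]^{<\omega}\to\mathcal{U}$ sending unbounded sets to unbounded sets, and set $A_\alpha:=g(\{\alpha\})$ for $\alpha<\kappa$. For any infinite $X\subseteq\kappa$ the family $\{\{\alpha\}:\alpha\in X\}\subseteq[\kappa]^{<\omega}$ is unbounded (its union is $X$), so its $g$-image $\{A_\alpha:\alpha\in X\}$ is unbounded in $\mathcal{U}$, i.e.\ $\bigcap_{\alpha\in X}A_\alpha\notin\mathcal{U}$, as required. (The statement as written uses $X\in[\kappa]^{\omega}$, but the condition for countable infinite $X$ already implies the condition for arbitrary infinite $X$, since $\bigcap_{\alpha\in X}A_\alpha\subseteq\bigcap_{\alpha\in Y}A_\alpha$ for any countable infinite $Y\subseteq X$.)

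There is no real obstacle; the only mild point to take care of is the direction convention in $(\mathcal{U},\supseteq)$, which must be threaded correctly so that ``unbounded'' translates into ``intersection not in $\mathcal{U}$'' rather than its negation. Once that is in place, both implications reduce to the identity $\bigcap_{s\in E}\bigcap_{\alpha\in s}A_\alpha=\bigcap_{\alpha\in\bigcup E}A_\alpha$ and the observation that singletons of an infinite set form an unbounded family in $[\kappa]^{<\omega}$.
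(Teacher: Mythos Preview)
Your proof is correct. The paper states this lemma without proof (treating it as a standard fact), and your argument via the unbounded-to-unbounded characterization from the preceding lemma is exactly the expected one; the only point worth noting is that you correctly handle the passage from countable $X$ to arbitrary infinite $X$ in the parenthetical remark.
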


\begin{dfn}
Let $\mathscr{I,J}$ be two ideals. We say that $\mathscr{I}$ is Kat\v{e}tov below $\mathscr{J}$, denoted by $\mathscr{I}\leq_K\mathscr{J}$, if there is a function $f:\bigcup\mathscr{J}\to\bigcup\mathscr{I}$ such that for all $A\in\mathscr{I}$, $f^{-1}[A]\in\mathscr{J}$.

\end{dfn}
\begin{dfn}Let $\kappa$ be an infinite cardinal. Consider $2^\kappa$ with the product topology. Then:
\begin{enumerate}
    \item  The ideal of nowhere dense subsets of this space is denoted by $\mathsf{nwd}(\kappa)$.
    \item For each $\alpha\in\kappa$ and $i\in 2$, define the prebasic open set $B_i^\alpha=\{x\in 2^\kappa:x(\alpha)=i\}$.
\end{enumerate}
In the case $\kappa=\omega$ we only write $\mathsf{nwd}$ instead of $\mathsf{nwd}(\omega)$.
\end{dfn}

\begin{dfn}
Let $\mathcal{U}$ be an ultrafilter on $\omega$. We say that  $\mathcal{U}$ is a $\mathsf{nwd}(\kappa)$-ultrafilter if $\mathsf{nwd}(\kappa)\nleq_K\mathcal{U}^*$.
\end{dfn}

The following theorem is the starting motivation of the results presented in this paper.

\begin{thm}
Let $\kappa\leq 2^\omega$. If $\mathcal{U}$ is not a $\mathsf{nwd}(\kappa)$-ultrafilter, then $[\kappa]^{<\omega}\leq_T\mathcal{U}$.   
\end{thm}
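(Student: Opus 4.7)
My plan is to exploit the characterization of $[\kappa]^{<\omega}\leq_T\mathcal{U}$ stated in the previous lemma: it suffices to produce a family $\{A_\alpha:\alpha\in\kappa\}\subseteq\mathcal{U}$ such that every countably infinite subfamily has intersection outside $\mathcal{U}$. The hypothesis gives me a witness $f:\omega\to 2^\kappa$ to $\mathsf{nwd}(\kappa)\leq_K\mathcal{U}^*$, and the natural candidates for $A_\alpha$ are preimages under $f$ of the prebasic clopen sets $B_i^\alpha$.

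Concretely, I would first note that for each $\alpha\in\kappa$, the sets $f^{-1}[B_0^\alpha]$ and $f^{-1}[B_1^\alpha]$ partition $\omega$, so exactly one of them lies in $\mathcal{U}$. After replacing $f$ coordinatewise by $\alpha\mapsto 1-f(\cdot)(\alpha)$ on those coordinates where the $1$-side fails, I may assume $A_\alpha:=f^{-1}[B_1^\alpha]\in\mathcal{U}$ for every $\alpha$; this flipping does not affect the Kat\v{e}tov reduction, because the map $x\mapsto x\oplus t$ on $2^\kappa$ (for any $t\in 2^\kappa$) is a homeomorphism and hence preserves the nowhere dense ideal.

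Next, for any countably infinite $X\subseteq\kappa$, I would observe that the closed set
\[
C_X=\bigcap_{\alpha\in X}B_1^\alpha=\{x\in 2^\kappa:x\restriction X\equiv 1\}
\]
is nowhere dense in $2^\kappa$: any nonempty basic open set depends on only finitely many coordinates $F\subseteq\kappa$, and since $X$ is infinite there is $\alpha\in X\setminus F$ on which the basic open set contains points with value $0$, so it cannot be a subset of $C_X$. Thus $C_X\in\mathsf{nwd}(\kappa)$, and by the Kat\v{e}tov reduction
\[
\bigcap_{\alpha\in X}A_\alpha=f^{-1}[C_X]\in\mathcal{U}^*,
\]
i.e.\ this intersection is not in $\mathcal{U}$. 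Applying the previous lemma then yields $[\kappa]^{<\omega}\leq_T\mathcal{U}$.

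There is essentially no hard step here; the only point that requires a tiny bit of care is the flipping argument in step two (to guarantee $A_\alpha\in\mathcal{U}$ uniformly), together with the observation that nowhere-denseness of $C_X$ genuinely uses $|X|\geq\aleph_0$ and the fact that basic open sets in the product topology on $2^\kappa$ depend on only finitely many coordinates. Everything else is routine unpacking of the definitions of $\leq_K$, $\mathcal{U}^*$, and the product topology.
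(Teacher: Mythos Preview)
Your proof is correct and follows essentially the same approach as the paper: both take $f$ witnessing the Kat\v{e}tov reduction, pick the side $A_\alpha=f^{-1}[B^\alpha_{g(\alpha)}]$ lying in $\mathcal{U}$, and observe that any countably infinite intersection $\bigcap_{\alpha\in X}A_\alpha$ is the $f$-preimage of a nowhere dense set and hence not in $\mathcal{U}$. The only cosmetic differences are that you normalize $g\equiv 1$ by precomposing with a coordinatewise flip (which the paper avoids by simply carrying $g$ along), and you invoke the preceding characterization lemma while the paper writes down the monotone Tukey map $G(H)=\bigcap_{\alpha\in H}A_\alpha$ explicitly.
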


\begin{proof}
Let $f:\omega\to 2^\kappa$ be such that for all $X\in\mathsf{nwd}(\kappa)$, $\omega\setminus f^{-1}[X]\in\mathcal{U}$. We have $D^\alpha_0=f^{-1}[B^\alpha_0]\in\mathcal{U}$ or $D^\alpha_1=f^{-1}[B^\alpha_1]\in\mathcal{U}$. Let $g:\kappa\to 2$ be such that for all $\alpha$, $D^\alpha_{g(\alpha)}\in\mathcal{U}$. Now define $G:[\kappa]^{<\omega}\to\mathcal{U}$ as:
\begin{equation}
    G(H)=\bigcap_{\alpha\in H} D^\alpha_{g(\alpha)}
\end{equation}
We claim that $G$ is a Tukey reduction. First note that $G$ is monotone. Let $D\subseteq [\kappa]^{<\omega}$ be an unbounded subset and assume towards a contradiction that $G[D]$ is a bounded by $A_0\in\mathcal{U}$.  Then there is an infinite countable $E\subseteq \kappa$ such that for all $\alpha\in E$, there is $F\in D$ such that $\alpha\in F$. Thus, we have that for all $\alpha\in E$, $A_0\subseteq G(\{\alpha\})=D^\alpha_{g(\alpha)}$, so $A_0\subseteq \bigcap_{\alpha\in E}D^\alpha_{g(\alpha)}$. Define $Z=\{x\in 2^\kappa:(\forall\alpha\in E)(x(\alpha)=g(\alpha))\}$ and note that $A_0\subseteq\bigcap_{\alpha\in E}D^\alpha_{g(\alpha)}=f^{-1}[Z]$, so $f^{-1}[Z]\in\mathcal{U}$. But we also have that $Z\in\mathsf{nwd}(\kappa)$, so $f^{-1}[Z]\notin \mathcal{U}$, which is a contradiction.
\end{proof}

Let us recall the definition of basically generated ultrafilter.

\begin{dfn}[see \cite{dt1}]
An ultrafilter $\mathcal{U}$ is basically generated if there is $\mathcal{B}\subseteq\mathcal{U}$ which generates $\mathcal{U}$, and for any $\{X_n:n\in\omega\}\subseteq\mathcal{B}$ such that $\lim_{n\in\omega}X_n\in\mathcal{B}$, there is $A\in[\omega]^\omega$ such that $\bigcap_{n\in\omega}X_n\in\mathcal{U}$.
\end{dfn}

The following is a well known result.

\begin{prp}
If $[\omega_1]^{<\omega}\leq_T\mathcal{U}$, then $\mathcal{U}$ is not basically generated.
\end{prp}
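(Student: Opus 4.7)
The plan is to argue by contradiction: use the preceding lemma to convert the hypothesis into an uncountable family of witnessing sets inside $\mathcal{U}$, extract a pointwise convergent $\omega$-subsequence via compactness of $2^\omega$, refine it so its limit lies in the generating base $\mathcal{B}$, and then feed the refined sequence into the defining clause of basic generation to produce a countable subfamily whose intersection lies in $\mathcal{U}$, contradicting the witnessing property.

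More precisely, by the lemma above, the assumption $[\omega_1]^{<\omega}\leq_T\mathcal{U}$ yields a family $\{A_\alpha:\alpha<\omega_1\}\subseteq\mathcal{U}$ such that $\bigcap_{\alpha\in X}A_\alpha\notin\mathcal{U}$ for every countably infinite $X\subseteq\omega_1$. Suppose, towards a contradiction, that $\mathcal{U}$ is basically generated by some base $\mathcal{B}$; I would assume without loss of generality that $\mathcal{B}$ is closed under finite intersections. For each $\alpha<\omega_1$ I would fix $B_\alpha\in\mathcal{B}$ with $B_\alpha\subseteq A_\alpha$.

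Identifying each $B_\alpha$ with its characteristic function, the family $\{B_\alpha:\alpha<\omega_1\}$ sits inside the compact metrizable space $2^\omega$, so sequential compactness produces $\alpha_0<\alpha_1<\cdots$ in $\omega_1$ and some $X\in 2^\omega$ with $B_{\alpha_n}\to X$ pointwise. The hard part is to arrange that $X\in\mathcal{B}$, as demanded by the basic generation clause. I would first secure $X\in\mathcal{U}$---for instance by a further pigeonhole/tree refinement on finite traces of the $B_\alpha$'s, or by extracting a $\mathcal{U}$-limit along a suitable ultrafilter on $\omega_1$---and then pick $C\in\mathcal{B}$ with $C\subseteq X$ and replace each $B_{\alpha_n}$ by $B_{\alpha_n}\cap C$. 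The new sequence still lies in $\mathcal{B}$, is still coordinate-wise dominated by $A_{\alpha_n}$, and converges to $X\cap C=C\in\mathcal{B}$. This step, rather than the extraction or the final contradiction, is the main technical obstacle.

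With the refined sequence and its limit $C\in\mathcal{B}$ in place, the basic generation property supplies an infinite $A\subseteq\omega$ with $\bigcap_{n\in A}(B_{\alpha_n}\cap C)\in\mathcal{U}$. Since $B_{\alpha_n}\cap C\subseteq A_{\alpha_n}$, this forces $\bigcap_{n\in A}A_{\alpha_n}\in\mathcal{U}$, contradicting the defining property of $\{A_\alpha\}$ applied to the countably infinite subset $\{\alpha_n:n\in A\}\subseteq\omega_1$.
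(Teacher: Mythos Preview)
The paper does not prove this proposition; it is stated as ``a well known result'' and left without argument. So there is no paper proof to compare against, only the standard argument from the literature.

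Your overall strategy is the right one, but the step you flag as ``the main technical obstacle'' is indeed a genuine gap, and your suggestions for filling it do not work as stated. First, the reduction ``assume without loss of generality that $\mathcal{B}$ is closed under finite intersections'' is not obviously legitimate: it is not clear that the closure of a basically generating base under finite intersections still satisfies the convergence clause in the definition. Second, the claim that the pointwise limit $X$ of the $B_{\alpha_n}$ lies in $\mathcal{U}$ is not justified; neither a pigeonhole on finite traces nor taking a $\mathcal{U}$-limit along an ultrafilter on $\omega_1$ will in general force $X\in\mathcal{U}$, and without that your intersection-with-$C$ refinement cannot start.

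Both issues disappear if you use the following observation in place of your step~5. After handling the trivial case where $\{B_\alpha:\alpha<\omega_1\}$ is countable as a set (then some single $B$ equals $B_\alpha$ for infinitely many distinct $\alpha$, giving the contradiction immediately without invoking basic generation), you have uncountably many distinct $B_\alpha$ in the second countable space $2^\omega$. Any uncountable subset of a second countable space contains one of its own limit points, so some $B_{\alpha^*}$ is a limit of a sequence of pairwise distinct $B_{\alpha_n}$. Now the limit is already in $\mathcal{B}$, no closure under finite intersections is needed, and your final paragraph goes through verbatim with $C$ replaced by $B_{\alpha^*}$.
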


\begin{crl}\qquad
\begin{enumerate}
    \item If there is no $\mathsf{nwd}(\mathfrak{c})$-ultrafilter, then all ultrafilers are Tukey top.
    \item If there is no $\mathsf{nwd}(\omega_1)$-ultrafilter, then there is no basically generated ultrafilter.
\end{enumerate}

\end{crl}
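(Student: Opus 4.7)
The plan is to observe that both parts follow by chaining the preceding theorem with either the trivial upper Tukey bound on ultrafilters (for (1)) or the preceding proposition (for (2)); there is essentially no obstacle.

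For part (1), I would first unpack the hypothesis: by definition, an ultrafilter $\mathcal{U}$ fails to be an $\mathsf{nwd}(\mathfrak{c})$-ultrafilter precisely when $\mathsf{nwd}(\mathfrak{c})\leq_K\mathcal{U}^*$, so assuming there is no $\mathsf{nwd}(\mathfrak{c})$-ultrafilter is the same as assuming every non-principal ultrafilter on $\omega$ satisfies this Kat\v{e}tov relation. Since $\mathfrak{c}\leq 2^\omega$, the preceding theorem (applied with $\kappa=\mathfrak{c}$) yields $[\mathfrak{c}]^{<\omega}\leq_T\mathcal{U}$ for every such $\mathcal{U}$. To upgrade this to ``Tukey top'' I need the reverse inequality $\mathcal{U}\leq_T[\mathfrak{c}]^{<\omega}$, which is standard: enumerating $\mathcal{U}=\{A_\alpha:\alpha<\mathfrak{c}\}$ and setting $f(H)=\bigcap_{\alpha\in H}A_\alpha$ produces a monotone map $[\mathfrak{c}]^{<\omega}\to\mathcal{U}$, and any cofinal $D\subseteq[\mathfrak{c}]^{<\omega}$ has $\bigcup D=\mathfrak{c}$, so $f[D]$ refines every $A_\alpha$ and hence is cofinal in $(\mathcal{U},\supseteq)$. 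I will record this one-line argument and then conclude $\mathcal{U}\equiv_T[\mathfrak{c}]^{<\omega}$.

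For part (2), the argument is even shorter. The hypothesis that there is no $\mathsf{nwd}(\omega_1)$-ultrafilter says that every non-principal ultrafilter $\mathcal{U}$ on $\omega$ satisfies $\mathsf{nwd}(\omega_1)\leq_K\mathcal{U}^*$; since $\omega_1\leq 2^\omega$ holds in $\mathsf{ZFC}$, the preceding theorem applies with $\kappa=\omega_1$ and gives $[\omega_1]^{<\omega}\leq_T\mathcal{U}$. The preceding proposition then immediately yields that $\mathcal{U}$ is not basically generated. As this is quantified over all ultrafilters, the conclusion follows.

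No step in this proof is delicate; if anything counts as ``the main point'' it is the cardinal-arithmetic triviality that $\omega_1\leq\mathfrak{c}$ and $\mathfrak{c}\leq\mathfrak{c}$, which is what makes the preceding theorem applicable at the desired $\kappa$, together with the one-line verification of the upper Tukey bound $\mathcal{U}\leq_T[\mathfrak{c}]^{<\omega}$ needed in (1).
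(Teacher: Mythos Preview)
Your proof is correct and follows exactly the intended route: both parts are immediate from the preceding theorem (applied with $\kappa=\mathfrak{c}$ and $\kappa=\omega_1$ respectively), combined in (1) with the standard fact that every ultrafilter on $\omega$ satisfies $\mathcal{U}\leq_T[\mathfrak{c}]^{<\omega}$, and in (2) with the preceding proposition. The paper leaves this corollary without proof for precisely this reason.
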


Although $\mathsf{nwd}(\kappa)$ is an useful ideal to find out the position of an ultrafilter in the Tukey order, there are other ideals which are useful. In particular, Theorem \ref{critical_ideal} identifies a critical ideal for having Tukey type above $[\kappa]^{<\omega}$.

\begin{dfn}Let $\kappa$ be an infinite cardinal. 
\begin{enumerate}
    \item For $A\in [\kappa]^\omega$ and $f:A\to 2$, define $\chi(A,f)=\{x\in 2^\kappa: x\upharpoonright A=f\}$.
    \item For $A\in[\kappa]^{\omega}$ and $f:A\to 2$, define $\chi_\omega(A,f)=[\chi(A,f)]^{\omega}$.
\end{enumerate}
Then define,
\begin{enumerate}
    \item $\tau_\kappa$ is the ideal generated by sets of the form $\chi(A,f)$, where $A\in[\kappa]^\omega$ and $f:A\to 2$.
    \item $\tau_\kappa^\diamond$ is the ideal generated by $\bigcup_{A\in[\kappa]^\omega}\bigcup_{f\in 2^A}\chi_\omega(A,f)$.
\end{enumerate}
\end{dfn}

\begin{dfn}
Let $\kappa$ be an infinite cardinal. We define $\mathcal{C}_\kappa$ as the collection of all sequences $\vec{\nu}=\langle\nu_n:n\in\omega\rangle$ such that:
\begin{enumerate}
    \item For all $n\in\omega$, $\nu_n:dom(\nu_n)\to 2$.
    \item For all $n\in\omega$, $dom(\nu_n)$ is finite.
    \item For all $n\in\omega$ $\max(dom(\nu_n))<\min(dom(\nu_{n+1}))$.
\end{enumerate}
\end{dfn}

\begin{dfn}\qquad
\begin{enumerate}
    \item For $\vec{\nu}\in\mathcal{C}_\kappa$, define $X_{\vec{\nu}}=\{x\in 2^\kappa:(\forall n\in\omega)(\nu_n\nsubseteq x)\}$.
    \item Let $\mathsf{nwd}_\divideontimes(\kappa)$ be the ideal generated by $\{X_{\vec{\nu}}:\vec{\nu}\in\mathcal{C}_\kappa\}$.
\end{enumerate}
\end{dfn}

\begin{lemma}\label{nwd_divide_def}\qquad
\begin{enumerate}
    \item $\mathsf{nwd}_\divideontimes(\kappa)$ is a proper ideal.
    \item For any infinite set $X\subseteq\kappa$ and $f:X\to 2$, there is $\vec{\nu}\in\mathcal{C}_{\kappa}$ such that
    \begin{equation*}
        \bigcap_{\alpha\in X}B_{f(\alpha)}^\alpha\subseteq X_{\vec{\nu}}
    \end{equation*}
    \item For any $\vec{\nu}\in\mathcal{C}_\kappa$, $X_{\vec{\nu}}$ is a nowhere dense set.
\end{enumerate}
\end{lemma}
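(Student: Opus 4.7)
The plan is to handle (2) and (3) first and then deduce (1) from (3); part (3) carries the essential topological content, and (1) follows at once because a nowhere dense subset of a non-empty space cannot be the whole space. For (2), since $X$ is infinite, enumerate it increasingly as $\alpha_0<\alpha_1<\cdots$ and set $\nu_n:=\{(\alpha_n,\,1-f(\alpha_n))\}$. Then $\vec\nu\in\mathcal{C}_\kappa$ is immediate from the strict increase of the $\alpha_n$ and the fact that each $\dom(\nu_n)=\{\alpha_n\}$ is a singleton. Any $x$ with $x\upharpoonright X=f$ satisfies $x(\alpha_n)=f(\alpha_n)\neq \nu_n(\alpha_n)$, so $\nu_n\nsubseteq x$ for every $n$, which gives $x\in X_{\vec\nu}$.

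For (3), I would show directly that every non-empty basic open set contains a non-empty open subset disjoint from $X_{\vec\nu}$. Take $[s]=\{x\in 2^\kappa:s\subseteq x\}$ with $s$ a finite partial function $\kappa\to 2$. The domains $\dom(\nu_n)$ are pairwise disjoint by clause (3) of the definition of $\mathcal{C}_\kappa$, so only finitely many of them meet the finite set $\dom(s)$; pick any $n^*$ with $\dom(\nu_{n^*})\cap\dom(s)=\emptyset$ and let $t:=s\cup \nu_{n^*}$. Then $[t]\subseteq [s]$ is non-empty, and every $x\in [t]$ extends $\nu_{n^*}$, hence $x\notin X_{\vec\nu}$. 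This verifies the definition of nowhere dense.

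For (1), every element of $\mathsf{nwd}_\divideontimes(\kappa)$ is contained in some finite union $X_{\vec\nu_0}\cup\cdots\cup X_{\vec\nu_{k-1}}$, which by (3) is a finite union of nowhere dense subsets of $2^\kappa$, and so is itself nowhere dense; in particular it is not equal to $2^\kappa$. Thus $2^\kappa\notin \mathsf{nwd}_\divideontimes(\kappa)$, and the ideal is proper. The only non-routine point in the whole argument is selecting $n^*$ in (3), but the disjointness clause built into $\mathcal{C}_\kappa$ makes this automatic, so I anticipate no real obstacle.
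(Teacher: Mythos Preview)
Your argument is correct and matches the paper's approach essentially line for line: part (3) is identical (pick $n^*$ with $\dom(\nu_{n^*})$ disjoint from the finite support and extend), and for (2) the paper does the same singleton construction $\nu_n=\{(\alpha_n,1-f(\alpha_n))\}$, only it explicitly passes first to a subset $Y\subseteq X$ of order type $\omega$ to cover the case where $X$ is uncountable---your phrasing ``enumerate it increasingly as $\alpha_0<\alpha_1<\cdots$'' implicitly does the same thing. Your treatment of (1) via (3) is more explicit than the paper's, which simply declares (1) clear.
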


\begin{proof}
(1) is clear. (2) Let $Y\subseteq X$ be a subset with order type $\omega$, and let $Y=\{\alpha_n:n\in\omega\}$ be its increasing enumeration. Then define $\nu_n=\{(\alpha_n,1-f(\alpha_n))\}$. To prove (3), fix $\vec{\nu}\in\mathcal{C}_{\kappa}$ and let $s:F\to 2$ be such that $F\in[\kappa]^{<\omega}$, whose corresponding basic set we denote by $\langle s\rangle=\{x\in 2^\kappa:s\subseteq x\}$. Note that there are at most finitely many $n\in\omega$ such that $F\cap dom(\nu_n)\neq\emptyset$, so we can choose $k_0\in\omega$ such that $F\cap dom(\nu_{k_0})=\emptyset$. Define $r=s\cup \nu_{k_0}$, and note that $\langle r\rangle\cap X_{\vec{\nu}}=\emptyset$. Thus, any basic open set can be extended to a basic open set disjoint from the set $X_{\vec\nu}$, which means $X_{\vec{\nu}}$ is a nowhere dense set.
\end{proof}

\begin{crl}
For any infinite cardinal $\kappa$, $\tau_\kappa^\diamond\leq_K\tau_\kappa\leq_K\mathsf{nwd}_\divideontimes(\kappa)\leq_K\mathsf{nwd}(\kappa)$.    
\end{crl}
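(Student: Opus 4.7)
The corollary splits into three Kat\v{e}tov reductions, two trivial and one requiring a construction. For the two inequalities involving $\mathsf{nwd}$, Lemma \ref{nwd_divide_def} already does essentially all the work: clause (2) says every generator $\chi(A,f)$ of $\tau_\kappa$ sits inside some generator $X_{\vec\nu}$ of $\mathsf{nwd}_\divideontimes(\kappa)$, yielding the set-theoretic containment $\tau_\kappa\subseteq \mathsf{nwd}_\divideontimes(\kappa)$ between ideals on the common underlying set $2^\kappa$, and clause (3) similarly gives $\mathsf{nwd}_\divideontimes(\kappa)\subseteq \mathsf{nwd}(\kappa)$. Whenever $\mathscr{I}\subseteq \mathscr{J}$ live on a common underlying set, the identity map trivially witnesses $\mathscr{I}\leq_K \mathscr{J}$, so $\tau_\kappa\leq_K \mathsf{nwd}_\divideontimes(\kappa)\leq_K \mathsf{nwd}(\kappa)$ follows with no further effort.

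The genuine content is in $\tau_\kappa^\diamond\leq_K \tau_\kappa$, the step I expect to require the only real thought: the two ideals live on different underlying sets ($\bigcup \tau_\kappa=2^\kappa$ versus $\bigcup \tau_\kappa^\diamond=[2^\kappa]^\omega$), so one must exhibit an explicit map $\varphi: 2^\kappa\to [2^\kappa]^\omega$. My plan is to send each $x\in 2^\kappa$ to a countable ``cloud'' of one-bit perturbations of $x$. Fix an injection $\iota:\omega\to \kappa$ and, for each $n\in\omega$, let $\delta_n\in 2^\kappa$ be the characteristic function of the singleton $\{\iota(n)\}$. Then set
\begin{equation*}
    \varphi(x)=\{x\oplus \delta_n : n\in\omega\},
\end{equation*}
where $\oplus$ denotes coordinatewise addition modulo $2$; distinct values of $n$ flip distinct coordinates, so $\varphi(x)$ is automatically a countably infinite subset of $2^\kappa$.

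The verification that $\varphi$ pulls $\tau_\kappa^\diamond$ back into $\tau_\kappa$ should proceed by a dichotomy on each generator $\chi_\omega(A,g)$. If some $\iota(n)\in A$, then $x$ and $x\oplus \delta_n$ disagree at a point of $A$, so they cannot both match $g$ there, forcing $\varphi^{-1}[\chi_\omega(A,g)]=\emptyset\in \tau_\kappa$. If instead $A\cap \iota[\omega]=\emptyset$, every element of $\varphi(x)$ agrees with $x$ on all of $A$, so $\varphi(x)\subseteq \chi(A,g)$ collapses to $x\upharpoonright A=g$ and the preimage is exactly $\chi(A,g)\in \tau_\kappa$. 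Monotonicity and closure of $\tau_\kappa$ under finite unions then extend the conclusion from generators to every element of $\tau_\kappa^\diamond$. No step presents a real obstacle; the only mild care is in ensuring the cloud is genuinely countably infinite rather than finite, which is automatic from the injectivity of $\iota$.
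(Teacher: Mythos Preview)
Your treatment of the second and third inequalities is correct and matches the paper. The issue is with the first: you have misread the definition of $\tau_\kappa^\diamond$. It is an ideal on $2^\kappa$, not on $[2^\kappa]^\omega$. When the paper writes ``the ideal generated by $\bigcup_{A}\bigcup_{f}\chi_\omega(A,f)$'', the set $\bigcup_{A}\bigcup_{f}\chi_\omega(A,f)$ is being used as the \emph{family} of generators --- each generator is a countable set $Y\in[\chi(A,f)]^\omega$, hence a subset of $2^\kappa$. Thus an element of $\tau_\kappa^\diamond$ is a (necessarily countable) subset of $2^\kappa$ contained in a finite union of sets $\chi(A_i,f_i)$. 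This is precisely how the ideal is used in the proof of Theorem~\ref{critical_ideal}: the Kat\v{e}tov witness constructed there is a map $f\colon\omega\to 2^\kappa$, not $\omega\to[2^\kappa]^\omega$, and one reads ``$\varphi[Z]\subseteq\chi(A,g\upharpoonright A)$, so $\varphi[Z]\in\tau_\kappa^\diamond$'', which only makes sense if the elements of $\tau_\kappa^\diamond$ are subsets of $2^\kappa$.

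Under the intended reading one has $\tau_\kappa^\diamond\subseteq\tau_\kappa$ as ideals on the same underlying set, and the identity map witnesses $\tau_\kappa^\diamond\leq_K\tau_\kappa$ --- exactly as you argued for the other two containments. Your cloud construction is correct for the problem you set yourself, but that problem does not arise here.
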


\begin{proof}
The first Kat\v{e}tov inequality is just because $\tau_\kappa^\diamond\subseteq\tau_\kappa$. The second inequality follows from (2) of Lemma \ref{nwd_divide_def}. For the third inequality, by (3) of Lemma \ref{nwd_divide_def} $X_{\vec{\nu}}$ is a nowhere dense set for any $\vec{\nu}\in\mathcal{C}_\kappa$, which implies  $\mathsf{nwd}_\divideontimes(\kappa)\subseteq\mathsf{nwd}(\kappa)$.
\end{proof}

\noindent\textbf{Remark.} Note that $\mathsf{nwd}_\divideontimes(\omega)=\mathsf{nwd}$.

\begin{thm}\label{critical_ideal}
Let $\mathcal{U}$ be a non-principal ultrafilter on $\omega$. Then, for any uncountable cardinal $\kappa$, $[\kappa]^{<\omega}\leq_T\mathcal{U}$ if and only if $\tau_\kappa^\diamond\leq_K \mathcal{U}^*$.    
\end{thm}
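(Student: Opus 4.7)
For the forward direction of the equivalence, I start with a Tukey witness $\{A_\alpha:\alpha<\kappa\}\subseteq\mathcal{U}$ supplied by the Tukey characterization lemma above, encode it as $y_n\in 2^\kappa$ by $y_n(\alpha)=1$ exactly when $n\in A_\alpha$, and inflate each $y_n$ into a countably infinite set
\[
g(n)=\{y_n\}\cup\{y_n^{(k)}:k<\omega\},
\]
where $\langle\beta_k:k<\omega\rangle$ is a fixed injective sequence in $\kappa$ and $y_n^{(k)}$ agrees with $y_n$ everywhere except at the coordinate $\beta_k$. All members of $g(n)$ agree off $\{\beta_k:k<\omega\}$, so any countable $C\subseteq\kappa\setminus\{\beta_k:k<\omega\}$ gives $g(n)\in\chi_\omega(C,y_n\upharpoonright C)\subseteq\bigcup\tau_\kappa^\diamond$. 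To verify the Kat\v{e}tov condition fix countable $A\subseteq\kappa$ and $h:A\to 2$: if $A$ meets $\{\beta_k:k<\omega\}$ then two members of $g(n)$ disagree there, so $\{n:g(n)\subseteq\chi(A,h)\}$ is empty; otherwise this set equals $\{n:y_n\upharpoonright A=h\}$, which lies inside $\bigcap_{\alpha\in A,\,h(\alpha)=1}A_\alpha$ when $h^{-1}(1)\cap A$ is infinite, and inside $\omega\setminus A_{\alpha_0}$ for some $\alpha_0\in h^{-1}(0)\cap A$ otherwise; in either case the containing set misses $\mathcal{U}$.

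For the backward direction, suppose $g:\omega\to\bigcup\tau_\kappa^\diamond$ is a Kat\v{e}tov reduction and, for each $n$, fix a countable $A_n\subseteq\kappa$ and $h_n\in 2^{A_n}$ with $g(n)\subseteq\chi(A_n,h_n)$. For each $\alpha<\kappa$ and $i\in 2$ set $D^\alpha_i=\{n:g(n)\subseteq B_i^\alpha\}$; testing the Kat\v{e}tov condition against the generator $\chi_\omega(X,h)$ unpacks to $\bigcap_{\alpha\in X}D^\alpha_{h(\alpha)}\notin\mathcal{U}$ for every countable $X\subseteq\kappa$ and $h:X\to 2$. Call $\alpha$ concentrated if $F^\alpha:=\omega\setminus(D^\alpha_0\cup D^\alpha_1)\notin\mathcal{U}$; then one of $D^\alpha_0,D^\alpha_1$ belongs to $\mathcal{U}$, and setting $A^*_\alpha=D^\alpha_{i(\alpha)}$ for the corresponding $i(\alpha)$ yields $\bigcap_{\alpha\in X}A^*_\alpha\notin\mathcal{U}$ whenever $X$ consists of concentrated coordinates, exactly the condition required by the Tukey characterization lemma above.

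The main obstacle is the spread coordinates, where $F^\alpha\in\mathcal{U}$ so neither $D^\alpha_i$ lies in $\mathcal{U}$ and no coordinate-by-coordinate reading of $g$ directly produces $A^*_\alpha$. My plan for these is to refine $g$: for each spread $\alpha$ pick $i(\alpha)\in 2$ for which $g(n)\cap B_{i(\alpha)}^\alpha$ is infinite on a $\mathcal{U}$-large subset of $F^\alpha$, replace $g(n)$ by this intersection there, and enlarge the chosen pair $(A_n,h_n)$ by adding $\alpha$ with value $i(\alpha)$. Countability of every $A_n$ permits a bookkeeping argument that the refinement preserves both membership in $\bigcup\tau_\kappa^\diamond$ and the Kat\v{e}tov condition at every countable test pair; arranging these refinements coherently---possibly by transfinite recursion---through all spread coordinates without creating new spread ones is the principal technical difficulty, and once every coordinate becomes concentrated, the previous paragraph closes the argument.
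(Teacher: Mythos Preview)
Your argument rests on a misreading of the ideal $\tau_\kappa^\diamond$. In the paper, $\tau_\kappa^\diamond$ is the ideal \emph{on $2^\kappa$} generated by the family $\bigcup_{A,f}\chi_\omega(A,f)=\bigcup_{A,f}[\chi(A,f)]^{\omega}$; this union is a family of countably infinite subsets of $2^\kappa$, so the ideal it generates consists of countable subsets of $2^\kappa$ that are covered by finitely many sets $\chi(A,f)$. You have instead interpreted $\tau_\kappa^\diamond$ as the ideal on $[2^\kappa]^\omega$ generated by the collection $\{\chi_\omega(A,f):A,f\}$. This is why the paper's Kat\v{e}tov witness is simply a map $\varphi:\omega\to 2^\kappa$, not a map into $[2^\kappa]^\omega$.

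With the correct reading both directions become short. For the forward direction one sets $\varphi(n)(\alpha)=1\iff n\in D_\alpha$ and checks directly that $\varphi^{-1}[\chi(A,g)]=\bigcap_{\alpha\in A}D_\alpha'$ (with the obvious sign adjustment), which is not in $\mathcal{U}$; no inflation of points to infinite sets is needed. For the backward direction, since $\varphi(n)$ is a single point of $2^\kappa$, the sets $\varphi^{-1}[B^\alpha_0]$ and $\varphi^{-1}[B^\alpha_1]$ partition $\omega$, so exactly one lies in $\mathcal{U}$ and your ``spread coordinate'' phenomenon cannot occur. One then takes $A^*_\alpha=\varphi^{-1}[B^\alpha_{g(\alpha)}]$ and observes $\bigcap_{\alpha\in X}A^*_\alpha=\varphi^{-1}[\chi(X,g\upharpoonright X)]\notin\mathcal{U}$, since $\varphi[\bigcap_{\alpha\in X}A^*_\alpha]$ is a countable subset of $\chi(X,g\upharpoonright X)$ and hence lies in $\tau_\kappa^\diamond$.

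Under your interpretation the backward direction is not merely technically incomplete: the refinement plan is actually dangerous. Replacing $g(n)$ by $g(n)\cap B^\alpha_{i(\alpha)}$ shrinks $g(n)$, and for any test pair $(A,h)$ with $\alpha\notin A$ this can only \emph{enlarge} $\{n:g(n)\subseteq\chi(A,h)\}$; so a set that was outside $\mathcal{U}$ before may land in $\mathcal{U}$ after refinement, destroying the Kat\v{e}tov condition. The transfinite bookkeeping you allude to would also require intersecting $\kappa$-many $\mathcal{U}$-sets at limit stages, which need not stay in $\mathcal{U}$.
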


\begin{proof}
Let $\mathcal{U}$ be an ultrafilter with Tukey type above $[\kappa]^{<\omega}$, and let $\{D_\alpha:\alpha\in\kappa\}\subseteq\mathcal{U}$ be a witness of this. Define a map $f:\omega\to 2^\kappa$ as follows:
\begin{equation*}
    f(n)(\alpha)=1\Longleftrightarrow n\in D_\alpha
\end{equation*}
Clearly, $f[\omega]\in[2^\kappa]^{\omega}$. Let now $X\in \tau_\kappa^\diamond$. Since $\mathcal{U}$ is an ultrafilter, and each $X\in\tau_\kappa^\diamond$ is contained in the union of finitely many sets of the form $\chi(A,f)$, we can assume there are $A\in [\kappa]^\omega$ and $g:A\to 2$ such that $X\subseteq\{x\in 2^\kappa:x\upharpoonright A=g\}$. Assume $f^{-1}[X]\in\mathcal{U}$, so we also have $Z=f^{-1}[\chi(A,g)]\in\mathcal{U}$. Let us see that $Z=\bigcap_{\alpha\in A}D_\alpha$. Suppose there are $n\in Z$ and $\alpha_0\in A$ such that $n\notin D_{\alpha_0}$. Then, by definition of $f$ and the choice of $g$, we have that $f(n)(\alpha_0)=0=g(\alpha_0)$. Now, for any $m\in Z\setminus\{n\}$, we have $f(m)(\alpha_0)=g(\alpha_0)=0$, which means $m\notin D_{\alpha_0}$. Therefore, $Z\cap D_{\alpha_0}=\emptyset$, which means $Z\notin \mathcal{U}$, which contradicts our assumption of $Z\in\mathcal{U}$. Thus, we have $Z\subseteq\bigcap_{\alpha\in A}D_{\alpha}$. Now, suppose there is $n\in\bigcap_{\alpha\in A}D_\alpha\setminus Z$. Then, by the definition of $Z$, there is $\alpha_0\in A$ such that  $f(n)(\alpha_0)\neq g(\alpha_0)$. Since $n\in \bigcap_{\alpha\in A}D_\alpha$, we have $f(n)(\alpha_0)=1$, which implies $g(\alpha_0)=0$. By the choice of $Z$, we have that for any $k\in Z$, $f(k)(\alpha_0)=g(\alpha_0)=0$, which means that for any $k\in Z$, $k\notin D_{\alpha_0}$, that is $Z\cap D_{\alpha_0}=\emptyset$, which, again, is a contradiction. Therefore, we have $\bigcap_{\alpha\in A}D_\alpha= Z$. Then we have that $\bigcap_{\alpha\in A} D_\alpha\in\mathcal{U}$, which contradicts that $\{D_\alpha:\alpha\in\kappa\}$ is a witness for $[\kappa]^{<\omega}\leq_T\mathcal{U}$. Since these contradictions come from assuming that $f^{-1}[X]\in\mathcal{U}$ for some $X\in\tau_{\kappa}^\diamond$, we should have that for all $X\in\tau_\kappa^\diamond$, $f^{-1}[X]\in\mathcal{U}^*$, that is, $\tau_\kappa^\diamond\leq_K\mathcal{U}^*$.

Let us assume now that $\tau_\kappa^\diamond\leq_K\mathcal{U}^*$ and let $\varphi:\omega\to 2^\kappa$ be a witness of this. For each $\alpha\in \kappa$, we have $\varphi^{-1}[B_0^\alpha]\in\mathcal{U}$ or $\varphi^{-1}[B_1^\alpha]\in\mathcal{U}$. Define $g:\kappa\to 2$ such that for each $\alpha\in \kappa$, $\varphi^{-1}[B_{g(\alpha)}^\alpha]\in\mathcal{U}$. We claim that $\{\varphi^{-1}[B_{g(\alpha)}^\alpha]:\alpha\in \kappa\}$ witnesses $[\kappa]^{<\omega}\leq_T\mathcal{U}$. Suppose there is $A\in[\kappa]^{\omega}$ such that $Z=\bigcap_{\alpha\in A}\varphi^{-1}[B_{g(\alpha)}^\alpha]\in\mathcal{U}$. Then we have, $$Z=\bigcap_{\alpha\in A}\varphi^{-1}[B_{g(\alpha)}^\alpha]=\varphi^{-1}\left[\bigcap_{\alpha\in A}B^{\alpha}_{g(\alpha)}\right]$$ but clearly we have $\bigcap_{\alpha\in A}B_{g(\alpha)}^\alpha=\chi(A,g\upharpoonright A)\in \tau_\kappa$, also note that $\varphi[Z]\subseteq\chi(A,g\upharpoonright A)$, so $\varphi[Z]\in\tau_\kappa^\diamond$. Since $\varphi$ witnesses $\tau_\kappa^\diamond\leq_K\mathcal{U}^*$, we have $Z=\varphi^{-1}[\chi(A,g\upharpoonright A)]\in\mathcal{U}^*$, which contradicts our assumption of $Z\in\mathcal{U}$. Therefore, $\{\varphi^{-1}[B_{g(\alpha)}^\alpha]:\alpha\in k\}$ witnesses $[\kappa]^{<\omega}\leq_T\mathcal{U}$.
\end{proof}

\begin{crl}
Let $\mathcal{U}$ be an ultrafilter on $\omega$. If $\mathsf{nwd}_\divideontimes(\kappa)\leq_K\mathcal{U}^*$, then $[\kappa]^{<\omega}\leq_T\mathcal{U}$.
\end{crl}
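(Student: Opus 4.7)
The plan is to combine Theorem \ref{critical_ideal} with the Kat\v{e}tov inequalities established in the preceding corollary, using the transitivity of the Kat\v{e}tov order $\leq_K$.

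First, I would observe that the Kat\v{e}tov order is transitive: if $f$ witnesses $\mathscr{I}\leq_K\mathscr{J}$ and $g$ witnesses $\mathscr{J}\leq_K\mathscr{K}$, then $f\circ g$ witnesses $\mathscr{I}\leq_K\mathscr{K}$, since $(f\circ g)^{-1}[A]=g^{-1}[f^{-1}[A]]\in\mathscr{K}$ for every $A\in\mathscr{I}$. This is a standard routine check that I would not grind through.

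Next, from the corollary immediately preceding the statement we already have $\tau_\kappa^\diamond\leq_K\mathsf{nwd}_\divideontimes(\kappa)$ (this was obtained by chaining $\tau_\kappa^\diamond\subseteq\tau_\kappa$ with the reduction from (2) of Lemma \ref{nwd_divide_def}). Combining this with the hypothesis $\mathsf{nwd}_\divideontimes(\kappa)\leq_K\mathcal{U}^*$ via transitivity yields $\tau_\kappa^\diamond\leq_K\mathcal{U}^*$.

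Finally, Theorem \ref{critical_ideal} gives the equivalence $[\kappa]^{<\omega}\leq_T\mathcal{U}\iff\tau_\kappa^\diamond\leq_K\mathcal{U}^*$, so the desired conclusion $[\kappa]^{<\omega}\leq_T\mathcal{U}$ follows immediately. There is no substantive obstacle here — all the real work has already been done in Theorem \ref{critical_ideal} and in Lemma \ref{nwd_divide_def}(2), and the corollary is simply packaging these two results together through transitivity of $\leq_K$.
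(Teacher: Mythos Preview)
Your argument is correct and is exactly the intended one: the paper states this corollary without proof because it follows immediately from the chain $\tau_\kappa^\diamond\leq_K\mathsf{nwd}_\divideontimes(\kappa)$ (established in the preceding corollary) together with Theorem~\ref{critical_ideal} via transitivity of $\leq_K$. There is nothing to add.
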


When talking about ultrafilters on $\omega$, the ideals $\tau_\kappa^{\diamond}$ and $\tau_\kappa$ look essentially the same,
in the sense that an ultrafilter is Kat\v{e}tov above $\tau_\kappa^\diamond$ if and only if it is Kat\v{e}tov above $\tau_\kappa$. However, $\mathsf{nwd}(\kappa)$ seems to be strictly higher since it may not be reached by Tukey top ultrafilters, as pointed in Corollary \ref{nwd_Tukey_not_nwd_kappa}; we need a few facts to prove this.

\begin{lemma}
If $\mathsf{nwd}\nleq_K\mathcal{U}^*$, then $\mathsf{nwd}(\kappa)\nleq\mathcal{U}^*$.
\end{lemma}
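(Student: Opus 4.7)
I would prove the contrapositive: assume $\mathsf{nwd}(\kappa)\leq_K\mathcal{U}^*$ and deduce $\mathsf{nwd}\leq_K\mathcal{U}^*$. Fix a witness $f:\omega\to 2^\kappa$, so that $f^{-1}[X]\in\mathcal{U}^*$ for every nowhere dense $X\subseteq 2^\kappa$. The natural idea is to project to a countable coordinate set: choose any infinite countable $A\subseteq\kappa$, let $\pi_A:2^\kappa\to 2^A$ be the restriction map, and define $g=\pi_A\circ f:\omega\to 2^A$. Since $2^A$ is homeomorphic to $2^\omega$, showing that $g$ witnesses $\mathsf{nwd}(A)\leq_K\mathcal{U}^*$ gives $\mathsf{nwd}\leq_K\mathcal{U}^*$, which is what we want.

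The key topological fact needed is that the projection $\pi_A$ is a continuous \emph{open} surjection (basic open sets in $2^\kappa$ have finite support, hence project to basic open sets of $2^A$). Given any nowhere dense $Y\subseteq 2^A$, I would verify that $\pi_A^{-1}[Y]$ is nowhere dense in $2^\kappa$ by a direct argument: if $U\subseteq 2^\kappa$ is any nonempty open set, then $\pi_A[U]$ is a nonempty open subset of $2^A$, so there is a nonempty open $V\subseteq\pi_A[U]$ disjoint from $Y$; then $U\cap\pi_A^{-1}[V]$ is a nonempty open subset of $U$ disjoint from $\pi_A^{-1}[Y]$. This shows $\pi_A^{-1}[Y]$ has empty interior in every nonempty open set of $2^\kappa$, i.e., is nowhere dense.

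With that in hand, for any nowhere dense $Y\subseteq 2^A$ we have
\begin{equation*}
g^{-1}[Y]=f^{-1}\bigl[\pi_A^{-1}[Y]\bigr]\in\mathcal{U}^*,
\end{equation*}
the membership holding because $\pi_A^{-1}[Y]$ is nowhere dense in $2^\kappa$ and $f$ witnesses $\mathsf{nwd}(\kappa)\leq_K\mathcal{U}^*$. Composing with a homeomorphism $2^A\cong 2^\omega$ gives the required Kat\v{e}tov reduction $\mathsf{nwd}\leq_K\mathcal{U}^*$. There is essentially no obstacle here: the whole argument rests on the single observation that inverse images of nowhere dense sets under open continuous surjections remain nowhere dense, which is routine for product projections.
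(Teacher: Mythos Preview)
Your proof is correct and follows essentially the same approach as the paper: both prove the contrapositive by composing the witness with the projection onto a countable coordinate set (the paper takes $A=\omega$ and sets $\varphi^*(n)=\varphi(n)\upharpoonright\omega$). The only difference is that the paper checks the Kat\v{e}tov reduction on the generating family $\{X_{\vec{\nu}}:\vec{\nu}\in\mathcal{C}_\omega\}$ for $\mathsf{nwd}$, using the identity $(\varphi^*)^{-1}[X_{\vec{\nu}}^\omega]=\varphi^{-1}[X_{\vec{\nu}}^\kappa]$, whereas you argue directly for arbitrary nowhere dense sets via openness of the projection---a slightly more self-contained route to the same conclusion.
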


\begin{proof}
In this lemma, given $\vec{\nu}\in\mathcal{C}_\omega$, to distinguish between nowhere dense sets in $2^\omega$ and nowhere dense sets in $2^\kappa$, we write $X_{\vec{\nu}}^\omega$ for the definition of $X_{\vec{\nu}}$ in $2^\omega$ and $X_{\vec{\nu}}^\kappa$ for the definition of $X_{\vec{\nu}}$ in $2^\kappa$. A similar argument to that of (3) from Lemma \ref{nwd_divide_def} proves that $X_{\vec{\nu}}^\kappa$ is nowhere dense for any $\vec{\nu}\in\mathcal{C}_\omega$.

Now, let us see that $\mathsf{nwd}(\kappa)\leq_K\mathcal{U}^*$ implies $\mathsf{nwd}\leq_K\mathcal{U}^*$. Let $\varphi:\omega\to 2^\kappa$ be a witness of $\mathsf{nwd}(\kappa)\leq_K\mathcal{U}^*$ and define $\varphi^*:\omega\to2^\omega$ as $\varphi^*(n)=\varphi(n)\upharpoonright\omega$. Let $\vec{\nu}\in\mathcal{C}_\omega$ be arbitrary. Then $\varphi^{-1}[X_{\vec{\nu}}^\kappa]\in\mathcal{U}^*$. But we also have that $(\varphi^*)^{-1}[X^\omega_{\vec\nu}]=\varphi^{-1}[X_{\vec{\nu}}^\kappa]$, which implies $(\varphi^*)^{-1}[X_{\vec{\nu}}^\omega]\in\mathcal{U}^*$. Therefore, $\mathsf{nwd}\leq_K\mathcal{U}^*$.
\end{proof}

The following theorem was first proved by O. Guzm\'an-Gonz\'alez, we include the result here with his permission.

\begin{thm}[O. Guzm\'an-Gonz\'alez]\label{osvaldo_theorem_Z}
For any ultrafilter $\mathcal{U}$ such that $\mathcal{Z}\leq_K\mathcal{U}^*$, $\mathcal{U}$ is Tukey top.
\end{thm}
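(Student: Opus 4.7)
The plan is to apply Theorem~\ref{critical_ideal} with $\kappa=\mathfrak{c}$; equivalently, by the earlier lemma characterizing $[\mathfrak{c}]^{<\omega}\leq_T\mathcal{U}$, I must exhibit $\{A_\alpha:\alpha<\mathfrak{c}\}\subseteq\mathcal{U}$ such that $\bigcap_{\alpha\in X}A_\alpha\notin\mathcal{U}$ for every $X\in[\mathfrak{c}]^{\omega}$. A simplifying first move is to replace $\mathcal{U}$ by the push-forward ultrafilter $\mathcal{V}=\{B\subseteq\omega:\varphi^{-1}[B]\in\mathcal{U}\}$, where $\varphi:\omega\to\omega$ witnesses $\mathcal{Z}\leq_K\mathcal{U}^*$. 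Then $\mathcal{V}$ is an ultrafilter extending the density-one filter $\mathcal{Z}^*$ and satisfies $\mathcal{V}\leq_T\mathcal{U}$ via $A\mapsto\varphi[A]$, so any family witnessing Tukey-topness of $\mathcal{V}$ pulls back through $\varphi^{-1}$ to one for $\mathcal{U}$. Hence it is enough to prove the theorem under the additional assumption that $\mathcal{U}$ itself extends $\mathcal{Z}^*$.

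For such $\mathcal{U}$, the target is a family $\{E_\alpha:\alpha<\mathfrak{c}\}\subseteq\mathcal{Z}$ of density-zero sets with the crucial property that $\bigcup_{\alpha\in X}E_\alpha$ has asymptotic density one for every $X\in[\mathfrak{c}]^{\omega}$. Granted such a family, setting $A_\alpha=\omega\setminus E_\alpha$ produces density-one sets, and in particular $A_\alpha\in\mathcal{Z}^*\subseteq\mathcal{U}$; and for any $X\in[\mathfrak{c}]^{\omega}$ the intersection $\bigcap_{\alpha\in X}A_\alpha=\omega\setminus\bigcup_{\alpha\in X}E_\alpha$ has density zero, so it lies in $\mathcal{Z}\subseteq\mathcal{U}^{*}$ and is therefore not in $\mathcal{U}$, as required.

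The main obstacle is producing the family $\{E_\alpha\}$ in $\mathsf{ZFC}$. My intended approach is to fix a partition $\omega=\bigsqcup_n I_n$ with $|I_n|$ growing rapidly, identify $\mathfrak{c}$ with $2^\omega$, and define $E_\alpha\cap I_n$ as a sparse subset of $I_n$ determined by the initial segment $\alpha\upharpoonright n$, chosen so that the distinctness of countably many such initial segments $\{\alpha\upharpoonright n:\alpha\in X\}$ forces the union of the corresponding sparse slices to exhaust $I_n$ up to a vanishing fraction. The delicate point, where I expect all the real technical work of the proof to be concentrated, is certifying that this block-level exhaustion translates into genuine asymptotic density one uniformly over all countable $X$, including at partial-block positions in the natural enumeration of $\omega$; this likely requires a careful coordination between the growth rate of $|I_n|$ and the enumeration inside each block, or else an appeal to a known ZFC existence result for families with the appropriate independence-modulo-$\mathcal{Z}$ properties.
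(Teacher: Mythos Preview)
The paper does not prove this theorem; it is stated as a result of O.~Guzm\'an-Gonz\'alez and included without argument, so there is no proof in the paper to compare against.

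Your reduction to the case $\mathcal{U}\supseteq\mathcal{Z}^*$ via the Rudin--Keisler image is correct, and isolating as the target a family $\{E_\alpha:\alpha<\mathfrak{c}\}\subseteq\mathcal{Z}$ whose every countably infinite union lies in $\mathcal{Z}^*$ is a natural strategy. However, the construction you sketch does not work, and the obstruction is structural rather than a matter of bookkeeping at partial blocks. If the slice $E_\alpha\cap I_n$ is determined by the initial segment $\alpha\!\upharpoonright\! n$, then for a countable $X=\{\alpha_i:i\in\omega\}\subseteq 2^\omega$ the number of distinct slices available in block $n$ is $k_n=\lvert\{\alpha_i\!\upharpoonright\! n:i\in\omega\}\rvert$. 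While $k_n\to\infty$, it can grow arbitrarily slowly: taking $\alpha_i=0^{f(i)}1\,0\,0\cdots$ for a rapidly increasing $f$ gives $k_n\le\lvert\{i:f(i)<n\}\rvert+1$. If each slice has relative size $\epsilon_n$ in $I_n$, the union $\bigcup_i E_{\alpha_i}\cap I_n$ has relative size at most $k_n\epsilon_n$, and there is no choice of $\epsilon_n\to 0$ making $k_n\epsilon_n\to 1$ uniformly over all countable $X$. The same obstruction applies to any scheme in which $E_\alpha\cap I_n$ is a function of finitely many coordinates of $\alpha$.

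So the ``delicate point'' you flag is in fact a genuine gap: the initial-segment idea fails, and what remains after your reduction is essentially the full content of the theorem. A complete proof needs either a fundamentally different construction of the family $\{E_\alpha\}$ (one not factoring through finitely many coordinates of $\alpha$ per block) or a different strategy that is allowed to use the particular ultrafilter $\mathcal{U}$ rather than producing a single witness for all $\mathcal{U}\supseteq\mathcal{Z}^*$ at once.
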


\begin{crl}
For any ultrafilter $\mathcal{U}$, if $\mathcal{Z}\leq_K\mathcal{U}^*$, then $\tau_\mathfrak{c}^\diamond\leq_K\mathcal{U}^*$.
\end{crl}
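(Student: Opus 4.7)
The plan is to chain together the two results immediately preceding the statement. By hypothesis, $\mathcal{Z}\leq_K \mathcal{U}^*$, so I would first invoke Theorem \ref{osvaldo_theorem_Z} (the Guzm\'an-Gonz\'alez result) to conclude that $\mathcal{U}$ is Tukey top. Unpacking the definition of Tukey top given in the introduction, this means precisely that $\mathcal{U}$ is Tukey equivalent to $([\mathfrak{c}]^{<\omega},\subseteq)$, and in particular $[\mathfrak{c}]^{<\omega}\leq_T \mathcal{U}$.

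The second step is a direct appeal to the equivalence established in Theorem \ref{critical_ideal} applied with the uncountable cardinal $\kappa=\mathfrak{c}$: this theorem characterizes $[\kappa]^{<\omega}\leq_T \mathcal{U}$ as being equivalent to $\tau_\kappa^\diamond\leq_K \mathcal{U}^*$. Taking $\kappa=\mathfrak{c}$ and feeding in the conclusion of the previous paragraph yields $\tau_\mathfrak{c}^\diamond\leq_K \mathcal{U}^*$, which is exactly what the corollary asserts.

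Since both inputs are already available as stated results in the paper, there is no genuine obstacle here; the corollary is essentially a two-line concatenation. The only subtlety worth noting explicitly is that one must check Theorem \ref{critical_ideal} genuinely applies at $\kappa = \mathfrak{c}$, but $\mathfrak{c}$ is uncountable, so this hypothesis is satisfied without further work.
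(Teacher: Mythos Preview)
Your proposal is correct and matches the intended argument: the paper states this corollary without proof, and it is indeed just the concatenation of Theorem~\ref{osvaldo_theorem_Z} (to get $[\mathfrak{c}]^{<\omega}\leq_T\mathcal{U}$) with the forward direction of Theorem~\ref{critical_ideal} at $\kappa=\mathfrak{c}$.
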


\begin{thm}[see \cite{hong_zhang_relations}]
Assume $\mathsf{CH}$. There is an ultrafilter $\mathcal{U}$ such that:
\begin{enumerate}
    \item $\mathsf{nwd}\nleq\mathcal{U}^*$ and,
    \item $\mathcal{Z}\leq \mathcal{U}^*$.
\end{enumerate}
\end{thm}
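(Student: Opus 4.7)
The plan is a $\mathsf{CH}$ transfinite induction of length $\omega_1$. Using $\mathsf{CH}$, fix enumerations $\{f_\alpha:\alpha<\omega_1\}$ of all functions $\omega\to 2^\omega$ and $\{B_\alpha:\alpha<\omega_1\}$ of all subsets of $\omega$, and build an increasing chain $\langle\mathcal{F}_\alpha:\alpha<\omega_1\rangle$ of filters on $\omega$, starting with $\mathcal{F}_0$ equal to the density-one filter $\mathcal{D}$ (the dual of $\mathcal{Z}$), each $\mathcal{F}_\alpha$ generated over $\mathcal{D}$ by countably many sets, and each kept inside the coideal $\mathcal{Z}^+$ of sets of positive upper asymptotic density. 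At stage $\alpha$ I would handle two tasks: put $B_\alpha$ or $\omega\setminus B_\alpha$ into $\mathcal{F}_{\alpha+1}$ so the final union is an ultrafilter, and find $A\in\mathcal{F}_{\alpha+1}$ with $f_\alpha[A]$ nowhere dense in $2^\omega$. Setting $\mathcal{U}=\bigcup_{\alpha<\omega_1}\mathcal{F}_\alpha$, the extension $\mathcal{U}\supseteq\mathcal{D}$ gives $\mathcal{Z}\subseteq\mathcal{U}^*$ and hence $\mathcal{Z}\leq_K\mathcal{U}^*$ via the identity on $\omega$, while the nowhere-dense tasks guarantee $\mathsf{nwd}\not\leq_K\mathcal{U}^*$.

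Two key lemmas drive the induction. The first is that $\mathcal{Z}^+$ is a coideal: for any filter $\mathcal{F}\subseteq\mathcal{Z}^+$ and any $B\subseteq\omega$, at least one of $\mathcal{F}\cup\{B\}, \mathcal{F}\cup\{\omega\setminus B\}$ still has all finite intersections in $\mathcal{Z}^+$. If both failed, witnesses $F_1,F_2\in\mathcal{F}$ with $F_1\cap B$ and $F_2\setminus B$ of density zero would force $F_1\cap F_2$, which is in $\mathcal{F}\subseteq\mathcal{Z}^+$, to be a union of two density-zero sets and thus itself have density zero, a contradiction. This lemma discharges the ultrafilter-extension step. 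The second lemma is the $\mathsf{nwd}$-step: given a filter $\mathcal{F}\subseteq\mathcal{Z}^+$ extending $\mathcal{D}$ and generated over $\mathcal{D}$ by a decreasing sequence $C_0\supseteq C_1\supseteq\cdots$, and given $f:\omega\to 2^\omega$, there is $A$ with $f[A]$ nowhere dense in $2^\omega$ such that $\mathcal{F}\cup\{A\}\subseteq\mathcal{Z}^+$.

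To prove the $\mathsf{nwd}$-step, extend $\mathcal{F}$ via Zorn and the first lemma to an ultrafilter $\mathcal{V}\subseteq\mathcal{Z}^+$, and let $x:=\lim_\mathcal{V} f\in 2^\omega$, which exists by compactness. If $f^{-1}[\{x\}]\in\mathcal{V}$, take $A=f^{-1}[\{x\}]$: then $f[A]=\{x\}$ is nowhere dense and $A\cap C\in\mathcal{V}\subseteq\mathcal{Z}^+$ for every $C\in\mathcal{F}$. Otherwise, pick clopen neighborhoods $V_0\supseteq V_1\supseteq\cdots$ of $x$ with $\bigcap_k V_k=\{x\}$, so that every $f^{-1}[V_k]\cap C_k$ lies in $\mathcal{V}\subseteq\mathcal{Z}^+$, and assemble $A$ block-by-block: choose integers $N_0<N_1<\cdots$ and set $A\cap[N_k,N_{k+1})\subseteq f^{-1}[V_k]\cap C_k\cap[N_k,N_{k+1})$ large enough that this block realizes a controlled positive fraction of $[0,N_{k+1}]$. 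Every block past index $n$ lies in $C_n$, so $d^*(A\cap C_n)>0$, while $f[A]\setminus V_k$ is finite for every $k$, so $f[A]\cup\{x\}$ is a sequence converging to $x$ and $f[A]$ is nowhere dense.

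The main obstacle is the block-density balancing in the second case of the $\mathsf{nwd}$-step: the densities $\delta_k:=d^*(f^{-1}[V_k]\cap C_k)$ may collapse to zero as both $V_k$ and $C_k$ tighten, so one cannot naively demand each block to have $(\delta_k/2)N_{k+1}$ elements and hope for a uniform lower bound on $d^*(A\cap C_n)$. The trick is to choose the neighborhoods $V_k$ to shrink only as fast as needed, keeping $\limsup_k\delta_k>0$, or to extract a subsequence of levels $k_0<k_1<\cdots$ along which the $\delta_{k_j}$ are bounded below and assemble $A$ only from those, while still arranging $\bigcap_j V_{k_j}=\{x\}$ so that $f[A]$ converges to $x$. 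Once this careful balancing is in place, the induction closes and the resulting $\mathcal{U}$ satisfies both $\mathcal{Z}\leq_K\mathcal{U}^*$ and $\mathsf{nwd}\not\leq_K\mathcal{U}^*$.
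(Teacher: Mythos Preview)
The paper does not prove this theorem; it is quoted from \cite{hong_zhang_relations} without argument, so there is no in-paper proof to compare against. Your overall architecture (a $\mathsf{CH}$ induction keeping the filter inside $\mathcal{Z}^+$) and your coideal lemma are correct. The gap is in the $\mathsf{nwd}$-step, and it is genuine rather than cosmetic.

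Your block construction in Case~2 yields $A\subseteq^* f^{-1}[V_k]\cap C_k$ for every $k$, so $d^*(A)\le\inf_k\delta_k$, and both of your proposed repairs still require $\limsup_k\delta_k>0$. This can fail outright even with all $C_k=\omega$: let $f$ map $[2^j,2^{j+1})$ bijectively onto $\{t^\frown 0^\infty:t\in 2^j\}$. Then $d(f^{-1}[[s]])=2^{-|s|}$ for every $s\in 2^{<\omega}$, so for \emph{any} $x\in 2^\omega$ and \emph{any} clopen neighborhood base $V_k\downarrow\{x\}$ one has $d^*(f^{-1}[V_k])\to 0$; hence every $A$ whose $f$-image converges to a point has $d^*(A)=0$. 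No slowing of the $V_k$ and no subsequence avoids this. The lemma is nonetheless true for this $f$ --- take $F=2^\omega\setminus\bigcup_n[t_n]$ with $t_n\supseteq s_n$ ending in a $1$ and $\sum_n 2^{-|t_n|}<1$; then $d^*(f^{-1}[F])>0$ --- but $F$ is not a convergent sequence. There is a second, independent obstruction: demanding $A\subseteq^* C_k$ for all $k$ can by itself force $d^*(A)=0$ (e.g.\ $C_n=\bigcup_k[2^k,2^k+\lfloor 2^k/(n{+}1)\rfloor)$), whereas you only need the strictly weaker $d^*(A\cap C_n)>0$. A correct argument must aim $f[A]$ into a closed nowhere dense set that is not a single limit, and must interleave across the $C_n$ rather than sit almost inside all of them.
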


\begin{crl}\label{nwd_Tukey_not_nwd_kappa}
Assume $\mathsf{CH}$. There is an ultrafilter $\mathcal{U}$ such that 
\begin{enumerate}
    \item $\tau_{\mathfrak{c}}^\diamond\leq_K\mathcal{U}^*$.
    \item $\mathsf{nwd}(\mathfrak{c})\nleq\mathcal{U}^*$.
\end{enumerate}
\end{crl}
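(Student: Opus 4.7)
The plan is to simply combine the three preceding results: the Hong--Zhang existence theorem under $\mathsf{CH}$, the corollary deducing $\tau_{\mathfrak{c}}^\diamond\leq_K\mathcal{U}^*$ from $\mathcal{Z}\leq_K\mathcal{U}^*$, and the lemma saying that $\mathsf{nwd}\nleq_K\mathcal{U}^*$ implies $\mathsf{nwd}(\kappa)\nleq_K\mathcal{U}^*$ for any $\kappa$.

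Specifically, I would first invoke the Hong--Zhang theorem under $\mathsf{CH}$ to produce an ultrafilter $\mathcal{U}$ with $\mathsf{nwd}\nleq_K\mathcal{U}^*$ and $\mathcal{Z}\leq_K\mathcal{U}^*$. For item (1), since $\mathcal{Z}\leq_K\mathcal{U}^*$, the corollary immediately following Guzm\'an-Gonz\'alez's Theorem \ref{osvaldo_theorem_Z} (combined with Theorem \ref{critical_ideal} to pass from Tukey topness to the Kat\v{e}tov statement at $\kappa=\mathfrak{c}$) yields $\tau_{\mathfrak{c}}^\diamond\leq_K\mathcal{U}^*$. For item (2), since $\mathsf{nwd}\nleq_K\mathcal{U}^*$, the preceding lemma with $\kappa=\mathfrak{c}$ gives $\mathsf{nwd}(\mathfrak{c})\nleq_K\mathcal{U}^*$.

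Since no new construction is required and all pieces are already in place, there is essentially no obstacle: the corollary is a pure bookkeeping consequence of the chain Hong--Zhang $\Rightarrow$ Guzm\'an-Gonz\'alez $\Rightarrow$ Theorem \ref{critical_ideal} on one side, and the lifting lemma from $\mathsf{nwd}$ to $\mathsf{nwd}(\kappa)$ on the other. The only thing worth pointing out in the writeup is that under $\mathsf{CH}$ we have $\mathfrak{c}=\aleph_1$, so the cardinality parameters match.
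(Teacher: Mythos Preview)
Your proposal is correct and matches the paper's intended argument: the corollary is stated without proof precisely because it follows immediately by chaining the Hong--Zhang existence theorem with the corollary after Theorem~\ref{osvaldo_theorem_Z} for item~(1) and the preceding lifting lemma for item~(2). The remark that $\mathfrak{c}=\aleph_1$ under $\mathsf{CH}$ is harmless but unnecessary, since both the lifting lemma and the Tukey-top conclusion already apply at $\kappa=\mathfrak{c}$ regardless of its value.
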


Our aim is to prove that consistently all ultrafilters are Kat\v{e}tov above $\tau_{\omega_1}^\diamond$ (in the first model); and  all ultrafilters are Kat\v{e}tov above $\tau_{\mathfrak{c}}^\diamond$ (in the second model). In doing so, the ideal $\mathsf{nwd}_\divideontimes(\kappa)$ will be useful. Given a model $V$ of $\mathsf{ZFC}+\diamondsuit(S)+\mathsf{GCH}$, we are going to construct generic extensions $V[G_0]$ and $V[G_1]$ such that:

\begin{enumerate}
    \item [($\star$)] $V[G_0]\vDash$ For any non-principal ultrafilter $\mathcal{U}$, $\mathsf{nwd}_\divideontimes(\omega_1)\leq_K\mathcal{U}^*$.
        \item [($\star\star$)] $V[G_1]\vDash$ For any non-principal ultrafilter $\mathcal{U}$, $\mathsf{nwd}_\divideontimes(\kappa)\leq_K\mathcal{U}^*$ and $2^\omega=\kappa>\omega_1$.
\end{enumerate}

By Theorem \ref{critical_ideal}, this will imply the next theorem.

\begin{thm}\label{main_theorem_reformulation}\qquad
    \begin{enumerate}
        \item It is consistent that for any non-principal ultrafilter $\mathcal{U}$ on $\omega$, $\tau_{\omega_1}^\diamond\leq_K\mathcal{U}^*$.
        \item It is consistent that for any non-principal ultrafilter $\mathcal{U}$ on $\omega$, $\tau_{\mathfrak{c}}^\diamond\leq_K\mathcal{U}^*$.
    \end{enumerate}
\end{thm}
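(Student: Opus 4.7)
The plan is to treat Theorem \ref{main_theorem_reformulation} as essentially a bookkeeping consequence of Theorem \ref{critical_ideal} together with the hypothetical models $V[G_0]$ and $V[G_1]$ satisfying ($\star$) and ($\star\star$); the genuine work is packaged inside those hypotheses and is deferred to the construction sections. So once ($\star$) and ($\star\star$) are in hand, the present theorem should be a one-line deduction.

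More concretely, I would argue as follows. Fix a non-principal ultrafilter $\mathcal{U}$ in $V[G_0]$. By ($\star$) we have $\mathsf{nwd}_\divideontimes(\omega_1)\leq_K\mathcal{U}^*$. Combining this with the chain of Kat\v{e}tov reductions
\begin{equation*}
\tau_{\omega_1}^\diamond\leq_K\tau_{\omega_1}\leq_K\mathsf{nwd}_\divideontimes(\omega_1)
\end{equation*}
established in the corollary following Lemma \ref{nwd_divide_def}, transitivity of $\leq_K$ yields $\tau_{\omega_1}^\diamond\leq_K\mathcal{U}^*$, which proves (1). For (2), the identical argument is run in $V[G_1]$ with $\kappa=\mathfrak{c}$ in place of $\omega_1$, using ($\star\star$) to supply $\mathsf{nwd}_\divideontimes(\kappa)\leq_K\mathcal{U}^*$ and then the same chain of reductions to conclude $\tau_{\mathfrak{c}}^\diamond\leq_K\mathcal{U}^*$. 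Note that the equivalence with Tukey-topness from Theorem \ref{critical_ideal} is not actually needed for \emph{this} statement — only the Kat\v{e}tov chain — but Theorem \ref{critical_ideal} is what gives the reformulation its meaning in terms of Tukey types.

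The obvious main obstacle is not in this deduction at all, but in establishing ($\star$) and ($\star\star$). These require constructing forcing extensions in which \emph{every} non-principal ultrafilter is Kat\v{e}tov-above $\mathsf{nwd}_\divideontimes(\kappa)$, which amounts to arranging that every such ultrafilter meets the diagonalization encoded by the sets $X_{\vec{\nu}}$ for $\vec{\nu}\in\mathcal{C}_\kappa$. As sketched in the introduction, for ($\star$) this will be done by a countable support iteration of proper forcings derived from a reformulation of Shelah's $\mathsf{nwd}$-destroying forcing, while for ($\star\star$) one first forces with a Sacks iteration of length $\omega_2$ to produce a distinguished family $\mathscr{F}$ of suitable filters, then forces with a carefully chosen suborder of a countable support product of posets $\mathbb{P}_\kappa(\mathcal{F})$ that blow up the Tukey type of any ultrafilter extending each $\mathcal{F}\in\mathscr{F}$; the $\aleph_2$-c.c. of the second-step forcing combined with the Wimmers-style observation that any ultrafilter in a $2^\omega$-c.c. extension contains a $2^\omega$-saturated filter from the ground model is what propagates the property from the $\mathscr{F}$-extending ultrafilters to \emph{all} ultrafilters in the final extension. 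Establishing the required properties — Cohen preservation, $\aleph_2$-c.c., and preservation of $\omega_1$ under an iteration/product of strongly proper Cohen-preserving forcings — is where essentially all the technical difficulty of the paper is concentrated, and the present theorem merely harvests the outcome.
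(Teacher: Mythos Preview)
Your proposal is correct and matches the paper's approach: the paper also presents Theorem \ref{main_theorem_reformulation} as an immediate consequence of ($\star$) and ($\star\star$), with the actual work deferred to the later forcing constructions. Your deduction via the Kat\v{e}tov chain $\tau_\kappa^\diamond\leq_K\mathsf{nwd}_\divideontimes(\kappa)\leq_K\mathcal{U}^*$ is in fact slightly more direct than the paper's phrasing (which cites Theorem \ref{critical_ideal}), but the content is identical.
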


One natural question that arises is the existence of an ultrafilter which is not a $\mathsf{nwd}$-ultrafilter but it is a $\mathsf{nwd}(\kappa)$-ultrafilter for some $\kappa>\omega$. Propositions \ref{not_the_same_1} and \ref{not_the_same_2} answer this question up to some degree, but we first need some preliminary facts, some for which we include the proofs for completeness.

In the rest of this section, if $(X,\tau)$ is a topological space, $\mathsf{nwd}(\tau)$ denotes the family of $\tau$-nowhere dense subsets of $X$, which in all the spaces that appear below is a proper ideal. If $\lambda$ is a cardinal, $\mathsf{nwd}(\lambda)$ keeps the previously defined meaning. For any $A\subseteq X$, $\tau-int(A)$ denotes the interior of the set $A$ in the topology $\tau$. The notation $\mathsf{nwd}(\tau)$ will be used only in this section.

\begin{dfn}
Let $(X,\tau)$ be a topological space. We say that $(X,\tau)$ is irresolvable if any two dense subsets of $X$ have non-empty intersection; otherwise it is called resolvable.
We say that $(X,\tau)$ is strongly irresolvable if any non-empty open subset $U$ is irresolvable with the subspace topology.
\end{dfn}

\begin{lemma}\label{nwd_positive_interior}
Let $(\omega,\tau)$ be a strongly irresolvable space. Then, for any $X\in\mathsf{nwd}(\tau)^+$, $\tau-int(X)\neq\emptyset$.
\end{lemma}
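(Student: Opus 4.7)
The plan is to exploit the definition of $X \in \mathsf{nwd}(\tau)^+$ to find a non-empty open set $U$ in which $X$ is dense, and then use strong irresolvability of $U$ to upgrade this density to the existence of a non-empty open subset contained in $X$.

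Concretely, first I would unpack what it means for $X$ to be $\mathsf{nwd}(\tau)$-positive: the set $U := \tau\text{-}int(\overline{X})$ is non-empty. Then I would observe that $X \cap U$ is dense in $U$ with the subspace topology: any non-empty relatively open $V \subseteq U$ is non-empty open in $\omega$, and since $V \subseteq U \subseteq \overline{X}$, necessarily $V \cap X \neq \emptyset$.

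Next comes the key step using the hypothesis. Consider the complement $U \setminus X$ inside $U$. If $U \setminus X$ were also dense in $U$, then $X \cap U$ and $U \setminus X$ would be two disjoint dense subsets of $U$, making $U$ resolvable and contradicting the strong irresolvability of $(\omega,\tau)$ (which states that every non-empty open subspace is irresolvable). Therefore $U \setminus X$ is not dense in $U$, so some non-empty relatively open $V \subseteq U$ avoids $U \setminus X$, i.e., $V \subseteq X$. Since $U$ is open in $\omega$, $V$ is also open in $\omega$, witnessing $\tau\text{-}int(X) \neq \emptyset$.

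I do not foresee any serious obstacle in this argument; the main conceptual point is simply identifying the correct pair of disjoint candidate dense sets to which strong irresolvability applies. One small caveat is making sure to apply irresolvability to the open subspace $U$ (not to $\omega$ itself), which is exactly why the hypothesis was strengthened from irresolvability to strong irresolvability.
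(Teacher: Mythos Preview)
Your proposal is correct and follows essentially the same approach as the paper: both take $U=\tau\text{-}int(\overline{X})$, note that $X$ is dense in $U$, and then use strong irresolvability to rule out $U\setminus X$ being dense in $U$ as well. The only cosmetic difference is that the paper phrases it as a proof by contradiction (assuming $\tau\text{-}int(X)=\emptyset$ forces both $X$ and $U\setminus X$ to be dense in $U$), while you argue directly.
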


\begin{proof}
Let $X\subseteq$ be a $\mathsf{nwd}(\tau)$-positive set, and let $V=\tau-int(\overline{X})$, so $V$ is a non-empty set. Suppose $\tau-int(X)=\emptyset$. Then, for any $\tau$-open set $W\subseteq V$, we have $W\setminus X\neq\emptyset$ and $X\cap W\neq\emptyset$, which means that $X$ is dense in $V$ and $V\setminus X$ is dense in $V$, which is in contradiction with $(\omega,\tau)$ being strongly irresolvable.
\end{proof}

\begin{lemma}[see \cite{ferreira_hrusak_resolvability_extraresolvability}]\label{nwd_saturated_irresolvable}
If $(\omega,\tau)$ is strongly irresolvable, then $\mathsf{nwd}(\tau)^*$ is a saturated filter.
\end{lemma}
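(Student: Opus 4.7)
The plan is to establish the $\omega_{1}$-saturation of $\mathsf{nwd}(\tau)^{*}$ directly, by showing that the quotient Boolean algebra $\mathcal{P}(\omega)/\mathsf{nwd}(\tau)$ admits no uncountable antichain. Assume toward a contradiction that $\{A_\alpha : \alpha < \omega_1\}$ is a family of $\mathsf{nwd}(\tau)$-positive subsets of $\omega$ with $A_\alpha \cap A_\beta \in \mathsf{nwd}(\tau)$ for every $\alpha \neq \beta$.

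The argument uses strong irresolvability exactly once, via the preceding Lemma \ref{nwd_positive_interior}, to upgrade each $A_\alpha$ to a nonempty $\tau$-open subset $U_\alpha = \tau\text{-}int(A_\alpha) \subseteq A_\alpha$. Choosing a point $n_\alpha \in U_\alpha$ for each $\alpha < \omega_1$ gives a map from an uncountable index set into the countable set $\omega$, so the pigeonhole principle yields $\alpha \neq \beta$ with $n_\alpha = n_\beta \in U_\alpha \cap U_\beta$. Then $U_\alpha \cap U_\beta$ is a nonempty $\tau$-open subset of $A_\alpha \cap A_\beta$, and since a nonempty open set is never nowhere dense and $\mathsf{nwd}(\tau)$ is closed under finite unions, this forces $A_\alpha \cap A_\beta \notin \mathsf{nwd}(\tau)$, contradicting the antichain hypothesis.

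I do not anticipate any real obstacle: the pigeonhole step is driven entirely by the countability of $\omega$, and the only topological input is the preceding lemma. The one point deserving a brief justification in the written proof is the routine fact that a nonempty $\tau$-open set forbids membership in $\mathsf{nwd}(\tau)$, which is immediate from the definition of nowhere dense together with the closure of the ideal under finite unions.
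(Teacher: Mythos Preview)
Your proposal is correct and follows essentially the same approach as the paper: both use Lemma~\ref{nwd_positive_interior} to extract nonempty open sets $U_\alpha\subseteq A_\alpha$ and then exploit the countability of $\omega$. The paper phrases the finish slightly differently---observing directly that the $U_\alpha$ must be pairwise disjoint (else some $A_\alpha\cap A_\beta$ contains a nonempty open set) and hence form a countable family---but this is the same argument as your pigeonhole on chosen points.
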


\begin{proof}
Let $(\omega,\tau)$ be strongly irresolvable and let $\{A_\alpha:\alpha\in\lambda\}\subseteq\mathsf{nwd}(\tau)^+$ be such that for all different $\alpha,\beta\in\lambda$, $A_\alpha\cap A_\beta\in\mathsf{nwd}(\tau)$. Then, by Lemma \ref{nwd_positive_interior}, for each $\alpha<\lambda$, there is $B_\alpha\in\tau$ such that $B_\alpha\subseteq A_\alpha$. Note that for different $\alpha,\beta\in\lambda$, we should have $B_\alpha\cap B_\beta=\emptyset$. Thus, $\{B_\alpha:\alpha\in\lambda\}$ is a disjoint family, and $\omega$ is countable, so $\lambda$ should be countable.
\end{proof}

\begin{lemma}\label{nwd_filter_of_dense}
Let $(\omega,\tau)$ be a strongly irresolvable space. Then, $\mathsf{nwd}(\tau)^*$ is the filter of $\tau$-dense subsets of $\omega$.    
\end{lemma}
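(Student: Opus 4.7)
The plan is to prove both inclusions, showing that for $A\subseteq\omega$, $A$ is $\tau$-dense iff $\omega\setminus A\in\mathsf{nwd}(\tau)$; strong irresolvability is only needed for one direction, via the previous lemma.

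First I would handle the easy direction: if $A\subseteq\omega$ satisfies $\omega\setminus A\in\mathsf{nwd}(\tau)$, then $A$ is $\tau$-dense. Indeed, suppose for contradiction that $A$ is not dense; then there is a nonempty $\tau$-open $U$ with $U\cap A=\emptyset$, hence $U\subseteq\omega\setminus A\subseteq\overline{\omega\setminus A}$. This gives $\tau\text{-}int(\overline{\omega\setminus A})\neq\emptyset$, contradicting $\omega\setminus A\in\mathsf{nwd}(\tau)$. This direction works in any topological space and does not require strong irresolvability.

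Next I would handle the other direction, which is where strong irresolvability enters. Suppose $A$ is $\tau$-dense; I want to show $\omega\setminus A\in\mathsf{nwd}(\tau)$. Assume for contradiction that $\omega\setminus A\in\mathsf{nwd}(\tau)^+$. By Lemma \ref{nwd_positive_interior}, applied to the strongly irresolvable space $(\omega,\tau)$, we obtain $\tau\text{-}int(\omega\setminus A)\neq\emptyset$. Pick a nonempty open $U\subseteq\omega\setminus A$. Then $U\cap A=\emptyset$, contradicting the density of $A$.

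Combining both directions yields the claimed equality: $\mathsf{nwd}(\tau)^*=\{A\subseteq\omega:A\text{ is }\tau\text{-dense}\}$. The only nontrivial ingredient is Lemma \ref{nwd_positive_interior}, which I do not expect to be an obstacle here since it is already established; everything else is a straightforward unpacking of the definitions of density and nowhere density. There is no real hard step — the content of the lemma is essentially a direct corollary of Lemma \ref{nwd_positive_interior}, packaged in filter-theoretic language.
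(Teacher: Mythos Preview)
Your proof is correct and follows essentially the same approach as the paper: both directions are handled by contradiction, with the nontrivial direction (dense implies complement is nowhere dense) relying on Lemma~\ref{nwd_positive_interior}. The paper treats the easy direction as ``clear'' while you spell it out, but the arguments are otherwise identical.
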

\begin{proof}
Clearly, if $X\in \mathsf{nwd}(\tau)$, then $\omega\setminus X$ is a $\tau$-dense subset of $\omega$. Now, pick $D\subseteq\omega$ a $\tau$-dense set and let us see that $\omega\setminus X\in\mathsf{nwd}(\tau)$. Suppose $\omega\setminus X$ is $\mathsf{nwd}(\tau)$-positive. By Lemma \ref{nwd_positive_interior}, we have $V=\tau-int(\omega\setminus X)\neq\emptyset$, which implies $V\cap X=\emptyset$, which is contradicts that $X$ is $\tau$-dense. Therefore, $\omega\setminus X\in\mathsf{nwd}(\tau)$.
\end{proof}

\begin{lemma}[see \cite{union_of_resolvable}]\label{union_resolvable}
The union of resolvable spaces is resolvable.
\end{lemma}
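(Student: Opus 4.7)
The plan is to show that if $X = \bigcup_{i \in I} Y_i$ with each $Y_i$ resolvable in its subspace topology, then $X$ is resolvable, by building two disjoint dense subsets $D_0, D_1 \subseteq X$ through a Zorn's lemma argument.

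First I would fix, for each index $i$, a pair $(A_i, B_i)$ of disjoint subsets of $Y_i$ both dense in $Y_i$ with $A_i \sqcup B_i = Y_i$, available from the resolvability of $Y_i$ (absorbing any leftover points into $A_i$). Then I would consider the poset $\mathcal{P}$ of all pairs $(D_0, D_1)$ of disjoint subsets of $X$ such that for every index $i$ with $Y_i \subseteq D_0 \cup D_1$, both $D_0 \cap Y_i$ and $D_1 \cap Y_i$ are dense in $Y_i$. The order on $\mathcal{P}$ is componentwise inclusion; componentwise unions along chains give upper bounds, so Zorn's lemma yields a maximal element $(D_0^*, D_1^*)$.

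The central step will be to verify $D_0^* \cup D_1^* = X$. Assuming otherwise, some $x \in X$ lies outside, say $x \in Y_j$ so that $Y_j \not\subseteq D_0^* \cup D_1^*$. The argument enlarges $(D_0^*, D_1^*)$ by absorbing the unassigned portion $Y_j \setminus (D_0^* \cup D_1^*)$, split between $D_0^*$ and $D_1^*$ compatibly with the local resolution $(A_j, B_j)$ and with the existing intersections $D_0^* \cap Y_j$ and $D_1^* \cap Y_j$. Since the extension only adds points, it preserves every density condition already demanded by an index $i$ with $Y_i \subseteq D_0^* \cup D_1^*$, and a careful splitting ensures the new requirement for $Y_j$ is met. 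This strict extension contradicts maximality. Once $D_0^* \cup D_1^* = X$, every $Y_i$ is enclosed, and density of $D_0^*$ and $D_1^*$ in $X$ follows: any non-empty open $U \subseteq X$ meets some $Y_i$ in a non-empty open subset of $Y_i$, which in turn meets both $D_0^*$ and $D_1^*$ by the imposed density conditions.

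The hard part will be the extension step. The clash between $Y_j \cap (D_0^* \cup D_1^*)$ and the fixed pair $(A_j, B_j)$ needs to be resolved, which I would handle by appealing to the fact that non-empty open subspaces of resolvable spaces remain resolvable, and by partitioning $Y_j \setminus (D_0^* \cup D_1^*)$ using an auxiliary disjoint dense decomposition of its open-in-$Y_j$ interior. For the published version of this argument I would refer to \cite{union_of_resolvable}.
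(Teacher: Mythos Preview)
The paper does not prove this lemma; it simply cites \cite{union_of_resolvable}. So there is no argument in the paper to compare against, but your sketch has a genuine gap.

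Your poset $\mathcal{P}$ is not closed under unions of chains, so Zorn's lemma does not apply. For a concrete failure take the trivial cover $Y_0=X$ of an infinite $T_1$ resolvable space, well-order $X=\{x_\beta:\beta<\kappa\}$, and for $\alpha<\kappa$ put $D_0^\alpha=\{x_0\}$, $D_1^\alpha=\{x_\beta:0<\beta\leq\alpha\}$. Each pair lies in $\mathcal{P}$ vacuously (the unique $Y_0$ is not yet covered), but the union of the chain is $(\{x_0\},X\setminus\{x_0\})$, which covers $Y_0$ while $\{x_0\}$ is not dense. The extension step suffers from the same design defect: because your condition constrains a $Y_j$ only once it is fully covered, nothing prevents $D_0^*\cap Y_j$ from already containing a nonempty relatively open subset of $Y_j$, after which no disjoint enlargement of $D_1^*$ can be dense in $Y_j$; and covering $Y_j$ may incidentally cover other $Y_k$ for which you have arranged nothing.

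The standard remedy is to change the poset to pairs $(D_0,D_1)$ of disjoint sets each of which is dense in $D_0\cup D_1$. Chains do have upper bounds in this poset, and a maximal pair must have $D_0^*\cup D_1^*$ dense in $X$: if some nonempty open $U$ misses it, pick $Y_j$ meeting $U$; the nonempty open subset $U\cap Y_j$ of the resolvable space $Y_j$ is itself resolvable, and adjoining a resolution of $U\cap Y_j$ to $(D_0^*,D_1^*)$ gives a strict extension, contradicting maximality. Then $D_0^*$ and $D_1^*$ are each dense in a dense subset of $X$, hence dense in $X$.
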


\begin{lemma}\label{strongly_irresolvable_subspace}
Let $(\omega,\tau)$ be an irresolvable space. Then there is an open subset $X$ such that $(X,\tau\upharpoonright X)$ is strongly irresolvable.
\end{lemma}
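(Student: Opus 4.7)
The plan is to isolate the ``resolvable part'' of the space and show its complement (more precisely, the complement of its closure) is the open set we seek. First, I would let $U$ denote the union of all open subsets $V\subseteq\omega$ such that $(V,\tau\upharpoonright V)$ is resolvable. This $U$ is open, and by Lemma \ref{union_resolvable} applied to the family of open resolvable subspaces whose union is $U$, the space $(U,\tau\upharpoonright U)$ is itself resolvable. The candidate for $X$ is then $X=\omega\setminus\overline{U}$, which is open by construction.

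Next I would check that $X\neq\emptyset$. Suppose on the contrary $\overline{U}=\omega$, i.e.\ $U$ is $\tau$-dense in $\omega$. Since $U$ is resolvable, pick disjoint $A,B\subseteq U$ that are dense in $(U,\tau\upharpoonright U)$. For any non-empty $\tau$-open $V\subseteq\omega$, density of $U$ gives $V\cap U\neq\emptyset$, and then density of $A$ in $U$ (applied to the relatively open set $V\cap U$) forces $V\cap A\neq\emptyset$; similarly for $B$. Hence $A$ and $B$ are disjoint $\tau$-dense subsets of $\omega$, contradicting the irresolvability of $(\omega,\tau)$.

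Finally I would verify that $(X,\tau\upharpoonright X)$ is strongly irresolvable. Let $Y\subseteq X$ be non-empty and relatively open; since $X$ is $\tau$-open, $Y$ is itself $\tau$-open in $\omega$. If $(Y,\tau\upharpoonright Y)$ were resolvable, then by the definition of $U$ we would have $Y\subseteq U$, whereas $Y\subseteq X=\omega\setminus\overline{U}$ forces $Y\cap U=\emptyset$, whence $Y=\emptyset$, a contradiction. Thus every non-empty relatively open subset of $X$ is irresolvable, which is exactly strong irresolvability of $X$.

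The only delicate point is the application of Lemma \ref{union_resolvable}: one must make sure that the lemma applies to the (possibly uncountable) family of \emph{open} resolvable subspaces whose union is $U$, so that $U$ inherits resolvability; and one must check that ``dense in $U$'' upgrades to ``dense in $\omega$'' under the assumption that $U$ is dense, which is the step that leverages the irresolvability hypothesis. Everything else is bookkeeping with subspace topologies.
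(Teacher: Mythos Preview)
Your proof is correct and follows essentially the same approach as the paper: both take the union of all resolvable open subsets, use Lemma~\ref{union_resolvable} to see this union is resolvable, and then take the interior of its complement (your $\omega\setminus\overline{U}$ is exactly the paper's $\tau\text{-}int(\omega\setminus V_0)$). You spell out in more detail why $X\neq\emptyset$, which the paper leaves as a one-line appeal to irresolvability, but the underlying argument is the same.
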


\begin{proof}
Let $\Omega$ be the collection of all $\tau$-open subsets which are resolvable, and define $V_0=\bigcup\Omega$. By Lemma \ref{union_resolvable} the union of resolvable spaces is resolvable, so we have that $V_0$ is an open resolvable subspace of $(\omega,\tau)$. Let $C_0=\omega\setminus V_0$. $C_0$ is closed, and since $(\omega,\tau)$ is irresolvable, we should have $\tau-int(C_0)\neq\emptyset$. Define $V_1=\tau-int(C_0)$. Let $V\subseteq V_1$ be an arbitrary $\tau\upharpoonright V_1$-open set. Since $V_1$ is $\tau$-open, $V$ is $\tau$-open as well, and it can not be resolvable since if it were, we would have $V\subseteq V_0$, which implies $V\cap V_1=\emptyset$, which is impossible.
\end{proof}

\begin{dfn}\qquad
\begin{enumerate}
    \item $Dense(\mathbb{Q})$ denotes the family of all dense subsets of $\mathbb{Q}$.
    \item A filter $\mathcal{F}\subseteq Dense(\mathbb{Q})$ is a maximal filter on $\mathbb{Q}$ if for all $X\in Dense(\mathbb{Q})$, either $X\in\mathcal{F}$ or there is $Y\in \mathcal{F}$ such that $X\cap Y\notin Dense(\mathbb{Q})$.
\end{enumerate}
\end{dfn}

\begin{dfn}
Let $\mathcal{F}\subseteq\mathbb{Q}$ be a filter. We say that $\mathcal{F}$ is $\mathbb{Q}$-basically generated if $\mathcal{F}$ has a base $\mathcal{B}$ such that for all $\langle X_n:n\in\omega\rangle\subseteq\mathcal{B}$, a sequence of open sets $\langle U_n:n\in\omega\rangle$ in $\mathbb{Q}$, $V$ an open set and $Y\in\mathcal{B}$ such that $$\lim_{n\in\omega}X_n\cap U_n=Y\cap V$$ there are $A\in[\omega]^\omega$ and $Z\in\mathcal{B}$ such that $Z\cap V\subseteq\bigcap_{n\in A}X_n\cap V$.

\end{dfn}

\begin{prp}\label{not_the_same_1}
Assume $\mathsf{CH}$. Then there is a maximal $p$-filter $\mathcal{F}$ which is $\mathbb{Q}$-basically generated.
\end{prp}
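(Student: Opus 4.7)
The plan is to build $\mathcal{F}$ by a transfinite recursion of length $\omega_1$ using $\mathsf{CH}$. I will construct an increasing, continuous chain $\langle \mathcal{B}_\alpha : \alpha<\omega_1\rangle$ of countable filter bases consisting of dense subsets of $\mathbb{Q}$, closed under finite intersections, starting with $\mathcal{B}_0=\{\mathbb{Q}\}$, and then let $\mathcal{F}$ be the filter of all dense subsets of $\mathbb{Q}$ that contain some member of $\mathcal{B}:=\bigcup_{\alpha<\omega_1}\mathcal{B}_\alpha$. By $\mathsf{CH}$ fix an enumeration $\langle D_\alpha:\alpha<\omega_1\rangle$ of all dense subsets of $\mathbb{Q}$, and schedule in order type $\omega_1$ three kinds of tasks, using standard bookkeeping so that each task is attacked only after its data has appeared in some $\mathcal{B}_\beta$: (i) \emph{maximality tasks} for each $D_\alpha$; (ii) \emph{$p$-filter tasks} for each countable sequence $\langle X_n:n\in\omega\rangle$ appearing in the eventual base $\mathcal{B}$; (iii) \emph{basically-generated tasks} for each configuration $(\langle X_n\rangle,\langle U_n\rangle,V,Y)$ with $\lim_n X_n\cap U_n = Y\cap V$ and $X_n,Y\in\mathcal{B}$.

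At successor stage $\alpha+1$ I handle the $\alpha$-th scheduled task. For a maximality task on $D_\alpha$: if $D_\alpha\cap B$ is dense for every $B\in\mathcal{B}_\alpha$, put $D_\alpha$ into the base and close under finite intersections; otherwise some $B\in\mathcal{B}_\alpha$ already witnesses $D_\alpha\notin\mathcal{F}$ and I do nothing. For a $p$-filter task $\langle X_n\rangle\subseteq\mathcal{B}_\alpha$: enumerate both $\mathbb{Q}$ as $\{q_k:k\in\omega\}$ and $\mathcal{B}_\alpha$ as $\{B_k:k\in\omega\}$, and pick by induction on $k$ a point $z_k\in B_k\cap X_0\cap\cdots\cap X_k$ inside a rational open interval around $q_k$ that has not yet been hit; this is possible because each $B_k\cap X_0\cap\cdots\cap X_k$ is a finite intersection of dense sets and hence dense, so $Z=\{z_k:k\in\omega\}$ is a dense set with $Z\subseteq^* X_n$ for every $n$ and with $Z\cap B$ infinite (in fact dense) for every $B\in\mathcal{B}_\alpha$. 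Adjoining such $Z$ and closing under finite intersections gives $\mathcal{B}_{\alpha+1}$. For a basically-generated task $(\langle X_n\rangle,\langle U_n\rangle,V,Y)$: since $Y\cap V=\liminf_n(X_n\cap U_n)$, each $y\in Y\cap V$ lies in $X_n\cap U_n$ for all but finitely many $n$; enumerate a suitable dense subset of $Y\cap V$ and thin $\omega$ diagonally to obtain $A\in[\omega]^\omega$, then build $Z$ by a greedy choice inside $Y$ (using the same kind of diagonal as in the $p$-filter step against $\mathcal{B}_\alpha$) so that $Z\cap V\subseteq\bigcap_{n\in A}X_n\cap V$. Take unions at limits.

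In the end $\mathcal{F}$ consists of dense subsets of $\mathbb{Q}$ by construction, it is maximal because every $D_\alpha$ was processed, it is a $p$-filter because every countable subfamily of $\mathcal{B}$ appeared in some $\mathcal{B}_\beta$ and then received a pseudo-intersection inside $\mathcal{B}$, and the base $\mathcal{B}$ witnesses that $\mathcal{F}$ is $\mathbb{Q}$-basically generated.

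The main obstacle is the basically-generated step. The difficulty is twofold: first, one must extract the infinite set $A$ from the set-theoretic limit $\lim_n X_n\cap U_n=Y\cap V$ in a way that is \emph{uniform} over a dense portion of $Y\cap V$, so that a \emph{single} element $Z$ of the base can be placed inside $\bigcap_{n\in A}X_n\cap V$; second, $Z$ must simultaneously be a dense subset of $\mathbb{Q}$ and remain compatible with every finite intersection from the countable base $\mathcal{B}_\alpha$ so that it can be legitimately added. The combinatorial trick is to interleave the topological convergence datum with an enumeration of $\mathcal{B}_\alpha$ and an enumeration of a dense subset of $V$, performing a single diagonal construction that picks one point at a time for $Z$ while thinning $A$, exactly in the spirit of the classical CH construction of $p$-points. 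Once this step is handled, combining the three kinds of tasks produces the desired $\mathcal{F}$.
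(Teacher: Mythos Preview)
Your proposal is correct and follows essentially the same approach as the paper: a $\mathsf{CH}$-length recursion through countable filter bases of dense subsets of $\mathbb{Q}$, handling maximality, pseudo-intersections, and the basically-generated condition by diagonal constructions. The paper's execution of the basically-generated step is organized slightly differently---it splits the choice of points for $Z$ into those inside $V$ (the $r_n$'s, which simultaneously determine $A$) and those in $\mathbb{Q}\setminus\overline{V}$ (the $q_n$'s, needed to keep $Z$ dense in all of $\mathbb{Q}$)---but your ``single diagonal picking one point at a time for $Z$ while thinning $A$'' is the same idea.
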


\begin{proof}
Let $E$ denote the set of all ordinals $\alpha$ in $\omega_1$ which are limit ordinals or there is a limit ordinal $\beta$ and a natural number $n$ such that $\alpha=\beta+2n$, and let $O=\omega_1\setminus E$.

Denote by $\tau_0$ the usual topology on $\mathbb{Q}$ and let $\mathcal{B}_0$ a countable base of clopen sets for $\tau_0$. Let $\{A_\alpha:\alpha\in E\}$ be an enumeration of $Dense(\mathbb{Q})$, and let $\{(\vec{B}_\alpha,\vec{U}_\alpha,V_\alpha):\alpha\in O\}$ be an enumeration of $(Dense(\mathbb{Q}))^\omega\times (\tau_0\setminus\{\emptyset\})^\omega\times (\tau_0\setminus\{\emptyset\})$ such that each element repeats cofinally often. We construct an $\subseteq$-increasing sequence $\langle\mathcal{F}_\alpha:\alpha\in\omega_1\rangle$ of countable families as follows:
\begin{enumerate}
    \item Let $\mathcal{F}_0$ be the closure of $\{A_0\}\cup \mathsf{fin}(\mathbb{Q})^*$ under finite intersections, where $\mathsf{fin}(\mathbb{Q})$ is the ideal of finite subsets of $\mathbb{Q}$.
    \item Suppose $\mathcal{F}_\alpha$ has been defined and we have to define $\mathcal{F}_{\alpha+1}$. Then:
    \begin{enumerate}
        \item \underline{$\alpha\in E$}: if for all $X\in\mathcal{F}_\alpha$ we have that $A_\alpha\cap X\in Dense(\mathbb{Q})$, then define $\mathcal{F}_{\alpha+1}$ to be the closure of $\{A_{\alpha+1}\}\cup\mathcal{F}_\alpha$ under finite intersections. Otherwise, there is $X\in\mathcal{F}_\alpha$, such that $A_\alpha\cap X\notin Dense(\mathbb{Q})$, and we can define $\mathcal{F}_{\alpha+1}=\mathcal{F}_\alpha$.
        
        \item \underline{$\alpha\in O$}: let $\vec{B}_\alpha=\langle B_n^\alpha:n\in\omega\rangle$ and $\vec{U}_\alpha=\langle U_n^\alpha:n\in\omega\rangle$. We have the following subcases:
        \begin{enumerate}
            \item If for some $n\in\omega$, $B_n^\alpha\notin \mathcal{F}_\alpha$, define $\mathcal{F}_{\alpha+1}=\mathcal{F}_\alpha$.
            
            \item If for all $n\in\omega$, $B^\alpha_n\in\mathcal{F}_\alpha$ but there is no $Y_\alpha\in \mathcal{F}_\alpha$ such that $\lim_{n\to\infty} B_n^\alpha\cap U_n^\alpha=Y_\alpha\cap V_\alpha$, then define $\mathcal{F}_{\alpha+1}=\mathcal{F}_\alpha$. 
            
            \item If for all $n\in\omega$, $B^\alpha_n\in\mathcal{F}_\alpha$ and there is $Y_\alpha\in\mathcal{F}_\alpha$ such that $\lim_{n\to\infty} B_n^\alpha\cap U_n^\alpha=Y_\alpha\cap V_\alpha$, and $(\vec{B}_\alpha,\vec{U}_\alpha,V_\alpha)$ has appeared before at step $\gamma<\alpha$ and this resulted in $\mathcal{F}_{\gamma+1}$ having more elements than $\mathcal{F}_\gamma$, then define $\mathcal{F}_{\alpha+1}=\mathcal{F}_\alpha$.
            
            \item Otherwise, let $Y_\alpha\in\mathcal{F}_\alpha$ be such that $\lim_{n\in\omega}B_n^\alpha\cap U_n^\alpha=Y_\alpha\cap V_\alpha$. We are going to construct $A\in[\omega]^{\omega}$ and $X_0\in Dense(\mathbb{Q})$ such that:
            \begin{enumerate}
                \item For all $Y\in\mathcal{F}_\alpha$, $X_0\cap Y\in Dense(\mathbb{Q})$.
                \item $X_0\cap V_\alpha\subseteq \bigcap_{n\in A}B_n^\alpha\cap V_\alpha$.
            \end{enumerate}
            Let $\langle F_n:n\in\omega\rangle$ be an enumeration of $\mathcal{F}_\alpha$, $\langle W_n^0:n\in\omega\rangle$ be the enumeration of all basic open sets such that $W\subseteq V_\alpha$, and let $\langle W_n^1:n\in\omega\rangle$ be the enumeration of all open basic sets such that $W\subseteq\mathbb{Q}\setminus \overline{V}_\alpha$. Note that the second list may be empty, in which case $V_\alpha$ is an open dense subset of $\mathbb{Q}$. Let us first assume $int(\mathbb{Q}\setminus \overline{V}_\alpha)\neq\emptyset$. Pick $q_0\in F_0\cap Y_\alpha\cap W_0^1$ and $r_0\in F_0\cap Y_\alpha\cap W_0^0$, and let $k_0\in\omega$ be big enough so for all $l\ge k_0$, $r_0\in B_l^\alpha\cap V_\alpha$ (such $k_0$ exists because $\lim_{n\in\omega}B_n^\alpha\cap U_n^\alpha=Y_\alpha\cap V_\alpha$ and $W_0^0\subseteq V_\alpha$). Suppose we have defined $\{q_k:k\leq n\},\{r_k:k\leq n\}$ and $\{k_l:l\leq n\}$. Then, pick $q_{n+1}$ and $r_{n+1}$ such that:
            \begin{equation*}
                q_{n+1}\in Y_\alpha\cap\bigcap_{l\leq n+1}F_l\cap W_{n+1}^1
            \end{equation*}
            \begin{equation*}
                r_{n+1}\in Y_\alpha\cap V_\alpha\cap \bigcap_{l\leq n+1}F_l\cap W_{n+1}^0
            \end{equation*}
            then, let $k_{n+1}\in\omega$ be big enough so for all $m\ge k_{n+1}$, $r_{n+1}\in B_{m}^\alpha$.
            Let $X_0=\{q_n:n\in\omega\}\cup\{r_n:n\in\omega\}$ and $A=\{k_n:n\in\omega\}$. It is clear from the construction that for all $Z\in\mathcal{F}_\alpha$, $X_0\cap Z\in Dense(\mathbb{Q})$. Also, note that $X_0\cap V_\alpha=\{r_n:n\in\omega\}$, and $X_0\cap V_\alpha\subseteq\bigcap_{n\in A}B_n^\alpha\cap V_\alpha$. Now, let $Z\in Dense(\mathbb{Q})$ be a pseudointersection of $\{X_0\}\cup\mathcal{F}_\alpha$. Finally, we define $F_{\alpha+1}$ as the closure of $\mathcal{F}_\alpha\cup\{X_0,Z\}$ under finite intersections.
            The case when $int(\mathbb{Q}\setminus \overline{V}_\alpha)=\emptyset$ is handled similarly but we omit to select the points $q_n$.
        \end{enumerate}
    \end{enumerate}
    \item At limit steps $\alpha$ we take $\mathcal{G}_\alpha=\bigcup_{\beta<\alpha}\mathcal{F}_\beta$. If $A_\alpha$ has dense intersection with each element from $\mathcal{G}_\alpha$, then define $\mathcal{F}_\alpha$ to be the closure of $\mathcal{G}_{\alpha}\cup\{A_\alpha\}$ under finite intersections. Otherwise let $\mathcal{F}_\alpha=\mathcal{G}_\alpha$.
\end{enumerate}
Let $\mathcal{F}=\bigcup_{\alpha\in\omega_1}\mathcal{F}_\alpha$. By the construction we have that $\mathcal{F}$ generates a maximal filter on $Dense(\mathbb{Q})$. Let $\mathcal{H}$ be the filter generated by $\mathcal{F}$. Note that $\mathsf{nwd}(\tau_0)^*\subseteq\mathcal{H}$. From the construction it also follows that $\mathcal{F}$ has the following properties:
\begin{enumerate}
    \item[$\circledast$] Let $\langle X_n:n\in\omega\rangle$ be a sequence of elements from $\mathcal{F}$, $\langle U_n:n\in\omega\rangle$ a sequence of open sets, $V$ an open set and $Y\in\mathcal{F}$ such that $\lim_{n\in\omega}X_n\cap U_n=X\cap V$. Then there are $A\in[\omega]^\omega$ and $Z\in\mathcal{F}$ such that $Z\cap V\subseteq\bigcap_{n\in A}X_n\cap V$.
    \item [$\circledast\circledast$] For any $\{X_n:n\in\omega\}\subseteq\mathcal{F}$, there is $X_\omega\in\mathcal{F}$ such that for all $n\in\omega$, $X_\omega\subseteq^* X_n$.
\end{enumerate}

Thus, $\mathcal{H}$ is a $\mathbb{Q}$-basically generated $p$-filter on $Dense(\mathbb{Q})$.
\end{proof}

\begin{prp}\label{not_the_same_2}
Let $\mathcal{H}$ be a maximal $p$-filter $\mathbb{Q}$-basically generated filter. Then there is an ultrafilter $\mathcal{U}$ extending $\mathcal{H}$ such that:
\begin{enumerate}
    \item $\mathsf{nwd}\leq_{K}\mathcal{U}^*$.
    \item $\mathcal{U}$ is a basically generated ultrafilter. Thus, $\tau_{\omega_1}^\diamond\nleq_K\mathcal{U}^*$, which implies $\mathsf{nwd}(\omega_1)\nleq_K\mathcal{U}^*$.
\end{enumerate}
\end{prp}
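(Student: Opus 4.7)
To prove (1), I would observe from the construction in Proposition~\ref{not_the_same_1} that $\mathcal{H}\supseteq\mathsf{nwd}(\tau_0)^*$: if $X\in\mathsf{nwd}(\tau_0)^*$, then $\mathbb{Q}\setminus X$ is nowhere dense, so $X$ is dense and meets every dense set in a dense set, hence $X$ is added whenever it is processed in the enumeration of $Dense(\mathbb{Q})$. Consequently any ultrafilter $\mathcal{U}\supseteq\mathcal{H}$ on $\omega\cong\mathbb{Q}$ satisfies $\mathsf{nwd}\subseteq\mathcal{U}^*$, and the identity witnesses $\mathsf{nwd}\leq_K\mathcal{U}^*$.

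For (2), the plan is to extend $\mathcal{H}$ to an ultrafilter $\mathcal{U}$ by a transfinite recursion of length $\omega_1$ under $\mathsf{CH}$, while building a countable family $\mathcal{V}$ of non-empty clopen subsets of $\mathbb{Q}$, closed under finite intersection and sitting inside $\mathcal{U}$. The envisioned base is
\[
\mathcal{B}=\{Y\cap V:Y\in\mathcal{H},\ V\in\mathcal{V}\}.
\]
At a successor stage processing $A\in\mathcal{P}(\mathbb{Q})$, whichever of $A,\mathbb{Q}\setminus A$ is consistent with the current $\mathcal{B}_\alpha$ is added to the filter. Say $A$ is added. Since $\mathsf{nwd}^*\subseteq\mathcal{H}\subseteq\mathcal{U}$, we have $A\notin\mathsf{nwd}$, so $\mathrm{int}(\overline{A})$ is non-empty and contains a clopen $W$ compatible with $\mathcal{V}_\alpha$, which is then added to $\mathcal{V}_{\alpha+1}$ (together with its finite intersections). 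Moreover the set $A\cup(\mathbb{Q}\setminus W)$ is co-nowhere-dense, since its complement $W\setminus A\subseteq\partial A$ is nowhere dense; hence $A\cup(\mathbb{Q}\setminus W)\in\mathsf{nwd}^*\subseteq\mathcal{H}$, and setting $Y=A\cup(\mathbb{Q}\setminus W)$ gives $Y\cap W=A\cap W\subseteq A$, ensuring that $A$ contains an element of the new base.

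To verify that $\mathcal{U}$ is basically generated, take $\langle X_n:n\in\omega\rangle\subseteq\mathcal{B}$ with $\lim_n X_n\in\mathcal{B}$, and write $X_n=Y_n\cap V_n$, $\lim_n X_n=Y\cap V$, with $Y_n,Y\in\mathcal{H}$ and $V_n,V\in\mathcal{V}$. Applying the $\mathbb{Q}$-basically generated property of $\mathcal{H}$ to $\langle Y_n\rangle$, $\langle V_n\rangle$, $V$, and $Y$ (the hypothesis $\lim_n Y_n\cap V_n=Y\cap V$ is immediate), one obtains $A\in[\omega]^\omega$ and $Z\in\mathcal{H}$ with $Z\cap V\subseteq\bigcap_{n\in A}Y_n\cap V$. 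Since $\mathcal{V}$ is countable, pigeonhole produces an infinite $A'\subseteq A$ on which $V_n$ is constant, say equal to $V^*\in\mathcal{V}$. Then $V\cap V^*\in\mathcal{V}$, and
\[
Z\cap V\cap V^*\subseteq\bigcap_{n\in A'}(Y_n\cap V^*)=\bigcap_{n\in A'}X_n,
\]
with $Z\cap V\cap V^*\in\mathcal{B}\subseteq\mathcal{U}$; hence $\bigcap_{n\in A'}X_n\in\mathcal{U}$, as required. The Tukey part follows: $\mathcal{U}$ basically generated yields $[\omega_1]^{<\omega}\nleq_T\mathcal{U}$ by the earlier proposition, so $\tau_{\omega_1}^\diamond\nleq_K\mathcal{U}^*$ by Theorem~\ref{critical_ideal}, and $\mathsf{nwd}(\omega_1)\nleq_K\mathcal{U}^*$ by the Katětov domination $\tau_{\omega_1}^\diamond\leq_K\mathsf{nwd}(\omega_1)$.

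The principal obstacle is the successor-step bookkeeping, where one must simultaneously decide the next subset, produce a clopen refinement compatible with $\mathcal{V}_\alpha$, and arrange that the growing $\mathcal{B}$ truly serves as a base of $\mathcal{U}$. This relies on the maximality of $\mathcal{H}$ on $Dense(\mathbb{Q})$, on the inclusion $\mathsf{nwd}^*\subseteq\mathcal{H}$ to guarantee non-empty interiors of closures and hence clopen refinements at every stage, and on keeping $\mathcal{V}$ countable so that the pigeonhole argument in the verification remains available.
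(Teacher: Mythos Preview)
There is a genuine gap at the successor step: the clopen $W\subseteq\mathrm{int}(\overline A)$ compatible with all of $\mathcal V_\alpha$ need not exist, and in fact it can fail simultaneously for $A$ and for $\mathbb Q\setminus A$.

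Concretely, suppose that by stage $\alpha$ the family $\mathcal V_\alpha$ already contains the clopen sets $V_n=(-\pi/n,\pi/n)\cap\mathbb Q$ for all $n\ge 1$ (this is consistent with your recursion: just process $A_n=V_n$ earlier and take $W=V_n$ each time). Now let $A=(0,\infty)\cap\mathbb Q$. Both $A$ and $\mathbb Q\setminus A$ meet every $Y\cap V_n$ with $Y\in\mathcal H$, so either choice is admissible. However $\mathrm{int}(\overline A)=A$, and any clopen $W\subseteq A$ is closed in $\mathbb Q$ and misses $0$, hence satisfies $\inf W>0$; thus $W\cap V_n=\emptyset$ for all large $n$. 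The symmetric argument rules out $\mathbb Q\setminus A$. So no $W$ as you describe exists, and the construction stalls. The difficulty persists even if you relax to ``$W$ clopen with $W\setminus A\in\mathcal H^*$'': since $\mathcal H\subseteq Dense(\mathbb Q)$, this forces $W\setminus A$ to have empty interior, which again pushes $W$ entirely to one side of $0$.

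The paper avoids this by a genuinely different mechanism. It passes to a strongly irresolvable open set $W=B_*\cap D_*$ in the finer topology $\tau_{\mathcal H}$ generated by $\{U\cap X:U\in\mathcal B_0,\ X\in\mathcal H\}$, and then takes \emph{any} ultrafilter $\mathcal U$ on $W$ extending $\mathcal H\upharpoonright W$ (no recursion, no CH). The base is
\[
\mathcal B_{\mathcal U}=\mathcal U\cap\{X\cap V\cap W:X\in\mathcal F,\ V\in\tau_0\setminus\{\emptyset\}\},
\]
with $V$ ranging over \emph{all} nonempty $\tau_0$-opens rather than a countable filter of clopens. Strong irresolvability (Lemma~\ref{nwd_positive_interior}) guarantees that every $X\in\mathcal U$ has nonempty $\tau_W$-interior, and the $p$-filter property of $\mathcal H$ lets one collapse a countable cover of that interior to a single $Z_\omega\in\mathcal F$, producing an element of $\mathcal B_{\mathcal U}$ below $X$. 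This is exactly the step your clopen-bookkeeping cannot reproduce: the irresolvable restriction converts ``$A\in\mathcal U$'' into ``$A$ has interior in $\tau_{\mathcal H}$'', which is what furnishes the refinement.

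Your verification that the base is basically generated (the pigeonhole on $V_n$ plus the $\mathbb Q$-basically generated property) is fine in spirit and close to the paper's argument, but it presupposes a construction that, as written, does not go through.
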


\begin{proof}
Following the same notation of the previous lemma, $\tau_0$ denotes the usual topology on $\mathbb{Q}$ and $\mathcal{B}_0$ a countable base of clopens. Let $\mathcal{H}$ be a maximal $p$-filter $\mathbb{Q}$-basically generated filter and let $\mathcal{F}$ be a base for $\mathcal{H}$ witnessing $\mathcal{H}$ is $\mathbb{Q}$-basically generated. Let $\tau_\mathcal{H}$ the topology on $\mathbb{Q}$ generated by $\mathcal{B}_\mathcal{H}=\{U\cap X:X\in\mathcal{H}\land U\in\mathcal{B}_0\}$. Then $(\mathbb{Q},\tau_\mathcal{H})$ is a Hausdorff irresolvable space. By Lemma \ref{strongly_irresolvable_subspace}, there is a $\tau_\mathcal{H}$-open $W'\subseteq\mathbb{Q}$ such that $(W',\tau_\mathcal{H}\upharpoonright W')$ is strongly irresolvable. Moreover, since $(W',\tau_{\mathcal{H}}\upharpoonright W')$ is strongly irresolvable and $W'$ is $\tau_\mathcal{H}$-open, we can take $W=B_*\cap D_*\subseteq W'$, where $B_*\in\mathcal{B}_0$ and $D_*\in\mathcal{H}$, and such that $(W,\tau_\mathcal{H}\upharpoonright W)$ is strongly irresolvable; define $\tau_W=\tau_\mathcal{H}\upharpoonright W$ and let $\mathcal{B}_W$ be the restriction of the base $\mathcal{B}_{\mathcal{H}}$ to $W$. Note that $\mathcal{B}_W=\{B\in\mathcal{B}_\mathcal{H}:B\subseteq W\}=\{B\cap X:X\in\mathcal{H}\land X\subseteq D_*\land B\in\mathcal{B}_0\land B\subseteq B_* \}$.

Let $\mathcal{U}$ be any ultrafilter on $W$ extending $\mathcal{H}\upharpoonright W$. We claim that $\mathcal{U}$ is not a $\mathsf{nwd}$-ultrafilter. First we need to prove a few facts. By Lemma \ref{nwd_filter_of_dense}, we have that $\mathsf{nwd}(\tau_W)^*$ is the filter of $\tau_W$-dense sets, and it is not difficult to see that each element $X\in\mathcal{H}\upharpoonright W$ is a $\tau_W$-dense set, so we have $\mathcal{H}\upharpoonright W\subseteq(\mathsf{nwd}(\tau_W))^*$.

Let us see that $\mathsf{nwd}(\tau_0)\upharpoonright W=\mathsf{nwd}(\tau_0\upharpoonright W)$. Pick $X\in\mathsf{nwd}(\tau_0)\upharpoonright W$ and let $V\in\tau_0\upharpoonright W$ be an open set. Then $V= V'\cap W$, where $V'\in\tau_0$. Note that we can assume $V'\subseteq B_*$. Since $X\in\mathsf{nwd}(\tau_0)\upharpoonright W$, we have $X\in\mathsf{nwd}(\tau_0)$, so there is $V''\in\tau_0$ such that $V''\subseteq V'$ and $X\cap V''=\emptyset$. Then $V''\cap W$ is a $\tau_0\upharpoonright W$-open subset of $V$ and has empty intersection with $X$. Thus any $\tau_0\upharpoonright W$-open set can be extended to an open set having empty intersection with $X$, which means $X\in\mathsf{nwd(\tau_0\upharpoonright W)}$. Pick now $X\in \mathsf{nwd}(\tau_0\upharpoonright W)$. It is enough to prove that $X$ is $\tau_0$-nowhere dense. Let $V\in\mathcal{B}_0$ be arbitrary. If $V\cap(\mathbb{Q}\setminus B_*)\neq\emptyset$, then clearly $V\cap(\mathbb{Q}\setminus B_*)$ is an open set having empty intersection with $X$, so let us assume that $V\subseteq B_*$. Then $V\cap W$ is a $\tau_0\upharpoonright W$-open set, so there is $V'\in\tau_0\upharpoonright W$ such that $V'\subseteq V\cap W$ and $V'\cap X=\emptyset$. Let $V''\in\tau_0$ be such that $V'=V''\cap W$, we can assume $V''\subseteq B_*$. Note that we should have $V''\cap X=\emptyset$: otherwise, $V'\cap X=(V''\cap W)\cap X\neq\emptyset$, which contradicts the choice of $V'$. We also have that $V''\subseteq V$: otherwise, $V''\setminus V$ is a non-empty $\tau_0$-open (because $V\in\mathcal{B}_0$), which implies $W\cap (V''\setminus V)\neq\emptyset$; since $V'=V''\cap W$, we get $V'\setminus (V\cap W)\neq\emptyset$, which contradicts $V'' \cap W=V'\subseteq V\cap W$. Therefore, $V''\subseteq V$ is $\tau_0$-open and $V''\cap X=\emptyset$. Thus, any $\tau_0$-open set can be extended to an open set having empty intersection with $X$, so $X$ is $\tau_0$-nowhere dense. Thus, $\mathsf{nwd}(\tau_0\upharpoonright W)\subseteq\mathsf{nwd}(\tau_0)\upharpoonright W$.

Now we continue to prove that $\mathcal{U}$ is not $\mathsf{nwd}$-ultrafilter. By the construction of $\mathcal{H}\upharpoonright W$, we have that $(\mathsf{nwd}(\tau_0)\upharpoonright W)^*\subseteq\mathcal{H}\upharpoonright W=(\mathsf{nwd}(\tau_W))^*$, so we have $\mathsf{nwd}(\tau_0)\upharpoonright W\subseteq\mathsf{nwd}(\tau_W)$. 
By the previous paragraph, $\mathsf{nwd}(\tau_0\upharpoonright W)=\mathsf{nwd}(\tau_0)\upharpoonright W$. Thus, $\mathsf{nwd}(\tau_0\upharpoonright W)^*\subseteq\mathcal{H}\upharpoonright W\subseteq\mathcal{U}$, which means that $\mathcal{U}$ is not a $\mathsf{nwd}$-ultrafilter.

We claim that $\mathcal{U}$ is basically generated. Define $\mathcal{B}_\mathcal{U}=\mathcal{U}\cap\{X\cap V\cap W:X\in\mathcal{F}\land V\in\tau_0\setminus\{\emptyset\}\}$. First, let us see that $\mathcal{B}_\mathcal{U}$ is a base for $\mathcal{U}$. Let $X\in\mathcal{U}$ be arbitrary. We have that $\tau_W-int(X)\in\mathcal{U}$ (since $X\setminus (\tau_W-int(X))\in\mathsf{nwd}(\tau_W)$). Consider $G=\{U\in\mathcal{B}_\mathcal{H}:U\subseteq X\}$, and let $\mathcal{A}=\{A_n:n\in\omega\}\subseteq G$ a maximal family of disjoint basic open sets. Then, for each $n\in\omega$, there are $B_n\in\mathcal{B}_0$ and $Z_n\in\mathcal{H}$ such that $A_n=Z_n\cap B_n$. Note that since $Z_n\in\mathcal{H}$ for all $n\in\omega$, we should have that $B_n\cap B_m=\emptyset$ for all different $n,m\in\omega$. Now, let $Z_\omega\in\mathcal{F}$ be a pseudointersection of $\{Z_n:n\in\omega\}$.
Note that we have that for each $n\in\omega$, $(Z_\omega\cap B_n)\setminus A_n$ is at most finite. Thus, $\left(Z_\omega\cap\bigcup_{n\in\omega}B_n\right)\setminus (\tau_W-int(X))\in\mathsf{nwd}(\tau_W)$, and actually, $N=\left( Z_\omega\cap\bigcup_{n\in\omega}B_n\right)\setminus (\tau_W-int(X))\in\mathsf{nwd}(\tau_0\upharpoonright W)$. Let $Z_\omega'\in\mathcal{F}$ be such that $Z_\omega'\subseteq Z_\omega\setminus N$. Now define $V=Z_\omega'\cap\bigcup_{n\in\omega}B_n$. By construction we have $V\subseteq \tau_W-int(X)$. Let us prove that $V\in\mathcal{U}$. Let $A_\omega=\bigcup_{n\in\omega}B_n$ and note that $N'=\tau_W-int(X)\setminus A_\omega\in\mathsf{nwd}(\tau_W)$. Then $\tau_W-int(X)\setminus N'\in\mathcal{U}$. But we also have $\tau_W-int(X)\setminus N'\subseteq A_\omega$, and therefore, it follows that $A_\omega\in\mathcal{U}$. Since $Z_\omega'\in\mathcal{F}\subseteq\mathcal{U}$, we also have, $V=A_\omega\cap Z_\omega'\in\mathcal{U}$. Finally, note that $A_\omega\cap Z_\omega'\in\mathcal{B}_\mathcal{U}$.

It remains to prove that $\mathcal{B}_\mathcal{U}$ witnesses that $\mathcal{U}$ is basically generated. Let now $\{X_n:n\in\omega\}\subseteq\mathcal{B}_\mathcal{U}$ be a sequence converging to some point $Y\in\mathcal{B}_\mathcal{U}$. For each $n\in\omega$, let $A_n\in\mathcal{F}$ and $V_n\in\tau_0$ such that $X_n=A_n\cap V_n$, and let $Y_*\in\mathcal{F}$ and $W_*\in\tau_0$ be such that $Y=Y_*\cap W_*$. Then, since $\mathcal{F}$ witnesses $\mathcal{U}$ is $\mathbb{Q}$-basically generated, we find $D\in[\omega]^\omega$ and $Z\in\mathcal{F}$ such that $Z\cap W_*\subseteq\bigcap_{n\in D}A_n\cap W_*$. Note that we have that $Z\cap W_*\in\mathcal{U}$: since $Y\in\mathcal{U}$ and $Y\subseteq W_*$, we have $W_*\in\mathcal{U}$; also, $Z\in\mathcal{F}\subseteq\mathcal{U}$, so $Z\cap W_*\in \mathcal{U}$.
This implies $\bigcap_{n\in D}A_n\cap W_*\in\mathcal{U}$. Thus, $\mathcal{U}$ is basically generated.
\end{proof}

A more satisfactory answer would be to know whether it is consistent that there is no $\mathsf{nwd}$-ultrafilter yet there is an ultrafilter which is not Tukey above $[\omega_1]^{<\omega}$.

\begin{question}
Is it consistent that there is no $\mathsf{nwd}$-ultrafilter but there is an ultrafilter not Tukey above $[\omega_1]^{<\omega}$?
\end{question}

\begin{question}
Is there a non-Tukey top ultrafilter in Shelah's model for no $\mathsf{nwd}$-ultrafilters?    
\end{question}

We don't know the answer to this questions. On the other hand, Theorem \ref{osvaldo_theorem_Z} says that any ultrafilter which is Kat\v{e}tov above the asymptotic density zero ideal $\mathcal{Z}$ is a Tukey top ultrafilter. So, Propositions \ref{not_the_same_1} and \ref{not_the_same_2} also show that the ideal $\mathsf{nwd}$ does not enjoy of this property.

\section{Suitable filters.}\label{suitable_filters}

This section introduces some terminology that will be used in sections \ref{one_tukey_type_start} to \ref{getting_all_together}. Section \ref{no_basically_generated} does not require everything that is presented here, and all that it needs is that whenever $\mathcal{U}$ is an ultrafilter, then Player I has no winning strategy in any of the games defined in 2.3, 2.12, 2.13 and 2.14 below; these are classical and well known results.

\begin{dfn}
Let $\mathcal{F}$ be a filter on $\omega$. An $\mathcal{F}$-partition is a sequence $\vec{E}=\langle E_n:n\in\omega\rangle$ such that:
\begin{enumerate}
    \item For all $n\in\omega$, $E_n\in\mathcal{F}^*$.
    \item For all different $n,m$, $E_n\cap E_m=\emptyset$.
    \item $dom(\vec{E})=\bigcup_{n\in\omega}E_n\in\mathcal{F}$.
    \item For all $n\in\omega$, $\min(E_n)<\min(E_{n+1})$.
\end{enumerate}
We call the enumeration given by (4) the canonical enumeration of $\vec{E}$.

Given two $\mathcal{F}$-partitions $\vec{E},\vec{F}$, define $\vec{F}\sqsubseteq\vec{E}$ if and only if:
\begin{enumerate}
    \item $dom(\vec{F})\subseteq dom(\vec{E})$.
    \item $\vec{E}\upharpoonright dom(\vec{F})$ is finer than $\vec{F}$, that is, each element from $\vec{F}$ is the union of elements from $\vec{E}$.
\end{enumerate}
The family of all $\mathcal{F}$-partitions is denoted by $\mathcal{P}_{\mathcal{F}}$.
\end{dfn}

\begin{dfn}
A filter $\mathcal{F}$ is $\kappa$-saturated if $\mathcal{P}(\omega)/\mathcal{F}^*$ has the $\kappa$-c.c.  If $\kappa=\omega_1$ then we just say $\mathcal{F}$ is saturated.
\end{dfn}

Let us recall the following game introduced by Shelah in \cite{con_i_u}.

\begin{dfn}
Let $\mathcal{F}$ be a filter on $\omega$. In the two players game $\mathcal{G}(\mathcal{F})$, Player I and Player II play elements from $\mathcal{F}$ as follows:

\begin{center}
\begin{tabular}[t]{l |c |c |c |c |c |r}
Player I & $A^1_0$ &      $ $      & $A^1_1\subseteq A^2_0$ & $ $    & $\ldots\ \ \ $  &\\
\hline
Player II & $ $          & $A^2_0\subseteq A^1_0$ & $ $         & $A^2_1\subseteq A^1_1$ & & $\ldots\ \ \ $\\
\end{tabular}
\end{center}

Player II wins if and only if $\bigcup_{n\in\omega}A^1_n\setminus A^2_n\in\mathcal{F}$.
\end{dfn}

It is well known that if $\mathcal{F}$ filter is $2^\omega$-saturated, then Player I has no winning strategy in the game $\mathcal{G}(\mathcal{F})$. The importance of saturation is that it allows several forcing notions to be proper and have all relevant properties necessary for us. However, in our applications we will find that is not possible to always get saturation properties\footnote{For example, if we have a $2^\omega$-saturated filter not saturated in the Sacks model, it is not possible to make sure that we get a reflection of such filter to intermediate steps of the iteration which is $2^\omega$-saturated.}, so a more general condition is necessary. In this section we formulate a generalization which is sufficient for our purposes. This property is intermediate between non-meagerness and $2^\omega$-saturation, and we will find that any $2^\omega$-saturated filter in the Sacks model reflects to one having the stated property on an $\omega_1$-club subset of $\omega_2$.

\begin{dfn}
Define $2^{<\omega}_*=2^{<\omega}\setminus\{\emptyset\}$, and for positive $k\in\omega$, $2^{\leq k}_*=\bigcup_{0<j\leq k}2^{j}$. We define a well ordering on $2^{<\omega}_*$ as follows: given different $\sigma,\tau\in 2^{<\omega}_*$ define $\sigma<\tau$ if and only if one of the following happens:
\begin{enumerate}
    \item $\vert\sigma\vert<\vert\tau\vert$.
    \item $\vert\sigma\vert=\vert\tau\vert$ and $\sigma\sqsubseteq_{lex}\tau$, where $\sqsubseteq_{lex}$ is the lexicographical order.
\end{enumerate}
This ordering induces a well order on $2_*^{<\omega}$ of type $\omega$, say $\{\sigma_n:n\in\omega\}$, which we call the canonical enumeration of $2^{<\omega}_*$.
\end{dfn}

\begin{dfn}\label{F_tree_def}
Let $\mathcal{F}$ be a filter on $\omega$. A  function $\tau:2^{<\omega}_*\to\mathcal{F}\times\mathcal{F}$ is an $\mathcal{F}$-tree whenever the following holds:
\begin{enumerate}
    \item For all $\sigma\in 2^{<\omega}_*$, $\tau(\sigma)=(\tau_0(\sigma),\tau_1(\sigma))$.
    \item For all $\sigma\in 2^{<\omega}_*$, $\tau_1(\sigma) \subseteq\tau_0(\sigma)$.

    \item For all $\sigma_n,\sigma_m\in2^{<\omega}_*$, if $n<m$, then $\tau_0(\sigma_m)\subseteq\tau_1(\sigma_n)$(not necessarily proper inclusion when $m=n+1$).
\end{enumerate}
\end{dfn}

\begin{dfn}
Let $\tau$ be an $\mathcal{F}$-tree. For $f\in 2^\omega$, define $\tau\text{-}br_0(f)$ and $\tau\text{-}br_1(f)$ as follows:
\begin{enumerate}
    \item $\tau\text{-}br_0(f)=\bigcup_{n>0}(\tau_1(f\upharpoonright n)\setminus\tau_0(f\upharpoonright(n+1)))$.
    \item $\tau\text{-}br_1(f)=\bigcup_{n>0}(\tau_0(f\upharpoonright n)\setminus \tau_1(f\upharpoonright n))$.
\end{enumerate}
\end{dfn}

\begin{dfn}
We say that a filter $\mathcal{F}$ is a suitable filter if for any $\mathcal{F}$-tree there is $f\in 2^\omega$ such that $\tau\text{-}br_0(f)\in\mathcal{F}$.
\end{dfn}

It may seem that the definition of suitable filter is somewhat artificial, however, as we will see below, they are rather a natural generalization of $2^\omega$-saturated filters, and this class is included in the class of filters for which Player I has no winning strategy in any instance of the game $\mathcal{G}(\mathcal{F})$. Our interest on suitable filters is two-folded. On the one side, they serve as intermediaries to reflect useful properties of $2^{\omega}$-saturated filters along an iteration (see Lemma \ref{suitable_reflection} below). On the other side, they will allow us to construct fusion sequences
when the game version of fusion is not available.

\begin{prp}
Any $2^\omega$-saturated filter is a suitable filter.    
\end{prp}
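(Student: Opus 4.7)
The plan is to argue by contradiction and produce an antichain of size $2^\omega$ in $\mathcal{P}(\omega)/\mathcal{F}^*$, contradicting the $2^\omega$-c.c. Assume towards contradiction that $\tau\text{-}br_0(f) \notin \mathcal{F}$ for every $f \in 2^\omega$. First I would observe that the set $K := \bigcap_{n \ge 1} \tau_0(f \upharpoonright n)$ does not actually depend on $f$: the indices $\iota_n(f)$ of $f \upharpoonright n$ in the canonical enumeration $\{\sigma_k\}_{k \in \omega}$ tend to infinity with $n$, so $K = \bigcap_{k \in \omega} \tau_0(\sigma_k)$. A short computation using conditions (2) and (3) of Definition \ref{F_tree_def} shows $\tau_0(f \upharpoonright 1) = K \sqcup \tau\text{-}br_0(f) \sqcup \tau\text{-}br_1(f)$, and since $\tau_0(f \upharpoonright 1) \in \mathcal{F}$ elementary filter arithmetic turns the hypothesis $\tau\text{-}br_0(f) \notin \mathcal{F}$ into the statement that $Z_f := K \cup \tau\text{-}br_1(f)$ is $\mathcal{F}$-positive.

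The key combinatorial point is how the classes $[Z_f] \in \mathcal{P}(\omega)/\mathcal{F}^*$ meet pairwise. Unravelling, $\tau\text{-}br_1(f) = \bigcup_{m \ge 1} I_{\iota_m(f)}$ where $I_k := \tau_0(\sigma_k) \setminus \tau_1(\sigma_k)$; each $I_k$ lies in $\mathcal{F}^*$ because $\omega \setminus I_k \supseteq \tau_1(\sigma_k) \in \mathcal{F}$. For distinct $f, g \in 2^\omega$ that first disagree at position $n$, one has $\iota_m(f) = \iota_m(g)$ for $m \le n$, while for $m > n$ the values $\iota_m(f), \iota_m(g)$ are distinct indices of length-$m$ strings; because distinct lengths occupy disjoint blocks of the enumeration, $\iota_m(f)$ and $\iota_{m'}(g)$ cannot coincide when $m \neq m'$ and $m > n$ either. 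Consequently
\[ \tau\text{-}br_1(f) \cap \tau\text{-}br_1(g) \;=\; \bigcup_{1 \le m \le n} I_{\iota_m(f)}, \]
a finite union of $\mathcal{F}^*$-sets and hence itself in $\mathcal{F}^*$. Thus $[Z_f] \wedge [Z_g] = [K]$ in $\mathcal{P}(\omega)/\mathcal{F}^*$ for all distinct $f, g$.

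In the principal case $K \in \mathcal{F}^*$, one has $[K] = 0$, so the $2^\omega$ classes $[Z_f]$ form a pairwise disjoint family of nonzero elements of $\mathcal{P}(\omega)/\mathcal{F}^*$, directly contradicting the $2^\omega$-c.c. The place where I expect the real work is the residual case $K \notin \mathcal{F}^*$. My plan there is to restrict to $\omega \setminus K$: the trace filter $\mathcal{F}' = \{X \cap (\omega \setminus K) : X \in \mathcal{F}\}$ on $\omega \setminus K$ inherits the $2^\omega$-c.c., and replacing each $\tau_i(\sigma)$ by $\tau_i(\sigma) \setminus K$ yields an $\mathcal{F}'$-tree whose intersection of first coordinates is empty, placing us in the main case. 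One then has to verify that a branch $f$ with $\tau'\text{-}br_0(f) \in \mathcal{F}'$ pulls back to $\tau\text{-}br_0(f) \in \mathcal{F}$; this transfer is where the disjointness $\tau\text{-}br_0(f) \cap K = \emptyset$ together with conditions (2) and (3) on the $\mathcal{F}$-tree should combine to close the loop.
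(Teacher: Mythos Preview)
Your approach coincides with the paper's: the sets $\tau\text{-}br_1(f)$ are pairwise almost disjoint modulo $\mathcal{F}^*$, so if all of them lie in $\mathcal{F}^+$ one violates the $2^\omega$-c.c. Your verification of the almost-disjointness via the pairwise disjoint blocks $I_k=\tau_0(\sigma_k)\setminus\tau_1(\sigma_k)$ and the index analysis is correct and considerably more explicit than the paper's bare ``it is not hard to see''. The paper compresses your entire principal case into the single sentence ``$\tau\text{-}br_0(f)\notin\mathcal{F}$ if and only if $\tau\text{-}br_1(f)\in\mathcal{F}^+$''; as your decomposition $\tau_0(f{\upharpoonright}1)=K\sqcup\tau\text{-}br_0(f)\sqcup\tau\text{-}br_1(f)$ makes clear, that equivalence is literally correct only when $K\in\mathcal{F}^*$, so you are right to isolate $K$.

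However, your plan for the residual case $K\notin\mathcal{F}^*$ does not close. Running the main case for the trace filter $\mathcal{F}'$ on $\omega\setminus K$ produces an $f$ with $\tau\text{-}br_0(f)=\tau'\text{-}br_0(f)\in\mathcal{F}'$, but membership in the trace filter only says that $\tau\text{-}br_0(f)\cup K\supseteq X$ for some $X\in\mathcal{F}$, i.e.\ $\tau\text{-}br_0(f)\cup K\in\mathcal{F}$; you cannot strip off $K$ and remain in $\mathcal{F}$ unless $K\in\mathcal{F}^*$, and the disjointness $\tau\text{-}br_0(f)\cap K=\emptyset$ gives no leverage here. Indeed, when $K\in\mathcal{F}$ (for instance the constant tree $\tau\equiv(\omega,\omega)$, which satisfies the stated axioms) every $\tau\text{-}br_0(f)$ lies inside $\omega\setminus K\in\mathcal{F}^*$, so no branch can work and your trace filter is improper. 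Your residual case thus uncovers a genuine wrinkle that the paper's one-line equivalence also glosses over; its resolution lies in the intended reading of the $\mathcal{F}$-tree axioms (the parenthetical in condition~(3) hints at strictness) rather than in any refinement of the restriction device.
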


\begin{proof}
First note that for any $f\in 2^\omega$, $\tau\text{-}br_0(f)\notin \mathcal{F}$ if and only if $\tau\text{-}br_1(f)\in\mathcal{F}^+$. Let $\tau:2^{<\omega_*}\to\mathcal{F}\times\mathcal{F}$ be an $\mathcal{F}$-tree. It is not hard to see that for any different $f_0,f_1\in 2^\omega$, $\tau\text{-}br_1(f_0)\cap \tau\text{-}br_1(f_1)\in\mathcal{F}^*$. Thus, if for all $f\in 2^\omega$ we have $\tau\text{-}br_1(f)\in\mathcal{F}^+$, then $\mathcal{F}$ cannot be $2^\omega$-saturated.
\end{proof}

In the next lemma we consider a strategy for Player I as a tree $\Sigma\subseteq\mathcal{F}^{<\omega}$ such that even levels\footnote{We start to count levels from $0$, that is, the starting move corresponding to Player I is on level $0$.} correspond to what Player I plays and odd levels correspond to what Player II plays. So, even levels are uniquely determined by the initial sequence preceding them, while odd levels include all possibilities Player II is allowed to play. If we have that $\sigma\in\Sigma$ is a position in the game in which last movement was given by Player II, then $\Sigma(\sigma)$ denotes the answer of Player I according to $\Sigma$. $\Sigma(\emptyset)$ denotes the starting move of Player I.

\begin{lemma}\label{F_subtree}
Let $\mathcal{F}$ be a filter. Let $\Sigma$ be a strategy for Player I in the game $\mathcal{G}(\mathcal{F})$. Then $\Sigma$ ``contains" an $\mathcal{F}$-tree.
\end{lemma}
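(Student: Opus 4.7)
The plan is very short, because the lemma merely asks us to read off an $\mathcal{F}$-tree from a single infinite play against the strategy. The only observation one really needs is that the canonical enumeration of $2^{<\omega}_*$ is linear of type $\omega$, and Definition \ref{F_tree_def}(3) is a decreasing condition along this linear order, which matches exactly the monotone chain of moves in a run of $\mathcal{G}(\mathcal{F})$.

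First I would fix the canonical enumeration $\{\sigma_n:n\in\omega\}$ of $2^{<\omega}_*$ and, by recursion on $n$, simultaneously (i) extend a play of $\mathcal{G}(\mathcal{F})$ by one round with Player I following $\Sigma$, and (ii) assign a pair $(\tau_0(\sigma_n),\tau_1(\sigma_n))$. Concretely, at stage $n$, having played a finite history $A^1_0,A^2_0,\ldots,A^1_{n-1},A^2_{n-1}$, let $A^1_n:=\Sigma(A^1_0,A^2_0,\ldots,A^1_{n-1},A^2_{n-1})$ (with $A^1_0:=\Sigma(\emptyset)$ at the first step), let Player II play any legal $A^2_n\in\mathcal{F}$ with $A^2_n\subseteq A^1_n$ — say, the trivial choice $A^2_n:=A^1_n$ — and set
\[
\tau_0(\sigma_n):=A^1_n,\qquad \tau_1(\sigma_n):=A^2_n.
\]

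Verification of the three clauses of Definition \ref{F_tree_def} is immediate. Clauses (1) and (2) hold because the sequence is a legal play, so each $A^1_n,A^2_n$ lies in $\mathcal{F}$ and $A^2_n\subseteq A^1_n$. For clause (3), if $n<m$ then by the monotonicity inherent in the game,
\[
\tau_0(\sigma_m)=A^1_m\subseteq A^2_{m-1}\subseteq A^1_{m-1}\subseteq\cdots\subseteq A^2_n=\tau_1(\sigma_n).
\]

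The informal assertion that $\Sigma$ ``contains'' the resulting $\mathcal{F}$-tree is then witnessed by construction: every value $\tau_0(\sigma_n)$ is precisely $\Sigma$'s response to the history of Player II moves $\tau_1(\sigma_0),\ldots,\tau_1(\sigma_{n-1})$, so the whole image of $\tau$ sits along a single infinite branch of the strategy tree $\Sigma$. There is no genuine obstacle here; the only conceptual point worth stressing is that although $2^{<\omega}_*$ is indexed as a tree, the decreasing requirement (3) is imposed along the linear canonical enumeration, which is exactly the reason a single monotone play of $\mathcal{G}(\mathcal{F})$ suffices — no branching construction or diagonalization is needed.
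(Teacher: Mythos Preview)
Your construction does produce an $\mathcal{F}$-tree, but it is not ``contained'' in $\Sigma$ in the sense the paper defines and needs. The paper spells out the meaning of ``contains'' inside the proof: for every $f\in 2^\omega$, the sequence
\[
g_f=\Sigma(\emptyset)^\frown\langle\tau_0(f\upharpoonright 1),\tau_1(f\upharpoonright 1),\tau_0(f\upharpoonright 2),\tau_1(f\upharpoonright 2),\ldots\rangle
\]
must be a run of $\mathcal{G}(\mathcal{F})$ in which Player I follows $\Sigma$. This is exactly what Corollary~\ref{PI_no_ws} uses: suitability yields some $f$ with $\tau\text{-}br_0(f)\in\mathcal{F}$, and then $g_f$ is a run following $\Sigma$ which Player II wins.

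Your $\tau$ fails this. You build one play $(A^1_0,A^2_0,A^1_1,A^2_1,\ldots)$ and set $\tau(\sigma_n)=(A^1_n,A^2_n)$. For a branch $f$, the sequence $g_f$ then reads off the values of $\tau$ at the nodes $f\upharpoonright 1,f\upharpoonright 2,\ldots$, which form a sparse, non-consecutive subsequence of the canonical enumeration. There is no reason this should be a run following $\Sigma$: for example, you would need $\tau_1(f\upharpoonright 2)$ to equal $\Sigma$ applied to the four-term history $\langle\Sigma(\emptyset),\tau_0(f\upharpoonright 1),\tau_1(f\upharpoonright 1),\tau_0(f\upharpoonright 2)\rangle$, but in your construction it is $\Sigma$ applied to a completely different (and longer) history coming from the linear play. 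Your own summary --- ``the whole image of $\tau$ sits along a single infinite branch of the strategy tree $\Sigma$'' --- pinpoints the problem: one branch of $\Sigma$ is not enough.

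The paper's construction is genuinely branching: $\tau_1(\sigma_{n+1})$ is defined as $\Sigma(t_{\sigma_j}{}^\frown\tau_0(\sigma_{n+1}))$, where $\sigma_j$ is the \emph{tree-predecessor} of $\sigma_{n+1}$ in $2^{<\omega}_*$, not its predecessor in the linear enumeration. The decreasing clause (3) along the canonical enumeration is then arranged by always taking $\tau_0(\sigma_{n+1})=\tau_1(\sigma_n)$ as Player II's (legal) move. So the remark that ``no branching construction is needed'' is precisely where the argument goes wrong.
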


\begin{proof}
Before going into the proof, let us make precise what we mean by ``contains an $\mathcal{F}$-tree". The way we have defined $\mathcal{F}$-tree and the definition of strategy we have taken, clearly imply that there cannot be inclusion relation between these two objects, so let us explain. First, let $\tau$ be an $\mathcal{F}$-tree, and for each $f\in 2^\omega$, define a sequence $s_f\in\mathcal{F}^\omega$ as follows:
\begin{enumerate}
    \item $t_1^f=\langle\tau_0(f\upharpoonright 1),\tau_1(f\upharpoonright 1)\rangle$
    \item For $n>1$, $t_{n+1}^f={t_n^f}^\frown\langle\tau_0(f\upharpoonright n),\tau_1(f\upharpoonright n)\rangle$ 
    \item Let $s_f=\bigcup_{n\ge 1}t_n^f$.
\end{enumerate}
Let now $\Sigma$ be the strategy of Player I, and for each $f\in 2^\omega$, define $g_f=\Sigma(\emptyset)^\frown s_f$. Now define $G(\tau)=\{g_f\upharpoonright n: f\in 2^\omega\land n\in\omega\land n>0\}$. Then ``$\Sigma$ contains an $\mathcal{F}$-tree" means that $G(\tau)\subseteq \Sigma$, for some $\mathcal{F}$-tree $\tau$.

What remains is to construct the $\mathcal{F}$-tree. We proceed by recursion on $n$ along the canonical enumeration of $2^{<\omega}_*$, and on the side, we construct $\langle t_{\sigma_i}:i\in\omega\rangle\subseteq\mathcal{F}^{<\omega}$.
\begin{enumerate}
    \item We start by defining $\tau_0(\sigma_0)=\Sigma(\emptyset)=A_\emptyset$, $\tau_1(\sigma_0)=\Sigma(A_\emptyset,A_\emptyset)$ and $t_{\sigma_0}=(A_\emptyset,A_\emptyset,\tau_1(\sigma_0))$.
    \item Then make $\tau_0(\sigma_1)=\tau_1(\sigma_0)$, and $\tau_1(\sigma_1)=\Sigma(A_\emptyset,\tau_0(\sigma_1))$(which is the same as $\Sigma(A_\emptyset,\tau_1(\sigma_0))$). Then make $t_{\sigma_1}=(A_\emptyset,\tau_0(\sigma_1),\tau_1(\sigma_1))$.
    \item For $n>1$, suppose $\tau(\sigma_n)$ and $t_{\sigma_n}$ have been defined, so we have to define $\tau(\sigma_{n+1})$ and $t_{\sigma_{n+1}}$. Then make $\tau_0(\sigma_{n+1})=\tau_1(\sigma_n)$. To define $\tau_1(\sigma_{n+1})$, let $j<n+1$ be such that $\sigma_{n+1}\upharpoonright(\vert\sigma_{n+1}\vert-1)=\sigma_j$, and define $\tau_1(\sigma_{n+1})=\Sigma({t_{\sigma_j}}^\frown\tau_0(\sigma_{n+1}))$. Then define $t_{\sigma_{n+1}}={t_{\sigma_j}}^\frown\tau(\sigma_{n+1})=$ ${t_{\sigma_{j}}}^\frown\langle\tau_0(\sigma_{n+1}),\tau_1(\sigma_{n+1})\rangle$.
\end{enumerate}

\end{proof}

\begin{crl}\label{PI_no_ws}
If $\mathcal{F}$ is a suitable filter, then Player I has no winning strategy in $\mathcal{G}(\mathcal{F})$.
\end{crl}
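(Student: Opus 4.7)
The plan is a direct argument by contradiction, built on top of Lemma \ref{F_subtree} and the definition of suitability. Suppose for contradiction that $\Sigma$ is a winning strategy for Player I in $\mathcal{G}(\mathcal{F})$. First I would apply Lemma \ref{F_subtree} to extract an $\mathcal{F}$-tree $\tau$ with $G(\tau) \subseteq \Sigma$, so that for every $f \in 2^\omega$ the sequence $g_f$ constructed from $\tau$ along the branch $f$ is a legal play in $\mathcal{G}(\mathcal{F})$ in which Player I follows $\Sigma$. The next step is to trace through the explicit construction of $G(\tau)$ and identify the moves of the two players along branch $f$: after the opening $A^1_0 = \Sigma(\emptyset)$, Player I's round-$n$ move is $\tau_1(f \upharpoonright n)$ and Player II's response is $\tau_0(f \upharpoonright (n+1))$, so that
\begin{equation*}
\bigcup_{n \in \omega}\bigl(A^1_n \setminus A^2_n\bigr) \;\supseteq\; \bigcup_{n>0}\bigl(\tau_1(f \upharpoonright n) \setminus \tau_0(f \upharpoonright (n+1))\bigr) \;=\; \tau\text{-}br_0(f).
\end{equation*}

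Finally, since $\mathcal{F}$ is suitable, applied to this particular $\mathcal{F}$-tree $\tau$ the definition produces some $f \in 2^\omega$ with $\tau\text{-}br_0(f) \in \mathcal{F}$. For this $f$ the play $g_f$ is consistent with $\Sigma$ yet satisfies $\bigcup_{n \in \omega}(A^1_n \setminus A^2_n) \in \mathcal{F}$, so Player II wins, contradicting the assumption that $\Sigma$ was a winning strategy. The only obstacle is notational bookkeeping: one has to match the recursive values of $\tau$ produced in the proof of Lemma \ref{F_subtree} with the alternating moves in $\mathcal{G}(\mathcal{F})$, verifying that the ``gap sets'' $A^1_n \setminus A^2_n$ are correctly identified with the differences $\tau_1(f \upharpoonright n) \setminus \tau_0(f \upharpoonright (n+1))$ composing $\tau\text{-}br_0(f)$. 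Once this indexing is settled, the proof is essentially a one-line application of the suitable-filter hypothesis.
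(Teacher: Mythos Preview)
Your proposal is correct and follows essentially the same approach as the paper's proof: apply Lemma~\ref{F_subtree} to extract an $\mathcal{F}$-tree $\tau$ with $G(\tau)\subseteq\Sigma$, use suitability to obtain $f\in 2^\omega$ with $\tau\text{-}br_0(f)\in\mathcal{F}$, and observe that the run $g_f$ (in which Player I opens with $A_\emptyset=\Sigma(\emptyset)$, Player II plays $\tau_0(f\upharpoonright n)$ and Player I responds with $\tau_1(f\upharpoonright n)$) is consistent with $\Sigma$ and has $\bigcup_n A^1_n\setminus A^2_n\supseteq\tau\text{-}br_0(f)\in\mathcal{F}$, so Player II wins. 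The only cosmetic difference is that the paper presents this directly rather than by contradiction.
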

\begin{proof}
Let $\Sigma$ be a strategy for Player I in the game $\mathcal{G}(\mathcal{F})$. By Lemma \ref{F_subtree}, there is an $\mathcal{F}$-tree $\tau$ such that $G(\tau)\subseteq\Sigma$. Since $\mathcal{F}$ is suitable, there is $f\in 2^\omega$ such that $\tau\text{-}br_0(f)\in\mathcal{F}$, and then $f$ induces a run on the game in which Player I follows $\Sigma$ but Player II wins as follows:
\begin{center}
\begin{tabular}[t]{l |c |c |c |c |c|c |c|r}
I & $A_\emptyset$ &      $ $      & $\tau_1(f\upharpoonright 1)$ & $ $    & $\tau_1(f\upharpoonright 2) $  & $ $ &$\tau_1(f\upharpoonright 3)$\\
\hline
II & $ $          & $\tau_0(f\upharpoonright 1)$ & $ $ &  $\tau_0(f\upharpoonright 2)$ & & $\tau_0(f\upharpoonright 3)$& $ $& $\ldots$\\
\end{tabular}
\end{center}
Where $A_\emptyset=\Sigma(\emptyset)$. 
\end{proof}

\begin{lemma}\label{suitable_reflection}
Let $\mathbb{S}_{\omega_2}$ be the countable support iteration of the Sacks forcing. Let $\mathcal{F}$ be an $\omega_2$-saturated filter on $\omega$. Then the set 
\begin{equation}
\{\alpha\in\omega_2:\dot{\mathcal{F}}\cap V[G_\alpha]\in V[G_\alpha]\text{ is a suitable filter in }V[G_\alpha]\}
\end{equation}
contains an $\omega_1$-club set (that is, a subset closed under increasing sequences of length $\omega_1$).
\end{lemma}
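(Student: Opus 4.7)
My plan is to reflect suitability from $V[G_{\omega_2}]$ down to intermediate stages of uncountable cofinality, using the closure and chain-condition properties of the countable-support Sacks iteration. Three well-known structural facts about this iteration are needed: \textbf{(i)} since $V\models\mathrm{CH}$ and each iterand has size $\aleph_1$, every intermediate extension $V[G_\alpha]$ with $\alpha<\omega_2$ still satisfies $2^\omega=\aleph_1$; \textbf{(ii)} by the $\aleph_2$-c.c.\ together with countable support, every real in $V[G_{\omega_2}]$ already appears in some $V[G_\beta]$ with $\beta<\omega_2$; \textbf{(iii)} by properness and countable support, for any $\alpha<\omega_2$ of uncountable cofinality, every countable sequence of reals in $V[G_\alpha]$ already belongs to $V[G_\beta]$ for some $\beta<\alpha$. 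Since $\mathcal{F}$ is $\omega_2$-saturated in $V[G_{\omega_2}]$ and $2^\omega=\omega_2$ there, the proposition stating that every $2^\omega$-saturated filter is suitable yields that $\mathcal{F}$ is suitable in $V[G_{\omega_2}]$.

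I would then define a function $h\colon\omega_2\to\omega_2$ as follows. Writing $\mathcal{F}_\alpha:=\mathcal{F}\cap V[G_\alpha]$, fact \textbf{(i)} implies that $V[G_\alpha]$ contains at most $\aleph_1$ many $\mathcal{F}_\alpha$-trees $\tau$. Each such $\tau$ is automatically an $\mathcal{F}$-tree in $V[G_{\omega_2}]$, since $\mathcal{F}_\alpha\subseteq\mathcal{F}$ and the clauses of Definition \ref{F_tree_def} are arithmetic in the values and hence absolute. Suitability of $\mathcal{F}$ in $V[G_{\omega_2}]$ now supplies a branch $f_\tau\in 2^\omega$ with $\tau\text{-}br_0(f_\tau)\in\mathcal{F}$, and by \textbf{(ii)} we may choose $\beta_\tau<\omega_2$ with $f_\tau\in V[G_{\beta_\tau}]$. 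Because $\omega_2$ is regular and there are only $\aleph_1$ many relevant $\tau$, the ordinal $h(\alpha):=\sup_\tau\beta_\tau$ stays strictly below $\omega_2$.

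Let $C\subseteq\omega_2$ be the set of closure points of $h$, which is club in $\omega_2$, and let $S:=\{\alpha<\omega_2:\mathrm{cof}(\alpha)=\omega_1\}$, which is an $\omega_1$-club. Then $C\cap S$ is an $\omega_1$-club, and I claim it is contained in the set of the statement. Fix $\alpha\in C\cap S$ and an $\mathcal{F}_\alpha$-tree $\tau\in V[G_\alpha]$. By \textbf{(iii)}, $\tau\in V[G_\beta]$ for some $\beta<\alpha$; since the values of $\tau$ lie in $V[G_\beta]\cap\mathcal{F}=\mathcal{F}_\beta$, $\tau$ is actually an $\mathcal{F}_\beta$-tree. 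The construction of $h$ therefore provides $f\in V[G_{h(\beta)}]$ with $\tau\text{-}br_0(f)\in\mathcal{F}$, and because $\alpha\in C$ yields $h(\beta)<\alpha$, we have $f\in V[G_\alpha]$. Consequently $\tau\text{-}br_0(f)$, being definable from $\tau$ and $f$, lies in $V[G_\alpha]\cap\mathcal{F}=\mathcal{F}_\alpha$, witnessing that $\mathcal{F}_\alpha$ is suitable in $V[G_\alpha]$.

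The substantive content is packaged entirely in facts \textbf{(ii)} and \textbf{(iii)}, both standard for countable-support proper iterations satisfying the $\aleph_2$-c.c.; once they are accepted, the argument reduces to combining the closure-under-$h$ club with the $\omega_1$-club of ordinals of cofinality $\omega_1$. The only other point worth underlining is the absoluteness of the definition of an $\mathcal{F}$-tree, which allows a tree produced in an intermediate model to be treated as an $\mathcal{F}$-tree in the full extension and thus to inherit branches witnessed by suitability there.
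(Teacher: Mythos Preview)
Your argument is correct and follows essentially the same strategy as the paper: count the $\aleph_1$ many $\mathcal{F}_\alpha$-trees at each stage, use suitability of $\mathcal{F}$ in the full extension to find witnessing branches at bounded stages, define a regressive-type function $h$, and take closure points. You are in fact more explicit than the paper on two points: you isolate the push-down fact \textbf{(iii)} needed to realize an $\mathcal{F}_\alpha$-tree as an $\mathcal{F}_\beta$-tree for some $\beta<\alpha$, and you explicitly intersect with the cofinality-$\omega_1$ set $S$ to make that step work. One small omission: the displayed set in the lemma also requires $\mathcal{F}\cap V[G_\alpha]\in V[G_\alpha]$, and the paper begins by passing to an $\omega_1$-club $\mathcal{C}$ on which this holds (a standard $\aleph_2$-c.c.\ reflection); you should intersect your $C\cap S$ with such a $\mathcal{C}$ as well.
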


\begin{proof}
Let $\mathcal{C}\subseteq\omega_2$ be an $\omega_1$-club such that for all $\alpha\in\mathcal{C}$, $\mathbb{S}_\alpha\Vdash\dot{\mathcal{F}}\cap V[G_\alpha]\in V[G_\alpha]$. Let $\dot{\mathcal{F}}_\alpha$ be a $\mathbb{S}_\alpha$-name for $\dot{\mathcal{F}}\cap V[G_\alpha]$, and $\dot{\Gamma}_\alpha$ be a name for the collection of all $\dot{\mathcal{F}}_\alpha$-trees in $V[G_\alpha]$. Then $\dot{\Gamma}_\alpha$ is forced to have cardinality $\omega_1$, and since each $\dot{\mathcal{F}}_\alpha$-tree is countable, we can replace $\dot{\Gamma}_\alpha$ by a set of cardinality $\omega_1$ of countable names which are either evaluated as $\dot{\mathcal{F}}_\alpha$-trees or as the empty set. Since $\dot{\mathcal{F}}$ is forced to be suitable, for each $\dot{\tau}\in\dot{\Gamma}_\alpha$ there is an ordinal $\beta_{\dot{\tau}}\in\mathcal{C}$ such that $V[G_{\beta_{\dot{\tau}}}]$ contains a real $\dot{f}_{\dot{\tau}}$ which witness $\dot{\mathcal{F}}$ is suitable on the tree $\dot{\tau}$. Then we can find $\beta_\alpha\in\mathcal{C}$ such that for all $\dot{\tau}\in\dot{\Gamma}_\alpha$, $V[G_{\beta_\alpha}]$ contains a real witnessing $\dot{\mathcal{F}}$ is suitable for the $\mathcal{F}_\alpha$-tree $\dot{\tau}$. This defines an increasing function $h:\mathcal{C}\to\mathcal{C}$. The closure points of $h$(i.e., the set of $\alpha\in\mathcal{C}$ for which $h[\alpha]\subseteq\alpha$) is the required $\omega_1$-club. 
\end{proof}

Finally, we will need the following variations of the game $\mathcal{G}(\mathcal{F})$.
\begin{dfn}
Let $\mathcal{F}$ be a filter on $\omega$. Let $\mathcal{M}\prec H(\theta)$ be a countable elementary submodel such that $\mathcal{F}\in\mathcal{M}$. The game $\mathcal{G}_{\mathcal{M}}(\mathcal{F})$ between Player I and Player II, is defined as: both players play elements from $\mathcal{M}\cap\mathcal{F}$ as follows:

\begin{center}
\begin{tabular}[t]{l |c |c |c |c |c |r}
Player I & $A^1_0$ &      $ $      & $A^1_1\subseteq A^2_0$ & $ $    & $\ldots\ \ \ $  &\\
\hline
Player II & $ $          & $A^2_0\subseteq A^1_0$ & $ $         & $A^2_1\subseteq A^1_1$ & & $\ldots\ \ \ $\\
\end{tabular}
\end{center}

Player II wins if and only if $\bigcup_{n\in\omega}A^1_n\setminus A^2_n\in\mathcal{F}$.
\end{dfn}

\begin{dfn}
Let $\mathcal{F}$ be a filter on $\omega$. Let $\mathcal{M}\prec H(\theta)$ be a countable elementary submodel such that $\mathcal{F}\in\mathcal{M}$. The game $\mathcal{G}_{\mathcal{M}}^\mathcal{P}(\mathcal{F})$ between Player I and Player II, is defined as: both players play elements from $\mathcal{M}\cap\mathcal{P}_\mathcal{F}$ as follows:

\begin{center}
\begin{tabular}[t]{l |c |c |c |c |c |r}
Player I & $\vec{E}^1_0$ &      $ $      & $\vec{E}^1_1\sqsubseteq \vec{E}^2_0$ & $ $    & $\ldots\ \ \ $  &\\
\hline
Player II & $ $          & $\vec{E}^2_0\sqsubseteq \vec{E}^1_0$ & $ $         & $\vec{E}^2_1\sqsubseteq \vec{E}^1_1$ & & $\ldots\ \ \ $\\
\end{tabular}
\end{center}

Player II wins if and only if $\bigcup_{n\in\omega}dom(\vec{E}^1_n)\setminus dom(\vec{E}^2_n)\in\mathcal{F}$.

The game $\mathcal{G}^{\mathcal{P}}(\mathcal{F})$ is defined in a similar way as $\mathcal{G}_{\mathcal{M}}^{\mathcal{P}}(\mathcal{F})$, but we omit the elementary submodel and players are not restricted to play $\mathcal{F}$-partitions in any elementary submodel.
\end{dfn}

\begin{prp}
Let $\mathcal{F}$ be a suitable filter. Then Player I has no winning strategy in any of the games $\mathcal{G}_{\mathcal{M}}(\mathcal{F})$, $\mathcal{G}^{\mathcal{P}}(\mathcal{F})$ and  $\mathcal{G}_{\mathcal{M}}^{\mathcal{P}}(\mathcal{F})$.
\end{prp}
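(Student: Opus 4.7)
The plan is to reduce each of the three games to the original game $\mathcal{G}(\mathcal{F})$ (or to $\mathcal{G}_{\mathcal{M}}(\mathcal{F})$, which itself will be handled by a direct adaptation of Corollary \ref{PI_no_ws}), and to invoke the suitability of $\mathcal{F}$ through Lemma \ref{F_subtree}.

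For $\mathcal{G}_{\mathcal{M}}(\mathcal{F})$, the argument of Corollary \ref{PI_no_ws} carries over with one additional observation. Let $\Sigma$ be a strategy for Player I; by the rules of the game every output of $\Sigma$ lies in $\mathcal{M}\cap\mathcal{F}$. In the recursion defining the $\mathcal{F}$-tree $\tau$ in Lemma \ref{F_subtree}, each $\tau_1(\sigma)$ is a value of $\Sigma$ and each $\tau_0(\sigma_{n+1})$ is a previously set $\tau_1(\sigma_j)$, hence every node of $\tau$ belongs to $\mathcal{M}\cap\mathcal{F}$. Suitability of $\mathcal{F}$ provides an $f\in 2^\omega$ with $\tau\text{-}br_0(f)\in\mathcal{F}$, and this $f$ codes a run where Player II plays $\tau_0(f\upharpoonright n)\in\mathcal{M}\cap\mathcal{F}$ (so legally) and wins.

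For $\mathcal{G}^{\mathcal{P}}(\mathcal{F})$ I would derive a companion strategy $\Sigma^{\flat}$ for Player I in $\mathcal{G}(\mathcal{F})$ from any Player I strategy $\Sigma$ in the partition game. Maintain a simulated parallel run: whenever Player I's simulated move is a partition $\vec{E}^1$ coming from $\Sigma$, have $\Sigma^{\flat}$ play the set $\dom(\vec{E}^1)\in\mathcal{F}$; when the real Player II responds with $A^2\subseteq \dom(\vec{E}^1)$, $A^2\in\mathcal{F}$, translate $A^2$ back into an $\mathcal{F}$-partition $\vec{E}^2\sqsubseteq\vec{E}^1$ by listing the non-empty sets $E^1_n\cap A^2$ in increasing order of their minima, and feed $\vec{E}^2$ into $\Sigma$ to generate the next simulated partition. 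By Corollary \ref{PI_no_ws} the strategy $\Sigma^{\flat}$ is not winning, so there is a run of $\mathcal{G}(\mathcal{F})$ with $\bigcup_n A^1_n\setminus A^2_n\in\mathcal{F}$, and this lifts to a run of $\mathcal{G}^{\mathcal{P}}(\mathcal{F})$ defeating $\Sigma$, since the winning condition of $\mathcal{G}^{\mathcal{P}}(\mathcal{F})$ refers only to the domains. The game $\mathcal{G}^{\mathcal{P}}_{\mathcal{M}}(\mathcal{F})$ is handled by exactly the same reduction, targeting $\mathcal{G}_{\mathcal{M}}(\mathcal{F})$ in place of $\mathcal{G}(\mathcal{F})$: by elementarity $\vec{E}^2$ is definable from $\vec{E}^1,A^2\in\mathcal{M}$ and hence lies in $\mathcal{M}$, so $\Sigma^{\flat}$ is a legitimate strategy in $\mathcal{G}_{\mathcal{M}}(\mathcal{F})$.

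The main bookkeeping point is the set-to-partition conversion. The intersections $E^1_n\cap A^2$ need not have monotone minima in the original indexing of $\vec{E}^1$, because $A^2$ may omit the minima of early pieces of $\vec{E}^1$; one must therefore re-enumerate them by minimum. Once this is done, each piece $E^2_k$ equals some $E^1_n\cap A^2\subseteq E^1_n\in\mathcal{F}^*$, so $E^2_k\in\mathcal{F}^*$; the domain is $\dom(\vec{E}^2)=A^2\in\mathcal{F}$; and the refinement clause of $\vec{E}^2\sqsubseteq\vec{E}^1$ is immediate since each piece of $\vec{E}^2$ is a single piece of $\vec{E}^1\upharpoonright A^2$. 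This is the only step where some care is needed; the rest of the argument is a routine assembly of Lemma \ref{F_subtree} and Corollary \ref{PI_no_ws}.
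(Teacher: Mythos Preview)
Your proposal is correct and is precisely the kind of argument the paper has in mind: its proof is the single sentence ``An easy adaptation of the proofs of Lemma~\ref{F_subtree} and Corollary~\ref{PI_no_ws}.'' Your treatment of $\mathcal{G}_{\mathcal{M}}(\mathcal{F})$ is exactly that adaptation, and for the partition games your reduction via the domain/intersection translation is a clean way to package the same idea (one could equivalently rerun the $\mathcal{F}$-tree construction with partitions in place of sets, but your reduction avoids repeating that bookkeeping).
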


\begin{proof}
An easy adaptation of the proofs of Lemma \ref{F_subtree} and Corollary \ref{PI_no_ws}.
\end{proof}

\section{A model with no basically generated ultrafilters.}\label{no_basically_generated}

\begin{dfn}
Let $\vec{E}\in\mathcal{P}_{\mathcal{F}}$ be a $\mathcal{F}$-partition. For 
$A\subseteq\omega$ we define $\vec{E}*A=\{E^p_k:k\notin A\}$. In particular, for $n\in\omega$, $\vec{E}*n=\{E^p_k:k\ge n\}$.
\end{dfn}

\begin{dfn}
Let $\mathcal{U}$ be a filter. Define a forcing notion $\mathbb{P}_0(\mathcal{U})$ as follows: $p\in\mathbb{P}_0(\mathcal{U})$ if and only if:
\begin{enumerate}
    \item $p=(h_p,\vec{E}_p)$
    \item $\vec{E}_p=\langle E_n^p:n\in\omega\rangle$ is an $\mathcal{U}$-partition.
    \item $h_p:\omega\setminus dom(\vec{E}_p)\to 2$.
\end{enumerate}
The order on $\mathbb{P}_{0}(\mathcal{U})$ is defined as $q\leq p$ if and only if:
\begin{enumerate}
    \item $\vec{E}_q\sqsubseteq \vec{E}_p$.
    \item $h_p\subseteq h_q$
    \item If $E_k^p\subseteq dom(h_q)$, the for all $j\in E_k^p$, $h_q(j)=h_q(\min(E_k^p))$.
\end{enumerate}
\end{dfn}

\begin{dfn}
Let $\mathcal{U}$ be a filter and $\kappa$ and infinite cardinal. Let $\bigotimes_{\alpha\in\kappa}\mathbb{P}_0(\mathcal{U})$ be the countable support product of $\kappa$-many copies of $\mathbb{P}_{0}(\mathcal{U})$. Define the forcing $\mathbb{P}_{\kappa}(\mathcal{U})$ as $p\in\mathbb{P}_\kappa(\mathcal{U})$ if and only if:
\begin{enumerate}
    \item $p\in\bigotimes_{\alpha\in\kappa}\mathbb{P}_\alpha(\mathcal{U})$.
    \item $0\in dom(p)$.
    \item For all $\alpha\in dom(p)$, there is $F^p_\alpha\subseteq\omega$ such that $\vec{E}_{p(\alpha)}=\vec{E}_{p(0)}*F_\alpha^p$.
\end{enumerate}
The order is given by $q\leq p$ if and only if for all $\alpha\in dom(p)$, $q(\alpha)\leq p(\alpha)$.

For $p\in\mathbb{P}_\kappa(\mathcal{U})$, if $\alpha\in dom(0)\setminus\{0\}$, let $N(p,\alpha)$ be the unique subset of $\omega$ such that $\vec{E}_{p(\alpha)}=\vec{E}_{p(0)}*N(p,\alpha)$, equivalently, $(h_{p(\alpha)},\vec{E}_{p(0)}*N(p,\alpha))\in\mathbb{P}_0(\mathcal{U})$.
\end{dfn}

We make some conventions about notation here. Given an $\mathcal{F}$-partition $\vec{E}$, then:
\begin{enumerate}
    \item We think of $\vec{E}=\langle E_n:n\in\omega\rangle$ in two different ways, one is as a sequence and the other is like the set of its elements, $\{E_n:n\in\omega\}$. The context should be enough to avoid ambiguity.
    \item If $\{F_n:n\in\lambda\}$ is a disjoint family, where $\lambda\in\omega\cup\{\omega\}$, and such that for all $n\in\lambda$, $F_n\in\mathcal{F}^*$ and $F_n\cap dom(\vec{E})$, then, by $\{F_n:n\in\lambda\}\cup\vec{E}$ we mean the $\mathcal{F}$-partition which results from the reenumeration (that is, the canonical enumeration) of $\{F_n:n\in\lambda\}\cup\{E_n:n\in\omega\}$.
    \item If we write $\langle F_n:n\in\lambda\rangle\cup\vec{E}$ we mean the same partition as the previous clause.
\end{enumerate}
Sometimes we write $\{F_n:n\in\lambda\}\cup\vec{E}$ or $\langle F_n:n\in\lambda\rangle\cup\vec{E}$. Typically, we use this notation only for $\lambda\in\omega$.

The next definition needs a justification, which appears after the definition.

\begin{dfn}
Let $\dot{G}$ be a generic filter for $\mathbb{P}_\kappa(\mathcal{U})$. For $\alpha\in\kappa$, define $\dot{H}_\alpha:\omega\to 2$ as:
\begin{equation*}
    \dot{H}_\alpha=\bigcup\{h_{p(\alpha)}:p\in \dot{G}\}
\end{equation*}
Then, for $\alpha\in\kappa$ and $i\in 2$, define $\dot{D}_i^\alpha=\dot{H}_\alpha^{-1}[\{i\}]$.

Also, define $\dot{\varphi}:\omega\to 2^\kappa$ as:
\begin{equation*}
    \dot{\varphi}(n)(\alpha)=1\iff n\in \dot{D}^\alpha_1
\end{equation*}
\end{dfn}

It is not totally clear that for each $\alpha\in \kappa$ the function $\dot{H}_\alpha$ has $\omega$ as domain. First note that if $\alpha\in dom(p)$, then for any $k\in\omega$, the sets of conditions $q\leq p$ for which $k\in dom(h_q)$ is dense below $q$. Thus, it is enough to prove that given any $p\in\mathbb{P}_\kappa(\mathcal{U})$ and $A\in[\kappa]^\omega$
disjoint from $dom(p)$, there is an extension $q\leq p$ such that $A\subseteq dom(q)$. This follows easily by just making a copy of $p$ in $A$. More exactly, let $\langle\alpha_n:n\in\omega\rangle$ be an enumeration of $A$ and $\langle\beta_n:n\in\omega\rangle$ be an enumeration of $dom(p)$; then define $q:A\cup dom(p)\to\mathbb{P}_0(\mathcal{U})$ as $q(\beta_n)=p(\beta_n)$ and $q(\alpha_n)=p(\beta_n)$.\footnote{The reader may worry that this operation seems somewhat restrictive; the lemmas below introduced and some arguments used in the proofs should make clear how to construct more general conditions.}

The following is easy to proof.

\begin{lemma}
$\mathbb{P}_\kappa(\mathcal{F})$ forces $\dot{\varphi}[\omega]$ is a dense subset of $2^\kappa$.  
\end{lemma}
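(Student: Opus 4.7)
The plan is to verify density directly. Fix $p \in \mathbb{P}_\kappa(\mathcal{F})$ and a basic open set $[s] = \{x \in 2^\kappa : s \subseteq x\}$, where $s : F \to 2$ and $F \in [\kappa]^{<\omega}$. I must produce $q \leq p$ and $n \in \omega$ with $q \Vdash \dot{\varphi}(n)\upharpoonright F = s$, i.e.\ $q \Vdash \dot{H}_\alpha(n) = s(\alpha)$ for every $\alpha \in F$.

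First, I would enlarge the support of $p$: using the remark immediately preceding the lemma (copy $p(0)$ into each missing coordinate), extend $p$ to $p^* \leq p$ with $dom(p^*) \supseteq dom(p) \cup F$, so that $F^{p^*}_\alpha = \emptyset$ for every new $\alpha \in F \setminus dom(p)$ while $\vec{E}_{p^*(0)} = \vec{E}_{p(0)}$.

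Next I locate a single block of the master partition that is still ``free'' at every coordinate in $F$ simultaneously. Since $\mathcal{F}$ is closed under finite intersections, the set $A = \bigcap_{\alpha \in F} dom(\vec{E}_{p^*(\alpha)})$ belongs to $\mathcal{F}$ and is therefore nonempty. Pick any $n \in A$ and let $k$ be the unique index with $n \in E^{p(0)}_k$. For each $\alpha \in F$ we have $A \cap E^{p(0)}_k \neq \emptyset$, hence $E^{p(0)}_k \subseteq dom(\vec{E}_{p^*(\alpha)})$, which means $k \notin F^{p^*}_\alpha$.

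Finally I build $q \leq p^*$ by transferring the block $E^{p(0)}_k$ out of the partition at each $\alpha \in F$ and into the domain of $h_{q(\alpha)}$ with constant value $s(\alpha)$: set $q(0) = p^*(0)$, put $q(\alpha) = p^*(\alpha)$ for $\alpha \in dom(p^*) \setminus (F \cup \{0\})$, and for $\alpha \in F$ declare $F^q_\alpha = F^{p^*}_\alpha \cup \{k\}$ and $h_{q(\alpha)} = h_{p^*(\alpha)} \cup (E^{p(0)}_k \times \{s(\alpha)\})$. Verifying that $q$ is a legal condition of $\mathbb{P}_\kappa(\mathcal{F})$ is routine: the new partition $\vec{E}_{q(\alpha)}$ has domain $dom(\vec{E}_{p^*(\alpha)}) \setminus E^{p(0)}_k \in \mathcal{F}$ (intersection of two filter sets, since $E^{p(0)}_k \in \mathcal{F}^*$), the remaining blocks stay pairwise disjoint with strictly increasing minima, and $h_{q(\alpha)}$ is constant on the newly freed block, satisfying clause (3) of the ordering on $\mathbb{P}_0(\mathcal{F})$. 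Since $n \in E^{p(0)}_k \subseteq dom(h_{q(\alpha)})$ with value $s(\alpha)$ for every $\alpha \in F$, the condition $q$ forces $\dot{\varphi}(n) \in [s]$, which is exactly what density requires. The only mildly subtle step is the second one, where the filter property of $\mathcal{F}$ is used to free a single master block uniformly across all of $F$; once $k$ is chosen, the remainder of the argument is pure bookkeeping against the definitions.
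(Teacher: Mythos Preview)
The paper omits the proof, calling it easy; your direct density argument is the natural one and is correct when $0 \notin F$. There is, however, a small oversight in the case $0 \in F$: you simultaneously set $q(0) = p^*(0)$ and, via the clause for $\alpha \in F$, remove block $k$ at coordinate $0$---these prescriptions are inconsistent. If you resolve this by actually shrinking the master partition to $\vec{E}_{q(0)} = \vec{E}_{p(0)} * \{k\}$, then for any $\alpha \in dom(p^*) \setminus F$ with $k \notin N(p^*, \alpha)$ the block $E^{p(0)}_k$ is still present in $\vec{E}_{q(\alpha)} = \vec{E}_{p^*(\alpha)}$ but is no longer a block of $\vec{E}_{q(0)}$, so the requirement $\vec{E}_{q(\alpha)} = \vec{E}_{q(0)} * F^q_\alpha$ fails and $q \notin \mathbb{P}_\kappa(\mathcal{F})$.

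The fix is immediate: remove $E^{p(0)}_k$ from \emph{every} coordinate where it appears as a block, extending $h_{q(\alpha)}$ on $E^{p(0)}_k$ by the constant $s(\alpha)$ for $\alpha \in F$ and by $0$ otherwise; then all partitions are again of the form $\vec{E}_{q(0)} * N$. Alternatively, simply note that one may assume $0 \notin F$ (handling the value at $0$ by a further one-step extension afterward), after which your construction goes through verbatim.
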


\begin{dfn}Let $p=(f_p,\vec{E}_p)\in\mathbb{P}_0(\mathcal{U})$ be a condition. Then
\begin{enumerate}
    \item For $n\in dom(\vec{E}_p)$, $n/\vec{E}_p$ denotes the element 
    ${E}_k^p$ from $\vec{E}_p$ such that $n\in E_k^p$.
    \item $A_p=\{\min(E_n^p):n\in\omega\}=\{a_n^p:n\in\omega\}$ is the set of canonical representa\-tives of $\vec{E}_p$, ordered by its increasing enumeration.
\end{enumerate}

\end{dfn}

\begin{dfn}
Let $p,q\in\mathbb{P}_0(\mathcal{U})$ be two conditions and $n\in\omega$. Define $q\leq_n p$ if and only if $q\leq p$ and for $i\leq n$, $a_i^q/\vec{E}_q=a_i^p/\vec{E}_p$.    
\end{dfn}

\begin{dfn}
Let $p=(h_p,\vec{E}_p)\in\mathbb{P}_0(\mathcal{U})$ be a condition and $n\in\omega$. Let $\{a_i^p:i\leq n\}$ be the first $n+1$ elements of $A_p$. Let $\nu \in 2^{n+1}$. Then define $p^{[\nu]}$ as the condition $q=(h_q,\vec{E}_q)\leq p$ such that:
\begin{enumerate}
    \item $dom(\vec{E}_q)=dom(\vec{E}_p*(n+1))=\bigcup_{k> n}E_k^p$.
    \item $\vec{E}_q=\vec{E}_p\upharpoonright dom(\vec{E}_q)$.
    \item For each $i\leq n$, $h_q\upharpoonright E_i^p\equiv \nu(i)$.
\end{enumerate}
\end{dfn}

\begin{dfn}
Let $\kappa$ be an infinite cardinal. We denote by $(\kappa)^{<\omega}$ the set of all finite inyective sequence of ordinals in $\kappa$ whose first element is 0.
\end{dfn}

\begin{dfn}
Let $p,q\in\mathbb{P}_\kappa(\mathcal{U})$ be conditions, $n\in\omega$ and $\vec{S}=\langle \alpha_0,\ldots,\alpha_{n}\rangle\in (\kappa)^{<\omega}$ such that $\vert \vec{S}\vert=n+1$ and $rng(\vec{S})\subseteq dom(p)$. Then define $q\leq_n^{\vec{S}}p$ if and only if:
\begin{enumerate}
    \item $q\leq p$
    \item For all $i\leq n$, $q(\alpha_i)\leq_{n-i}p(\alpha_i)$.
\end{enumerate}
\end{dfn}

\begin{dfn}
Let $p\in\mathbb{P}_\kappa(\mathcal{F})$ be a condition, $n\in\omega$ a natural number and $\vec{S}\in (\kappa)^{<\omega}$ such that $rng(\vec{S})\subseteq dom(p)$ and $\vert S\vert=n+1$, let $\vec{m}=\langle m_i:i\leq n\rangle\in \omega^{<\omega}$ is a sequence of positive natural numbers. Then for each $\nu\in \Pi_{j<n+1}2^{m_j}$ we define $p[\vec{S},\vec{\nu}]$ as follows:
\begin{enumerate}
    \item $dom(p{[\vec{S},\vec{\nu}]})=dom(p)$.
    \item For all $\alpha\notin rng(\vec{S})$, $p{[\vec{S},\vec{\nu}]}(\alpha)=p(\alpha)$.
    \item For each $j<n$, $p{[\vec{S},\vec{\nu}]}(\alpha_j)=p(\alpha_j)^{[\vec{\nu}(j)]}$.
\end{enumerate}
\end{dfn}

\begin{lemma}\label{n_S_extension}
Let $p\in\mathbb{P}_\kappa(\mathcal{U})$ be a condition and $\mathcal{A}\subseteq\mathbb{P}_\kappa(\mathcal{U})$ a maximal antichain. Then for any  $\vec{S}=\langle \alpha_0,\ldots,\alpha_n\rangle\in(\kappa)^{<\omega}$ such that $rng(\vec{S})\subseteq dom(p)$, there is $q\leq_n^{\vec{S}} p$ such that for all $\vec{\nu}\in\Pi_{j<n+1}2^{n+1-j}$, $q{[\vec{S},\vec{\nu}]}$ extends some condition in $\mathcal{A}$. 
\end{lemma}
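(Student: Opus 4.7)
The plan is to prove this by a finite fusion-style recursion along an enumeration of the parameter set. Since $\leq_n^{\vec{S}}$ is easily checked to be transitive, and since $r \leq_n^{\vec{S}} p$ implies $r[\vec{S},\vec{\nu}] \leq p[\vec{S},\vec{\nu}]$ (the operation $\cdot[\vec{S},\vec{\nu}]$ only freezes the first $m_j = n+1-j$ blocks of coordinate $\alpha_j$ to the constant values dictated by $\vec{\nu}$, and these blocks coincide in $r$ and $p$), it suffices to enumerate $\Pi_{j\leq n}2^{n+1-j}$ as $\{\vec{\nu}_k : k<K\}$ and construct a $\leq_n^{\vec{S}}$-decreasing chain $p = p_0 \geq_n^{\vec{S}} p_1 \geq_n^{\vec{S}} \cdots \geq_n^{\vec{S}} p_K = q$ such that at every stage $k<K$, $p_{k+1}[\vec{S},\vec{\nu}_k]$ extends some element of $\mathcal{A}$; this property, once achieved, is preserved in all subsequent stages by the observation above.

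For the recursive step, given $p_k$, apply the maximality of $\mathcal{A}$ to the condition $p_k[\vec{S},\vec{\nu}_k]$ to obtain $r_k \leq p_k[\vec{S},\vec{\nu}_k]$ which extends some $a_k \in \mathcal{A}$. The task becomes to produce $p_{k+1} \leq_n^{\vec{S}} p_k$ with $p_{k+1}[\vec{S},\vec{\nu}_k] \leq r_k$. For this, I would first form the base $\mathcal{U}$-partition $\vec{E}_{p_{k+1}(0)}$ by keeping the first $n+1$ blocks of $\vec{E}_{p_k(0)}$ (in order to guarantee $p_{k+1}(0) \leq_n p_k(0)$) and by using the blocks of $\vec{E}_{r_k(0)}$ beyond them, further refined if necessary so that for each $\alpha_j \in \mathrm{rng}(\vec{S})$ with $j>0$, the concrete blocks of $\vec{E}_{p_k(0)}$ that realize the first $m_j$ canonical blocks of $\vec{E}_{p_k(\alpha_j)}$ survive as single blocks of $\vec{E}_{p_{k+1}(0)}$. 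Such a common refinement exists because each of the blocks to preserve lies in $\mathcal{U}^+$ and $\vec{E}_{r_k(0)} \sqsubseteq \vec{E}_{p_k(0)}*(n+1)$.

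Next, for each $\alpha \in \mathrm{dom}(p_k) \cup \mathrm{dom}(r_k)$, I would define the coordinate condition $p_{k+1}(\alpha) = (h_{p_{k+1}(\alpha)}, \vec{E}_{p_{k+1}(0)} * F^{p_{k+1}}_\alpha)$ as follows: if $\alpha = \alpha_j \in \mathrm{rng}(\vec{S})$, let the first $m_j$ blocks of $\vec{E}_{p_{k+1}(\alpha_j)}$ coincide with those of $\vec{E}_{p_k(\alpha_j)}$ and let its subsequent blocks be those of $\vec{E}_{r_k(\alpha_j)}$, while setting $h_{p_{k+1}(\alpha_j)}$ to agree with $h_{p_k(\alpha_j)}$ on $\omega \setminus \mathrm{dom}(\vec{E}_{p_k(\alpha_j)})$ and with $h_{r_k(\alpha_j)}$ on the tail blocks that $r_k$ absorbs into its function. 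For $\alpha \notin \mathrm{rng}(\vec{S})$, I would simply take the data of $r_k(\alpha)$ (or of $p_k(\alpha)$ if $\alpha \notin \mathrm{dom}(r_k)$) translated to live on top of $\vec{E}_{p_{k+1}(0)}$. The required $F^{p_{k+1}}_\alpha$ is then read off from which blocks of $\vec{E}_{p_{k+1}(0)}$ are skipped.

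The main technical obstacle I anticipate is verifying that $p_{k+1}$ is a legitimate condition of $\mathbb{P}_\kappa(\mathcal{U})$ (that is, the coordinate partitions are of the form $\vec{E}_{p_{k+1}(0)} * F^{p_{k+1}}_\alpha$) together with the inequality $p_{k+1}[\vec{S},\vec{\nu}_k] \leq r_k$. The first issue is handled by the careful choice of the refinement used to build $\vec{E}_{p_{k+1}(0)}$, which by construction neither splits a preserved block of $p_k(\alpha_j)$ nor introduces new blocks not coming from $\vec{E}_{p_k(0)}$ or $\vec{E}_{r_k(0)}$. The second is a direct calculation using that $p_{k+1}[\vec{S},\vec{\nu}_k]$ commits exactly the first $m_j$ blocks of coordinate $\alpha_j$ to the constant values $\vec{\nu}_k(j)(i)$, which is precisely the commitment already encoded in $h_{p_k[\vec{S},\vec{\nu}_k](\alpha_j)}$ and hence honored by $r_k(\alpha_j)$; beyond those initial blocks, the data of $p_{k+1}$ was built to copy the data of $r_k$, and the constancy of $h$ on blocks that get absorbed into $\mathrm{dom}(h)$ follows from the corresponding constancy in $r_k$.
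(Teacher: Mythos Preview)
Your approach is essentially the paper's: enumerate $\prod_{j\leq n}2^{n+1-j}$, build a $\leq_n^{\vec S}$-decreasing chain where at each stage you extend $p_k[\vec S,\vec\nu_k]$ into $\mathcal A$ via some $r_k$ and then re-insert the frozen blocks at the coordinates $\alpha_j$ to obtain $p_{k+1}\leq_n^{\vec S}p_k$. The paper's glue-back is written a bit more directly (it simply sets $\vec E_{q_{k+1}(\alpha_j)}=\vec E_{p(\alpha_j)}\upharpoonright(n{-}j{+}1)\cup\vec E_{r_k(\alpha_j)}$ and $q_{k+1}(\beta)=r_k(\beta)$ for $\beta\notin\mathrm{rng}(\vec S)$), whereas you add an explicit refinement of the base partition to keep the preserved blocks visible; the two constructions amount to the same argument.
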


\begin{proof}
Fix $p\in\mathbb{P}_\kappa(\mathcal{U})$, $n\in\omega$ and $\vec{S}=\langle\alpha_0,\ldots,\alpha_n\rangle\in(\kappa)^{<\omega}$. Let $\langle\vec{\nu}_i:i\leq m\rangle$ be an enumeration of $\Pi_{j<n+1}2^{n+1-j}$. We construct a sequence of conditions $\langle q_i:i\leq m\rangle$ such that:
\begin{enumerate}
    \item $q_0\leq_n^{\vec{S}}p$.
    \item For all $i<m$, $q_{i+1}\leq_n^{\vec{S}} q_i$.
    \item For all $i\leq m$, $q_i{[\vec{S},\vec{\nu}_i]}$ extends some condition in $\mathcal{A}$.
\end{enumerate}

Suppose we have defined such sequence. Note that for each $l\leq m$, $q_{m}[\vec{S},\vec{\nu}_l]\leq q_l[\vec{S},\vec{\nu}_l]$. Thus, for each $l\leq m$, $q_m[\vec{S},\vec{\nu}_l]$ extends some condition in $\mathcal{A}$.

We start by choosing $r_0\leq p{[\vec{S},\vec{\nu}_0]}$ extending some condition in $\mathcal{A}$, and then define $q_0$ as follows:
\begin{enumerate}
    \item If $\beta\notin rng(\vec{S})$, then $q_0(\beta)=r_0(\beta)$.
    \item For $j\leq n$, $q_0(\alpha_j)$ is defined as:
    \begin{enumerate}
        \item $\vec{E}_{q_0(\alpha_j)}=\vec{E}_{p(\alpha_j)}\upharpoonright(n-j+1)\cup\vec{E}_{r_0(\alpha_j)}$.
        \item $h_{q_0(\alpha_j)}= h_{r_0(\alpha_j)}\upharpoonright(\omega\setminus dom(\vec{E}_{q_0(\alpha_j)}))$.
    \end{enumerate}
\end{enumerate}

Suppose we have defined $q_l$ for some $l<m$. Then let $r_{l+1}\leq q_l[\vec{S},\vec{\nu}_{l+1}]$ be a condition extending one element from $\mathcal{A}$. Now define $q_{l+1}$ as follows:
\begin{enumerate}
    \item If $\beta\notin rng(\vec{S})$, then $q_{l+1}(\beta)=r_{l+1}(\beta)$.
    \item For $j\leq n$, $q_{l+1}(\alpha_j)$ is defined as:
    \begin{enumerate}
        \item $\vec{E}_{q_{l+1}(\alpha_j)}=\vec{E}_{p(\alpha_j)}\upharpoonright(n-j+1)\cup\vec{E}_{r_{l+1}(\alpha_j)}$.
        \item $h_{q_{l+1}(\alpha_j)}= h_{r_{l+1}(\alpha_j)}\upharpoonright(\omega\setminus dom(\vec{E}_{q_{l+1}(\alpha_j)}))$.
    \end{enumerate}
\end{enumerate}

This finishes the construction

\end{proof}

\begin{dfn}\label{extension_to_compatible_partitions}
Let $q\in\mathbb{P}_0(\mathcal{U})$ be a condition. Let $n\in\omega$ and $\vec{E}\sqsubseteq \vec{E}_{q}$ such that $dom(\vec{E})\cap E^{q}_k=\emptyset$ for each $k\leq n$. Then define the condition $q^{(n,\vec{E})}$ as follows:
\begin{enumerate}
    \item $\vec{E}_{q^{(n,\vec{E})}}=\{E_k^{q}:k\leq n\}\cup\vec{E}$.
    \item For $l\in dom(h_{q^{(n,\vec{E})}})\setminus dom(h_q)$, define $h_{q^{(n,\vec{E})}}(l)=0$
\end{enumerate}

Given $\vec{E}\in\mathcal{P}_\mathcal{U}$ such that $\vec{E}\sqsubseteq\vec{E}_q$, we also define the condition $q^{(*,\vec{E})}$ as:
\begin{enumerate}
    \item [(1)] $\vec{E}_{q^{(*,\vec{E})}}=\vec{E}$.
    \item [(2)] For $l\in dom(h_{q^{(*,\vec{E})}})\setminus dom(h_q)$, define $h_{q^{(*,\vec{E})}}(l)=0$.
\end{enumerate}
\end{dfn}

\begin{dfn}
Let $p\in\mathbb{P}_\kappa(\mathcal{F})$ be a condition. We say that $p$ is a simple condition if there is $\langle\alpha_j:j\in\omega\rangle$ enumeration of  $dom(p)$ such that $\alpha_0=0$, and for all $j\in\omega$, $\vec{E}_{p(\alpha_j)}=\vec{E}_{p(0)}*j$. In this case we refer to $\langle\alpha_j:j\in\omega\rangle$ as the canonical enumeration of $dom(p)$.
\end{dfn}

\begin{lemma}\label{simple_conditions_dense}
The set of simple conditions is dense.
\end{lemma}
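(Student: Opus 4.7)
The plan is to construct $q \le p$ with $\dom(q) \supseteq \dom(p)$ by refining the master partition $\vec{E}_{p(0)}$ into a new one $\vec{E}_{q(0)} \sqsubseteq \vec{E}_{p(0)}$ so that, for each $\alpha \in \dom(p)$, the ``removed'' block-indices $F^p_\alpha$ are confined to a finite initial segment of $\vec{E}_{q(0)}$; then setting $\vec{E}_{q(\beta_k)} = \vec{E}_{q(0)} * k$ (for $\beta_k$ at rank $k$) will make $q$ simple with enumeration $\langle \beta_k : k \in \omega\rangle$.

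Fix an enumeration $\dom(p) = \{\alpha_n : n \in \omega\}$ with $\alpha_0 = 0$; note $F^p_{\alpha_0} = \emptyset$. Write $S_{\alpha} = \omega \setminus F^p_\alpha$. Since each $\dom(\vec{E}_{p(\alpha_n)}) = \bigcup_{l \in S_{\alpha_n}} E^{p(0)}_l$ belongs to the filter $\mathcal{U}$, all finite intersections $\bigcap_{n \le N} \dom(\vec{E}_{p(\alpha_n)})$ do as well. Using the suitable-filter combinatorics developed at the end of Section 3, in particular the absence of winning strategies for Player I in $\mathcal{G}^{\mathcal{P}}(\mathcal{U})$, I would diagonally construct pairwise disjoint index pieces $I_0, I_1, \ldots \subseteq \omega$ with $\min(I_k)$ strictly increasing, together with an injective rank assignment $j : \dom(p) \to \omega$ satisfying $j(0) = 0$, such that (i) $I_k \cap F^p_{\alpha} = \emptyset$ for every $\alpha \in \dom(p)$ and every $k \ge j(\alpha)$, and (ii) $\bigcup_k \bigcup_{l \in I_k} E^{p(0)}_l \in \mathcal{U}$.

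Set $E^{q(0)}_k = \bigcup_{l \in I_k} E^{p(0)}_l$ and enumerate $\dom(q)$ as $\langle \beta_k : k \in \omega \rangle$, with $\beta_{j(\alpha)} = \alpha$ for each $\alpha \in \dom(p)$ and fresh ordinals from $\kappa \setminus \dom(p)$ filling the leftover positions. Define $\vec{E}_{q(\beta_k)} = \vec{E}_{q(0)} * k$, and, when $\beta_k \in \dom(p)$, extend $h_{p(\beta_k)}$ to $h_{q(\beta_k)}$ by assigning any constant value (say $0$) to each newly dropped block of $\vec{E}_{p(\beta_k)}$, which is exactly what clause (3) of the ordering demands; for fresh $\beta_k$, define $h_{q(\beta_k)}$ arbitrarily on its domain. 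By (i), for $\alpha \in \dom(p)$ and $k \ge j(\alpha)$ the piece $I_k$ avoids $F^p_\alpha$, so $E^{q(0)}_k$ is a union of blocks of $\vec{E}_{p(\alpha)}$; thus $\vec{E}_{q(\alpha)} \sqsubseteq \vec{E}_{p(\alpha)}$, and together with the $h$-extension this yields $q \le p$. The enumeration $\langle \beta_k : k \in \omega\rangle$ realizes $q$ as a simple condition.

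The principal technical obstacle is the inductive step producing $I_k$ while simultaneously maintaining the strictly increasing minima, the covering property (i), and the $\mathcal{U}$-largeness (ii). In a generic filter the intersections $\bigcap_{n\le N} S_{\alpha_n}$ may fail to be infinite, so a naive pseudo-intersection is unavailable; the suitability hypothesis on $\mathcal{U}$ is precisely what permits a Player-II-style diagonalization to locate safe indices above $\min(I_{k-1})$ at each stage while preserving enough $\mathcal{U}$-mass in the remaining reservoir.
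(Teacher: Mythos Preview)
Your approach can be made to work, but it takes a much harder route than the paper. The paper's argument is entirely elementary and invokes neither suitability nor any game. The key idea you are missing: instead of choosing index-pieces $I_k$ that \emph{avoid} the removed index-sets $F^p_{\alpha_n}$ (which is what forces you to worry about the $\mathcal F$-largeness in (ii)), one simply \emph{absorbs} each $F^p_{\alpha_n}$ into an early block. Since $\bigcup_{l\in F^p_{\alpha_n}} E^{p(0)}_l=\dom(\vec E_{p(0)})\setminus\dom(\vec E_{p(\alpha_n)})\in\mathcal F^*$, finite unions of these sets remain in $\mathcal F^*$. Setting $A_n=\{0,\dots,n\}\cup\bigcup_{m\le n}F^p_{\alpha_{m+1}}$ and $E_n=\bigcup_{l\in A_n\setminus A_{n-1}}E^{p(0)}_l$ produces an $\mathcal F$-partition $\vec E$ with $\dom(\vec E)=\dom(\vec E_{p(0)})$, so your condition (ii) is automatic with no diagonalization at all. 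The tail $\vec E*n$ uses only indices outside $A_{n-1}\supseteq F^p_{\alpha_n}$, hence $\vec E*n\sqsubseteq\vec E_{p(\alpha_n)}$, and one defines $q$ on $\dom(q)=\dom(p)$ by $q(\alpha_n)=p(\alpha_n)^{(*,\vec E*n)}$, with no fresh coordinates needed.

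One small correction: the finite intersections $\bigcap_{n\le N}S_{\alpha_n}$ are always infinite, since the corresponding set $\bigcup\{E^{p(0)}_l:l\in\bigcap_{n\le N}S_{\alpha_n}\}$ is a finite intersection of $\mathcal F$-large sets and each individual block lies in $\mathcal F^*$. The only genuine obstacle in your route is the \emph{countable} diagonalization behind (ii), and the absorption trick removes that obstacle entirely.
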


\begin{proof}
Let $p\in\mathbb{P}_\kappa(\mathcal{F})$ be a condition. Let $\langle\alpha_n:n\in\omega\rangle$ be an enumeration of $dom(p)$ such that $\alpha_0=0$. Construct a sequence $\langle A_n:n\in\omega\rangle$ of subsets of $\omega$ such that:
\begin{enumerate}
    \item $\forall n\in\omega$, $A_n\varsubsetneq
 A_{n+1}$.
    \item $\forall n\in\omega$, $N(\alpha_{n+1},p)\subseteq A_n$.
    \item $\forall n\in\omega$, $\bigcup_{k\in A_n}E_k^p\in\mathcal{F}^*$.
    \item $\omega=\bigcup_{n\in\omega}A_n$.
\end{enumerate}
Now define $E_0=\bigcup_{k\in A_0}E_k^p$, and for $n>0$, $E_n=\bigcup_{k\in A_n\setminus A_{n-1}}E_k^p$. Let $\vec{E}=\langle E_n:n\in\omega\rangle$, and note that $\vec{E}$ is an $\mathcal{F}$-partition such that $dom(\vec{E})=dom(\vec{E}_{p(0)})$. Also note that for each $n\in\omega$, $\vec{E}*n\sqsubseteq \vec{E}_{p(0)}*N(\alpha_n,p)=\vec{E}_{p(\alpha_n)}$.

Now define a condtion $q\leq p$ as follows:
\begin{enumerate}
    \item $dom(q)=dom(p)$.
    \item For all $n\in\omega$, $q(\alpha_n)=p(\alpha_n)^{(*,\vec{E}*n)}$.
\end{enumerate}
It is clear from the construction that $q$ is a simple condition and $q\leq p$.
\end{proof}

\begin{lemma}\label{n_S_simple_extension}
Let $p\in\mathbb{P}_\kappa(\mathcal{U})$ be a simple condition and $\langle \alpha_i:i\in\omega\rangle$ be the canonical enumeration of $dom(p)$. Fix $n\in\omega$ and let $\vec{S}=\langle \alpha_i:i\leq n\rangle$. Let $\mathcal{A}$ be a maximal antichain. Then, there is a simple condition $q\leq_n^{\vec{S}} p$ such that for all $\vec{\nu}\in\Pi_{j<n+1}2^{n+1-j}$, $q{[\vec{S},\vec{\nu}]}$ extends some condition in $\mathcal{A}$
\end{lemma}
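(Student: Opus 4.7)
The plan is to combine Lemmas \ref{n_S_extension} and \ref{simple_conditions_dense}. First I would apply Lemma \ref{n_S_extension} to the simple condition $p$, the tuple $\vec{S}$, the integer $n$, and the antichain $\mathcal{A}$, obtaining a condition $q' \leq_n^{\vec{S}} p$ such that for every $\vec{\nu}\in\prod_{j<n+1}2^{n+1-j}$, the condition $q'[\vec{S},\vec{\nu}]$ extends some element of $\mathcal{A}$. The condition $q'$ need not be simple, and the remaining task is to produce a simple $q\leq q'$ that inherits both properties.

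To construct $q$, I would mimic the coarsening argument in the proof of Lemma \ref{simple_conditions_dense} applied to $q'$, with the crucial modification that the initial segment of the sequence $A_0\subsetneq A_1\subsetneq\cdots$ is pre-set by $A_i=\{0,1,\ldots,i\}$ for $i\leq n$. This freezes the first $n+1$ cells of the new base partition $\vec{E}_{q(0)}$ to be exactly $E_0^{p(0)},E_1^{p(0)},\ldots,E_n^{p(0)}$. Consequently, for each $j\leq n$, the first $n-j+1$ canonical representatives of $\vec{E}_{q(\alpha_j)}=\vec{E}_{q(0)}*j$ agree with those of $\vec{E}_{p(\alpha_j)}=\vec{E}_{p(0)}*j$, giving $q(\alpha_j)\leq_{n-j}p(\alpha_j)$ and hence $q\leq_n^{\vec{S}}p$. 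For $k>n$, the sets $A_k$ are chosen as in Lemma \ref{simple_conditions_dense}, ensuring $N(q',\alpha_{k+1})\subseteq A_k$ so that $q(\alpha_{k+1})\leq q'(\alpha_{k+1})$ and the cells remain $\mathcal{F}$-positive.

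The antichain-extension property is inherited automatically: for each $\vec{\nu}$ and each $j\leq n$, the operation $q(\alpha_j)^{[\vec{\nu}(j)]}$ acts only on the first $n+1-j$ cells of $\vec{E}_{q(\alpha_j)}$, and these coincide with those of $\vec{E}_{q'(\alpha_j)}$ by the freezing described above. Hence $q[\vec{S},\vec{\nu}]\leq q'[\vec{S},\vec{\nu}]$ for every $\vec{\nu}$, and the latter already extends some condition of $\mathcal{A}$.

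The main subtlety is guaranteeing that the simple refinement $q$ is simultaneously an extension of $q'(\alpha_j)$ at all coordinates; the tail cells of $\vec{E}_{q(0)}$ (those indexed by $A_k\setminus A_{k-1}$ for $k>n$) must avoid the indices in $N(q',\alpha_j)\cap[n+1,\infty)$ for every $j\leq n$. This is handled by a preliminary strengthening of $q'$ that only coarsens its tail, discarding those finitely many ``obstructing'' families of indices; since $\mathcal{F}$ is closed under finite intersections and the tails of $\vec{E}_{q'(\alpha_j)}$ for $j\leq n$ all have domain in $\mathcal{F}$, the resulting strengthening exists and preserves both the relation $\leq_n^{\vec{S}}p$ (as the freezing is untouched) and the antichain-extension property (as any extension below $q'[\vec{S},\vec{\nu}]$ still extends the same element of $\mathcal{A}$).
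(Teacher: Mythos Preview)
Your proposal is correct and follows exactly the same approach as the paper's proof: apply Lemma~\ref{n_S_extension} to get $q'\leq_n^{\vec S}p$ with the antichain-extension property, then run a variant of the construction in Lemma~\ref{simple_conditions_dense} to produce a simple $q\leq_n^{\vec S}q'$. The paper's proof is only two sentences and leaves the ``similar construction'' unspecified; your write-up fills in precisely the details (freezing the first $n+1$ cells, handling the tail obstruction by a preliminary coarsening) that the paper implicitly relies on.
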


\begin{proof}
Let $p,n,\vec{S}$ as stated. First, apply Lemma \ref{n_S_extension} to obtain a condition $q'\leq_{n}^{\vec{S}}p$ such that for all $\vec{\nu}\in\Pi_{j<n+1}2^{n+1-j}$, $q'[\vec{S},\vec{\nu}]$ extends some condition in $\mathcal{A}$. Now apply a similar construction to that of Lemma \ref{simple_conditions_dense} to construct a simple condition $q\leq_n^{\vec{S}}q'$.
\end{proof}

\begin{thm}\label{pp_iterated}
Any countable support iteration of proper forcings having the Sacks property has the Sacks property.    
\end{thm}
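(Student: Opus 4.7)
The plan is to prove this by induction on the length $\alpha$ of the iteration $\mathbb{P}_\alpha$, following the Shelah-style preservation-theorem machinery for countable support iterations of proper forcings. Recall that $\mathbb{P}$ has the Sacks property if for every $\mathbb{P}$-name $\dot{f}$ for an element of $\omega^\omega$ forced to be bounded by some ground model $g\in\omega^\omega$, and every $h\in\omega^\omega$ tending to infinity, there is a slalom $\vec{S}=\langle S_n:n\in\omega\rangle\in V$ with $\vert S_n\vert\leq h(n)$ such that $\Vdash(\forall n)(\dot{f}(n)\in S_n)$.

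The successor case is routine: if $\mathbb{P}_{\alpha+1}=\mathbb{P}_\alpha * \dot{\mathbb{Q}}_\alpha$ and both factors have the Sacks property, one first applies the Sacks property of $\dot{\mathbb{Q}}_\alpha$ in $V^{\mathbb{P}_\alpha}$ to $\dot{f}$ with auxiliary growth function $h'(n)=\lfloor\sqrt{h(n)}\rfloor$ to obtain a $\mathbb{P}_\alpha$-name $\dot{\vec{S}}$ for a slalom with $\vert \dot{S}_n\vert\leq h'(n)$. Coding $\dot{\vec{S}}$ as a bounded function in $V^{\mathbb{P}_\alpha}$ and applying the Sacks property of $\mathbb{P}_\alpha$ with growth bound $h'$ produces a ground model slalom-of-slaloms, whose levelwise union is a single ground model slalom of size at most $h'(n)^2\leq h(n)$ capturing $\dot{f}$. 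Limit stages $\delta$ of uncountable cofinality reduce to earlier stages: in a countable support iteration of proper forcings, any $\mathbb{P}_\delta$-name for a bounded function in $\omega^\omega$ is, by the standard reflection argument, equivalent to a $\mathbb{P}_\alpha$-name for some $\alpha<\delta$, so the inductive hypothesis at $\alpha$ applies.

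The main obstacle is the countable cofinality limit case. Fix $\delta$ with $\mathrm{cf}(\delta)=\omega$, a name $\dot{f}$ for an element of $\omega^\omega$ bounded by some $g\in V\cap\omega^\omega$, a growth function $h$, and a condition $p\in\mathbb{P}_\delta$. I would pick an increasing cofinal sequence $\langle\alpha_n:n\in\omega\rangle$ in $\delta$ and a countable $M\prec H(\theta)$ containing the iteration and all relevant parameters, and then build a fusion sequence $p=p_0\geq p_1\geq\cdots$ in which each $p_n$ is $(M,\mathbb{P}_{\alpha_n})$-generic, $p_{n+1}\restriction\alpha_n=p_n\restriction\alpha_n$, and at stage $n$ the possibilities for $\dot{f}(n)$ are shrunk to a ground model set $S_n$ with $\vert S_n\vert\leq h(n)$. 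This shrinking is done by applying the inductive Sacks property at $\mathbb{P}_{\alpha_n}$ to a name for the set of possible values of $\dot{f}(n)$ below $p_{n-1}$, combined with the Sacks property forced on the tail $\mathbb{P}_{\alpha_n,\delta}$. The fusion limit $p_\omega$ exists by the master condition construction for proper countable support iterations, and it forces $\dot{f}(n)\in S_n$ for all $n$, yielding the desired slalom.

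The delicate part is the bookkeeping: one must apply the Sacks property at each step with a sufficiently slow-growing auxiliary function so that the growth accumulated across all stages respects the global bound $h$, while simultaneously ensuring that the genericity requirements at each $\alpha_n$ are met without reintroducing too many possibilities for $\dot{f}(n)$. This is the content of Shelah's general preservation theorem for Sacks-type properties, and the proof of the present statement is a standard instance of that framework applied to our setting.
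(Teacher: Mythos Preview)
The paper does not actually give a proof of this theorem: it is stated without argument, as a known preservation result from the literature on proper forcing (essentially Shelah; see also Bartoszy\'nski--Judah). So there is no ``paper's own proof'' to compare your sketch against.

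Your outline follows the standard route and is broadly correct in structure: two-step composition for successors, reflection at uncountable-cofinality limits, and a fusion/bookkeeping argument at countable-cofinality limits. One slip: the property you recall---``$\dot f$ forced to be bounded by some ground model $g$''---is the \emph{Laver} property, not the Sacks property. The Sacks property applies to \emph{all} $\dot f\in\omega^\omega$, with no boundedness hypothesis. The distinction matters because, for unbounded $\dot f$, you cannot code the intermediate slalom $\dot{\vec S}$ as a bounded function before feeding it back through the first factor's Sacks property; you must instead use that the Sacks property already handles arbitrary reals. The fix is routine (encode the slalom directly as an element of $\omega^\omega$ and apply Sacks property to that), but as written your successor step does not quite go through.
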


The construction in the next lemma makes use of a bookkeeping device. Let $\{P_n:n\in\omega\}$ be a partition (which is intended to be fixed from now on) of $\omega$ into infinite sets such that for all $n\in\omega$, $n+2\subseteq\bigcup_{k\leq n}P_k$, so $\{0,1\}\subseteq P_0$. When recursively constructing a decreasing sequence of conditions $\langle p_n:n\in\omega\rangle$, we associate a bookkeeping device $\langle\varphi_n:n\in\omega\rangle$ in the following way:
\begin{enumerate}
    \item For all $n\in\omega$, $\varphi_n:\bigcup_{k\leq n}P_k\to dom(p_n) $ is a bijection.
    \item For all $n\in\omega$, $\varphi_{n+1}\setminus \varphi_n$ is a bijection between $dom(\varphi_{n+1})\setminus dom(\varphi_n)$ and $dom(p_{n+1})\setminus dom(p_n)$.
\end{enumerate}

Note that we assume that $dom(p_{n+1})\setminus dom(p_n)$ is infinite for all $n\in\omega$.

\begin{thm}\label{P_k_has_sacks}
Let $\mathcal{F}$ be a suitable filter. Then $\mathbb{P}_\kappa(\mathcal{F})$ is proper and has the Sacks property.
\end{thm}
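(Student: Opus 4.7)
The plan is to establish properness and the Sacks property simultaneously through a fusion argument built on the bookkeeping device described above. Fix a countable elementary submodel $\mathcal{M} \prec H(\theta)$ containing $\mathcal{F}$, $\kappa$, $\mathbb{P}_\kappa(\mathcal{F})$ and any given $p \in \mathbb{P}_\kappa(\mathcal{F}) \cap \mathcal{M}$, and enumerate inside $\mathcal{M}$ the maximal antichains $\langle \mathcal{A}_n : n \in \omega \rangle$ of $\mathbb{P}_\kappa(\mathcal{F})$ together with the $\mathbb{P}_\kappa(\mathcal{F})$-names $\langle \dot{f}_n : n \in \omega \rangle$ for functions $\omega \to \omega$ bounded by elements of $V$. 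By Lemma \ref{simple_conditions_dense} we may assume at the outset that $p$ is simple.

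Next I would recursively construct a decreasing chain $\langle p_n : n \in \omega \rangle$ of simple conditions in $\mathcal{M}$ together with the bookkeeping $\langle \varphi_n : n \in \omega \rangle$, setting $\vec{S}_n = \langle \varphi_n(i) : i \leq n \rangle$, so that $p_{n+1} \leq_n^{\vec{S}_n} p_n$ and, for every $\vec{\nu} \in \prod_{j \leq n} 2^{n+1-j}$, the condition $p_{n+1}[\vec{S}_n, \vec{\nu}]$ extends some element of $\mathcal{A}_n$ and decides the value of $\dot{f}_m(n)$ for each $m \leq n$. Each such step is a finite iteration of Lemma \ref{n_S_simple_extension} (once for the antichain and once for each decision) followed by Lemma \ref{simple_conditions_dense} to restore simplicity; elementarity of $\mathcal{M}$ keeps the construction inside $\mathcal{M}$.

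The hard part is securing a lower bound $p_\omega$ for this sequence. By the bookkeeping, $dom(p_\omega) := \bigcup_n dom(p_n)$ is fully enumerated, and at every coordinate $\alpha$ the partitions $\vec{E}_{p_n(\alpha)}$ and the functions $h_{p_n(\alpha)}$ stabilize on arbitrarily long initial segments, producing a candidate limit partition $\vec{E}_\omega$ at coordinate $0$ and, via the simple structure, at every other coordinate. The only thing that can go wrong is that $dom(\vec{E}_\omega) \notin \mathcal{F}$. This is where suitability of $\mathcal{F}$ is crucial: I would encode the choice of each freshly committed piece $E_{n+1}^{p_{n+1}(0)}$ (together with the fresh tail partition) as Player II's move in the game $\mathcal{G}_{\mathcal{M}}^{\mathcal{P}}(\mathcal{F})$, while the constraints of extending elements of $\mathcal{A}_n$ and deciding the $\dot{f}_m(n)$ are packaged as a strategy for Player I in the same game. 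The last proposition of Section \ref{suitable_filters} says that this strategy cannot be winning, so there is a run in which Player I follows it but Player II defeats it, and this run yields a fusion sequence whose limit partition is a genuine $\mathcal{F}$-partition, hence $p_\omega \in \mathbb{P}_\kappa(\mathcal{F})$.

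With $p_\omega$ in hand the two conclusions follow routinely. Any $q \leq p_\omega$ selects a pattern $\vec{\nu}$ on $\vec{S}_n$, and by the construction $q$ is compatible with an element of $\mathcal{A}_n \cap \mathcal{M}$, so $p_\omega$ is $(\mathcal{M}, \mathbb{P}_\kappa(\mathcal{F}))$-generic and properness follows. For the Sacks property, the ground-model slalom
\begin{equation*}
F_m(n) = \bigl\{ a \in \omega : \exists \vec{\nu}\ \ p_{n+1}[\vec{S}_n, \vec{\nu}] \Vdash \dot{f}_m(n) = a \bigr\},\qquad n \geq m,
\end{equation*}
has size at most $\prod_{j \leq n} 2^{n+1-j} = 2^{(n+1)(n+2)/2}$, and $p_\omega$ forces $\dot{f}_m(n) \in F_m(n)$ for all $n \geq m$; a standard reindexing of the bookkeeping then delivers the usual Sacks-property bound. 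The game-theoretic step ensuring $dom(\vec{E}_\omega) \in \mathcal{F}$ is by a clear margin the main technical obstacle; the rest is combinatorial bookkeeping resting on Lemmas \ref{n_S_simple_extension} and \ref{simple_conditions_dense}.
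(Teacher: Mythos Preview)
Your proposal is correct and follows essentially the same route as the paper: a fusion along $\leq_n^{\vec{S}_n}$ using the bookkeeping device and Lemma~\ref{n_S_simple_extension}, with the game $\mathcal{G}_{\mathcal{M}}^{\mathcal{P}}(\mathcal{F})$ invoked exactly to guarantee that the limit partition at coordinate $0$ has domain in $\mathcal{F}$. One small clarification: in the paper's interleaving it is Player~I's move that is determined by the construction (Player~I plays the tail $\vec{E}_{p_{n-1}(0)}*\vartheta_{n-1}$), while Player~II's arbitrary reply $\vec{E}_n^1$ feeds back into the definition of $p_n$ --- the ``freshly committed piece'' is the difference $dom(\vec{E}_n^0)\setminus dom(\vec{E}_n^1)$ (plus one old block), not Player~II's move itself --- but this is only a matter of how the two processes are synchronized, and your account captures the essential mechanism.
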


\begin{proof}
We prove both properness and Sacks property at the same time. Let $\mathcal{M}\prec H(\theta)$ be a countable elementary submodel, where $\theta$ is big enough, and such that $\mathcal{F},\kappa,\mathbb{P}_\kappa(\mathcal{F})\in\mathcal{M}$. Let $\dot{f}\in\mathcal{M}$ and a simple condition $p\in\mathbb{P}_\kappa(\mathcal{F})\cap \mathcal{M}$ be such that $p\Vdash\dot{f}\in\omega^\omega$. Let $\langle\dot{\alpha}_n:n\in\omega\rangle$ be an enumeration of all the $\mathbb{P}_\kappa(\mathcal{F})$-names in $\mathcal{M}$ for ordinals. For each $n\in\omega$, let $\mathcal{A}_n\in\mathcal{M}$ be a maximal antichain
which decides the value of $\dot{f}(n)$, and $\mathcal{B}_n\in\mathcal{M}$ a maximal antichain which decides the value of $\dot{\alpha}_n$.

We give an strategy to Player I in the game $\mathcal{G}_{\mathcal{M}}^\mathcal{P}(\mathcal{F})$, on the side we construct sequences $\langle p_n, \vartheta_n,\vec{S}_n:n\in\omega\rangle$ and a bookkeeping $\langle\varphi_n:n\in\omega\rangle$ such that:
\begin{enumerate}
    
    \item $p_0\leq p$.
    \item For all $n\in\omega$, $p_n\in\mathcal{M}$ is a simple condition.
    \item For each $n\in\omega$ and for all $\vec{\nu}\in \Pi_{k\leq n}2^{n-k+1}$, the condition $p_{n}[\vec{S}_n,\vec{\nu}]$ extends one condition in $\mathcal{A}_n$ and one condition in $\mathcal{B}_n$, both inside $\mathcal{M}$.
    \item For all $n\in\omega$, $p_{n+1}\leq^{\vec{S}_n}_n p_n$.
    \item $\bigcup_{n\in\omega}dom(p_n)=\bigcup_{n\in\omega}rng(\vec{S}_n)$.
    \item $\varphi_0:P_0\to dom(p_0)$ is a bijection such that $\varphi_0(0)=0$.
    \item $\vec{S}_0=\langle 0\rangle$ and $\vec{S}_1=\langle0,\varphi_0(1)\rangle$.
    \item For all $n\in\omega$, $dom(p_{n+1})\setminus dom(p_n)$ is infinite.
    \item For all $n\in\omega$, once $p_{n+1}$ is defined, $\varphi_{n+1}:\bigcup_{k\leq n+1}P_k\to dom(p_{n+1})$ is a bijection extending $\varphi_n$.
    \item For all $n\in\omega$, $\beta_n=\varphi_n(n)$ and $\vec{S}_n=\langle\beta_0,\ldots,\beta_n\rangle$.
    \item For $n>0$, once $\varphi_{n}$ is defined, define $\vec{S}_{n+1}={\vec{S}_{n}}^\frown\varphi_{n}(n+1)$.
    \item $\langle\vartheta_n:n\in\omega\rangle$ is an increasing sequence of natural numbers.
    \item For all $n\in\omega$, $\vec{E}_{p_n(0)}*\vartheta_n\sqsubseteq\vec{E}_{p_n(\beta_{n+1})}$.
    \item At round $n\in\omega$ of the game $\mathcal{G}_{\mathcal{M}}(\mathcal{F})$, Player I plays $\vec{E}_n^0=\vec{E}_{p_{n-1}(0)}*\vartheta_{n-1}$(define $p_{-1}=p$ and $\vartheta_{-1}=0$).
    \item If at round $n$ of the game $\mathcal{G}_{\mathcal{M}}(\mathcal{F})$ Players I and II have played $\vec{E}_n^0$ and $\vec{E}_n^1$, respectively, then $\vec{E}^{p_n(0)}_n\supseteq dom(\vec{E}_n^0)\setminus dom(\vec{E}_n^1)$.
\end{enumerate}

Clearly, if we succeed to construct such sequence having a lower bound $p_\omega$, clause (3) above implies $p_\omega$ is an $(\mathcal{M},\mathbb{P}_\kappa(\mathcal{F}))$-generic condition and also forces that $\dot{f}(n)$ is contained in a finite set of cardinality $h(n)=\Pi_{k\leq n}2^{n-k+1}$. Since $\mathcal{M}$ was arbitrary and we can capture any name for a real inside a countable elementary submodel, the Sacks property follows.

We leave it to the reader to check that all the objects constructed below, whenever it is required, can be found inside $\mathcal{M}$ (since everything Players I and II play should be in $\mathcal{M}$). We start with the definition of $p_0$. We can assume $p$ is a simple condition. Then we make Player I to play $\vec{E}_0^0=\vec{E}_{p(0)}*1$. Then Player II answer with $\vec{E}_0^1\sqsubseteq\vec{E}_0^0$. Now, define an $\mathcal{F}$-partition $\vec{F}_0$ as follows:
\begin{enumerate}
    \item $F^0_0=E_{0}^{p}\cup (dom(\vec{E}_0^0)\setminus dom(\vec{E}_0^1))$.
    \item $\vec{F}_0=\{F_0^0\}\cup\vec{E}^1_0$.
\end{enumerate}
Let $\langle\alpha_k^p:k\in\omega\rangle$ be the canonical enumeration of $dom(p)$. Note that for all $k\in\omega$, 
\begin{equation*}
    \vec{F}_0*k\sqsubseteq\vec{E}_{p(\alpha_k^p)}=\vec{E}_{p(0)}*k
\end{equation*}
So we can define a condition $r_0$ as follows:
\begin{enumerate}
    \item $\vec{E}_{r_0(0)}=\vec{F}_0$.
    \item $dom(r_0)=dom(p)$.
    \item For all $k\in\omega$, $r_0(\alpha_k^p)=p(\alpha_k^p)^{(*,\vec{F}_0*k)}$.
\end{enumerate}
Thus, we have $E_0^{r_0}=F_0^0=E_{0}^{p}\cup (dom(\vec{E}_0^0)\setminus dom(\vec{E}_0^1))$. Now, by two consecutive applications of Lemma \ref{n_S_simple_extension} with $\vec{S}_0=\langle 0\rangle$ and $n=0$, we find a simple condition $p_0\leq_0^{\vec{S}_0} r_0$ such that for all $s\in 2^1$, the condition $p_0[\vec{S}_0,s]$ extends some condition in $\mathcal{A}_0$ and some condition in $\mathcal{B}_0$. Finally define the corresponding function $\varphi_0$ and make $\vec{S}_1={\vec{S}_0}^\frown\varphi_0(1)=\langle\beta_0,\beta_1\rangle$. Now, let $\vartheta_0\in\omega$ be big enough so $\vec{E}_{p_0(0)}*\vartheta_0\sqsubseteq \vec{E}_{p_0(\beta_1)}$. Note that $E_{\vartheta_0}^{p_0}\in\vec{E}_{p_0(\beta_1)}$.

Suppose $p_n$, $\vec{S}_{n}$, $\vec{S}_{n+1}$, $\vartheta_n$ have been defined, so $\vec{S}_{n+1}=\langle\beta_0,\ldots,\beta_{n+1}\rangle$, and $\vec{E}_{p_n(0)}*\vartheta_n\sqsubseteq\vec{E}_{p_n(\beta_{n+1})}$. Then we let Player I to play $\vec{E}_{n+1}^0=\vec{E}_{p_n(0)}*(\vartheta_n+1)$, and Player II answers with $\vec{E}_{n+1}^1\sqsubseteq\vec{E}_{n+1}^0$. Define an $\mathcal{F}$-partition as follows:
\begin{enumerate}
    \item $F_0^{n+1}=E_{\vartheta_n}^{p_n}\cup(dom(\vec{E}_{n+1}^0)\setminus dom(\vec{E}_{n+1}^1))$.
    \item $\vec{F}_{n+1}=\{F_0^{n+1}\}\cup\vec{E}_{n+1}^1$.
\end{enumerate}

Let $\langle\alpha_k^{n}:k\in\omega\rangle$ be the canonical enumeration of $dom(p_n)$. Note that we have $\vec{S}_{n}=\langle \alpha_k^n:k\leq n\rangle$. Let $b_{n+1}\in\omega$ be such that $\beta_{n+1}=\alpha_{b_{n+1}}^n$; note that $b_{n+1}\leq\vartheta_{n}$. Note that for all $l>b_{n+1}$, $\vec{F}_{n+1}*(l-b_{n+1})\sqsubseteq\vec{E}_{p_n(\alpha_l^n)}$. We also have that for each $l\in[n+1,b_{n+1}]$, $\vec{F}_{n+1}\sqsubseteq\vec{E}_{p_n(\alpha_{l}^n)}$. Let $\langle\gamma_n:n\in\omega\rangle$ be an enumeration of $dom(p_n)\setminus\{\beta_0,\ldots,\beta_n\}$ and such that:
\begin{enumerate}
    \item $\gamma_0=\beta_{n+1}$.
    \item $\{\alpha_l^n:l\in[n+1,b_{n+1})\}=\{\gamma_{l}:l\in[1,b_{n+1}-n+1)\}$.
    \item $\{\alpha_{l}^n:l>b_{n+1}\}=\{\gamma_l:l>b_{n+1}-n+1\}$.
\end{enumerate}

Note that for each $l\in\omega$, $\vec{F}_{n+1}*l\sqsubseteq \vec{E}_{p_n(\gamma_l)}$. Now define a condition $r_{n+1}$ as follows:
\begin{enumerate}
    \item $\vec{E}_{r_{n+1}(0)}=(\vec{E}_{p_n}\upharpoonright (n+1))\cup \vec{F}_{n+1}$.
    \item $dom(r_{n+1})=dom(p_n)$.
    
    \item For all $j\leq n$, $r_{n+1}(\beta_j)=p_n(\beta_j)^{(n-j,\vec{F}_{n+1})}$.
    
    \item For $l\in\omega$, $r_{n+1}(\gamma_l)=p(\gamma_l)^{(*,\vec{F}_{n+1}*l)}$.
\end{enumerate}

Note that we have $r_{n+1}(\beta_{n+1})=r_{n+1}(\gamma_0)=p_n(\beta_{n+1})^{(*,\vec{F}_{n+1})}$. Moreover, $r_{n+1}$ is a simple condition such that $r_{n+1}\leq_n^{\vec{S}_n}p_n$. Then we have that:
\begin{enumerate}
    \item For each $j\leq n$, $E^{r_{n+1}}_j=E_j^{p_n}$.
    \item $E^{r_{n+1}}_{n+1}=F_{0}^{n+1}=\vec{E}_{\vartheta_n}^{p_n}\cup(dom(\vec{E}_{n+1}^0)\setminus dom(\vec{E}_{n+1}^1))$.
\end{enumerate}
Now, by two consecutive applications of Lemma \ref{n_S_simple_extension}, we can find a simple 
 condition $p_{n+1}\leq_{n+1}^{S_{n+1}}r_{n+1}$ such that for all $\vec{\nu}\in\Pi_{j\leq n+1}2^{n+1-j+1}$, the condition $p_{n+1}[\vec{S}_{n+1},\vec{\nu}]$ extends some condition in $\mathcal{A}_{n+1}$ and also extends some condition in $\mathcal{B}_{n+1}$. Note that we have $p_{n+1}\leq_n^{\vec{S}_n}p_n$. Now make the corresponding definition of $\varphi_{n+2}$. Also define ${\vec{S}_{n+2}=\vec{S}_{n+1}}^\frown \beta_{n+2}$, where $\beta_{n+2}=\varphi_{n+1}(n+2)$. Now, let $\vartheta_{n+1}\in\omega$ be big enough so $\vec{E}_{p_{n+1}(0)}*\vartheta_{n+1}\sqsubseteq\vec{E}_{p_{n+1}(\beta_{n+2})}$. This finishes the construction.

Since the previous strategy given to Player I is not a winning strategy, there is a run of the game in which Player II wins. Let $\langle (p_n,\vec{S}_n):n\in\omega\rangle$ be the sequence constructed along such game. Then we have,
\begin{enumerate}
    \item $\bigcup_{n\in\omega}dom(p_n)=\bigcup_{n\in\omega}rng(\vec{S}_n)$.
    \item $\langle\beta_n:n\in\omega\rangle=\bigcup_{n\in\omega}\vec{S}_n$
\end{enumerate}
Then, define a condition $p_\omega$ as follows:
\begin{enumerate}
    \item $dom(p_\omega)=\{\beta_n:n\in\omega\}$.
    \item $\vec{E}_{p_\omega(0)}=\langle E_n^{p_n}:n\in\omega\rangle=\langle F_0^n:n\in\omega\rangle$.
    \item For all $n\in\omega$, $h_{p_{\omega}(\beta_n)}=\bigcup_{j\in\omega}h_{p_j(\beta_n)}$.
    \item For all $n\in\omega$, $\vec{E}_{p_\omega(\beta_n)}=\vec{E}_{p_\omega(0)}*n$.
\end{enumerate}

It is not hard to see that $p_\omega$ is a condition, and for all $n\in\omega$, $p_\omega\leq_n^{\vec{S}_n}p_n$. Thus, $p_\omega$ is the required condition.
\end{proof}

The next lemma is easy to prove.

\begin{lemma}
Let $p\in\mathbb{P}_\kappa(\mathcal{F})$ be a condition. Then, for each $n\in\omega$ and $\alpha\in\kappa$, $p\Vdash\dot{\varphi}(n)(\alpha)=1$ if and only if $h_{p(\alpha)}(n)=1$.    
\end{lemma}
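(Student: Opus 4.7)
The plan is to verify the iff by unpacking the definitions of $\dot\varphi$, $\dot H_\alpha$, and the order on $\mathbb{P}_0(\mathcal{U})$, with the two directions proved by fundamentally different styles of argument: the backward implication is a direct consequence of the monotonicity built into the ordering, while the forward implication is by contrapositive, constructing explicit extensions that kill the forced value.

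For the backward direction I would argue as follows. The assumption $h_{p(\alpha)}(n)=1$ presupposes $\alpha\in\dom(p)$ and $n\in\omega\setminus\dom(\vec{E}_{p(\alpha)})$. Clause (2) of the definition of the order on $\mathbb{P}_0(\mathcal{U})$ requires $h_{p(\alpha)}\subseteq h_{q(\alpha)}$ whenever $q\leq p$ and $\alpha\in\dom(q)$. Hence every generic $\dot G\ni p$ has $\dot H_\alpha(n)=h_{q(\alpha)}(n)=1$ for any $q\in\dot G$ with $\alpha\in\dom(q)$, so $n\in\dot D^\alpha_1$ and $\dot\varphi(n)(\alpha)=1$ is forced by $p$.

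For the forward direction I would prove the contrapositive: if $h_{p(\alpha)}(n)\neq 1$, I will build $q\leq p$ with $q\Vdash\dot\varphi(n)(\alpha)=0$, which contradicts $p\Vdash\dot\varphi(n)(\alpha)=1$. There are three cases. (i) If $\alpha\in\dom(p)$, $n\in\dom(h_{p(\alpha)})$ and $h_{p(\alpha)}(n)=0$, just take $q=p$ and apply the backward direction with the value $0$. (ii) If $\alpha\in\dom(p)$ but $n\in\dom(\vec{E}_{p(\alpha)})$, then $n$ lies in some block $E_{l}^{p(0)}$ with $l\in F_\alpha^p$. Define $q$ by setting $q(\beta)=p(\beta)$ for $\beta\neq\alpha$ and $q(0)=p(0)$, and putting $F_\alpha^q=F_\alpha^p\setminus\{l\}$, $\vec{E}_{q(\alpha)}=\vec{E}_{q(0)}*F_\alpha^q$, with $h_{q(\alpha)}=h_{p(\alpha)}\cup(E_{l}^{p(0)}\times\{0\})$. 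Then $\vec{E}_{q(\alpha)}\sqsubseteq\vec{E}_{p(\alpha)}$, $h_{p(\alpha)}\subseteq h_{q(\alpha)}$ with $h_{q(\alpha)}$ constant on the new block, and the coherence requirement $\vec{E}_{q(\alpha)}=\vec{E}_{q(0)}*F_\alpha^q$ holds by construction. Hence $q\leq p$ and $h_{q(\alpha)}(n)=0$. (iii) If $\alpha\notin\dom(p)$, set $q(\beta)=p(\beta)$ for $\beta\in\dom(p)$ and define $q(\alpha)$ by $\vec{E}_{q(\alpha)}=\vec{E}_{p(0)}*\{l\}$, where $l$ is the index of the block of $\vec{E}_{p(0)}$ containing $n$ (if $n\notin\dom(\vec{E}_{p(0)})$ take $F_\alpha^q=\emptyset$), and $h_{q(\alpha)}\equiv 0$ on its domain; then $q$ is a legitimate condition in $\mathbb{P}_\kappa(\mathcal{F})$ with $q\leq p$, and $h_{q(\alpha)}(n)=0$.

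In each of the three cases the backward direction applied to the value $0$ yields $q\Vdash\dot\varphi(n)(\alpha)=0$, contradicting the assumption that $p\Vdash\dot\varphi(n)(\alpha)=1$. I expect no serious obstacle; the only bookkeeping subtlety is verifying in case (ii) that coarsening $\vec{E}_{p(\alpha)}$ to absorb the block $E_{l}^{p(0)}$ into $\dom(h_{q(\alpha)})$ remains compatible with the structural constraint $\vec{E}_{q(\alpha)}=\vec{E}_{q(0)}*F_\alpha^q$, but this is immediate because we leave the $0$-th coordinate unchanged and only remove one index from $F_\alpha^p$.
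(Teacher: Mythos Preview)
Your proof is correct in approach and essentially complete; the paper itself omits the argument as ``easy to prove,'' and what you have written is exactly the natural verification from the definitions. There is one notational slip in case (ii): since $\vec{E}_{p(\alpha)}=\vec{E}_{p(0)}*F_\alpha^p=\{E_k^{p(0)}:k\notin F_\alpha^p\}$, the block $E_l^{p(0)}$ containing $n$ has $l\notin F_\alpha^p$, not $l\in F_\alpha^p$, and to remove this block you want $F_\alpha^q=F_\alpha^p\cup\{l\}$ rather than $F_\alpha^p\setminus\{l\}$. With that correction the construction goes through exactly as you describe.
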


\begin{lemma}\label{nwd_codes_ground_model_avoided}
Let $\mathcal{F}$ be a suitable filter. Then $\mathbb{P}_\kappa(\mathcal{F})$ forces that for all $\vec{\nu}\in\mathcal{C}_\kappa\cap V$, $\dot{\varphi}^{-1}[\dot{X}_{\vec\nu}]\in\mathcal{F}^*$.
\end{lemma}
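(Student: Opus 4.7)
The plan is to show: for every $p \in \mathbb{P}_\kappa(\mathcal{F})$ and every $\vec{\nu} = \langle\nu_n : n\in\omega\rangle \in \mathcal{C}_\kappa\cap V$, there is an extension $q\leq p$ together with a ground-model $B\in\mathcal{F}$ forced by $q$ to lie in $\omega\setminus\dot{\varphi}^{-1}[\dot{X}_{\vec{\nu}}]$; this yields $\dot{\varphi}^{-1}[\dot{X}_{\vec{\nu}}]\in\mathcal{F}^*$. The strategy is to engineer $q$ so that on cofinitely many blocks of the basic partition $\vec{E}_{q(0)}$ the generic function $\dot{\varphi}$ is forced to extend some $\nu_n$ coordinatewise; the union of those blocks is then the desired $B$.

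I will first adjoin the countable set $S = \bigcup_n\dom(\nu_n)$ to the support of $p$: extend $p$ to $p_1$ with $\dom(p_1)=\dom(p)\cup S$ by putting $F^{p_1}_\beta = \emptyset$ and $h_{p_1(\beta)}\equiv 0$ for every new $\beta \in S\setminus\dom(p)$. Using Lemma \ref{simple_conditions_dense} I then pass to a simple $q_0\leq p_1$ whose canonical enumeration $\langle\alpha_j:j\in\omega\rangle$ is chosen so that every index of $\dom(\nu_n)$ is strictly smaller than every index of $\dom(\nu_{n+1})$ for all $n$ — here I use that the $\dom(\nu_n)$'s are finite, pairwise disjoint, and interval-separated in $\kappa$, and that the enumeration in the proof of that lemma may be prescribed freely. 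Write $\vec{E}_{q_0(0)} = \langle E_m:m\in\omega\rangle$ and $j_{\max}(n) = \max\{j:\alpha_j\in\dom(\nu_n)\}$, a strictly increasing sequence; then define the disjoint intervals $I_n = [j_{\max}(n), j_{\max}(n+1)-1]$, whose union $[j_{\max}(0),\infty)$ is cofinite in $\omega$.

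Next I will define $q\leq q_0$ coordinatewise. Keep $\vec{E}_{q(0)} = \vec{E}_{q_0(0)}$ and put $q(\alpha) = q_0(\alpha)$ for $\alpha\notin S$. For $\alpha\in S$ let $n_\alpha$ be the unique $n$ with $\alpha\in\dom(\nu_n)$, set $F^q_\alpha = F^{q_0}_\alpha\cup I_{n_\alpha}$ (a disjoint union of finite sets, since $F^{q_0}_\alpha = \{0,\ldots,j(\alpha)-1\}$ and $I_{n_\alpha}$ sits above $j(\alpha)$), define $\vec{E}_{q(\alpha)} = \vec{E}_{q(0)}*F^q_\alpha$, and extend $h_{q_0(\alpha)}$ to $h_{q(\alpha)}$ by declaring $h_{q(\alpha)}\restriction E_m \equiv \nu_{n_\alpha}(\alpha)$ for each $m\in I_{n_\alpha}$. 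Since each $F^q_\alpha$ is finite and blocks of $\vec{E}_{q_0(0)}$ lie in $\mathcal{F}^*$, $\vec{E}_{q(\alpha)}$ remains an $\mathcal{F}$-partition; the coherence clauses in the definition of $\mathbb{P}_0(\mathcal{F})$ hold because we only assign constant values on blocks that were undecided in $q_0$, so $q\in\mathbb{P}_\kappa(\mathcal{F})$ and $q\leq p$.

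Finally, let $B = \bigcup_{m\geq j_{\max}(0)}E_m$, which equals $\dom(\vec{E}_{q_0(0)})$ minus the finite union $\bigcup_{m<j_{\max}(0)}E_m\in\mathcal{F}^*$, so $B\in\mathcal{F}$. For any $k\in B$ there is a unique $m$ with $k\in E_m$ and a unique $n$ with $m\in I_n$; then $m\in F^q_\alpha$ for every $\alpha\in\dom(\nu_n)$, so $q$ forces $\dot{\varphi}(k)(\alpha) = h_{q(\alpha)}(k) = \nu_n(\alpha)$ for all such $\alpha$, giving $\nu_n\subseteq\dot{\varphi}(k)$ and hence $\dot{\varphi}(k)\notin\dot{X}_{\vec{\nu}}$. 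Thus $q\Vdash B\cap\dot{\varphi}^{-1}[\dot{X}_{\vec{\nu}}]=\emptyset$, and the lemma follows. The only step requiring real care is choosing the canonical enumeration of $q_0$ so that the intervals $I_n$ are disjoint and collectively cofinite; everything else is routine bookkeeping with the definitions.
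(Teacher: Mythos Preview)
Your argument is correct and in fact simpler than the paper's. The paper proves this lemma by giving Player~I a strategy in the game $\mathcal{G}^{\mathcal{P}}(\mathcal{F})$ and using that Player~I has no winning strategy (which is where suitability enters) to produce a new $\mathcal{F}$-partition $\vec{E}_\omega$ whose domain lies in $\mathcal{F}$; the final condition is then built on top of $\vec{E}_\omega$. Your approach bypasses the game entirely: you keep the partition $\vec{E}_{q_0(0)}$ fixed and, for each coordinate $\alpha\in\dom(\nu_n)$, you simply remove the \emph{finite} interval $I_n$ of blocks and assign the value $\nu_n(\alpha)$ there. Because each $F^q_\alpha$ is finite, each $\vec{E}_{q(\alpha)}$ is automatically an $\mathcal{F}$-partition, and the witnessing set $B$ differs from $\dom(\vec{E}_{q_0(0)})\in\mathcal{F}$ only by a finite union of $\mathcal{F}^*$-sets. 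Thus your proof does not use the suitability hypothesis at all --- it works for any filter $\mathcal{F}$ --- which is a genuine strengthening of what the paper records. The paper's game-based argument is presumably written in parallel with the proof of Theorem~\ref{P_k_has_sacks}, where the game really is needed, but for the present lemma your direct construction is both shorter and more informative.

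One small point to patch: your phrasing ``Keep $\vec{E}_{q(0)}=\vec{E}_{q_0(0)}$'' conflicts with the instruction for $\alpha\in S$ in the edge case $0\in\dom(\nu_0)$. This is harmless: simply observe that if $0\in\dom(\nu_0)$ one may pass to the tail $\vec{\nu}'=\langle\nu_n:n\geq 1\rangle\in\mathcal{C}_\kappa$, since $X_{\vec{\nu}}\subseteq X_{\vec{\nu}'}$ and hence $\dot{\varphi}^{-1}[X_{\vec{\nu}'}]\in\mathcal{F}^*$ already gives the conclusion. With that reduction in place, $0\notin S$ and your construction goes through verbatim.
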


\begin{proof}
Let $p\in\mathbb{P}_\kappa(\mathcal{F})$ be a condition and $\vec{\nu}=\langle\nu_n:n\in\omega\rangle\in\mathcal{C}_\kappa$. We can assume that $p$ is a simple condition and that for all $n\in\omega$, $dom(\nu_n)\subseteq dom(p)$. Let $\langle\alpha_n:n\in\omega\rangle$ be the canonical enumeration of $dom(p)$. First note that  for all $k\in\omega$, there are $m\ge k$ and $l\in\omega$ such that $dom(\nu_l)\subseteq\{\alpha_j:j\in[k,m)\}$. We give a strategy to Player I in the game $\mathcal{G}^\mathcal{P}(\mathcal{F})$, and on the side we are going to construct sequences $\langle p_n:n\in\omega\rangle,\langle m_n:n\in\omega\rangle,\langle k_n:n\in\omega\rangle,\langle \vec{E}_n^0,\vec{E}_n^1, E_n^{-1}:n\in\omega\rangle,\langle b_n:n\in\omega\rangle$.

We start by defining $m_0=k_0=0$. Then let $m_1\in\omega$ be such that $dom(\nu_{k_0})\subseteq\{\alpha_j:j\in [m_0,m_1)\}$, and let $b_0\in\omega$ be big enough so for all $j\in[m_0,m_1)$, 
\begin{equation}
\vec{E}_{p(0)}*b_0\sqsubseteq\vec{E}_{p(\alpha_j)}    
\end{equation}
Then let Player I play $\vec{E}_0^0=\vec{E}_{p(0)}*(b_0+1)$. Then Player II answers with $\vec{E}_{0}^1\sqsubseteq\vec{E}_{0}^0$. Define $E_0^{-1}$ as:
\begin{equation*}
E_0^{-1}=E^{p(0)}_{b_0}\cup(dom(\vec{E}_0^0)\setminus dom(\vec{E}_0^1))    
\end{equation*}
Suppose $m_n,m_{n+1},k_n,\vec{E}_n^0,\vec{E}_n^1$ and $E_n^{-1}$ have been defined. Then let $k_{n+1},m_{n+2}\in\omega$ be big enough so $dom(\nu_{k_{n+1}})\subseteq\{\alpha_j:j\in[m_{n+1},m_{n+2})\}$. Note that since we assume $p$ to be a simple condition, there is $b_{n+1}$ such that for all $j\in [m_{n+1},m_{n+2})$,
\begin{equation*}
    \vec{E}_{n}^1*b_{n+1}\subseteq \vec{E}_{p(\alpha_j)}
\end{equation*}
Then we let Player I to play $\vec{E}_{n+1}^0=\vec{E}_{n+1}^{0}*(b_{n+1}+1)$. Then Player II answers with $\vec{E}_{n+1}^1\sqsubseteq\vec{E}_{n+1}^0$. Define $E_{n+1}^{-1}$ as follows:
\begin{equation*}
    E_{n+1}^{-1}=E_{b_{n+1}}^1\cup(dom(\vec{E}_{n+1}^0)\setminus dom(\vec{E}_{n+1}^1))
\end{equation*}

Let $\{E_n^{-1}:n\in\omega\}$ be the sequence constructed along a run of the game in which Player II wins the game. Thus, $\vec{E}_\omega=\{E_{n}^{-1}:n\in\omega\}$ is an $\mathcal{F}$-partition having the following property:
\begin{enumerate}
    \item [$\divideontimes$] For all $j\in\omega$, if $j\in [m_n,m_{n+1})$, then $\vec{E}_\omega* n\sqsubseteq\vec{E}_{p(\alpha_j)}$.
\end{enumerate}
So we can define a condition $p_\omega\leq p$ as follows:
\begin{enumerate}
    \item $dom(p_\omega)=dom(p)$.
    \item For all $j\in\omega$, is $j\in [m_n,m_{n+1})$, $p_\omega(\alpha_j)=p(\alpha_j)^{(*,\vec{E}_\omega*n)}$.
\end{enumerate}
In particular we have that $p_\omega(0)=\vec{E}_\omega$. Note that $p_\omega$ has the following properties:
\begin{enumerate}
    \item For all $j\in\omega$, if $j\in [m_n,m_{n+1})$, then $\vec{E}_{0}^{p_\omega(\alpha_j)}=E_n^{p_\omega(0)}=E_n^{-1}$.
    \item For all $n\in\omega$, $dom(\nu_{k_n})\subseteq \{\alpha_j:j\in [m_n,m_{n+1})\}$.
\end{enumerate}
Thus, we can define another condition $q_\omega\leq p_\omega$ as follows:
\begin{enumerate}
    \item $dom(q_\omega)=dom(p_\omega)$.
    \item For each $\alpha\in dom(q_\omega)\setminus \bigcup_{n\in\omega}dom(\nu_{k_n})$, $q_\omega(\alpha)=p_\omega(\alpha)$.
    \item For each $n\in\omega$, if $\alpha\in dom(\nu_{k_n})$, then:
        \begin{enumerate}
            \item $\vec{E}_{q_\omega(\alpha)}=\vec{E}_{\omega}*(n+1)$.
            \item $h_{q_\omega(\alpha)}\upharpoonright E_n^{-1}\equiv\nu_{k_n}(\alpha)$.
        \end{enumerate}
\end{enumerate}
It is easy to see that for each $l\in E_n^{-1}$, $q_\omega\Vdash \nu_{k_n}\subseteq\dot{\varphi}(l)$. Thus, $q_\omega$ forces $\dot{\varphi}[dom(\vec{E}_\omega)]\cap\dot{X}_{\vec{\nu}}=\emptyset$. Since $dom(\vec{E}_\omega)\in\mathcal{F}$, this implies that $q_\omega$ forces $\dot{\varphi}^{-1}[\dot{X}_{\vec{\nu}}]\in\mathcal{F}^*$.
\end{proof}

Let us recall the following definition from \cite{free_sequences}.

\begin{dfn}
Let $\mathbb{P}$ be a forcing. We say that $\mathbb{P}$ is Cohen preserving, if for any $p\in\mathbb{P}$ and any $\mathbb{P}$-name $\dot{\mathbb{P}}$ such that $p\Vdash\dot{D}\subseteq 2^{<\omega}$ is open dense, there are $q\leq p$ and an open dense set $E\subseteq 2^{<\omega}$ such that $q\Vdash E\subseteq\dot{D}$.
\end{dfn}

Clearly, being Cohen preserving is equivalent to preserving $\{X_{\vec{\nu}}:\vec{\nu}\in\mathcal{C}_{\omega}\}$ (of course, the family of evaluations of the sets of the for $\dot{X}_{\vec{\nu}}$) as a base for the ideal $\mathsf{nwd}$. It turns out that under the additional assumption of properness, Cohen preserving forcings also preserve $\{X_{\vec{\nu}}:\vec{\nu}\in\mathcal{C}_{\kappa}\}$ as a ba se for $\mathsf{nwd}_\divideontimes(\kappa)$.

\begin{lemma}\label{cohen_preserving_kappa}
Let $\mathbb{P}$ be a Cohen preserving proper forcing. Then $\mathbb{P}$, for any $p\in\mathbb{P}$ and $\dot{Y}$ such that $p\Vdash\dot{Y}\in\mathsf{nwd}_\divideontimes(\kappa)$, there are $q\leq p$ and $\vec{\nu}\in\mathcal{C}_\kappa\cap V$ such that $q\Vdash\dot{Y}\subseteq\dot{X}_{\vec{\nu}}$.
\end{lemma}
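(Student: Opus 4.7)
The plan is to reduce to the $2^\omega$-version of Cohen preserving by using properness to confine the names' domains to a countable ground-model set of coordinates. First I would combine the finite union: since $p\Vdash\dot Y\in\mathsf{nwd}_\divideontimes(\kappa)$, there are finitely many $\mathcal{C}_\kappa$-names $\dot{\vec\nu}^1,\ldots,\dot{\vec\nu}^r$ with $p\Vdash\dot Y\subseteq\bigcup_i X_{\dot{\vec\nu}^i}$. Two such names can be interleaved into one: recursively define $\dot\nu_n:=\dot\nu^1_{i_n}\cup\dot\nu^2_{j_n}$ where $i_n,j_n$ are least with $\min(\dom\dot\nu^1_{i_n}\cup\dom\dot\nu^2_{j_n})>\max\dom\dot\nu_{n-1}$ in $\kappa$-order; this works since the domain-maxima of each $\dot{\vec\nu}^i$ tend to infinity in $\kappa$. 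The resulting $\dot{\vec\nu}\in\mathcal{C}_\kappa$ satisfies $X_{\dot{\vec\nu}^i}\subseteq X_{\dot{\vec\nu}}$ for each $i$, so after iterating we may assume $p\Vdash\dot Y\subseteq X_{\dot{\vec\nu}}$ for a single name.

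Next I would invoke properness. Take a countable $M\prec H(\theta)$ with $p,\dot{\vec\nu},\mathbb{P},\kappa\in M$ and pick $q\leq p$ that is $(M,\mathbb{P})$-generic. By $M$-genericity, below $q$ every ordinal in $\bigcup_n\dom\dot\nu_n$ is forced to lie in $A:=M\cap\kappa\in V$: each $\dot\nu_n\in M$ is a nice name for a finite function whose few domain-ordinals are decided by antichains in $M$. Since $\sup A$ either belongs to $A$ or has cofinality $\omega$, fix in $V$ a cofinal $\omega$-sequence $\langle\alpha_k:k\in\omega\rangle\subseteq A$ in $\sup A$ and a bijection $\phi:\omega\to A$ arranged so that $\phi(2k)=\alpha_k$ (with $\phi$ on odd integers enumerating $A\setminus\{\alpha_k:k\in\omega\}$, if nonempty).

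Now translate $\dot{\vec\nu}$ to $\omega$ via $\phi$: set $\dot\nu_n':=\dot\nu_n\circ\phi$, giving a name for a sequence of finite partial functions on $\omega$ with pairwise disjoint supports. The set $X_{\dot{\vec\nu}'}=\{z\in 2^\omega:\forall n\,(\dot\nu_n'\not\subseteq z)\}$ is forced to be nowhere dense in $2^\omega$, because for any $\sigma\in 2^{<\omega}$ one extends $\sigma$ by some $\dot\nu_n'$ with $\dom\dot\nu_n'\cap\dom\sigma=\emptyset$ (as almost all $\dom\dot\nu_n'$ avoid the finite set $\dom\sigma$). By the Cohen preserving hypothesis and the identity $\mathsf{nwd}_\divideontimes(\omega)=\mathsf{nwd}$, there exist $q'\leq q$ and $\vec\rho\in\mathcal{C}_\omega\cap V$ with $q'\Vdash X_{\dot{\vec\nu}'}\subseteq X_{\vec\rho}$. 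Transferring back by $\tilde\rho_k:=\rho_k\circ\phi^{-1}$ yields finite partial functions on $A$ in $V$, and $q'\Vdash\dot Y\subseteq\{x\in 2^\kappa:\forall k\,(\tilde\rho_k\not\subseteq x)\}$.

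The main obstacle is the last step: turning $\vec{\tilde\rho}$ into a member of $\mathcal{C}_\kappa\cap V$, since its domains $\dom\tilde\rho_k$ are pairwise disjoint finite subsets of $A$ but need not satisfy $\max<\min$ in $\kappa$-order. The plan is to extract in $V$ an infinite subsequence $\vec\mu$ of $\vec{\tilde\rho}$ whose domains do satisfy the ordering, and, if needed, to augment each $\mu_k$ by a value at a fresh coordinate in $\kappa\setminus A$ placed above $\max\dom\mu_k$ to rigidify the ordering. The choice of $\phi$ is what makes this possible: for $\vec\rho\in\mathcal{C}_\omega$ we have $\min\dom\rho_k\geq k$, and on the even integers $\phi$ sends large indices to cofinally large elements of $A$, so $\max\dom\tilde\rho_k$ tends cofinally to $\sup A$ along a suitable infinite set of $k$'s, allowing the subsequence to be chosen. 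This produces $\vec\mu\in\mathcal{C}_\kappa\cap V$ with $q'\Vdash\dot Y\subseteq X_{\vec\mu}$, as required.
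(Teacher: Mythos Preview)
Your overall strategy---reduce to a single $\mathcal{C}_\kappa$-name, trap its domains in a countable ground-model set $A$ via properness, translate to $2^\omega$, apply Cohen preserving, and translate back---matches the paper's proof. The gap is precisely where you flag it: the final step of converting $\vec{\tilde\rho}$ into a member of $\mathcal{C}_\kappa$ does not go through as written, because your set $A=M\cap\kappa$ typically has order type strictly larger than $\omega$.

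Here is a concrete failure. Take $A$ of order type $\omega\cdot 2$, with your $\alpha_k=\omega+k$ and hence $\phi(2k)=\omega+k$, $\phi(2k+1)=k$. Cohen preserving hands you some $\vec\rho\in\mathcal{C}_\omega$ over which you have no control; suppose $\dom\rho_k=\{2k,2k+1\}$. Then $\dom\tilde\rho_k=\{k,\omega+k\}$, so $\min\dom\tilde\rho_k=k$ and $\max\dom\tilde\rho_k=\omega+k$. For any two indices $k<l$ one has $\min\dom\tilde\rho_l=l<\omega\leq\omega+k=\max\dom\tilde\rho_k$, so \emph{no} subsequence of $\vec{\tilde\rho}$ satisfies the $\max<\min$ requirement of $\mathcal{C}_\kappa$. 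Augmenting by fresh coordinates above the maxima does not help, since the obstruction is that the minima stay bounded below $\omega$. Your cofinality argument only controls the maxima of $\dom\tilde\rho_k$, not the minima.

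The paper fixes exactly this by inserting an extra step before translating: after trapping $\dot x=\bigcup_n\dom\dot\nu_n$ in a countable $A$ by properness, it uses that Cohen preserving implies $\omega^\omega$-bounding to trap each finite block $\dot x\cap[\beta_n,\beta_{n+1})$ in a ground-model \emph{finite} set $G_n\subseteq[\beta_n,\beta_{n+1})$, where $\langle\beta_n\rangle$ is a fixed cofinal $\omega$-sequence in $\sup\dot x$. Then $G=\bigcup_n G_n$ has order type exactly $\omega$, and taking $\phi$ to be its increasing enumeration makes the back-translation order-preserving, so $\vec\rho\in\mathcal{C}_\omega$ pulls back directly to an element of $\mathcal{C}_\kappa$ with no subsequence extraction needed. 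Adding this $\omega^\omega$-bounding step to your argument would close the gap.
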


\begin{proof}
Let $\vec{\underaccent{\tilde}{\nu}}$ be a $\mathbb{P}$-name and $p\in\mathbb{P}$ such that $p\Vdash\vec{\underaccent{\tilde}{\nu}}\in\dot{\mathcal{C}}_\kappa$. Let $\dot{x}$ be a name for $\bigcup_{n\in\omega}dom(\dot{\nu}_n)$. Note that $\dot{x}$ is forced to be an infinite subset of $\kappa$ having order type $\omega$. Extend $p$ to a condition $q$ to find two ordinals $\alpha_0,\alpha_1\in\kappa$ such that $q\Vdash\min(\dot{x})=\alpha_0$ and $q\Vdash\sup(\dot{x})=\check{\alpha}_1$. Note that $\mathsf{cof}(\alpha)=\omega$. Let $\langle\beta_n:n\in\omega\rangle$ be a strictly increasing sequence of ordinals cofinal in $\alpha$ and such that $\beta_0=\alpha_0$. Since $q\Vdash ot(\dot{x})=\omega$, we have that $q\Vdash(\forall n\in\omega)(\vert\dot{x}\cap[\beta_n,\beta_{n+1}))\vert<\omega$. Let $\dot{f}:\omega\to[\kappa]^{<\omega}$ defined by $\dot{f}(n)=\dot{x}\cap[\beta_n,\beta_{n+1})$. By properness, we can extend $q$ to a condition $q_0$, and find $A\in[\kappa]^{\omega}$ such that $q_0\Vdash(\forall n\in\omega)(\dot{f}(n)\subseteq \check{A})$. Equivalently, $q_0\Vdash(\forall n\in\omega)(\dot{f}(n)\in[A]^{<\omega})$. Now, since $\mathbb{P}$ is Cohen preserving, $\mathbb{P}$ is $\omega^\omega$-bounding as well, which implies that we can find a condition $q_1$, for each $n\in\omega$, a finite set $F_n\subseteq[A]^{<\omega}$ such that $q_1\Vdash\dot{f}(n)\in F_n$. Now, for each $n\in\omega$, define $G_n=[\beta_n,\beta_{n+1})\cap\bigcup F_n$, and let $G=\bigcup_{n\in\omega}G_n$. Note that $ot(G)=\omega$ and $G$ is cofinal in $\alpha_1$. More over, $q_1\Vdash(\forall n\in\omega)(\dot{f}(n)\subseteq G)$. Thus, $q_1\Vdash(\forall n\in\omega)(dom(\nu_n)\subseteq G)$. Let $\phi:\omega\to G$ be the increasing enumeration of $G$. Then $\tilde{\nu}$ induces a $\mathbb{P}$-name $\vec{\underaccent{\tilde}{\mu}}$ in a natural way such that $q_1\Vdash\vec{\underaccent{\tilde}{\mu}}\in\dot{\mathcal{C}}_\omega$. Now, since $\mathbb{P}$ is Cohen preserving, we can extend $q_1$ to a condition $q_2$ and find $\vec{\mu}_0\in\mathcal{C}_\omega\cap V$ such that $q_2\Vdash\dot{X}_{\vec{\underaccent{\tilde}{\mu}}}\subseteq\dot{X}_{\vec{{\mu}}_0}$. Now we can translate $\vec{\mu}_0$ to an element $\vec{\nu}_0\in\mathcal{C}_\kappa$ such that $q_2\Vdash\dot{X}_{\vec{\underaccent{\tilde}{\nu}}}\subseteq\dot{X}_{\vec{\underaccent{\tilde}{\nu}}_0}$.
\end{proof}

\begin{lemma}[see \cite{FS}]
If $\mathbb{P}$ has the Sacks property, then $\mathbb{P}$ is Cohen preserving.
\end{lemma}

\begin{crl}
If $\mathcal{F}$ is suitable, $\mathbb{P}_\kappa(\mathcal{F})$ is Cohen preserving.
\end{crl}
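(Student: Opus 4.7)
The proof is essentially an immediate composition of two results already established in the text, so the plan is very short. First, I would invoke Theorem \ref{P_k_has_sacks}, which guarantees that whenever $\mathcal{F}$ is a suitable filter, the forcing $\mathbb{P}_\kappa(\mathcal{F})$ is proper and has the Sacks property. Second, I would apply the lemma (cited from \cite{FS}) stated immediately before the corollary, which asserts that any forcing with the Sacks property is Cohen preserving. Chaining these two implications gives the conclusion.

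There is no real obstacle here, since both ingredients are already in hand; the corollary is merely recording the combination. The only thing to double-check is that no additional hypothesis on $\kappa$ or $\mathcal{F}$ is needed to invoke Theorem \ref{P_k_has_sacks} beyond suitability of $\mathcal{F}$, which is exactly what the hypothesis of the corollary supplies. If one wanted a slightly more self-contained argument (bypassing the abstract Sacks-implies-Cohen-preserving step), one could instead combine Theorem \ref{P_k_has_sacks} with Lemma \ref{cohen_preserving_kappa}: given $p \in \mathbb{P}_\kappa(\mathcal{F})$ and a name $\dot{D}$ for an open dense subset of $2^{<\omega}$, the complement of $\dot{D}$ is forced into $\mathsf{nwd}$, and the Sacks property (which implies $\omega^\omega$-bounding and hence captures names for reals in ground-model finite branching trees) lets us find an extension $q \leq p$ and a ground-model open dense $E \subseteq 2^{<\omega}$ forced to be contained in $\dot{D}$. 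But this is exactly the content of the \cite{FS} lemma, so invoking it directly is cleaner and is the route I would take.

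In summary, the proof I would write consists of one sentence: by Theorem \ref{P_k_has_sacks}, $\mathbb{P}_\kappa(\mathcal{F})$ has the Sacks property, and by the preceding lemma the Sacks property implies Cohen preservation, so $\mathbb{P}_\kappa(\mathcal{F})$ is Cohen preserving.
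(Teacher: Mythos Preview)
Your proposal is correct and matches the paper's approach exactly: the corollary is stated immediately after the lemma from \cite{FS} and is meant to be the one-line combination of Theorem~\ref{P_k_has_sacks} (Sacks property for $\mathbb{P}_\kappa(\mathcal{F})$ when $\mathcal{F}$ is suitable) with that lemma (Sacks property implies Cohen preserving). The paper offers no separate argument beyond this juxtaposition, so your one-sentence proof is precisely what is intended.
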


For the next lemma recall that $B^\alpha_i$ is the pre-basic clopen set $\{x\in2^\kappa:x(\alpha)=i\}$.

\begin{lemma}\label{lemma_323}
Let $\mathcal{U}$ be a suitable filter. Let $\dot{\mathbb{Q}}$ be a $\mathbb{P}_\kappa(\mathcal{U})$-name for a Cohen preserving proper forcing. Let $\dot{Y}$ be a $\mathbb{P}_\kappa(\mathcal{U})*\dot{\mathbb{Q}}$-name and $(p,\dot{q})\in\mathbb{P}_\kappa(\mathcal{U})*\dot{\mathbb{Q}}$ such that $(p,\dot{q})$ forces $\dot{Y}\in\mathsf{nwd}_\divideontimes(\kappa)$. Then there are $(p_0,\dot{q}_0)\leq (p,\dot{q})$ and $X\in\mathcal{U}$ such that $(p_0,\dot{q}_0)$ forces $X\cap\dot{\varphi}^{-1}[\dot{Y}]=\emptyset$.
\end{lemma}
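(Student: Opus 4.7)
The plan is to reduce $\dot{Y}$ to a ground-model coded box $\dot{X}_{\vec{\nu}_0}$ for some $\vec{\nu}_0 \in \mathcal{C}_\kappa \cap V$, and then invoke (the proof of) Lemma \ref{nwd_codes_ground_model_avoided} on the $\mathbb{P}_\kappa(\mathcal{U})$-coordinate to extract a ground-model set $X \in \mathcal{U}$ disjoint from $\dot{\varphi}^{-1}[\dot{X}_{\vec{\nu}_0}]$.

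The first step will be to verify that the two-step iteration $\mathbb{P}_\kappa(\mathcal{U}) * \dot{\mathbb{Q}}$ is itself proper and Cohen preserving. Properness follows from the standard two-step preservation theorem, using that $\mathbb{P}_\kappa(\mathcal{U})$ is proper by Theorem \ref{P_k_has_sacks} and $\dot{\mathbb{Q}}$ is forced to be proper. For Cohen preservation I would run the standard two-step argument: given a name $\dot{D}$ forced to be open dense in $2^{<\omega}$, first pass to $V[G_{\mathbb{P}_\kappa(\mathcal{U})}]$ and use Cohen preservation of $\dot{\mathbb{Q}}$ to thin $\dot{D}$ to an open dense $\dot{E}\in V[G_{\mathbb{P}_\kappa(\mathcal{U})}]$; then view $\dot{E}$ as a $\mathbb{P}_\kappa(\mathcal{U})$-name for an open dense subset of $2^{<\omega}$ and use Cohen preservation of $\mathbb{P}_\kappa(\mathcal{U})$ (which holds because it has the Sacks property) to thin further to an open dense $E_0\in V$.

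The second step will be to apply Lemma \ref{cohen_preserving_kappa} to the forcing $\mathbb{P}_\kappa(\mathcal{U}) * \dot{\mathbb{Q}}$, the condition $(p,\dot{q})$, and the name $\dot{Y}$. This produces $(p_1,\dot{q}_1) \leq (p,\dot{q})$ and $\vec{\nu}_0 \in \mathcal{C}_\kappa \cap V$ with $(p_1,\dot{q}_1) \Vdash \dot{Y} \subseteq \dot{X}_{\vec{\nu}_0}$. The third step exploits that $\vec{\nu}_0$ is now a ground-model object: inspecting the proof of Lemma \ref{nwd_codes_ground_model_avoided}, what is actually established is that for any $p_1 \in \mathbb{P}_\kappa(\mathcal{U})$ and any $\vec{\nu}_0 \in \mathcal{C}_\kappa \cap V$ there exist $p_2 \leq p_1$ and a ground-model set $X \in \mathcal{U}$ (namely $X = dom(\vec{E}_\omega)$ for the fusion-like condition constructed there) such that $p_2 \Vdash_{\mathbb{P}_\kappa(\mathcal{U})} X \cap \dot{\varphi}^{-1}[\dot{X}_{\vec{\nu}_0}] = \emptyset$. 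Setting $(p_0,\dot{q}_0) = (p_2,\dot{q}_1)$ and using monotonicity of preimages, $(p_0,\dot{q}_0) \Vdash X \cap \dot{\varphi}^{-1}[\dot{Y}] \subseteq X \cap \dot{\varphi}^{-1}[\dot{X}_{\vec{\nu}_0}] = \emptyset$, which is the desired conclusion.

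The main obstacle I anticipate is the Cohen preservation of the two-step iteration, since everything else is a direct invocation of lemmas proved earlier; a minor subtlety is that Lemma \ref{nwd_codes_ground_model_avoided} as stated only asserts membership in $\mathcal{U}^*$, so one needs to extract a concrete ground-model witness $X$ from the construction rather than from the statement.
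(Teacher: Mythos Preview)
Your approach is correct and essentially the same as the paper's: reduce $\dot{Y}$ to a ground-model $\dot{X}_{\vec{\nu}}$ via Lemma~\ref{cohen_preserving_kappa} applied to the two-step iteration, then invoke Lemma~\ref{nwd_codes_ground_model_avoided} on the $\mathbb{P}_\kappa(\mathcal{U})$-coordinate to obtain the witness $X\in\mathcal{U}$. You are in fact more careful than the paper on two points it leaves implicit: (i) you spell out why the two-step iteration is proper and Cohen preserving before applying Lemma~\ref{cohen_preserving_kappa}, and (ii) you note that a specific ground-model $X$ and a condition forcing disjointness must be extracted (either from the construction in the proof of Lemma~\ref{nwd_codes_ground_model_avoided} or by a routine density argument below $p_1$).
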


\begin{proof}
Let $(p,\dot{q})\in\mathbb{P}_\kappa(\mathcal{U})$ and $\dot{Y}$ be as stated. By Lemma \ref{lemma_323}, there are $(p_0,\dot{q}_0)\leq (p,\dot{q})$ and $\vec{\nu}\in\mathcal{C}_\kappa$ such that:
\begin{equation*}
    (p_0,\dot{q}_0)\Vdash\dot{Y}\subseteq\dot{X}_{\vec{\nu}}
\end{equation*}
By Lemma \ref{nwd_codes_ground_model_avoided}, we find a condition $(p_1,\dot{q}_1)\leq(p_0,\dot{q}_0)$ and $A\in\mathcal{F}$ such that 
\begin{equation*}
    (p_1,\dot{q}_1)\Vdash\dot{\varphi}^{-1}[\dot{X}_{\vec{\nu}}]\cap A=\emptyset
\end{equation*}
It follows that 
\begin{equation*}
    (p_1,\dot{q}_1)\Vdash\dot{\varphi}^{-1}[\dot{Y}]\cap A=\emptyset
\end{equation*}
\end{proof}

\begin{crl}\label{two_steps_sacks_prop_Tukey_above}
Let $\mathcal{F}$ be a suitable filter. Let $\dot{\mathbb{Q}}$ be a $\mathbb{P}_\kappa(\mathcal{F})$-name for a Cohen preserving proper forcing. Then $\mathbb{P}_\kappa(\mathcal{F})*\dot{\mathbb{Q}}$ forces that any ultrafilter extending $\mathcal{F}$ is Tukey above $[\kappa]^{<\omega}$.    
\end{crl}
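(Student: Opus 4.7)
The plan is to show that in the full extension $V[G*H]$ by $\mathbb{P}_\kappa(\mathcal{F})*\dot{\mathbb{Q}}$, the generic map $\varphi:\omega\to 2^\kappa$ added by $\mathbb{P}_\kappa(\mathcal{F})$ already witnesses that $\mathsf{nwd}_\divideontimes(\kappa)\leq_K\mathcal{U}^*$ for every ultrafilter $\mathcal{U}$ extending $\mathcal{F}$. Once this is established, the corollary right after Theorem \ref{critical_ideal} (saying $\mathsf{nwd}_\divideontimes(\kappa)\leq_K\mathcal{U}^*$ implies $[\kappa]^{<\omega}\leq_T\mathcal{U}$) finishes the job.

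First, I would fix in $V$ an arbitrary condition $(p,\dot{q})\in\mathbb{P}_\kappa(\mathcal{F})*\dot{\mathbb{Q}}$, a $\mathbb{P}_\kappa(\mathcal{F})*\dot{\mathbb{Q}}$-name $\dot{\mathcal{U}}$ forced to be an ultrafilter extending $\mathcal{F}$, and a name $\dot{Y}$ forced to be in $\mathsf{nwd}_\divideontimes(\kappa)$. The aim is to find an extension $(p_1,\dot{q}_1)\leq (p,\dot{q})$ forcing $\dot{\varphi}^{-1}[\dot{Y}]\in\dot{\mathcal{U}}^*$. Lemma \ref{lemma_323} supplies the central ingredient: there exist $(p_0,\dot{q}_0)\leq(p,\dot{q})$ and $A\in\mathcal{F}$ with
\begin{equation*}
(p_0,\dot{q}_0)\Vdash A\cap\dot{\varphi}^{-1}[\dot{Y}]=\emptyset.
\end{equation*}
Because $A\in\mathcal{F}\subseteq\dot{\mathcal{U}}$, the complement of $\dot{\varphi}^{-1}[\dot{Y}]$ contains a set of $\dot{\mathcal{U}}$, so $(p_0,\dot{q}_0)$ already forces $\dot{\varphi}^{-1}[\dot{Y}]\in\dot{\mathcal{U}}^*$. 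This density argument shows that in $V[G*H]$, for every $Y\in\mathsf{nwd}_\divideontimes(\kappa)$, $\varphi^{-1}[Y]\in\mathcal{U}^*$, i.e., $\varphi$ witnesses $\mathsf{nwd}_\divideontimes(\kappa)\leq_K\mathcal{U}^*$.

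Applying the corollary after Theorem \ref{critical_ideal}, this gives $[\kappa]^{<\omega}\leq_T\mathcal{U}$. Since $(p,\dot{q})$ and $\dot{\mathcal{U}}$ were arbitrary, $\mathbb{P}_\kappa(\mathcal{F})*\dot{\mathbb{Q}}$ forces that every ultrafilter extending $\mathcal{F}$ is Tukey above $[\kappa]^{<\omega}$.

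The entire proof is a short bookkeeping argument on top of Lemma \ref{lemma_323}; the real work was done there (factoring through Lemma \ref{cohen_preserving_kappa}, which reflects the $\mathsf{nwd}_\divideontimes(\kappa)$-name to a ground model code $\vec{\nu}\in\mathcal{C}_\kappa$, and Lemma \ref{nwd_codes_ground_model_avoided}, which handles the ground model codes via the strategy in the game $\mathcal{G}^{\mathcal{P}}(\mathcal{F})$). The only point that requires any care here is the observation that the map $\varphi$ lives in the intermediate extension $V[G]$ but is used to code a Katětov reduction with respect to an ultrafilter $\mathcal{U}$ that is only available in $V[G*H]$; this is unproblematic because Lemma \ref{lemma_323} is stated at the two-step level and yields a set $A\in\mathcal{F}\subseteq V$, so membership $A\in\mathcal{U}$ in the final extension is automatic.
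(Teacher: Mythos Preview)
Your proposal is correct and follows essentially the same route as the paper: both arguments invoke Lemma~\ref{lemma_323} to conclude that $\dot\varphi$ witnesses $\mathsf{nwd}_\divideontimes(\kappa)\leq_K\mathcal{U}^*$ for any ultrafilter $\mathcal{U}\supseteq\mathcal{F}$ in the two-step extension, and then appeal to the corollary after Theorem~\ref{critical_ideal} (equivalently, to $\tau_\kappa\subseteq\mathsf{nwd}_\divideontimes(\kappa)$ together with Theorem~\ref{critical_ideal}) to obtain $[\kappa]^{<\omega}\leq_T\mathcal{U}$. Your write-up is simply more explicit about the density step than the paper's terse statement.
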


\begin{proof}
Let $G$ be a $\mathbb{P}_\kappa(\mathcal{F})*\dot{\mathbb{Q}}$-generic filter over $V$. In $V[G]$, let $\mathcal{U}$ be an ultrafilter on $\omega$ extending $\mathcal{F}$. The previous lemma implies that $\mathsf{nwd}_\divideontimes(\kappa)\leq_K\mathcal{U}^*$, as witnessed by $\dot{\varphi}$. Since $\tau_\kappa\subseteq\mathsf{nwd}_\divideontimes(\kappa)$, it follows that $\tau_\kappa\leq_K\mathcal{U}^*$, so $[\kappa]^{<\omega}\leq_T\mathcal{U}$ follows.
\end{proof}

Finally, recall that any forcing with the Sacks property is Cohen preserving.
\begin{lemma}[see \cite{FS}]
If $\mathbb{P}$ has the Sacks property, then $\mathbb{P}$ is Cohen preserving.
\end{lemma}

\begin{thm}
It is relatively consistent that for any ultrafilter $\mathcal{U}$, it holds that $[\omega_1]^{<\omega}\leq_T\mathcal{U}$. 
\end{thm}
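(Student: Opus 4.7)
The plan is to carry out a countable support iteration of length $\omega_2$ over a ground model $V\models\mathsf{ZFC}+\mathsf{GCH}+\diamondsuit(S^{\omega_2}_{\omega_1})$. Using the diamond sequence as bookkeeping, at each stage $\alpha<\omega_2$ I would guess a $\mathbb{P}_\alpha$-name $\dot{\mathcal{F}}_\alpha$ for a filter on $\omega$; if $\Vdash_{\mathbb{P}_\alpha}\dot{\mathcal{F}}_\alpha$ is a suitable filter, set $\dot{\mathbb{Q}}_\alpha=\mathbb{P}_{\omega_1}(\dot{\mathcal{F}}_\alpha)$ as defined in the intermediate extension, and otherwise take trivial forcing. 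By Theorem \ref{P_k_has_sacks} each iterand is proper with the Sacks property, and by Theorem \ref{pp_iterated} the full iteration $\mathbb{P}_{\omega_2}$ retains the Sacks property; hence it is proper, $\omega^\omega$-bounding, Cohen preserving, and (using $\mathsf{GCH}$) satisfies the $\aleph_2$-c.c., so it preserves cardinals.

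Let $G$ be $\mathbb{P}_{\omega_2}$-generic over $V$ and let $\mathcal{U}\in V[G]$ be an arbitrary non-principal ultrafilter on $\omega$. The goal is to locate a stage $\alpha<\omega_2$ at which $\mathcal{F}:=\mathcal{U}\cap V[G_\alpha]$ lies in $V[G_\alpha]$, is a suitable filter there, \emph{and} is captured by the diamond, so that $\dot{\mathbb{Q}}_\alpha=\mathbb{P}_{\omega_1}(\mathcal{F})$. The key step is a reflection argument in the spirit of Lemma \ref{suitable_reflection}: the set
\begin{equation*}
C_\mathcal{U}=\{\alpha<\omega_2:\mathcal{U}\cap V[G_\alpha]\in V[G_\alpha]\text{ and is a suitable filter in }V[G_\alpha]\}
\end{equation*}
contains an $\omega_1$-club. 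That the trace belongs to $V[G_\alpha]$ on a club follows from $\omega^\omega$-boundingness and properness of the iteration together with the fact that every real in $V[G]$ is already in some proper initial segment; suitability on an $\omega_1$-club is then obtained as in the proof of Lemma \ref{suitable_reflection}, by using the Sacks property of the tails to absorb branches through the $\omega_1$ many $\mathcal{F}$-trees that must be witnessed. Standard $\diamondsuit(S^{\omega_2}_{\omega_1})$-bookkeeping now produces some $\alpha\in C_\mathcal{U}$ at which $\dot{\mathcal{F}}_\alpha$ is a name for $\mathcal{F}=\mathcal{U}\cap V[G_\alpha]$, so $\dot{\mathbb{Q}}_\alpha$ is indeed $\mathbb{P}_{\omega_1}(\mathcal{F})$.

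The tail of the iteration above $\alpha$ factors in $V[G_\alpha]$ as $\mathbb{P}_{\omega_1}(\mathcal{F})\ast\dot{\mathbb{Q}}_{\mathrm{tail}}$, where $\dot{\mathbb{Q}}_{\mathrm{tail}}$ names a countable support iteration of proper Sacks-property forcings, in particular a proper Cohen preserving forcing. Since $\mathcal{U}$ extends $\mathcal{F}$ in $V[G]$, Corollary \ref{two_steps_sacks_prop_Tukey_above} applied to this two-step forcing yields $[\omega_1]^{<\omega}\leq_T\mathcal{U}$, which is the desired conclusion.

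The principal obstacle is the suitability half of the reflection claim: one needs $\mathcal{F}=\mathcal{U}\cap V[G_\alpha]$ to be suitable from the internal viewpoint of $V[G_\alpha]$, not merely as seen in $V[G]$. This requires transferring, for each $\mathcal{F}$-tree $\tau\in V[G_\alpha]$, a witnessing branch $f\in2^\omega$ with $\tau\text{-}br_0(f)\in\mathcal{F}$ back into $V[G_\alpha]$ itself. The Sacks property of the tail, together with a counting argument bounding the number of $\mathcal{F}$-trees at each intermediate stage under $\mathsf{GCH}$, is what makes this transfer possible on an $\omega_1$-club of stages.
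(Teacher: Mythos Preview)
Your proposal is correct and follows the same overall architecture as the paper: a countable support iteration of length $\omega_2$ of forcings $\mathbb{P}_{\omega_1}(\cdot)$ with $\diamondsuit(S^{\omega_2}_{\omega_1})$-bookkeeping, followed by an application of Corollary~\ref{two_steps_sacks_prop_Tukey_above} to the factorisation at the stage where the diamond catches the (trace of the) given ultrafilter.

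The one notable difference is that you take an unnecessarily elaborate route through the suitable-filter reflection of Lemma~\ref{suitable_reflection}. The paper instead lets the diamond guess \emph{ultrafilters} and reflects to ultrafilters: for any $\alpha$ with $\mathcal{U}\cap V[G_\alpha]\in V[G_\alpha]$, this trace is automatically an ultrafilter in $V[G_\alpha]$ (for every $A\in\mathcal{P}(\omega)^{V[G_\alpha]}$ one of $A,\omega\setminus A$ lies in $\mathcal{U}$), and ultrafilters are trivially suitable. So the only reflection needed is the standard closure argument giving an $\omega_1$-club of $\alpha$ with $\mathcal{U}\cap V[G_\alpha]\in V[G_\alpha]$; the additional work you describe---transferring witnessing branches for $\mathcal{F}$-trees back into $V[G_\alpha]$ via the Sacks property of the tail---is superfluous here. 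This is precisely why the paper remarks at the start of Section~\ref{suitable_filters} that Section~\ref{no_basically_generated} needs only the ultrafilter case of the game analysis, not the full suitable-filter machinery. Your version still works, but the paper's is cleaner.
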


\begin{proof}
Let $V$ be a model of $\mathsf{ZFC}+\mathsf{GCH}+\diamondsuit(S)$, where $S=\{\alpha\in\omega_2:cof(\alpha)=\omega_1\}$, and let $\langle A_\alpha:\alpha\in S\rangle$ be a $\diamondsuit(S)$-guessing sequence. Let $\mathbb{P}=\langle\mathbb{P}_\alpha,\dot{\mathbb{Q}}_\alpha:\alpha\in\omega_2\rangle$ be a countable support iteration defined as follows:
\begin{enumerate}
    \item $\mathbb{P}_0=\mathbb{P}_{\omega_1}(\mathcal{U})$, where $\mathcal{U}$ is an ultrafilter on $\omega$ from $V$.
    \item If $\alpha\notin S$, then $\mathbb{P}_\alpha\Vdash\dot{\mathbb{Q}}_\alpha=\{1\}$.
    \item If $\alpha\in S$, and $A_\alpha$ codifies a $\mathbb{P}_\alpha$-name for an ultrafilter on $\omega$, let $\dot{\mathcal{U}}_\alpha$ be a $\mathbb{P}_\alpha$-name for such ultrafilter, then make $\mathbb{P}_\alpha\Vdash\dot{\mathbb{Q}}_\alpha=\dot{\mathbb{P}}_{\omega_1}(\dot{\mathcal{U}}_\alpha)$.
    \item Otherwise, $\mathbb{P}_\alpha\Vdash\dot{\mathbb{Q}}_\alpha=\{1\}$.
\end{enumerate}
By \ref{pp_iterated} and \ref{P_k_has_sacks} we have that $\mathbb{P}$ has the Sacks property, and for all $\alpha\in\omega_2$, $\mathbb{P}_\alpha$ forces $[\mathbb{P}:\mathbb{P}_\alpha]$ has the Sacks property.

Now, let $\dot{\mathcal{F}}$ be a $\mathbb{P}$-name for an ultrafilter, and let $C=\{\alpha\in\omega_2:\dot{\mathcal{F}}\cap V[G_\alpha]\in V[G_\alpha]\text{ is an ultrafilter in }V[G_\alpha]\}$. The set $C$ contains an $\omega_1$-club subset of $\omega_2$, so there is $\alpha\in S\cap C$ such that $A_\alpha$ codifies $\dot{\mathcal{F}}\cap V[G_\alpha]$, so $\mathbb{P}_\alpha\Vdash\dot{\mathbb{Q}}_\alpha=\dot{\mathbb{P}}_{\omega_1}(\dot{\mathcal{U}}_\alpha)=\dot{\mathbb{P}}_{\omega_1}(\dot{\mathcal{F}}\cap V[G_\alpha])$, and we have that $\mathbb{P}=\mathbb{P}_{\alpha}*\dot{\mathbb{P}}_{\omega_1}(\dot{\mathcal{U}}_\alpha)*[\mathbb{P}:\mathbb{P}_{\alpha+1}]$ forces that any ultrafilter extending $\dot{\mathcal{F}}\cap V[G_\alpha]$ is Tukey above $[\omega_1]^{<\omega}$, thus, we have that $\mathbb{P}\Vdash[\omega_1]^{<\omega}\leq_T\dot{\mathcal{F}}$.
\end{proof}

\section{Towards only one Tukey type of ultrafilters.}\label{one_tukey_type_start}

Throughout this section we assume that $V$ is a model of $\mathsf{ZFC}+\mathsf{GCH}+\diamondsuit(S)$, where $S=\{\alpha\in\omega_2:cof(\alpha)=\omega_1\}$. We define the forcing for the main result and prove it has the $\omega_2$-c.c. After the next definition we point out two important remarks about the definition.

\begin{dfn}\label{Q_k_definition}
Let $\kappa>\omega_1$ be a regular cardinal. Define $S=\{\alpha\in\omega_2:cof(\alpha)=\omega_1\}$ and let $\langle A_\alpha:\alpha\in S\rangle$ be a $\diamondsuit(S)$-guessing sequence. Let $\mathbb{S}_{\omega_2}$ be the countable support iteration of the Sacks forcing. Let $D_0$ be the set of ordinals $\alpha\in S$ such that $A_\alpha$ codifies a $\mathbb{S}_\alpha$-name for a suitable filter. For each $\alpha\in D_0$, let $\dot{\mathcal{U}}_\alpha$ the $\mathbb{S}_\alpha$-name $A_\alpha$ codifies. Let $\{\alpha_\beta:\beta\in\omega_2\}$ be the increasing enumeration of $D_0$. Define a forcing notion $\mathbb{Q}_\kappa$ as follows:
\begin{enumerate}
    \item $(p,q)\in\mathbb{Q}_\kappa$ if and only if:
    \begin{enumerate}
        \item $p\in\mathbb{S}_{\omega_2}$.
        \item $q$ is a partial function with domain a countable subset of $\omega_2$, and for each $\beta\in dom(q)$, $q(\beta)$ is a $\mathbb{S}_{\alpha_\beta}$-name.
        \item For all $\beta\in dom(q)$, $p\upharpoonright \alpha_\beta\Vdash q(\beta)\in\dot{\mathbb{P}}_\kappa(\dot{\mathcal{U}}_{\alpha_\beta})^{V[\dot{G}_{\alpha_\beta}]}$, where $G_{\alpha_\beta}$ is the $\mathbb{S}_{\alpha_\beta}$-generic filter over $V$.
    \end{enumerate}
    \item For $(p_1,q_1),(p_0,q_0)\in\mathbb{Q}_\kappa$, define $(p_1,q_1)\leq(p_0,q_0)$ if and only if:
    \begin{enumerate}
        \item $p_1\leq p_0$.
        \item For all $\gamma\in dom(q_0)$, $p_1\upharpoonright\alpha_\gamma\Vdash q_1(\gamma)\leq q_0(\gamma)$.
    \end{enumerate}
\end{enumerate}
Given a condition $(p,q)\in\mathbb{Q}_\kappa$, we also define:
\begin{enumerate}
    \item $supp(p)=\{\alpha\in \omega_2:\alpha\in dom(p)\}$.
    \item $supp(q)=\{\alpha\in\omega_2:\alpha\in dom(q)\}$.
    \item $supp(p,q)=(supp(p),supp(q))$.
\end{enumerate}
(The context should be enough to avoid confusions of which support we refer to).
\end{dfn}

\noindent{\textbf{Remark.}} We emphasize two points of major importance in Definition \ref{Q_k_definition}. First, clause (1)(b): for each $\beta\in dom(q)$, $q(\beta)$ is a $\mathbb{S}_{\alpha_\beta}$-name. Note that for $\gamma>\alpha_\beta$ there may be $\mathbb{S}_{\gamma}$-names which are forced to be elements from $\dot{\mathbb{P}}_\kappa(\dot{\mathcal{U}}_{\alpha_\beta})^{V[\dot{G}_{\alpha_\beta}]}$, yet, they may not be $\mathbb{S}_{\alpha_\beta}$-names. By no means $q(\beta)$ is allowed to be one of such $\mathbb{S}_\gamma$-names: for all $\beta\in dom(q)$, $q(\beta)$ is strictly a $\mathbb{S}_{\alpha_\beta}$-name. Note that this implies that for each $\beta>0$, $q(\beta)$ does not depend $q(\gamma)$ for any $\gamma<\beta$; $q(\beta)$ depends only on $p\upharpoonright\alpha_\beta$. Second, clause (1)(c), 
\begin{center}
For any $\beta\in dom(q)$, $p\upharpoonright\alpha_\beta$ forces $q(\beta)\in\dot{\mathbb{P}}_\kappa(\dot{\mathcal{U}}_{\alpha_\beta})^{V[\dot{G}_{\alpha_\beta}]}$
\end{center}
here, $\dot{\mathbb{P}}_\kappa(\dot{\mathcal{U}}_{\alpha_\beta})^{V[\dot{G}_{\alpha_\beta}]}$ lives in the $\mathbb{S}_{\alpha_\beta}$-extension. Note that for any $\gamma>\alpha_\beta$, $V[G_\gamma]$ has its own definition of $\dot{\mathbb{P}}_\kappa(\dot{\mathcal{U}}_{\alpha_\beta})$, say $\dot{\mathbb{P}}_\kappa(\dot{\mathcal{U}}_{\alpha_\beta})^{V[\dot{G}_{\gamma}]}$. Naturally, $\dot{\mathbb{P}}_\kappa(\dot{\mathcal{U}}_{\alpha_\beta})^{V[\dot{G}_{\alpha_\beta}]}\subseteq \dot{\mathbb{P}}_\kappa(\dot{\mathcal{U}}_{\alpha_\beta})^{V[\dot{G}_{\gamma}]}$, however, note that  $\dot{\mathbb{P}}_\kappa(\dot{\mathcal{U}}_{\alpha_\beta})^{V[\dot{G}_{\alpha_\beta}]}$ is not even a regular suborder of $\dot{\mathbb{P}}_\kappa(\dot{\mathcal{U}}_{\alpha_\beta})^{V[\dot{G}_{\gamma}]}$.

The next lemma is a place where the definition of $\mathbb{P}_\kappa(\mathcal{F})$ allowing conditions with redundant information simplifies things, instead of looking just at simple conditions.

\begin{lemma}
Let $(p,q)\in\mathbb{Q}_\kappa$ be a condition. Then there is $(p_0,q_0)\leq(p,q)$ and $a\in[\kappa]^\omega$ such that  for all $\beta\in supp(q)$, $p\upharpoonright\alpha_\beta\Vdash dom(q(\beta))=a$.
\end{lemma}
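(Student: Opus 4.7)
The plan is to use properness of the countable-support Sacks iteration to uniformize all the (generically varying) sets $dom(q(\beta))$ inside one countable ground-model set $a$, and then pad each $q(\beta)$ up to have domain exactly $a$ by invoking the coordinate-duplication built into the definition of $\mathbb{P}_\kappa(\mathcal{U})$.

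First I would fix a countable elementary submodel $M\prec H(\theta)$ (with $\theta$ sufficiently large) containing $\kappa$, $\mathbb{S}_{\omega_2}$, $\mathbb{Q}_\kappa$, the enumeration $\langle\alpha_\beta:\beta<\omega_2\rangle$, and the condition $(p,q)$. Set $a=M\cap\kappa$; since $M$ is countable and $\kappa$ is uncountable, $a\in[\kappa]^\omega$. Because $supp(q)$ is countable and lies in $M$, in fact $supp(q)\subseteq M$.

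Next, by properness of $\mathbb{S}_{\omega_2}$, extend $p$ to an $(M,\mathbb{S}_{\omega_2})$-generic condition $p^*$. The standard fact about proper countable-support iterations guarantees that $p^*\upharpoonright\alpha_\beta$ is $(M,\mathbb{S}_{\alpha_\beta})$-generic for every $\beta\in supp(q)$. Since $q(\beta)\in M$ is an $\mathbb{S}_{\alpha_\beta}$-name forced to be a condition in $\mathbb{P}_\kappa(\dot{\mathcal{U}}_{\alpha_\beta})$, the set $dom(q(\beta))$ is forced to be a countable subset of $\kappa$. Applying the standard consequence of $M$-genericity --- that any name in $M$ for an ordinal is forced by an $M$-generic condition to lie in $M$ --- to each entry of a canonical enumeration of $dom(q(\beta))$, I would conclude
\begin{equation*}
    p^*\upharpoonright\alpha_\beta \Vdash dom(q(\beta))\subseteq M\cap\kappa = a
\end{equation*}
for every $\beta\in supp(q)$.

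Finally, I would define $q_0$ with $dom(q_0)=dom(q)$ by letting each $q_0(\beta)$ be the $\mathbb{S}_{\alpha_\beta}$-name for the condition obtained from $q(\beta)$ by adjoining each $\gamma\in a\setminus dom(q(\beta))$ as a duplicate of the coordinate $0$ of $q(\beta)$; this is exactly the padding operation described in the remark following the definition of $\dot{\varphi}$, and it produces a legitimate extension in $\mathbb{P}_\kappa(\dot{\mathcal{U}}_{\alpha_\beta})$ whose domain is forced to equal $a$. Taking $p_0=p^*$ produces the desired $(p_0,q_0)\leq(p,q)$. The delicate point is the simultaneity of the reduction: rather than proceeding by induction on $\beta$, one uses a single $(M,\mathbb{S}_{\omega_2})$-generic extension of $p$, whose restrictions to the initial segments $\mathbb{S}_{\alpha_\beta}$ inherit the $M$-genericity needed to bound each $dom(q(\beta))$ inside the common set $a$.
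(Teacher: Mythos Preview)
Your proof is correct and follows essentially the same approach as the paper's own argument: pick a countable elementary submodel containing the relevant data, take an $(M,\mathbb{S}_{\omega_2})$-generic extension of $p$, use that its restrictions are $(M,\mathbb{S}_{\alpha_\beta})$-generic to bound each $dom(q(\beta))$ inside $M\cap\kappa$, and then pad each $q(\beta)$ with copies of its $0$-coordinate. The only cosmetic difference is that the paper sets $a=\mathcal{M}\cap\mathsf{Ord}$ rather than $a=M\cap\kappa$; your choice is the one that actually matches the statement $a\in[\kappa]^\omega$.
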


\begin{proof}

Let $(p,q)\in\mathbb{Q}_\kappa$ be a condition. Let $\mathcal{M}\prec H(\theta)$ be a countable elementary submodel, for some big enough $\theta$, such that $(p,q),\mathbb{S}_{\omega_2},\omega_2,\omega_1,\kappa\in\mathcal{M}$. Note that $supp(q)\subseteq \mathcal{M}$, and for each $\gamma\in supp(q)$, there is, inside $\mathcal{M}$, a $\mathbb{S}_{\alpha_\gamma}$-name for $dom(q(\gamma))$. Let $p_0\leq q$ be a $(\mathbb{S}_{\omega_2},\mathcal{M})$-generic condition. Note that for each $\gamma\in supp(q)$, $p_0\upharpoonright\alpha_\gamma$ is $(\mathbb{S}_{\alpha_\gamma},\mathcal{M})$-generic, which implies that for any $\gamma\in supp(q)$, $$p_0\upharpoonright\alpha_\gamma\Vdash dom(q(\gamma))\subseteq\mathcal{M}\cap\mathsf{Ord}$$ Define $a=\mathcal{M}\cap \mathsf{Ord}$, and $q_0$  as:
\begin{enumerate}
    \item $supp(q_0)=supp(q)$.
    \item For each $\gamma\in supp(q)$, $p_0\upharpoonright\alpha_{\gamma}$ forces:
    \begin{enumerate}
        \item $dom(q_0(\gamma))=a$. 
        \item $q_0(\gamma)\upharpoonright dom(q(\gamma))=q(\gamma)$.
        \item For each $\beta\in a\setminus dom(q(\gamma))$, $q_0(\gamma)(\beta)=q(\gamma)(0)$
    \end{enumerate}
\end{enumerate}
It follows from the construction that $(p_0,q_0)$ is a condition in $\mathbb{Q}_\kappa$, and for each $\gamma\in supp(q_0)$, $p_0\upharpoonright\alpha_\gamma\Vdash dom(q_0(\gamma))=a$.
\end{proof}

\begin{lemma}
$\mathbb{Q}_{\kappa}$ has the $\omega_2$-c.c.
\end{lemma}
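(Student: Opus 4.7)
The plan is to run a standard $\Delta$-system/pigeonhole argument, made possible by the cardinal arithmetic $|\mathbb{S}_\alpha|\leq\omega_1$ for $\alpha<\omega_2$, which holds under $\mathsf{GCH}$.

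Given any family $\{(p_\xi,q_\xi):\xi<\omega_2\}\subseteq\mathbb{Q}_\kappa$, I would first apply the preceding lemma to pass, for each $\xi$, to a normalized extension: there is $a_\xi\in[\kappa]^\omega$ such that $p_\xi\upharpoonright\alpha_\beta\Vdash dom(q_\xi(\beta))=a_\xi$ for every $\beta\in supp(q_\xi)$. Setting $T_\xi=supp(p_\xi)\cup\{\alpha_\beta:\beta\in supp(q_\xi)\}\in[\omega_2]^\omega$, I would apply the $\Delta$-system lemma twice --- first to $\{T_\xi:\xi<\omega_2\}$ to extract a root $T\subseteq\omega_2$, then to $\{a_\xi:\xi<\omega_2\}$ to extract a root $a^*\subseteq\kappa$ --- retaining at each stage an $\omega_2$-sized subfamily. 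I would further uniformize the order types of $T_\xi$, $a_\xi$, and $supp(q_\xi)$ so that the root occupies the same position in every enumeration.

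The heart of the argument is to count \emph{types}. An easy induction using $\mathsf{GCH}$ and countable support shows $|\mathbb{S}_\alpha|\leq\omega_1$ for every $\alpha<\omega_2$. Hence the number of (equivalence classes of) nice $\mathbb{S}_{\alpha_\beta}$-names for countable combinatorial objects --- and, in particular, for conditions in $\dot{\mathbb{P}}_\kappa(\dot{\mathcal{U}}_{\alpha_\beta})$ with forced domain $a^*$ --- is bounded by $|\mathbb{S}_{\alpha_\beta}|^\omega\leq\omega_1$. Consequently the \emph{type} of $(p_\xi,q_\xi)$, defined as the pair $(p_\xi\upharpoonright T,\,q_\xi\upharpoonright\{\beta\in supp(q_\xi):\alpha_\beta\in T\})$ where each coordinate is replaced by a canonical representative of its name class, ranges over a set of cardinality at most $\omega_1^\omega=\omega_1$. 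A pigeonhole then yields $\omega_2$ indices of \emph{identical} type.

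Finally, for any two such indices $\xi\neq\eta$, I would glue $(p_\xi,q_\xi)$ and $(p_\eta,q_\eta)$ by taking unions: on the roots $T$ and $a^*$ the two conditions literally coincide, while outside the roots their supports are disjoint, so the union is a well-defined element of $\mathbb{Q}_\kappa$ extending both. Each clause of Definition~\ref{Q_k_definition} is verified coordinatewise; crucially, because $q(\beta)$ is by stipulation an $\mathbb{S}_{\alpha_\beta}$-name --- not a name depending on higher coordinates --- the gluing produces no conflict between ``higher'' pieces of information. The main obstacle is the cardinal-arithmetic lemma underlying the type count, namely verifying $|\mathbb{S}_\alpha|\leq\omega_1$ for $\alpha<\omega_2$ by induction on $\alpha$, using countable support, the fact that each coordinate name is determined by countably many antichains in $\mathbb{S}_\alpha$, and $\omega_1^\omega=\omega_1$ under $\mathsf{CH}$.
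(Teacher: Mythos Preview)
Your proposal is correct and follows essentially the same $\Delta$-system and counting strategy as the paper's proof; the paper merely packages the pigeonhole step differently, by extending each restricted condition into a fixed dense subset of size $\aleph_1$ of the poset $\mathbb{Q}_\kappa\upharpoonright(\lambda,R_1,R_2)$ rather than counting name-types directly as you do. One small point to make precise: your type should record $q_\xi(\beta)\upharpoonright a^*$ rather than the full $q_\xi(\beta)$ (whose forced domain $a_\xi$ varies with $\xi$), exactly as the paper restricts each $q_\beta(\gamma)$ to the root $R_2$; your phrase ``with forced domain $a^*$'' suggests you intend this, but your stated definition of the type does not quite say it.
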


\begin{proof}
Let $\{(p_\alpha,q_\alpha):\alpha\in\omega_2\}$ be any subset of $\mathbb{Q}_\kappa$ of cardinality $\omega_2$. By the previous lemma we can assume that for each $\alpha\in\omega_2$, there is $A_\alpha$ such that for all $\gamma\in supp(q_\alpha)$, $p_\alpha\upharpoonright\alpha_\gamma\Vdash dom(q_\alpha(\gamma))=A_\alpha$. By three consecutive applications of the $\Delta$-system lemma, we find $G\in[\omega_2]^{\omega_2}$, and $R_0\in [\omega_2]^{\leq\omega},R_1\in[\omega_2]^{\leq\omega},R_2\in [\kappa]^{\leq\omega}$ such that:
\begin{enumerate}
    \item $\{supp(p_\alpha):\alpha\in G\}$ is a $\Delta$-system with root $R_0$.
    \item $\{supp(q_\alpha):\alpha\in G\}$ is a $\Delta$-system with root $R_1$.
    \item $\{A_\alpha:\alpha\in G\}$ is a $\Delta$-system with root $R_2$.
    \item For each $\alpha,\beta\in G$ such that $\alpha<\beta$, 
    \begin{enumerate}
        \item $sup(R_0)<\min(supp(p_\alpha)\setminus R_0)$.
        \item $sup(R_1)<\min(supp(q_\alpha)\setminus R_1)$.
        \item $sup(supp(p_\alpha)\setminus R_0)<\min(supp(p_\beta)\setminus R_0)$.
        \item $sup(supp(q_\alpha)\setminus R_1)<\min(supp(q_\beta)\setminus R_1)$.
    \end{enumerate}
    \item Any two conditions in $\{p_\alpha:\alpha\in G\}$ are compatible.
\end{enumerate}
Define $\lambda=\sup\{\alpha_\gamma:\gamma\in R_1\}$. Let us look for a moment at 
\begin{multline*}
\mathbb{Q}_\kappa\upharpoonright(\lambda,R_1,R_2)=\{(p,q)\in\mathbb{Q}_\kappa: supp(p)\subseteq\lambda\land\\ supp(q)\subseteq R_1\land\\ \forall (\gamma\in supp(q))(p\upharpoonright\alpha_\gamma\Vdash dom(q(\gamma)\subseteq R_2))\}    
\end{multline*}
and give it the order corresponding to the restriction of the order defined on $\mathbb{Q}_\kappa$. By standard considerations of iterated forcing applied to the present case (for example, making use of hereditarily countable names and counting arguments), it is possible to find a dense subset of $\mathbb{Q}_\kappa\upharpoonright(\lambda,R_1,R_2)$ of cardinality $\omega_1$. Let $D$ be one of such sets. 

Now, for each $\beta\in G$, let $(\rho_\beta,\tau_\beta)\in D$ be such that $(\rho_\beta,\tau_\beta)\leq (p_\beta\upharpoonright\lambda,q_\beta\upharpoonright(R_1,R_2))$,
where $q_\beta\upharpoonright(R_1,R_2)$ is defined as:
\begin{enumerate}
    \item $supp(q_\beta\upharpoonright(R_1,R_2))= R_1$.
    \item For all $\gamma\in R_1$, $p\upharpoonright\alpha_\gamma\Vdash q_\beta\upharpoonright(R_1,R_2)(\gamma)=q_\beta(\gamma)\upharpoonright R_2$.
\end{enumerate}

Since $D$ has cardinality $\omega_1$ and $\vert G\vert=\omega_2$, there are different $\beta_0,\beta_1\in G$ such that $(\rho_{\beta_0},\tau_{\beta_0})=(\rho_{\beta_1},\tau_{\beta_1})$. Then we should have that $(p_{\beta_0},q_{\beta_0})$ and $(p_{\beta_1},q_{\beta_1})$ are compatible.
\end{proof}

\section{Cohen proper forcing.}\label{cohen_properness_notion}
In this section and the next one we develop a general framework from which the forcing $\mathbb{Q}_\kappa$ is a special case. Here we define the notion of Cohen proper forcing and derive some equivalent formulations enclosed in Theorem \ref{cohen_proper_equivalences}. The reader that wish to jump to the iteration and preservation theorem can take definitions \ref{restricted_iteration_definition}, \ref{divideontimes_symbol} and \ref{cohen_proper_definition} below, and clause (2) from Theorem \ref{cohen_proper_equivalences} as a black-box.

The notion stated in the next definition was named strongly proper by Shelah. However, there is also a strongly proper forcing notion due to Mitchell. It seems that Mitchell's notion of strongly proper forcing is more well known, so, to avoid any possible confusion and fully distinguish the two notions, we have decided to rename Shelah's notion as ``$\sigma$-proper''. More explicitly:

\begin{dfn}\label{strongly_proper_def}
Let $\mathbb{P}$ be a forcing. We say that $\mathbb{P}$ is $\sigma$-proper if for any $\mathcal{M}\prec H(\theta)$ (for big enough $\theta$) such that $\mathbb{P}\in\mathcal{M}$, any $p\in\mathbb{P}\cap\mathcal{M}$ and any family $\vec{\mathcal{D}}=\{D_n:n\in\omega\}$ of open dense subsets of $\mathbb{P}\cap\mathcal{M}$, there is a condition $q\leq p$ such that each set $D_n$ is predense below $q$. We say that $q$ is a $(\mathbb{P},\mathcal{M},\vec{\mathcal{D}})$-generic condition.
\end{dfn}

It is well known that countable support iterations of $\sigma$-proper forcings are $\sigma$-proper (strongly proper, under Shelah's naming) (see \cite{proper}).

The following lemma is easy to prove, so we omit the argument.

\begin{lemma}\label{omega_cohen_preserving}
If $\mathbb{P}$ is Cohen preserving, then for any countable family $\{\dot{D}_n:n\in\omega\}$ of $\mathbb{P}$-names for open dense subsets of $\mathbb{C}$ and $p\in\mathbb{P}$, there are $\{E_n:n\in\omega\}$ family of open dense subsets of $\mathbb{C}$ and $q\leq p$ such that for each $n\in\omega$, $q\Vdash E_n\subseteq\dot{D}_n$.
\end{lemma}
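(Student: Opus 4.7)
The plan is to reduce the countable version to the single-name hypothesis of Cohen preservation by packaging the countably many $\dot{D}_n$ into a single $\mathbb{P}$-name $\dot{D}$ for an open dense subset of $\mathbb{C}$, applying the single-name Cohen-preserving hypothesis once to $\dot{D}$, and then extracting each $E_n$ from the resulting ground-model open dense set.

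Concretely, I would fix a partition $\omega=\bigsqcup_{n\in\omega} I_n$ into infinite blocks together with increasing bijections $\phi_n:\omega\to I_n$. For $s\in 2^{<\omega}$, write $s|_n\in 2^{<\omega}$ for the slice of $s$ along $I_n\cap[0,|s|)$ reindexed via $\phi_n$. Define $\dot{D}\subseteq 2^{<\omega}$ as the name for the set of $s$ satisfying: for every $n$ with $s|_n\neq\emptyset$, one has $s|_n\in\dot{D}_n$. Density of $\dot{D}$ follows because a given $s$ touches only finitely many slices, so one can extend each touched slice using density of the corresponding $\dot{D}_n$ and combine the extensions into a single $t\supseteq s$. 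For openness, an extension $t\supseteq s$ keeps all previously touched slices inside their respective $\dot{D}_n$ by the openness of each $\dot{D}_n$; the subtle point is the handling of slices that become nonempty only in $t$, for which one arranges the partition so that a newly touched slice always starts with data automatically in $\dot{D}_n$, or, alternatively, passes to a Cohen-equivalent subforcing where new-slice creation is suitably controlled.

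Once $\dot{D}$ is forced to be open dense in $\mathbb{C}$, applying the Cohen-preserving hypothesis yields $q\leq p$ and a ground-model open dense $E\subseteq 2^{<\omega}$ with $q\Vdash E\subseteq \dot{D}$. For each $n$, set $E_n=\{s|_n:s\in E,\, s|_n\neq\emptyset\}\subseteq 2^{<\omega}$. Each $E_n$ is open dense in $\mathbb{C}$: density comes from density of $E$ together with the freedom to adjust the bits of $s$ at positions outside $I_n$, and openness follows from the openness of $E$ since extending $s|_n$ corresponds to an extension of $s$ itself. The containment $q\Vdash E_n\subseteq\dot{D}_n$ then follows directly from $q\Vdash E\subseteq\dot{D}$ and the defining property of $\dot{D}$. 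The main obstacle I anticipate is precisely verifying the openness of $\dot{D}$ when a brand-new slice becomes nonempty in an extension; this is what forces the careful choice of partition and encoding sketched above, and once it is in place the remainder of the argument is routine bookkeeping.
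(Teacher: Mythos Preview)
Your encoding has a real gap at exactly the point you flag. With $\dot D=\{s:\forall n\,(s|_n\neq\emptyset\Rightarrow s|_n\in\dot D_n)\}$, openness genuinely fails: if $s\in\dot D$ and $t\supseteq s$ first touches slice $m$, then $t|_m$ is an arbitrary short string and there is no reason it lies in $\dot D_m$. Your proposed repairs do not work. You cannot ``arrange the partition so that a newly touched slice always starts with data automatically in $\dot D_n$'': the $\dot D_n$ are names, not ground-model sets, so no choice of partition in $V$ can force initial bits of a fresh slice into $\dot D_n$. Passing to a ``Cohen-equivalent subforcing where new-slice creation is controlled'' is not a proof either; any reasonable finite-support product presentation of $\mathbb{C}$ has the same defect, since extending a condition can always enlarge the support.

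The clean fix, which is presumably the ``easy'' argument the paper has in mind (and which the paper uses explicitly in the proof of Theorem~\ref{cohen_proper_equivalences}), is a prefix coding rather than a slicing. Set $s_n=1^n{}^\frown 0$ and let $\dot D=\bigcup_n\{\,s_n{}^\frown t:t\in\dot D_n\,\}$. This is open because the cones below the pairwise incompatible $s_n$ are disjoint and each $\dot D_n$ is open; it is dense because every string either already extends some $s_n$ or consists only of $1$'s and can be extended to do so. Apply Cohen preservation once to get $q\leq p$ and open dense $E\subseteq 2^{<\omega}$ with $q\Vdash E\subseteq\dot D$, and put $E_n=\{t:s_n{}^\frown t\in E\}$. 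Each $E_n$ is open dense (this is just $E$ restricted to the cone below $s_n$, transported by the isomorphism $t\mapsto s_n{}^\frown t$), and $q\Vdash E_n\subseteq\dot D_n$ is immediate. Your extraction step and the verification of density and openness for the $E_n$ are fine; it is only the construction of a single open dense $\dot D$ that needs to be replaced.
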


Next we fix some notation to be used along the rest of the paper.
\begin{enumerate}
    \item If $f$ is a function and $A$ is a set, then $f*A=\{(a,b)\in f:a\notin A\}$(when taking about iterations $*$ keeps its usual meaning, the context should be enough to differentiate between the two meanings).
    \item $\mathbb{P}\lessdot\mathbb{Q}$ means that $\mathbb{P}$ is a regular suborder of $\mathbb{Q}$.
    \item If $\mathbb{P}\lessdot\mathbb{Q}$, $[\mathbb{Q}:\mathbb{P}]$ denotes the forcing quotient of $\mathbb{Q}$ given a $\mathbb{P}$-generic filter:
    \begin{equation*}
        \mathbb{P}\Vdash[\mathbb{Q}:\mathbb{P}]=\{q\in\mathbb{Q}:(\forall p\in G)(p\parallel q)\}
    \end{equation*}
    where $p\parallel q$ means $p$ and $q$ are compatible.
    \item Given a $\mathbb{P}$-name $\dot{x}$ and a $\mathbb{P}$-generic filter $G$, the evaluation of $\dot{x}$ by $G$ is denoted by $\dot{x}_G$.
\end{enumerate}

\begin{dfn}\label{restricted_iteration_definition}
Let $\mathbb{P,Q}$ be forcings such that $\mathbb{P}\lessdot\mathbb{Q}$, and let $\pi:\mathbb{Q}\to\mathbb{P}$ be a regular projection projection, $\dot{\mathbb{R}}$ a $\mathbb{P}$-name for a forcing and $\mathbb{P}*\dot{\mathbb{R}}$ the two step iteration of $\mathbb{P}$ and $\dot{\mathbb{R}}$. Define the $\mathbb{P}$-restricted iteration of $\mathbb{Q}$ and $\dot{\mathbb{R}}$ as: $(q,\dot{r})\in\mathbb{Q}*_{\mathbb{P}}\dot{\mathbb{R}}$ if and only if:
\begin{equation*}
    (\pi(q),\dot{r})\in\mathbb{P}*\dot{\mathbb{R}}.
\end{equation*}
and the order is given by $(q_1,\dot{r}_1)\leq(q_0,\dot{r}_0)$ if and only if:
\begin{enumerate}
    \item $q_1\leq q_0$.
    \item $\pi(q_1)\Vdash\dot{r}_1\leq\dot{r}_0$.
\end{enumerate}
\end{dfn}

\begin{dfn}\label{divideontimes_symbol}
Let $\mathbb{P}$ and $\mathbb{Q}$ be forcings such that $\mathbb{P}\lessdot\mathbb{Q}$ and let $\pi:\mathbb{Q}\to\mathbb{P}$ be a projection. Let $\dot{\mathbb{R}}$ be a $\mathbb{Q}$-name for a forcing and consider $\mathbb{Q}*_{\mathbb{P}}\dot{\mathbb{R}}$. For $X\subseteq\mathbb{Q}*_{\mathbb{P}}\dot{\mathbb{R}}$, define the set $$X^\divideontimes=\{(\dot{r},q):(q,\dot{r})\in X\}$$

Note that $X^\divideontimes$ is a $\mathbb{Q}$-name for a subset of $\dot{\mathbb{R}}$.
\end{dfn}

\begin{dfn}\label{cohen_proper_definition}\quad
Let $\mathbb{P}$ and $\mathbb{Q}$ be complete Boolean algebras such that $\mathbb{P}\lessdot\mathbb{Q}$, and let $\pi$ be the projection from $\mathbb{Q}$ to $\mathbb{P}$. We say that $(\mathbb{P},\mathbb{Q})$ is Cohen proper if for any countable $\mathcal{M}\prec H(\theta)$ such that $\mathbb{P},\mathbb{Q}\in\mathcal{M}$, any $\mathbb{P}$-name $\dot{\mathbb{R}}\in\mathcal{M}$ for a forcing, any $q\in\mathcal{M}\cap\mathbb{Q}$ and $\langle D_n:n\in\omega\rangle$ such that for each $n\in\omega$:
\begin{enumerate}
    \item $D_n\subseteq\mathbb{Q}*_{\mathbb{P}}\dot{\mathbb{R}}\cap\mathcal{M}$.
    \item $D_n$ is open dense in $\mathbb{Q}*_{\mathbb{P}}\dot{\mathbb{R}}\cap\mathcal{M}$.
\end{enumerate}
there are $q_0\leq q$ and $\langle \dot{E}_n:n\in\omega\rangle$ such that for each $n\in\omega$:
\begin{enumerate}
    \item $\dot{E}_n$ is a $\mathbb{P}$-name.
    \item $\pi(q_0)\Vdash \dot{E}_n\text{ is an open dense subset of $\dot{\mathbb{R}}\cap\mathcal{M}[\pi[\dot{G}]]$}$.
    \item $q_0\Vdash D_n^\divideontimes\subseteq\dot{\mathbb{R}}\cap\mathcal{M}[\dot{G}]\text{ is open dense}$.
    \item $q_0\Vdash \dot{E}_n\subseteq {D_n^\divideontimes}$.
\end{enumerate}
where $G$ is $\mathbb{Q}$-generic.

We say that $q_0$ is a $(\mathbb{P},\mathbb{Q},\mathcal{M})$-generic condition for $\{D_n:n\in\omega\}$ and $\dot{\mathbb{R}}$.
\end{dfn}

\begin{lemma}\label{strongly_proper_forcing_open_sets}
Let $\mathbb{Q}$ be a $\sigma$-proper forcing, $\mathbb{P}\lessdot\mathbb{Q}$ a regular suborder, and $\mathcal{M}\prec H(\theta)$ countable such that $\mathbb{P},\mathbb{Q}\in\mathcal{M}$. Let $\dot{\mathbb{R}}\in\mathcal{M}$ be a $\mathbb{P}$-name for a forcing, so $\mathbb{Q}*_{\mathbb{P}}\dot{\mathbb{R}}\in\mathcal{M}$, and $\{D_n:n\in\omega\}$ a family of open dense subsets of $\mathbb{Q}*_{\mathbb{P}}\dot{\mathbb{R}}\cap\mathcal{M}$. Fix $q\in\mathbb{Q}\cap\mathcal{M}$. Then there is $q_0\leq q$ which forces that for each $n\in\omega$, $D_n^\divideontimes$ is an open dense subset of $\dot{\mathbb{R}}\cap\mathcal{M}[\dot{G}]$.
\end{lemma}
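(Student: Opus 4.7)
The plan is to apply $\sigma$-properness of $\mathbb{Q}$ to a countable collection of open dense subsets of $\mathbb{Q} \cap \mathcal{M}$ tailored from the $D_n$'s. First I would enumerate the countable set $\mathcal{M} \cap (\mathbb{Q} *_{\mathbb{P}} \dot{\mathbb{R}})$ as $\{(q'_k, \dot{s}_k) : k \in \omega\}$, and for each $n, k \in \omega$ define
\[
F_{n,k} = \{q'' \in \mathbb{Q} \cap \mathcal{M} : q'' \perp q'_k\} \cup \{q'' \in \mathbb{Q} \cap \mathcal{M} : q'' \leq q'_k \text{ and } \exists \dot{r} \in \mathcal{M},\ (q'', \dot{r}) \in D_n,\ \pi(q'') \Vdash \dot{r} \leq \dot{s}_k\}.
\]

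The next step is to check that each $F_{n,k}$ is open dense in $\mathbb{Q} \cap \mathcal{M}$. Given any $q'' \in \mathbb{Q} \cap \mathcal{M}$, either $q'' \perp q'_k$ (so $q''$ is already in $F_{n,k}$) or we may pick a common extension $q''' \in \mathcal{M}$ of $q''$ and $q'_k$; then $(q''', \dot{s}_k) \in \mathbb{Q} *_{\mathbb{P}} \dot{\mathbb{R}} \cap \mathcal{M}$ and the open-density of $D_n$ inside $\mathcal{M}$ produces $(q'''', \dot{r}) \in D_n \cap \mathcal{M}$ below it, with $q'''' \leq q''$ the required extension in $F_{n,k}$. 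Then, by $\sigma$-properness of $\mathbb{Q}$ applied to the countable family $\{F_{n,k} : n, k \in \omega\}$ at the condition $q$, I obtain a condition $q_0 \leq q$ under which every $F_{n,k}$ is predense.

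The final step is to show $q_0$ forces the conclusion. Fix $n$ and let $G$ be $\mathbb{Q}$-generic with $q_0 \in G$. For density of $(D_n^\divideontimes)_G$ in $\dot{\mathbb{R}}_{\pi[G]} \cap \mathcal{M}[G]$, take any $b$ in the latter set. The $(\mathbb{Q}, \mathcal{M})$-genericity of $q_0$ gives the standard correctness statement $\mathcal{M}[G] \cap V[\pi[G]] = \mathcal{M}[\pi[G] \cap \mathcal{M}]$; in particular $b = (\dot{s}_k)_{\pi[G]}$ for some $k$ with $q'_k \in G$. Predensity of $F_{n,k}$ below $q_0$ then furnishes $q'' \in G \cap F_{n,k}$. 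Since $q''$ is compatible with $q'_k$ (both lie in $G$), $q''$ must sit in the non-trivial branch of $F_{n,k}$, yielding a $\mathbb{P}$-name $\dot{r} \in \mathcal{M}$ with $(q'', \dot{r}) \in D_n$ and $\pi(q'') \Vdash \dot{r} \leq \dot{s}_k$. Hence $\dot{r}_{\pi[G]} \in (D_n^\divideontimes)_G$ lies below $b$.

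Openness of $(D_n^\divideontimes)_G$ inside $\dot{\mathbb{R}}_{\pi[G]} \cap \mathcal{M}[G]$ is handled by a parallel augmentation of the countable family fed to $\sigma$-properness: for each triple $(k, l, n)$ with $\dot{s}_l$ appropriately forced below $\dot{s}_k$, one adds an analogous open dense set ensuring that downward extensions of existing witnesses themselves have witnesses in $\mathcal{M}$. I expect the main obstacle to be precisely the representation-correctness statement that elements of $\dot{\mathbb{R}}_{\pi[G]} \cap \mathcal{M}[G]$ arise as $\mathbb{P}$-name evaluations with names in $\mathcal{M}$; this is a routine but notationally delicate consequence of $\sigma$-properness combined with the regularity of the embedding $\mathbb{P} \lessdot \mathbb{Q}$.
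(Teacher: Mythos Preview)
Your approach is essentially the same as the paper's: feed a countable family of open dense subsets of $\mathbb{Q}\cap\mathcal{M}$ to $\sigma$-properness, then verify density and openness of each $D_n^\divideontimes$ in the extension. Your sets $F_{n,k}$ correspond exactly to the paper's sets $D(n,p,\dot r)$.

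The gap is that the parts you hand-wave are precisely where the work lies, and your appeal to the ``standard correctness statement'' $\mathcal{M}[G]\cap V[\pi[G]]=\mathcal{M}[\pi[G]\cap\mathcal{M}]$ is not automatic from the family you have written down. Applying $\sigma$-properness only to $\{F_{n,k}\}$ does not make $q_0$ $(\mathbb{Q},\mathcal{M})$-generic, and even full $(\mathbb{Q},\mathcal{M})$-genericity does not trivially yield that every element of $\dot{\mathbb{R}}_{\pi[G]}\cap\mathcal{M}[G]$ is the evaluation of a $\mathbb{P}$-name from $\mathcal{M}$: one must explicitly arrange, via additional dense sets, that each $\mathbb{Q}$-name $\dot r\in\mathcal{M}$ forced into $\dot{\mathbb{R}}$ gets identified with a $\mathbb{P}$-name $\dot r_1\in\mathcal{M}$ by some condition in $G\cap\mathcal{M}$. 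The paper does this with three further families: $D(\dot r)$ (deciding $\dot r\in\dot{\mathbb{R}}$), $D_1(p,\dot r)$ (producing a $\mathbb{P}$-name $\dot r_1$ with $p_1\Vdash\dot r=\dot r_1$), and $D(\dot r,\dot s)$ (deciding the order between two $\mathbb{P}$-names). With these in hand, both density and openness of $D_n^\divideontimes$ are argued directly, without invoking any intermediate-model correctness principle. Your sketch of openness (``parallel augmentation'') would also be realized by exactly these families. So your plan is right, but to turn it into a proof you need to name these extra dense sets and use them in place of the black-box correctness claim.
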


\begin{proof}
To simplify notation, given a condition $a\in \mathbb{Q}$, we write $a^{\perp}$ to denote the set of conditions in $\mathbb{Q}$ which are incompatible with $a$. We need to define several open dense subsets of $\mathbb{Q}\cap\mathcal{M}$:
\begin{enumerate}
\item For each $(p,\dot{r})\in\mathbb{Q}*_{\mathbb{P}}\dot{\mathbb{R}}\cap\mathcal{M}$ and $n\in\omega$, define 
    \begin{equation*}
    \overline{D}(n,p,\dot{r})=\{a\in\mathbb{Q}\cap\mathcal{M}:(\exists (t,\dot{s})\in D_n)((t,\dot{s})\leq(p,\dot{r}) \land t=a)\}    
    \end{equation*}
    Then let $D(n,p,\dot{r})=\overline{D}(n,p,\dot{r})\cup(p^\perp\cap\mathcal{M})$
\item Now, for each $\mathbb{Q}$-name $\dot{r}\in\mathcal{M}$, define
    \begin{equation*}
D(\dot{r})=\{p\in\mathbb{Q}\cap\mathcal{M}:p\Vdash \dot{r}\in\dot{\mathbb{R}}\vee p\Vdash\dot{r}\notin\dot{\mathbb{R}}\}            \end{equation*}

\item For $\mathbb{P}$-names $\dot{r},\dot{s}\in\mathcal{M}$. For a condition $p\in\mathbb{Q}$, let us say that $p$ decides the relation $\dot{s}<\dot{r}$ it either, $\pi(p)\Vdash \dot{r}<\dot{s}$, or $\pi\Vdash\dot{r}>\dot{r}$ or $\pi(p)\Vdash\dot{r}=\dot{s}$. define $D_0(\dot{s},\dot{r})$ and $D_1(\dot{s},\dot{r})$ as:
    \begin{equation*}
    D_0(\dot{s},\dot{r})=
    \{p\in\mathbb{Q}\cap\mathcal{M}:(\pi(p)\Vdash\dot{r},\dot{s}\in\dot{\mathbb{R}})\land
    (p\text{ decides }\dot{r}<\dot{s}))\}
    \end{equation*}
    
    \begin{equation*}
    D_1(\dot{s},\dot{r})=\{p\in\mathbb{Q}\cap\mathcal{M}:(\pi(p)\Vdash\dot{r}\notin \dot{\mathbb{R}})\vee (\pi(p)\Vdash \dot{s}\notin\dot{\mathbb{R}})\}
    \end{equation*}
Then let $D(\dot{r},\dot{s})=D_0(\dot{r},\dot{s})\cup D_1(\dot{r},\dot{s})$.

\item Now, for a $\mathbb{Q}$-name $\dot{r}\in\mathcal{M}$ and $p\in\mathbb{Q}\cap\mathcal{M}$ such that $p\Vdash\dot{r}\in\dot{\mathbb{R}}$, define
    \begin{equation*}
    \overline{D}_1(p,\dot{r})=\{p_1\in\mathbb{Q}\cap\mathcal{M}:(\exists(p_1,\dot{r}_1)\in\mathbb{Q}*_{\mathbb{P}}\dot{\mathbb{R}}\cap\mathcal{M})(p_1\leq p)\land(p_1\Vdash\dot{r}=\dot{r}_1)\}
    \end{equation*}
    and let $D_1(p,\dot{r})=\overline{D}_1(p,\dot{r})\cup(p^\perp\cap\mathcal{ B})$.
\end{enumerate}

Let $\mathcal{E}$ be the collection of all sets of the types (1)-(4). Let $q_0\leq q$ be a $(\mathbb{Q},\mathcal{M},\mathcal{E})$-generic condition. We claim that $q_0$ forces each $D_n^\divideontimes$ to be an open dense subset of $\dot{\mathbb{R}}\cap\mathcal{M}[\dot{G}]$. Let $G$ be a generic filter such that $q_0\in G$ and let us work in $V[G]$.  Note that $\dot{\mathbb{R}}_G=(\mathbb{Q}*_{\mathbb{P}}\dot{\mathbb{R}})^\divideontimes_G$.

First let us see that $(D_n)^\divideontimes_G$ is dense for each $n\in\omega$. Pick $r\in\dot{\mathbb{R}}_G\cap\mathcal{M}[G]$. Then there is $\dot{r}\in\mathcal{M}$ such that $r=\dot{r}_{G}$. Since $D(\dot{r})\cap G\neq\emptyset$, there is $p_0\in\mathbb{Q}\cap\mathcal{M}\cap G$ such that $p_0\Vdash \dot{r}\in\dot{\mathbb{R}}$. Now, $D_1(p_0,\dot{r})\cap G\neq\emptyset$, and $p_0\in G$, so there is $(p_1,\dot{r}_1)\in\mathbb{Q}*_{\mathbb{P}}\dot{\mathbb{R}}\cap\mathcal{M}$ such that $p_1\Vdash\dot{r}=\dot{r}_1$ and $p_1\leq p_0$. Now, $D(n,(p_1,\dot{r}_1))\cap G\neq\emptyset$, so pick a condition $p_2\in D(n,(p_1,\dot{r}_1))\cap G$. Then we should have $p_2\leq p_1$ and there is $\dot{r}_2\in\mathcal{M}$ such that $(p_2,\dot{r}_2)\in D_n\cap\mathcal{M}$ and $(p_2,\dot{r}_2)\leq(p_1,\dot{r}_1)$, which implies $(\dot{r}_2)_G\leq (\dot{r}_1)=\dot{r}_G=r$. By definition of $D_n^\divideontimes$, we also have $(\dot{r}_2)_G\in (D_n^\divideontimes)_G$. Thus $(D_n)^\divideontimes_G$ is dense.

Let us now see that $(D_n)^\divideontimes$ is open. Fix $r\in(D_n)^\divideontimes_G$ and let $s\in\dot{\mathbb{R}}_G\cap\mathcal{M}[G]$ be such that $s\leq r$. We have to prove that $s\in (D_n)^\divideontimes_G$. By definition of $(D_n)^\divideontimes_G$, there is $(p_0,\dot{r})\in D_n$ such that $p_0\in G$ and $r=\dot{r}_G$. Let $\dot{s}\in\mathcal{M}$ be such that $s=\dot{s}_G$. Then $D(\dot{s})\cap G\cap\mathcal{M}\neq\emptyset$. Fix $p_1\in D(\dot{s})\cap G\cap\mathcal{M}$ such that $p_1\leq p_0$. Then we have $p_1\Vdash \dot{s}\in\dot{\mathbb{R}}$. Now, we have $D_1(p_1,\dot{s})\cap G\cap\mathcal{M}\neq\emptyset$, so we can pick $p_2\in D_1(p_1,\dot{s})\cap G\cap\mathcal{M}$. Note that we have $p_2\leq p_1$, and there is $\dot{s}_2$ such that $(p_2,\dot{s}_2)\in\mathbb{Q}*_{\mathbb{P}}\dot{\mathbb{R}}\cap\mathcal{M}$ and $p_2\Vdash\dot{s}=\dot{s}_2$. Now, since $s\leq r$, there is $p_3\in G\cap D(\dot{r},\dot{s}_2)\cap\mathcal{M}$ such that $p_3\leq p_2$ and $\pi(p_3)\Vdash\dot{s}_2\leq\dot{r}$. Note that $(p_3,\dot{s}_2)\in \mathbb{Q}*_{\mathbb{P}}\dot{\mathbb{R}}$. Moreover, we also have $(p_3,\dot{s}_2)\leq(p_0,\dot{r})$. Since $(p_0,\dot{r})\in D_n$ and $D_n$ is open, we have $(p_3,\dot{s}_2)\in D_n$, which implies $(\dot{s}_2)_G\in D_n^\divideontimes$; since $p_3\Vdash \dot{s}=\dot{s}_2$, we have $s\in D_n^\divideontimes$. Thus, $D_n^\divideontimes$ is open.
\end{proof}

\begin{prp}\label{varsigma_proper_stringly_proper_cohen_preserving}
If $\mathbb{Q}$ is $\sigma$-proper and $\mathbb{P}$ forces $[\mathbb{Q}:\mathbb{P}]$ is Cohen preserving, then $(\mathbb{P},\mathbb{Q})$ is Cohen proper.
\end{prp}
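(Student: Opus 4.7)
The plan is to combine Lemma \ref{strongly_proper_forcing_open_sets} with the hypothesized Cohen preservation of the quotient $[\mathbb{Q}:\mathbb{P}]$, using the standard identification $\mathbb{Q}\cong\mathbb{P}\ast\dot{[\mathbb{Q}:\mathbb{P}]}$.

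First I would use the $\sigma$-properness of $\mathbb{Q}$ to invoke Lemma \ref{strongly_proper_forcing_open_sets}, obtaining a condition $q_1\leq q$ in $\mathbb{Q}\cap\mathcal{M}$ that forces each $D_n^\divideontimes$ to be an open dense subset of $\dot{\mathbb{R}}\cap\mathcal{M}[\dot{G}]$, where $\dot{G}$ is the $\mathbb{Q}$-generic. Since the names $\dot{r}$ appearing in the pairs $(q,\dot{r})\in D_n\subseteq(\mathbb{Q}\ast_\mathbb{P}\dot{\mathbb{R}})\cap\mathcal{M}$ are by definition $\mathbb{P}$-names lying in $\mathcal{M}$, each $\dot{r}_G=\dot{r}_{\pi[G]}$ belongs to $\mathcal{M}[\pi[G]]$; hence $q_1$ actually forces $D_n^\divideontimes$ to be an open dense subset of the countable forcing $\dot{\mathbb{R}}\cap\mathcal{M}[\pi[\dot{G}]]$ living in the $\mathbb{P}$-intermediate model. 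This already settles clause (3) of Definition \ref{cohen_proper_definition}.

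Next I would move into $V^\mathbb{P}$ below $\pi(q_1)$. Under the identification $\mathbb{Q}\cong\mathbb{P}\ast\dot{[\mathbb{Q}:\mathbb{P}]}$, the condition $q_1$ corresponds to a pair $(\pi(q_1),\dot{q}_1^*)$ with $\dot{q}_1^*$ a $\mathbb{P}$-name for a condition in $[\mathbb{Q}:\mathbb{P}]$. Fix any $\mathbb{P}$-generic $G_\mathbb{P}\ni\pi(q_1)$; in $V[G_\mathbb{P}]$ each $D_n^\divideontimes$ becomes a $[\mathbb{Q}:\mathbb{P}]$-name for an open dense subset of the countable forcing $\dot{\mathbb{R}}_{G_\mathbb{P}}\cap\mathcal{M}[G_\mathbb{P}]$. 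Since any countable atomless forcing embeds densely into $2^{<\omega}$ (atoms being trivial for the denseness statement), and $[\mathbb{Q}:\mathbb{P}]$ is Cohen preserving in $V[G_\mathbb{P}]$ by hypothesis, Lemma \ref{omega_cohen_preserving} furnishes a condition $q_2^*\leq\dot{q}_1^*$ in $[\mathbb{Q}:\mathbb{P}]$ and open dense subsets $E_n\subseteq\dot{\mathbb{R}}_{G_\mathbb{P}}\cap\mathcal{M}[G_\mathbb{P}]$ belonging to $V[G_\mathbb{P}]$ with $q_2^*\Vdash_{[\mathbb{Q}:\mathbb{P}]}E_n\subseteq D_n^\divideontimes$. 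Mixing these choices over $\pi(q_1)$ produces $\mathbb{P}$-names $\dot{q}_2^*$ and $\dot{E}_n$ such that $\pi(q_1)\Vdash\dot{q}_2^*\leq\dot{q}_1^*$, $\pi(q_1)\Vdash\dot{E}_n$ is open dense in $\dot{\mathbb{R}}\cap\mathcal{M}[\dot{G}_\mathbb{P}]$ (clause (2)), and $\pi(q_1)\Vdash\dot{q}_2^*\Vdash_{[\mathbb{Q}:\mathbb{P}]}\dot{E}_n\subseteq D_n^\divideontimes$ (clause (4)).

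Finally, under the isomorphism $\mathbb{Q}\cong\mathbb{P}\ast\dot{[\mathbb{Q}:\mathbb{P}]}$, the pair $(\pi(q_1),\dot{q}_2^*)$ corresponds to a $\mathbb{Q}$-condition $q_0\leq q_1\leq q$ with $\pi(q_0)=\pi(q_1)$, and all four clauses of Definition \ref{cohen_proper_definition} then follow at once. The main obstacle I anticipate is the bookkeeping across the three layers ($\mathbb{P}$, the quotient $[\mathbb{Q}:\mathbb{P}]$, and $\dot{\mathbb{R}}$), together with the care needed to ensure that the $\dot{E}_n$ are genuinely $\mathbb{P}$-names, as required by clause (1), and not merely $\mathbb{Q}$-names; this is precisely what the application of Cohen preservation inside $V^\mathbb{P}$, rather than in $V^\mathbb{Q}$, is designed to arrange.
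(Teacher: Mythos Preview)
Your approach is essentially identical to the paper's: invoke Lemma~\ref{strongly_proper_forcing_open_sets} to get a condition forcing each $D_n^{\divideontimes}$ open dense in $\dot{\mathbb{R}}\cap\mathcal{M}[\pi[\dot G]]$, pass to the $\mathbb{P}$-extension, apply Cohen preservation of $[\mathbb{Q}:\mathbb{P}]$ together with Lemma~\ref{omega_cohen_preserving} to find the ground-model dense sets $E_n$, and then pull back to a single $\mathbb{Q}$-condition via the identification $\mathbb{Q}\cong\mathbb{P}\ast\dot{[\mathbb{Q}:\mathbb{P}]}$. One minor slip: you write ``a condition $q_1\leq q$ in $\mathbb{Q}\cap\mathcal{M}$'', but the condition produced by Lemma~\ref{strongly_proper_forcing_open_sets} is a $(\mathbb{Q},\mathcal{M},\mathcal{E})$-generic condition and hence typically lies outside $\mathcal{M}$; this does not affect the rest of your argument. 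Your explicit remark that $\dot{\mathbb{R}}\cap\mathcal{M}[G_{\mathbb{P}}]$ is a countable forcing (so that Cohen preservation applies to its dense sets) is a detail the paper leaves implicit.
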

\begin{proof}
Let $\mathcal{M}\prec H(\theta)$ be countable and such that $(\mathbb{P},\mathbb{Q})\in\mathcal{M}$ and pick $q\in\mathbb{Q}\cap\mathcal{M}$. Fix $\dot{\mathbb{R}}\in\mathcal{M}$ a $\mathbb{P}$-name for a partial order. Let $\vec{D}$ be a countable family of open dense subsets of $\mathbb{Q}*_{\mathbb{P}}\dot{\mathbb{R}}\cap\mathcal{M}$. By the previous Lemma \ref{strongly_proper_forcing_open_sets}, we can find a $(\mathbb{Q},\mathcal{M})$-generic condition $q'\leq q$ forcing each set $D_n^\divideontimes$ to be an open dense subset of $\dot{\mathbb{R}}\cap\mathcal{M}[\dot{\Gamma}]$, fix $q_0$ one of such conditions. Let $\Gamma_0$ be a $\mathbb{P}$-generic filter such that $\pi(q_0)\in\Gamma_0$. Then, in $V[\Gamma_0]$, $q_0\in[\mathbb{Q}:\mathbb{P}]$ and forces each $D_n^\divideontimes$ to be an open dense subset of $\dot{\mathbb{R}}\cap\mathcal{M}[\Gamma_0]$. Since $[\mathbb{Q}:\mathbb{P}]$ Cohen preserving, by Lemma \ref{omega_cohen_preserving}, in $V[\Gamma_0]$ we can extend $q_0$ to a condition $q_1$ and find $\{E_n:n\in\omega\}$ open dense subsets of $\dot{\mathbb{R}}\cap\mathcal{M}[\Gamma_0]$ such that for each $n\in\omega$, $q_1\Vdash E_n\subseteq D_n^\divideontimes$. Going back to $V$ we get a $\mathbb{P}$-name $\dot{q}_1$ for $q_1$ and the corresponding family of $\mathbb{P}$-names $\{\dot{E}_n:n\in\omega\}$ for the open dense subsets of $\dot{\mathbb{R}}\cap\mathcal{M}[\Gamma_0]$. Then we have $(\pi(q_0),\dot{q}_1)$ forces, for each $n\in\omega$:
\begin{enumerate}
    \item $D_n^\divideontimes$ is an open dense subset of $\dot{\mathbb{R}}\cap\mathcal{M}[\dot{\Gamma}_0]$.
    \item $\dot{E}_n\subseteq D_n^\divideontimes$.
\end{enumerate}
and $\pi(q_0)$ forces for each $n\in\omega$,
\begin{enumerate}
    \item $\dot{E}_n$ lives in $V[\dot{\Gamma}_0]$ and is an open dense subset of $\dot{\mathbb{R}}\cap\mathcal{M}[\Gamma_0]$.
\end{enumerate}
Now we only have to find $q_2\in\mathbb{Q}$ such that $(\pi(q_2),q_2)\leq(\pi(q_0),\dot{q_1})$. Then $q_2$ and $\{\dot{E}_n:n\in\omega\}$ work.
\end{proof}

\begin{thm}\label{cohen_proper_equivalences}
Let $\mathbb{P}$ and $\mathbb{Q}$ be forcings such that $\mathbb{P}\lessdot\mathbb{Q}$ and $\pi:\mathbb{Q}\to\mathbb{P}$ a regular projection. The following are equivalent:
\begin{enumerate}
    \item $(\mathbb{P},\mathbb{Q})$ is Cohen proper.
    \item $\mathbb{Q}$ is $\sigma$-proper and $\mathbb{P}\Vdash[\mathbb{Q}:\mathbb{P}]\text{ is Cohen preserving}$.
    \item For any $\mathcal{M}\prec H(\theta)$ countable such that $\mathbb{Q},\mathbb{P}\in\mathcal{M}$, any open dense $D\subseteq(\mathbb{Q}\times\mathbb{C})\cap\mathcal{M}$ and any condition $q\in\mathbb{Q}\cap\mathcal{M}$, there are $q_0\leq q$ and a $\mathbb{P}$-name $\dot{E}$ such that,
    \begin{enumerate}
    \item $\pi(q_0)\Vdash\dot{E}\subseteq\mathbb{C}\text{ is open dense}$. 
    \item $q_0\Vdash\dot{E}\subseteq{D}^\divideontimes$.
    \end{enumerate}
    \item For any $\mathcal{M}\prec H(\theta)$ countable such that $\mathbb{Q},\mathbb{P}\in\mathcal{M}$, any $\mathbb{P}$-name $\dot{\mathbb{R}}\in\mathcal{M}$, any open dense $D\subseteq(\mathbb{Q}*_{\mathbb{P}}\dot{\mathbb{R}})\cap\mathcal{M}$ and any condition $q\in\mathbb{Q}\cap\mathcal{M}$, there are $q_0\leq q$ and a $\mathbb{P}$-name $\dot{E}$ such that,
    \begin{enumerate}
    \item $\pi(q_0)\Vdash\dot{E}\subseteq\dot{\mathbb{R}}\cap\mathcal{M}[{\pi[\dot\Gamma}]]\text{ is open dense}$. 
    \item $q_0\Vdash\dot{E}\subseteq{D}^\divideontimes$.
    \end{enumerate}

\end{enumerate}
\end{thm}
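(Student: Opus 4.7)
The plan is to establish the cycle $(2)\Rightarrow(1)\Rightarrow(4)\Rightarrow(3)\Rightarrow(2)$. The implication $(2)\Rightarrow(1)$ is Proposition \ref{varsigma_proper_stringly_proper_cohen_preserving}, and $(1)\Rightarrow(4)$ is the trivial single-dense-set case of (1). For $(4)\Rightarrow(3)$ I would instantiate (4) with $\dot{\mathbb{R}}=\check{\mathbb{C}}$: any open dense $D\subseteq(\mathbb{Q}\times\mathbb{C})\cap\mathcal{M}$ lifts via $(q,c)\mapsto(q,\check{c})$ to an open dense subset of $(\mathbb{Q}*_{\mathbb{P}}\check{\mathbb{C}})\cap\mathcal{M}$, and by absoluteness of Cohen forcing the resulting $\mathbb{P}$-name $\dot{E}$ reads as a $\mathbb{P}$-name for an open dense subset of $\mathbb{C}$. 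The substantive content lies in $(3)\Rightarrow(2)$, which I would split into $\sigma$-properness of $\mathbb{Q}$ and Cohen preservation of $[\mathbb{Q}:\mathbb{P}]$.

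For Cohen preservation, fix a $\mathbb{Q}$-name $\dot{D}$ for an open dense subset of $\mathbb{C}$ and a condition $q\in\mathbb{Q}$. Choose a countable $\mathcal{M}\prec H(\theta)$ containing $\mathbb{P},\mathbb{Q},\dot{D},q$, and set
\[
D=\bigl\{(q',c)\in(\mathbb{Q}\times\mathbb{C})\cap\mathcal{M}:q'\Vdash\check{c}\in\dot{D}\bigr\}.
\]
Openness of $D$ is inherited from openness of $\dot{D}$, and density follows from the forced density of $\dot{D}$ together with elementarity of $\mathcal{M}$. Applying (3) yields $q_0\leq q$ and a $\mathbb{P}$-name $\dot{E}$ with $\pi(q_0)\Vdash\dot{E}\subseteq\mathbb{C}$ open dense and $q_0\Vdash\dot{E}\subseteq D^\divideontimes\subseteq\dot{D}$, which is exactly the Cohen preservation property of $[\mathbb{Q}:\mathbb{P}]$ witnessed at $q_0$.

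For $\sigma$-properness, the obstacle is that (3) handles only one open dense set, whereas $\sigma$-properness demands countably many to be predense below $q_0$ simultaneously; I would bridge this by encoding the whole sequence via a maximal antichain $\{s_n:n\in\omega\}\subseteq\mathbb{C}$, concretely $s_n=0^n{}^\frown\langle 1\rangle$. Given open dense $D_n\subseteq\mathbb{Q}\cap\mathcal{M}$, define
\[
D=\bigl\{(q',s)\in(\mathbb{Q}\times\mathbb{C})\cap\mathcal{M}:\exists n\,(s\supseteq s_n\text{ and }q'\in D_n)\bigr\}.
\]
Density and openness of $D$ are routine, using that every $s\in 2^{<\omega}$ has an extension above some $s_n$ and that each $D_n$ is open in $\mathbb{Q}$. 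Applying (3) produces $q_0\leq q$ and a $\mathbb{P}$-name $\dot{E}$; in any $\mathbb{Q}$-generic $G\ni q_0$, the forced density of $\dot{E}$ in $\mathbb{C}$ supplies, for each fixed $n$, an extension of $s_n$ lying in $D^\divideontimes_G$, and the antichain property of $\{s_n\}$ pinpoints the associated witness as an element of $D_n\cap G$. Hence every $D_n$ is predense below $q_0$, giving the desired $(\mathbb{Q},\mathcal{M},\vec{D})$-generic condition. This antichain encoding is the main obstacle, as it is the step that converts the single-dense-set hypothesis of (3) into the countable-sequence conclusion of $\sigma$-properness.
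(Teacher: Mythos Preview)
Your proof is correct and follows essentially the same route as the paper. The only difference is organizational: the paper handles $\sigma$-properness and Cohen preservation of $[\mathbb{Q}:\mathbb{P}]$ in a single application of (3) by interleaving both tasks into one set $H=\bigcup_n H_n$ (reserving the cone below $s_0$ for a shifted copy of your $D_0$ and the cones below $s_n$, $n\geq 1$, for the sets $F_{n-1}$), whereas you apply (3) twice, once for each property. Your antichain encoding via $s_n=0^n{}^\frown\langle 1\rangle$ is exactly the paper's trick (they use $1^n{}^\frown 0$), and both arguments hinge on the same observation: forcing $\dot{E}$ dense below each $s_n$ forces the corresponding $D_n$ to meet the generic.
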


\begin{proof}
The implications $(4)\Longrightarrow(3)$ and $(1)\Longrightarrow(4)$ are easy, and the implication $(2)\Longrightarrow(1)$ is Proposition \ref{varsigma_proper_stringly_proper_cohen_preserving}. So we are left with implication $(3)\Longrightarrow(2)$.

$(3)\Longrightarrow(2)$.  Let $\dot{D}$ be a $\mathbb{Q}$-name for an open dense subset of $\mathbb{C}$. For each $n\in\omega$, let $s_n={\vec{1}^n}^\frown 0$. Now let $\mathcal{M}\prec H(\theta)$ be countable such that $\mathbb{P},\mathbb{Q}, \dot{D}\in\mathcal{M}$. Let $\mathcal{F}=\{F_n:n\in\omega\}$ be a family of open dense subsests of $\mathbb{Q}\cap\mathcal{M}$ and fix a condition $q\in\mathbb{Q}\cap\mathcal{M}$. We will find an extension $q_0\leq q$ and a $\dot{E}$ $\mathbb{P}$-name such that:
\begin{enumerate}
    \item[i)] $q_0$ is $(\mathcal{F},\mathbb{Q},\mathcal{M})$-generic.
    \item[ii)] $\pi(q_0)\Vdash\dot{E}\subseteq\mathbb{C}\text{ is open dense}$.
    \item[iii)] $q_0\Vdash\dot{E}\subseteq\dot{D}$.
\end{enumerate}
Let us first define $D_0=\{(q,s)\in\mathbb{Q}\times\mathbb{C}:q\Vdash s\in\dot{D}\}\cap\mathcal{M}$. It is clear that $D_0$ is an open dense subset of $(\mathbb{Q}\times\mathbb{C})\cap\mathcal{M}$. Define the set $H_0$ as follows:
\begin{equation*}
    H_0=\{(q, s_0^\frown t):(q,t)\in D_0\}
\end{equation*}

Now, for each positive $n\in\omega$, define a set $H_n$ as follows:
\begin{equation*}
    (p,t)\in H_n\Longleftrightarrow (p,t)\in D_0\land p\in F_{n-1}\land s_n\subseteq t
\end{equation*}
and let $H=\bigcup_{n\in\omega}H_n$. Let us see that $H$ is an open dense subset of $\mathbb{Q}\times\mathbb{C}\cap\mathcal{M}$. Let $(r,t)\in \mathbb{Q}\times\mathbb{C}\cap\mathcal{M}$ be arbitrary. We can assume $s_n\subseteq t$ for some $n\in\omega$. Since $D_0$ is open dense, there is $(p_0,t_0)\in D_0$ such that $(p_0,t_0)\leq (r,t)$. The set $F_n$ is open dense, there is $p_1\in F_n$ such that $p_1\leq p_0$, and since $D_0$ is open, we get $(p_1,t_0)\in D_0$, so we have $(p_1,t_0)\in D_0$, $p_1\in F_n$ and $s_n\subseteq t_0$, which implies $(p_1,t_0)\in H_n\subseteq H$, so $H$ is dense. To see that $H$ is open just note that each $H_n$ is open. Our hypothesis (3) implies that there is a condition $q_0\leq q$ and a $\mathbb{P}$-name $\dot{E}$ such that (3)(a) and (3)(b) hold for $H$. Then we have:
\begin{enumerate}
    \item $q_0\Vdash\dot{E}\subseteq H^\divideontimes$.
    \item $\pi(q_0)\Vdash\dot{E}\subseteq\mathbb{C}\text{ is open dense}$.
\end{enumerate}
Since $q_0$ forces $H^\divideontimes$ to contain a dense open set, we have that each $F_n$ is predense below $q_0$, since otherwise there is $n\in\omega$ such that $H^\divideontimes$ is not dense below $s_n$. On the other hand, $H_0^\divideontimes$ is open dense below $s_0$, and since $H_0$ is isomorphic to $D_0$, we have that $D_0^\divideontimes$ is open dense in $2^{<\omega}$. Finally note that $q_0\Vdash D_0^\divideontimes\subseteq\dot{D}_{\Gamma}$: let $G$ be a generic filter such that $q_0\in G$ and pick $t\in D_0^\divideontimes$, so there is $p\in G$ such that $(p,t)\in D_0$, and by definition of $D_0$ we have $p\Vdash t\in\dot{D}$, so it follows $t\in \dot{D}_{G}$.
\end{proof}

\begin{prp}\quad
\begin{enumerate}
    \item If $\mathcal{U}$ is a suitable filter, then $(\mathbb{S},\mathbb{S}\times\mathbb{P}_\kappa(\mathcal{U}))$ is Cohen proper.
    \item If $\mathbb{P}$ is a $\sigma$-proper forcing, and $\mathbb{P}$ forces $\dot{\mathbb{Q}}$ to be a $\sigma$-proper forcing and Cohen preserving, then $(\mathbb{P},\mathbb{P}*\dot{\mathbb{Q}})$ is Cohen proper.
    \item If $\mathbb{P}$ is $\sigma$-proper then $(\mathbb{P},\mathbb{P}\times\mathbb{Q}^\kappa)$ is Cohen proper, where $\mathbb{Q}^\kappa$ may be the countable support product of $\kappa$-many copies of the Sacks forcing, Silver forcing, Miller lite forcing, etc.
\end{enumerate}

\end{prp}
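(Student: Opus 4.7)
The plan is to invoke the equivalence $(1) \iff (2)$ of Theorem \ref{cohen_proper_equivalences}: for each of the three clauses it suffices to verify that the ``upper'' forcing $\mathbb{Q}$ is $\sigma$-proper and that the lower forcing $\mathbb{P}$ forces $[\mathbb{Q}:\mathbb{P}]$ to be Cohen preserving. This reduces everything to already-known ingredients.

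Clause (2) is essentially formal. The standard two-step iteration theorem for $\sigma$-proper forcings gives that $\mathbb{P}*\dot{\mathbb{Q}}$ is $\sigma$-proper, and under the natural identification $[\mathbb{P}*\dot{\mathbb{Q}}:\mathbb{P}] \cong \dot{\mathbb{Q}}$ in $V[\dot{G}_{\mathbb{P}}]$ the quotient is Cohen preserving by hypothesis. For clause (3), each of Sacks, Silver and Miller-lite is proper and has the Sacks property, and the countable support product of such forcings retains the Sacks property by the obvious product analogue of Theorem \ref{pp_iterated}. Hence $\mathbb{Q}^\kappa$ is $\sigma$-proper and, by the final cited lemma of Section \ref{no_basically_generated}, Cohen preserving. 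A fusion argument on the product side combined with $\sigma$-properness of $\mathbb{P}$ yields $\sigma$-properness of $\mathbb{P}\times\mathbb{Q}^\kappa$, and since $\mathbb{Q}^\kappa$ is defined purely from objects in $V$ independently of the $\mathbb{P}$-generic, the quotient $[\mathbb{P}\times\mathbb{Q}^\kappa:\mathbb{P}] \cong \mathbb{Q}^\kappa$ remains Cohen preserving in $V[\dot{G}_{\mathbb{P}}]$.

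Clause (1) is the substantive case. Sacks forcing $\mathbb{S}$ is proper with the Sacks property, and $\mathbb{P}_\kappa(\mathcal{U})$ has the Sacks property by Theorem \ref{P_k_has_sacks}; a fusion on the product — interleaving the Sacks-side fusion with the game-based construction in the proof of Theorem \ref{P_k_has_sacks} — shows that $\mathbb{S}\times\mathbb{P}_\kappa(\mathcal{U})$ is proper and has the Sacks property, hence is $\sigma$-proper. The quotient $[\mathbb{S}\times\mathbb{P}_\kappa(\mathcal{U}):\mathbb{S}]$ is canonically $\mathbb{P}_\kappa(\mathcal{U})$ as computed in $V[\dot{G}_{\mathbb{S}}]$, so it suffices to show that this latter forcing is Cohen preserving in $V[\dot{G}_{\mathbb{S}}]$. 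By Theorem \ref{P_k_has_sacks} it would be enough to see that $\mathcal{U}$ remains suitable after Sacks forcing, and this is where the proof does real work: given an $\mathcal{U}$-tree $\tau \in V[G_{\mathbb{S}}]$, use the $\omega^\omega$-bounding of Sacks (consequence of the Sacks property) to capture each value $\tau(\sigma)$ in a ground-model finite set of candidates, read such a capturing as a \emph{strategy} for Player I in the game $\mathcal{G}(\mathcal{U})$ in $V$ via Lemma \ref{F_subtree}, and apply Corollary \ref{PI_no_ws} in $V$ to obtain a branch $f \in V$ with $\tau\text{-}br_0(f)\in\mathcal{U}$; then $f$ witnesses suitability of $\mathcal{U}$ for $\tau$ in $V[G_{\mathbb{S}}]$ as well.

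The main obstacle is precisely the last step: the preservation of suitability of $\mathcal{U}$ by Sacks forcing, since the definition of suitable filter is formulated with reference to trees of conditions from $\mathcal{U}$ itself and not obviously absolute. All the other ingredients — the product-fusion giving the Sacks property of $\mathbb{S}\times\mathbb{P}_\kappa(\mathcal{U})$, and the implication ``Sacks property implies Cohen preserving'' — are either already proved in the paper or standard, and plug into the equivalence from Theorem \ref{cohen_proper_equivalences} to close all three clauses uniformly.
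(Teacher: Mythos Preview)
Your overall plan—applying criterion (2) of Theorem \ref{cohen_proper_equivalences} to each clause—is correct and is what the paper intends; clauses (2) and (3) go through. For clause (1), however, the detour through preservation of suitability is both unnecessary and flawed. The quotient $[\mathbb{S}\times\mathbb{P}_\kappa(\mathcal{U}):\mathbb{S}]$ is the ground-model poset $\mathbb{P}_\kappa(\mathcal{U})^V$ viewed in $V[G_{\mathbb{S}}]$, not $\mathbb{P}_\kappa(\mathcal{U})$ recomputed there, so even if $\mathcal{U}$ remained suitable, Theorem \ref{P_k_has_sacks} applied in the extension would speak about the wrong forcing (one whose conditions use $\mathcal{U}$-partitions living in $V[G_{\mathbb{S}}]$). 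Moreover, the suitability-preservation sketch does not work as written: the values $\tau(\sigma)$ lie in $\mathcal{U}\times\mathcal{U}$, not in $\omega$, so $\omega^\omega$-bounding does not trap them in ground-model finite sets, and no coding into a game is indicated.

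The fix uses an ingredient you already have. You argued that $\mathbb{S}\times\mathbb{P}_\kappa(\mathcal{U})$ has the Sacks property in $V$, hence is Cohen preserving in $V$. Now invoke the elementary general fact (recorded later as Lemma \ref{sacks_two_step_decomposition}) that if $\mathbb{P}\lessdot\mathbb{Q}$ and $\mathbb{Q}$ is Cohen preserving then $\mathbb{P}$ forces $[\mathbb{Q}:\mathbb{P}]$ to be Cohen preserving; this handles the quotient directly, with no analysis of $\mathcal{U}$ in the extension needed. The same shortcut also streamlines clause (3): once $\mathbb{P}\times\mathbb{Q}^\kappa$ is seen to be Cohen preserving, the quotient is automatically so, and you need not argue separately that $\mathbb{Q}^\kappa$ remains Cohen preserving in $V[G_{\mathbb{P}}]$.
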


\section{Iteration of Cohen forcing.}\label{iteration_cohen_properness}

Here we define a way to iterate Cohen proper forcings which seems very similar to countable support iterations and allows us to get the continuum arbitrarily large.

\begin{dfn}
Let $\alpha$ be an ordinal. $\mathbf{P}=\langle\mathbb{P}_\beta,(\dot{\mathbb{Q}}_\beta^0,\dot{\mathbb{Q}}_\beta^1,\dot{\pi}_\beta):\beta<\alpha\rangle$ is a $\varsigma$-iteration if and only if:
\begin{enumerate}
    \item $\mathbf{P}_0^\alpha=\langle\mathbb{P}_\beta,\dot{\mathbb{Q}}_\beta^0:\beta<\alpha\rangle$ is a countable support iteration.
    \item For each $\beta<\alpha$, $\dot{\mathbb{Q}}_\beta^0$ and $\dot{\mathbb{Q}}_\beta^1$ are  $\mathbb{P}_\beta$-names for complete boolean algebras.
    \item For each $\beta<\alpha$, $\mathbb{P}_\beta\Vdash \dot{\mathbb{Q}}_\beta^0\lessdot\dot{\mathbb{Q}}_\beta^1$, and $\dot{\pi}_\beta:\dot{\mathbb{Q}}_\beta^1\to\dot{\mathbb{Q}}_\beta^0$ is a $\mathbb{P}_\beta$-name for the regular projection.
    \item $(p,q)\in\mathbf{P}$ if and only if:
    \begin{itemize}
        \item $p\in\mathbf{P}_0^\alpha$.
        \item $q$ is a countable function such that $dom(q)\subseteq\alpha$.
        \item For each $\beta\in dom(q)$, $p\upharpoonright\beta\Vdash p(\beta)\Vdash q(\beta)\in[\dot{\mathbb{Q}}_\beta^1:\dot{\mathbb{Q}}_\beta^0]$
    \end{itemize}
\end{enumerate}
For $(p_0,q_0),(p_1,q_1)\in\mathbf{P}$, define $(p_1,q_1)\leq(p_0,q_0)$ if and only if $p_1\leq_{\mathbf{P}_0} p_0$ and for each $\beta\in dom(q_0)$, $p_1\upharpoonright\beta\Vdash (p_1(\beta),q_1(\beta))\leq (p_0(\beta),q_0(\beta))$.

For $\beta<\alpha$, the restriction of $\mathbf{P}$ to $\beta$, denoted by $\mathbf{P}^\beta$, is the $\varsigma$-iteration $\langle\mathbb{P}_\gamma,(\dot{\mathbb{Q}}_\gamma^0,\dot{\mathbb{Q}}_\gamma^1):\gamma<\beta\rangle$, and $\mathbf{P}^{\beta}_0$ is the countable support iteration $\langle\mathbb{P}_\gamma,\dot{\mathbb{Q}}_\gamma^0:\gamma<\beta\rangle$.

In the language of partial orders, $(2)$ above is replaced by $(2')$ below, and we omit the $\mathbb{P}_\beta$-name $\dot{\pi}_\beta$ (everything else remains the same),
\begin{enumerate}
    \item [(2')] For each $\beta<\alpha$, $\dot{\mathbb{Q}}_\beta^0$ and $\dot{\mathbb{Q}}_\beta^1$ are  $\mathbb{P}_\beta$-names for partial orders.
\end{enumerate}
\end{dfn}

\begin{dfn}
Let $\mathbf{P}=\langle\mathbb{P}_\beta,(\dot{\mathbb{Q}}_\beta^0,\dot{\mathbb{Q}}_\beta^1,\dot{\pi}_\beta):\beta<\alpha\rangle$ be a $\varsigma$-iteration. We say that $\mathbf{P}$ is a Cohen proper $\varsigma$-iteration, if:
\begin{enumerate}
    \item $(\mathbb{Q}_0^0,\mathbb{Q}_0^1)$ is Cohen proper.
    \item For all $\beta<\alpha$, $\mathbb{P}_\beta\Vdash (\dot{\mathbb{Q}}_\beta^0,\dot{\mathbb{Q}}_\beta^1)$ is Cohen proper.
\end{enumerate}
\end{dfn}

\begin{dfn}
Along a $\varsigma$-iteration there are many different generic filters arising, and we name some of them we will need later. For $\beta<\alpha$, 
\begin{enumerate}
    \item[0)] $\Gamma_0=\{(p(0),q(0)):(p,q)\in\Gamma\}$.
    \item[1)] $\Gamma_{<\beta}=\{(p\upharpoonright\beta,q\upharpoonright\beta):(p,q)\in\Gamma\}$.
    \item[2)] $\Gamma_{<\beta}^0=\{p:(p,q)\in\Gamma_{<\beta}\text{ for some $q$}\}$.
    \item[3)] $\Gamma_0^0=\Gamma_{<1}^0=\{p(0):(p,q)\in\Gamma\text{ for some $q$}\}$.
    \item[4)] $\Gamma_{<\beta}^1=\{\langle q(\beta)[\Gamma_{\beta+1}^0]:\beta\in dom(q)\rangle:(\exists p)(p,q)\in\Gamma_{<\beta}\}$.
\end{enumerate}
\end{dfn}

\begin{dfn}
Let $\mathbf{P}=\langle\mathbb{P}_\beta,(\dot{\mathbb{Q}}_\beta^0,\dot{\mathbb{Q}}_\beta^1):\beta<\alpha\rangle$ be a $\varsigma$-iteration and $\beta_0\leq\alpha$ some fixed ordinal. Define $\mathbf{P}^{(*\beta_0)}$ as:
\begin{equation*}
\mathbf{P}^{(*\beta_0)}=\{(p,q*\beta_0):(p,q)\in\mathbf{P}\}
\end{equation*}
here, the operation $q*\beta_0$ keeps the meaning of previous sections and means $$q*\beta_0=\{(\gamma,\dot{q}_\gamma)\in q:\gamma\ge\beta_0\}$$ The order is given  by $(p_1,q_1)\leq(p_0,q_0)$ if and only if $p_1\leq_{\mathbf{P}_0} p_0$ and for each $\gamma\in dom(q_0)$, $p_1\upharpoonright\gamma\Vdash (p_1(\gamma),q_1(\gamma))\leq (p_0(\gamma),q_0(\gamma))$.

Note that for $\beta_0=\alpha$, for any $(p,q)\in\mathbf{P}^{(*\beta_0)}$ we have $q=\emptyset$.
If $\Gamma_{<\beta}^0$ is a $\mathbf{P}^\beta_0$-generic filter, define in $V[\Gamma_{<\beta}^0]$ the quotient forcing $[\mathbf{P}^{(*\beta)}:\mathbf{P}^\beta_0]$ and give it the order induced by $\mathbf{P}^{(*\beta)}$. 
\end{dfn}

It can be easily seen that $\mathbf{P}$ can be represented as a $\mathbf{P}^\beta_0$-restricted iteration of $\mathbf{P}^{\beta}$ and $[\mathbf{P}^{(*\beta)}:\mathbf{P}^\beta_0]$. Indeed, let $$[\mathbf{P}^{(*\beta)}:\mathbf{P}^\beta_0]=\{{((p,q){\check{\phantom{x}}} },p\upharpoonright\beta):(p,q)\in\mathbf{P}^{(*\beta)}\}$$
which is the natural name for the poset $[\mathbf{P}^{(*\beta)}:\mathbf{P}^\beta_0]$. Now let $$\mathbf{P}^\beta*_{\mathbf{P}^\beta_0}[\mathbf{P}^{(*\beta)}:\mathbf{P}^\beta_0]=\{((p,q),{(p_0,q_0)\check{\phantom{x}}}):(p,q)\in\mathbf{P}^\beta\land((p_0,q_0)\in\mathbf{P}^{(*\beta)})\land(p\leq p_0\upharpoonright\beta)\}$$
Note that this poset is a $\mathbf{P}^{\beta}_0$-restricted iteration equivalent to $\mathbf{P}$: there is a natural dense embedding $\varphi_\beta$ from $\mathbf{P}$ into $\mathbf{P}^\beta*_{\mathbf{P}^\beta_0}[\mathbf{P}^{(*\beta)}:\mathbf{P}^\beta_0]$ above, to say, $$(p,q)\xrightarrow{\varphi_\beta}((p\upharpoonright\beta,q\upharpoonright\beta),(p,q*\beta)\check{\phantom{x}})$$
For any $((p,q),(p_0,q_0)\check{\phantom{x}})\in\mathbf{P}^\beta*_{\mathbf{P}^\beta_0}[\mathbf{P}^{(*\beta)}:\mathbf{P}^\beta_0]$, we have $(p\upharpoonright\beta^\frown (p_0*\beta),q^\frown q_0)\in\mathbf{P}$, and its $\varphi_\beta$-image is below $((p,q),(p_0,q_0)\check{\phantom{x}})$. The factorization $\mathbf{P}^\beta*_{\mathbf{P}_0^\beta}[\mathbf{P}^{(*\beta)}:\mathbf{P}^\beta_0]$ will be useful later. Note that if $D\subseteq\mathbf{P}$ is an open dense set, then $\varphi[D]$ gives us a dense subset of $\mathbf{P}^\beta*_{\mathbf{P}^\beta_0}[\mathbf{P}^{(*\beta)}:\mathbf{P}^\beta_0]$, instead of an open dense subset. This remark comes to say that in the iteration theorem we will prove in the next section, we need to make a convention on how to deal with the just stated remark. If $D\subseteq\mathbf{P}$ is an open dense set and $\beta<\alpha$, let us define the set $\overline{D}^\beta$ as follows: 
\begin{equation*}
    \overline{D}^\beta=\{p\in\mathbf{P}^\beta*_{\mathbf{P}^\beta_0}[\mathbf{P}^{(*\beta)}:\mathbf{P}^\beta_0]:(\exists p_0\in D)(p\leq\varphi_\beta(p_0)\}
\end{equation*}
Similarly, if $\mathcal{M}\prec H(\theta)$ is countable and $\beta,\mathbf{P}\in\mathcal{M}$, for any $D\subseteq\mathbf{P}\cap\mathcal{M}$ open dense, define $\overline{D}^{\beta}$ as
\begin{equation*}
    \overline{D}^\beta=\{p\in\mathbf{P}^\beta*_{\mathbf{P}^\beta_0}[\mathbf{P}^{(*\beta)}:\mathbf{P}^\beta_0]\cap\mathcal{M}:(\exists p_0\in D)(p\leq\varphi_\beta(p_0)\}
\end{equation*}
Clearly, if $D\subseteq\mathbf{P}\cap\mathcal{M}$ is open dense, then $\overline{D}^\beta$ is an open dense subset of $\mathbf{P}^\beta*_{\mathbf{P}^\beta_0}[\mathbf{P}^{(*\beta)}:\mathbf{P}^\beta_0]\cap\mathcal{M}$.
This operation will be used only for sets of the form $D\subseteq\mathbf{P}\cap\mathcal{M}$, so there will be no danger of confusion about which of the two versions above is being used (the second one, always). We need one lemma that illustrates the reason of introducing these objects, and the proof of Theorem \ref{preservation_of_cohen_properness} will show how we make use of them. When $D\subseteq\mathbf{P}\cap\mathcal{M}$ and $\Gamma$ is a $\mathbf{P}^\beta$-generic filter, define $$D^\divideontimes_{\Gamma}=\{(p,q)\in D:(p\upharpoonright\beta,q\upharpoonright\beta)\in \Gamma\}=[D:\mathbf{P}^\beta]$$

\begin{lemma}
Let $\mathbf{P}$ be a $\varsigma$-iteration of length $\alpha$ and $\mathcal{M}\prec H(\theta)$ countable such that $\mathbf{P}\in\mathcal{M}$ and pick $\beta\in\alpha\cap\mathcal{M}$. Let $D\subseteq\mathbf{P}\cap\mathcal{M}$ be an open dense set and assume $(p,q)\in\mathbf{P}^\beta$ is $(\mathbf{P}^\beta_0,\mathbf{P}^\beta,\mathcal{M})$-generic for $[\mathbf{P}^{(*\beta)}:\mathbf{P}^\beta_0]$ and $\overline{D}^\beta$, and let $\dot{E}$ be a $\mathbf{P}^\beta_0$-name witnessing this. Then, 
\begin{equation*}
(p,q)\Vdash(\overline{D}^\beta)^\divideontimes=\{(r,s*\beta):(r,s)\in D^\divideontimes_{\dot{\Gamma}}\}
\end{equation*}
and
\begin{equation*}
    (p,q)\Vdash\dot{E}\subseteq\{(r,s*\beta):(r,s)\in D^\divideontimes_{\dot{\Gamma}}\}
\end{equation*}
Therefore, $(p,q)$ forces that for any $(r,s)\in (\overline{D}^\beta)^\divideontimes_\Gamma$, there is $\tau$ such that $(r,\tau^\frown s)\in D$. In particular, this holds for all elements of $\dot{E}_{\Gamma}$.
\end{lemma}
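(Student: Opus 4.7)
The plan is to derive both displayed assertions and the concluding ``therefore'' clause from a direct unpacking of three definitions: the downward closure $\overline{D}^\beta$ of $\varphi_\beta[D]$, the $\divideontimes$-renaming of a restricted-iteration subset, and the recipe $D^\divideontimes_\Gamma=[D:\mathbf{P}^\beta]$. The final clause is a syntactic reformulation of the equality, and the second display is immediate from the first together with the Cohen-properness hypothesis, so the substantive work is the equality of $\mathbf{P}^\beta$-names, which I will verify by proving both inclusions in the $\mathbf{P}^\beta$-generic extension.

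For the ``$\supseteq$'' direction I would pick $(r,s)\in D^\divideontimes_{\dot\Gamma}$, i.e.\ $(r,s)\in D$ with $(r\upharpoonright\beta,s\upharpoonright\beta)\in\dot\Gamma$. Because $\varphi_\beta((r,s))=((r\upharpoonright\beta,s\upharpoonright\beta),(r,s*\beta)\check{\phantom{x}})$ is trivially $\le$ itself and the witness $(r,s)$ lies in $D$, we have $\varphi_\beta((r,s))\in\overline{D}^\beta$. By the $\divideontimes$-convention this says $((r,s*\beta)\check{\phantom{x}},(r\upharpoonright\beta,s\upharpoonright\beta))\in(\overline{D}^\beta)^\divideontimes$, and evaluating at $\dot\Gamma$ (which contains $(r\upharpoonright\beta,s\upharpoonright\beta)$) places $(r,s*\beta)$ in $(\overline{D}^\beta)^\divideontimes_{\dot\Gamma}$.

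For the ``$\subseteq$'' direction I would take $(p_0,q_0)\in(\overline{D}^\beta)^\divideontimes_{\dot\Gamma}$ and pick a witness $(p',q')\in\dot\Gamma$ with $((p',q'),(p_0,q_0)\check{\phantom{x}})\in\overline{D}^\beta$, so that some $(r,s)\in D$ satisfies $((p',q'),(p_0,q_0)\check{\phantom{x}})\le\varphi_\beta((r,s))$. Reading off coordinates, this yields $(p',q')\le(r\upharpoonright\beta,s\upharpoonright\beta)$ in $\mathbf{P}^\beta$ (so that $(r\upharpoonright\beta,s\upharpoonright\beta)\in\dot\Gamma$ and hence $(r,s)\in D^\divideontimes_{\dot\Gamma}$) and $(p_0,q_0)\le(r,s*\beta)$ in $\mathbf{P}^{(*\beta)}$. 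The key gluing step is then: set $r':=p_0$ and $s':=(s\upharpoonright\beta)\cup q_0$; the pair $(r',s')$ is a legitimate condition of $\mathbf{P}$, extends $(r,s)$, and so lies in $D$ by openness. Its restriction $(r'\upharpoonright\beta,s'\upharpoonright\beta)=(p_0\upharpoonright\beta,s\upharpoonright\beta)$ is forced below $(p',q')\in\dot\Gamma$, so $(r',s')\in D^\divideontimes_{\dot\Gamma}$, while $(r',s'*\beta)=(p_0,q_0)$ by construction.

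The second displayed assertion then follows at once: by Cohen-properness of $(\mathbf{P}^\beta_0,\mathbf{P}^\beta)$ we already have $(p,q)\Vdash\dot{E}\subseteq(\overline{D}^\beta)^\divideontimes$, so substituting the just-proved equality transports $\dot{E}$ into $\{(r,s*\beta):(r,s)\in D^\divideontimes_{\dot\Gamma}\}$. The concluding statement is obtained by taking any $(r,s)\in(\overline{D}^\beta)^\divideontimes_{\dot\Gamma}$, writing it as $(r',s'*\beta)$ with $(r',s')\in D$, and choosing $\tau:=s'\upharpoonright\beta$, so that $(r,\tau^\frown s)=(r',(s'\upharpoonright\beta)^\frown(s'*\beta))=(r',s')\in D$; the same recipe applies to elements of $\dot{E}_{\dot\Gamma}$ by the second display. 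The main subtlety I expect is in the ``$\subseteq$'' gluing step: one must confirm that concatenating $s\upharpoonright\beta$ with $q_0$ really yields a $\mathbf{P}$-condition (the forcing relations below $\beta$ are inherited from $s$, those above $\beta$ from $(p_0,q_0)\le(r,s*\beta)$) and that its restriction actually belongs to $\dot\Gamma$, not merely is compatible with it---a point that rests on $(p_0\upharpoonright\beta,s\upharpoonright\beta)$ being an extension of $(p',q')\in\dot\Gamma$ together with the coherence of the $\mathbf{P}^\beta_0$- and $\mathbf{P}^\beta$-generic filters.
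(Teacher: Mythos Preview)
Your overall strategy matches the paper's exactly: verify both inclusions of the first display by unpacking the definitions of $\overline{D}^\beta$, $\divideontimes$, and $D^\divideontimes_\Gamma$, then read off the second display and the ``therefore'' clause. The $\supseteq$ direction and the gluing construction $(r',s')=(p_0,(s\upharpoonright\beta)\cup q_0)$ in the $\subseteq$ direction are precisely what the paper does (with different variable names: the paper's $(r,s)$ is your $(p_0,q_0)$ and its $(r_0,s_0)$ is your $(r,s)$).

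There is, however, a genuine gap where you flag the ``main subtlety.'' You assert that $(p_0\upharpoonright\beta,s\upharpoonright\beta)$ is an extension of $(p',q')$, but this is false in general: from $((p',q'),(p_0,q_0)\check{\phantom{x}})\le\varphi_\beta((r,s))$ you only get $(p',q')\le(r\upharpoonright\beta,s\upharpoonright\beta)$ and $(p_0,q_0)\le(r,s*\beta)$. Both $(p',q')$ and $(p_0\upharpoonright\beta,s\upharpoonright\beta)$ lie below $(r\upharpoonright\beta,s\upharpoonright\beta)$, but they are incomparable---indeed in the second coordinate $s\upharpoonright\beta\ge q'$, not $\le$. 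So your route to ``restriction in $\dot\Gamma$'' does not go through.

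The paper closes this gap differently. It observes that $(\overline{D}^\beta)^\divideontimes_\Gamma\subseteq[\mathbf{P}^{(*\beta)}:\mathbf{P}^\beta_0]\cap\mathcal{M}$, so $(p_0,q_0)$ lies in the quotient and hence $p_0\upharpoonright\beta\in\Gamma^0_{<\beta}$. Combining this with $(r\upharpoonright\beta,s\upharpoonright\beta)\in\Gamma$ (which follows from $(p',q')\in\Gamma$ and upward closure) yields $(p_0\upharpoonright\beta,s\upharpoonright\beta)\in\Gamma$: take a common refinement in $\Gamma$ of some $(p_0\upharpoonright\beta,b)\in\Gamma$ and $(r\upharpoonright\beta,s\upharpoonright\beta)$; it extends $(p_0\upharpoonright\beta,s\upharpoonright\beta)$. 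Your hedge about ``coherence of the $\mathbf{P}^\beta_0$- and $\mathbf{P}^\beta$-generic filters'' is pointing at exactly this, but the concrete mechanism---quotient membership forcing $p_0\upharpoonright\beta$ into $\Gamma^0_{<\beta}$---is the missing ingredient.
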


\begin{proof}
It is enough to prove the first forcing relation, as the second follows from the first one, and the choice of $(p,q)$ and $\dot{E}$. Let $\Gamma$ be a $\mathbf{P}^\beta$-generic filter such that $(p,q)\in\Gamma$. Pick $(r,s)\in(\overline{D}^\beta)^\divideontimes_\Gamma$. By definition of $\overline{D}^\beta$, there is $(r_0,s_0)\in D$ such that:
\begin{enumerate}
    \item $(r_0\upharpoonright\beta,s_0\upharpoonright\beta)\in \Gamma$.
    \item $(r,s)\leq(r_0,s_0*\beta)$.
\end{enumerate}
Since $(\overline{D}^\beta)^\divideontimes_{\Gamma}\subseteq[\mathbf{P}^{(*\beta)}:\mathbf{P}^\beta_0]\cap\mathcal{M}$, we have that $r\upharpoonright\beta\in\Gamma_{<\beta}^0$. Note that $(r,s_0\upharpoonright\beta^\frown s)$ is actually a condition in $\mathbf{P}\cap\mathcal{M}$ and $(r,s_0\upharpoonright\beta^\frown s)\leq(r_0,s_0)\in D$, so $(r,s_0\upharpoonright\beta^\frown s)\in D$. Since we also have $(r\upharpoonright\beta,s_0\upharpoonright\beta)\in \Gamma$, we conclude that $(r,s_0\upharpoonright\beta^\frown s)\in D^\divideontimes_{\Gamma}$. It follows that $(r,s)\in\{(\rho,\eta*\beta):(\rho,\eta)\in D_{\Gamma}^\divideontimes\}$.  The other contention follows from the fact that $\varphi_\beta[D]\subseteq\overline{D}^\beta$ and the definition of the operation $\divideontimes$.
\end{proof}

\noindent\textbf{Convention.} Finally, before passing to the next section, whenever we mention a factorization of the form $\mathbf{P}^\beta*_{\mathbf{P}^\beta_0}[\mathbf{P}^{(*\beta)}:\mathbf{P}^\beta_0]$, we assume this factorization of $\mathbf{P}$ has been done accordingly to the previous paragraphs.

\section{A preservation theorem.}\label{A_preservation_theorem}

We need a few things before going into the preservation theorem. Assume $\mathbf{P}$ is a $\varsigma$-iteration, $\mathcal{M}\prec H(\theta)$ is countable such that $\mathbf{P}\in\mathcal{M}$ and pick a $\mathbf{P}_0^\alpha$-name $\dot{\mathbb{R}}\in\mathcal{M}$. Given a family $\mathcal{D}=\{D_n:n\in\omega\}$ of open dense subsets of $(\mathbf{P}*_{\mathbf{P}_0^\alpha}\dot{\mathbb{R}})\cap\mathcal{M}$, we say that $\mathcal{D}$ is $\mathbf{P}$-sufficient\footnote{It would be more correct to write something like $(\mathbf{P},\mathcal{M})$-sufficient, since this notion depends on $\mathcal{M}$ as well, but to keep simplicity we write $\mathbf{P}$-sufficient. Since this notion is only used in the proof of the next theorem and $\mathcal{M}$ is fixed, there is not danger of confusion.} if any $(\mathbf{P}^\alpha_0,\mathbf{P},\mathcal{M})$-generic condition for $\{D_n:n\in\omega\}$ and $\dot{\mathbb{R}}$, is also a $(\mathbf{P},\mathcal{M})$-generic condition. If $\{D_n:n\in\omega\}$ is the family of dense open subsets of $\mathbf{P}$ living in $\mathcal{M}$, and $E\subseteq(\mathbf{P}*_{\mathbf{P}_0^\alpha}\dot{\mathbb{R}})\cap\mathcal{M}$ is open dense, it is easy to see that the family $\{H_n:n\in\omega\}$ defined as $$H_n=\{(p^\frown\dot{r},q)\in E:(p,q)\in \bigcap_{j\leq n}D_j\}$$ is a $\mathbf{P}$-sufficient family of open dense sets. Moreover, for any $\beta\in\alpha\cap\mathcal{M}$, it turns out that $\{\overline{H}_n^\beta:n\in\omega\}$ is $\mathbf{P}^\beta$-sufficient\footnote{Let $D\in\mathcal{M}$ be an open dense subset of $\mathbf{P}^\beta$ and define $D'=\{(p,q)\in D_0: (p\upharpoonright\beta,q\upharpoonright\beta)\in D\}$, this is an open dense subset of $\mathbf{P}$ and lives in $\mathcal{M}$, so there is $k\in\omega$ such that $D_{k}=D'$. Then we have that for any $((p\upharpoonright\beta,q\upharpoonright\beta),(p^\frown\dot{r},q*\beta)\check{\phantom{x}})\in \varphi_\beta[H_k]$, $(p\upharpoonright\beta,q\upharpoonright\beta)\in D$, which implies the same about $\overline{H}_k^\beta$.}. We will denote this family by $\mathcal{H}_\beta(E,\mathbf{P},\dot{\mathbb{R}})$, that is $\mathcal{H}_\beta(E,\mathbf{P},\dot{\mathbb{R}})=\{\overline{H}_n^\beta:n\in\omega\}$ and assume this enumeration is fixed.

\begin{thm}\label{preservation_of_cohen_properness}
Let $\mathbf{P}=\langle \mathbb{P}_\beta,(\dot{\mathbb{Q}}_0^\beta,\dot{\mathbb{Q}}_1^\beta):\beta<\alpha\rangle$ be a $\varsigma$-iteration of Cohen proper forcings. Let $\mathcal{M}\prec H(\theta)$ be countable such that $\mathbf{P}\in\mathcal{M}$ and $\dot{\mathbb{R}}\in\mathcal{M}$ a $\mathbf{P}_0^\alpha$-name for a forcing. Fix $\beta_0\in\alpha\cap\mathcal{M}$. Let $D\subseteq(\mathbf{P}*_{\mathbf{P}^\alpha_0}\dot{\mathbb{R}})\cap\mathcal{M}$ be an open dense set, a condition $(p_0,q_0)\in\mathbf{P}^{\beta_0}$ and assume that $(p_0,q_0)$ is $(\mathbf{P}^{\beta_0}_0,\mathbf{P}^{\beta_0},\mathcal{M})$-generic for $[\mathbf{P}^{(*\beta_0)}*_{\mathbf{P}_0^\alpha}\dot{\mathbb{R}}:\mathbf{P}^{\beta_0}_0]$ and $\mathcal{H}_{\beta_0}(D,\mathbf{P},\dot{\mathbb{R}})$. Furthermore, assume that $\tilde{\eta}$ is a $\mathbf{P}^{\beta_0}_0$-name such that $p_0\Vdash\tilde{\eta}=(\tilde{\eta}_0,\tilde{\eta}_1)\in[\mathbf{P}^{(*\beta_0)}:\mathbf{P}^{\beta_0}_0]\cap\mathcal{M}$. Then there is a condition $(\overline{p},\overline{q})\in\mathbf{P}$ and a ${\mathbf{P}}_0^\alpha$-name $\dot{E}$ such that:
\begin{enumerate}
    \item $p_0=\overline{p}\upharpoonright\beta_0$ and $q_0=\overline{q}\upharpoonright\beta_0$.
    \item $(\overline{p},\overline{q})$ is $(\mathbf{P}_0^\alpha,\mathbf{P},\mathcal{M})$-generic for $[\mathbf{P}^{(*\alpha)}*_{\mathbf{P}_0^\alpha}\dot{\mathbb{R}}:\mathbf{P}_0^\alpha](\equiv\dot{\mathbb{R}})$ and $\mathcal{H}_\alpha(D,\mathbf{P},\dot{\mathbb{R}})$.
    \item $\overline{p}\Vdash\dot{E}\subseteq\dot{\mathbb{R}}\cap\mathcal{M}[\dot{\Gamma}_{<\alpha}^0]\text{ is open dense}$.
    \item $(\overline{p},\overline{q})\Vdash \dot{E}\subseteq ({D})^\divideontimes=\{\dot{r}_{\Gamma}:(\exists (p,q)\in\mathbf{P})((p^\frown\dot{r},q)\in D)\}$.
    \item There is a $\mathbf{P}^{\beta_0}$-name $\tilde{\tau}$ such that $(\overline{p},\overline{q})\Vdash(\tilde{\eta}_0,\tilde{\tau}^\frown\tilde{\eta}_1)\in\dot{\Gamma}$.
\end{enumerate}
\end{thm}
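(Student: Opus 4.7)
The proof is by induction on $\alpha$, with $\beta_0$ varying freely below $\alpha$ and the data $(D,\dot{\mathbb{R}},(p_0,q_0),\tilde{\eta})$ arbitrary. Fix once a countable increasing enumeration $\{\gamma_n:n<\omega\}$ of $\alpha\cap\mathcal{M}\setminus\beta_0$ (with supremum $\leq\alpha$), and from $D$ and the open dense subsets of $\mathbf{P}\cap\mathcal{M}$ lying in $\mathcal{M}$ build the $\mathbf{P}$-sufficient family $\mathcal{H}_\alpha(D,\mathbf{P},\dot{\mathbb{R}})$, enumerated as $\{H^n:n<\omega\}$. The strategy follows the template of standard iteration theorems: handle successors using one-step Cohen properness, and handle limits with a fusion-like $\omega$-sequence that simultaneously tracks both the genericity data and the names for open dense subsets of $\dot{\mathbb{R}}$.

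For the base step $\alpha=\beta_0+1$, work in the $\mathbf{P}_0^{\beta_0}$-extension, where $[\mathbf{P}^{(*\beta_0)}:\mathbf{P}_0^{\beta_0}]$ reduces to the single Cohen proper pair $(\dot{\mathbb{Q}}^{\beta_0}_0,\dot{\mathbb{Q}}^{\beta_0}_1)$; apply the Cohen properness of this pair in $V[\Gamma_{<\beta_0}^0]$, using $\dot{\mathbb{R}}$ as the follow-up forcing and $\tilde{\eta}$ as the seed condition, to the push-down of $D$ determined by the projection $\mathbf{P}^{\beta_0}\to\mathbf{P}^{\beta_0}_0$. This yields $\overline{q}(\beta_0)$, the $\mathbf{P}^{\beta_0}$-name $\tilde{\tau}$ demanded in clause (5), and the name $\dot{E}$ forced into the push-down of $D^\divideontimes$. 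For the successor step $\alpha=\gamma+1$ with $\gamma>\beta_0$, first invoke the inductive hypothesis at $\gamma$ to obtain $(p',q')\in\mathbf{P}^{\gamma}$ extending $(p_0,q_0)$, a $\mathbf{P}^\gamma_0$-name $\dot{E}'$, and $\tilde{\tau}'$ witnessing the full conclusion relative to the appropriate push-down of $D$ to level $\gamma$. Then apply the Cohen properness of $(\dot{\mathbb{Q}}^\gamma_0,\dot{\mathbb{Q}}^\gamma_1)$ in $V[\Gamma_{<\gamma}^0]$ to a name for a condition in $\dot{E}'$ together with $\dot{\mathbb{R}}$; this produces the stage-$\gamma$ coordinates of $(\overline{p},\overline{q})$ and a refined name $\dot{E}$ forced into the full push-down of $D^\divideontimes$.

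For a limit $\alpha$, build recursively $(p_n,q_n)\in\mathbf{P}^{\gamma_n}$, a $\mathbf{P}^{\gamma_n}_0$-name $\dot{E}_n$, and a $\mathbf{P}^{\beta_0}$-name $\tilde{\tau}_n$ such that: $(p_{n+1},q_{n+1})$ extends $(p_n,q_n)$ with $(p_{n+1}\!\upharpoonright\!\gamma_n,q_{n+1}\!\upharpoonright\!\gamma_n)=(p_n,q_n)$; the restriction of $(p_{n+1},q_{n+1})$ to $\mathbf{P}^{\gamma_{n+1}}$ is $(\mathbf{P}^{\gamma_{n+1}}_0,\mathbf{P}^{\gamma_{n+1}},\mathcal{M})$-generic for the first $n+1$ members of $\mathcal{H}_\alpha(D,\mathbf{P},\dot{\mathbb{R}})$ pulled back to level $\gamma_{n+1}$; a canonical push-down of $\dot{E}_{n+1}$ is forced by $p_{n+1}$ to refine $\dot{E}_n$; and $\tilde{\tau}_n\subseteq\tilde{\tau}_{n+1}$. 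Each step is a successor-case application of the inductive hypothesis. Since the supports lie in $\mathcal{M}$, hence are countable, $\overline{p}=\bigcup_n p_n$ and $\overline{q}=\bigcup_n q_n$ yield a legitimate condition in $\mathbf{P}$; the name $\dot{E}$ is defined coherently from $(\dot{E}_n)_n$ using that $\mathcal{M}[\dot{\Gamma}_{<\alpha}^0]$ is the ascending union of the $\mathcal{M}[\dot{\Gamma}_{<\gamma_n}^0]$, and similarly $\tilde{\tau}$ is the union of the $\tilde{\tau}_n$.

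The main obstacle is verifying, at the limit, that $\overline{p}$ forces $\dot{E}$ to be genuinely open dense in $\dot{\mathbb{R}}\cap\mathcal{M}[\dot{\Gamma}_{<\alpha}^0]$ rather than merely in each $\dot{\mathbb{R}}\cap\mathcal{M}[\dot{\Gamma}_{<\gamma_n}^0]$ separately. This forces the bookkeeping of the enumeration $\{H^n\}$ to be chosen so that every $\mathbf{P}^{\alpha}_0$-name in $\mathcal{M}$ for an element of $\dot{\mathbb{R}}$ has an extension into some $\dot{E}_{n+1}$ decided by stage $\gamma_{n+1}$, while openness is preserved by the refinement clauses imposed on the $\dot{E}_n$. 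Once this is arranged, clause (4) follows because each $\dot{E}_n$ is forced into the corresponding push-down of $D^\divideontimes$ and the system is coherent; clause (2) is the standard proper-iteration genericity argument applied to a $\mathbf{P}$-sufficient family (so meeting each $H^n$ below $(\overline{p},\overline{q})$ suffices for full $(\mathbf{P},\mathcal{M})$-genericity); and clauses (1), (3), (5) are immediate from the construction.
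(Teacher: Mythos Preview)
Your outline is essentially correct and matches the paper's architecture: induction on $\alpha$, the successor case handled by the inductive hypothesis on $\mathbf{P}^\gamma$ followed by one application of Cohen properness of $(\dot{\mathbb{Q}}^\gamma_0,\dot{\mathbb{Q}}^\gamma_1)$, and the limit case by a fusion along a cofinal $\omega$-sequence $\{\beta_n\}$ in $\alpha\cap\mathcal{M}$. You also correctly isolate the main difficulty at limits (density of $\dot E$ in $\dot{\mathbb{R}}\cap\mathcal{M}[\dot\Gamma^0_{<\alpha}]$) and name the right remedy (bookkeep every $\mathbf{P}_0^\alpha$-name in $\mathcal{M}$ for an element of $\dot{\mathbb{R}}$).

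Two places where your description departs from what actually works. First, at limits the name $\dot E$ is \emph{not} assembled ``coherently from $(\dot E_n)_n$.'' Each $\dot E_n$ produced by the inductive hypothesis is a $\mathbf{P}_0^{\beta_n}$-name for an open dense subset of the entire tail poset $[\mathbf{P}^{(*\beta_n)}*_{\mathbf{P}_0^\alpha}\dot{\mathbb{R}}:\mathbf{P}_0^{\beta_n}]\cap\mathcal{M}[\Gamma^0_{<\beta_n}]$, not of $\dot{\mathbb{R}}$ alone; these posets do not nest in a way that supports a union or limit. What the paper does is fix, in $\mathcal{M}$, a surjection $\tilde\varphi_0\colon\tilde\lambda\to\dot{\mathbb{R}}$ and enumerate $\tilde\lambda\cap\mathcal{M}$ as $\{\gamma_k\}$; at step $n$ one first extends the running $\mathbf{P}_0^\alpha$-condition to decide $\tilde\varphi_2(\gamma_n)=\tilde r_n$, then picks a \emph{single} witness $(\tilde p_n{}^\frown\tilde\rho_n,\tilde q_n)\in\dot E^n$ below $(\tilde p_n'{}^\frown\tilde r_n,\tilde q_{n-1}*\beta_n)$. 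The final dense set is simply $\tilde F_0=\{\tilde\rho_k:k<\omega\}$. Your bookkeeping remark is exactly this, but the ``coherent from $(\dot E_n)_n$'' formulation is misleading and cannot be made literal.

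Second, the $\tilde\tau_n$ are not nested $\mathbf{P}^{\beta_0}$-names and $\tilde\tau$ is not their union. In the paper $\tilde\tau_n$ is a $\mathbf{P}^{\beta_n}$-name (domain $\beta_n$, growing with $n$), obtained at step $n$ from clause~(5) of the inductive hypothesis applied between $\beta_n$ and $\beta_{n+1}$ with seed $(\tilde p_n\!\upharpoonright\!\beta_{n+1},\tilde q_n\!\upharpoonright\!\beta_{n+1})$. The relation between successive steps is $(\tilde p_n,\tilde\tau_n{}^\frown\tilde q_n)\le(\tilde p_{n-1},\tilde\tau_{n-1}{}^\frown\tilde q_{n-1})$ in $\mathbf{P}$, not $\tilde\tau_{n-1}\subseteq\tilde\tau_n$. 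Clause~(5) for $(\overline p,\overline q)$ is then witnessed already by the first step: one chooses $\nu$ with $(\eta_0\!\upharpoonright\!\beta_0,\nu)\in\Gamma_{<\beta_0}$, and the decreasing chain forces $(\tilde\eta_0,\tilde\nu{}^\frown\tilde\eta_1)\in\dot\Gamma$, so $\tilde\tau=\tilde\nu$. Your description (``$\tilde\tau_n\subseteq\tilde\tau_{n+1}$'', ``$\tilde\tau=\bigcup_n\tilde\tau_n$'') is not how this datum propagates.
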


\begin{proof}
The proof is by induction on the length of $\mathbf{P}$. 

Let us deal with the successor step first and assume $\alpha=\gamma+1$ and the theorem is true for $\gamma$. Then 1) we can extend $(p_0,q_0)$ to a condition $(\overline{p}_0,\overline{q}_0)\in\mathbf{P}^{\gamma}$ which is $(\mathbf{P}^\gamma_0,\mathbf{P}^\gamma,\mathcal{M})$-generic for $[\mathbf{P}^{(*\gamma)}*_{\mathbf{P}_0^\alpha}\dot{\mathbb{R}}:\mathbf{P}^\gamma_0]$ and $\mathcal{H}_\gamma(D,\mathbf{P},\dot{\mathbb{R}})$; 2) there is a $\mathbf{P}_\gamma$-name $\dot{E}^0$ such that $(\overline{p}_0,\overline{q}_0)$ forces $\dot{E}^0\subseteq(\overline{D}^\gamma)^\divideontimes$, and 3) for each $n\in\omega$, there is a $\mathbf{P}^\gamma$-name for an open dense set $\dot{F}_n^0\subseteq[\mathbf{P}^{(*\gamma)}*_{\mathbf{P}_0^\alpha}\dot{\mathbb{R}}:\mathbf{P}^\gamma_0]\cap\mathcal{M}$ such that $(\overline{p}_0,\overline{q}_0)$ forces $\dot{F}_n^0\subseteq (\overline{H}_n^\gamma)^\divideontimes$. Let $\Gamma$ be a $\mathbf{P}^\gamma$-generic filter such that $(\overline{p}_0,\overline{q}_0)\in\Gamma$; 4) there is ${\tau}_0$ such that $\tilde{\eta}_{\Gamma_{<\gamma}^0}=({\eta}_0,\tau_0^\frown{\eta}_1)\in\mathcal{M}[\Gamma_{<\gamma}^0]$ and $(\eta_0\upharpoonright\gamma,(\tau_0^\frown\eta_1\upharpoonright\gamma))\in\Gamma$, which implies $(\eta_0,\eta_1*\gamma)\in[\mathbf{P}^{(*\gamma)}*_{\mathbf{P}_0^\alpha}\dot{\mathbb{R}}:\mathbf{P}_0^\gamma]\cap\mathcal{M}$. Going to $V[\Gamma_{<\gamma}^0]$ we have that $[\mathbf{P}^{(*\gamma)}:\mathbf{P}_0^\gamma]$ is forcing equivalent to $\mathbb{Q}_0^\gamma*[\dot{\mathbb{Q}}_1^\gamma:\dot{\mathbb{Q}}_0^\gamma]$, and $(\dot{\mathbb{Q}}_0^\gamma,\dot{\mathbb{Q}}_1^\gamma)$ is Cohen proper. Thus, we can find a condition $(p_1,q_1)\leq(\eta_0,\eta_1*\gamma)$ which is $([\mathbf{P}_0^{\gamma+1}:\mathbf{P}^{\gamma}_0],[\mathbf{P}^{(*\gamma)}:\mathbf{P}^\gamma_0],\mathcal{M}[\Gamma_{<\gamma}^0])$-generic for $\dot{\mathbb{R}}$ and the family $$\{\dot{E}_{\Gamma_{<\gamma}^0}^0\}\cup\{F_n^0:n\in\omega\}$$ 
let $\dot{E}^1$, $\{\dot{F}_n^1:n\in\omega\}$ be a family of $[\mathbf{P}^{\gamma+1}_0:\mathbf{P}^\gamma_0]$-names witnessing $(p_1,q_1)$ is $([\mathbf{P}_0^{\gamma+1}:\mathbf{P}^{\gamma}_0],[\mathbf{P}^{(*\gamma)}:\mathbf{P}^\gamma_0],\mathcal{M}[\Gamma_{<\gamma}^0])$-generic for the given family and $\dot{\mathbb{R}}$. Going to $V[\Gamma]$, there is $\tau_1$ such that $(p_1\upharpoonright\beta_0,\tau_1)\in\Gamma$; note that we can actually get $\tau_1$ such that $(p_1\upharpoonright\beta_0,\tau_1)\leq(\eta_0\upharpoonright\beta_0,\tau_0)$, in particular we also have $(p_1,\tau_1^\frown q_1)\leq(\eta_0,\tau_0^\frown \eta_1)$.
Going back to $V$, $\tilde{\tau}_1$ be a $\mathbf{P}^\gamma$-name for $\tau_1$, let $\tilde{p}_1$ and $\tilde{q}_1$ be $\mathbf{P}^\gamma_0$-names for $p_1$ and $q_1$, respectively, and $\tilde{\nu},\tilde{\eta}$ $\mathbf{P}^\gamma_0$-names for $p_1(\gamma)$ and $q_1(\gamma)$, respectively, and define $\overline{p}=\overline{p}_0^\frown\tilde{\nu}$ and $\overline{q}=\overline{q}_0^\frown\tilde{\eta}$. Let $\tilde{E}^1$ be a translation of $\dot{E}^1$ to a $\mathbf{P}^\alpha_0$-name, and $\tilde{F}_n^1$ the translation of $\dot{F}_n^1$ to a $\mathbf{P}^\alpha_0$-name. We claim that $(\overline{p},\overline{q})$ and $\tilde{E}^1,\{\tilde{F}_n^1:n\in\omega\}$ make the work. Let now $\Gamma$ be a $\mathbf{P}$-generic filter such that $(\overline{p},\overline{q})\in\Gamma$. It is not hard to see that $(\overline{p},\overline{q})\Vdash (\tilde{p}_1,\tilde{\tau}_1^\frown\tilde{q}_1)\in\Gamma$, which implies that $\tilde{E}_{\Gamma_{<\alpha}^0}^1\subseteq\dot{\mathbb{R}}_{\Gamma_{<\alpha}^0}\cap\mathcal{M}[\Gamma_{<\alpha}^0]$ is an open dense set, and $\tilde{E}_{\Gamma_{<\alpha}^0}^1\subseteq(\dot{E}^0_{\Gamma_{<\gamma}^0})^{\divideontimes}_{_{\Gamma_{<\alpha}^{(*\gamma)}}}$, and the choice of $\dot{E}^0$ implies in turn that $\tilde{E}_{\Gamma_{<\alpha}^0}^1\subseteq({D})^\divideontimes_{\Gamma}$: pick any $r\in\tilde{E}_{\Gamma_{<\alpha}^0}^1$, since $r\in (\dot{E}_{\Gamma_{<\gamma}^0}^0)^\divideontimes_{\Gamma_{<\alpha}^{(*\gamma)}}$, there is $(p,q)\in[\mathbf{P}^{(*\gamma)}:\mathbf{P}^\gamma_0]$ such that $(p^\frown\dot{r},q)\in \dot{E}_{\Gamma_{<\gamma}^0}^0$ and $\dot{r}_{\Gamma_{<\alpha}^0}=r$; now, since $\dot{E}^0_{\Gamma_{<\gamma}^0}\subseteq({D})^\divideontimes_{\Gamma_{<\gamma}}$, there is $\nu$ such that $(p^\frown\dot{r},\nu^\frown q)\in D$, which implies $\dot{r}_{\Gamma_{<\alpha}^0}\in({D})^\divideontimes_{\Gamma}$; $\tilde{E}^1_{\Gamma_{<\alpha}^0}\subseteq({D})^\divideontimes_{\Gamma}$ follows. The same argument works to prove that $\tilde{F}_n^1$ is forced to be contained in $(\overline{H_n}^{\alpha})^\divideontimes$. On the other hand, we have $(\overline{p},\overline{q})\Vdash(\tilde{p}_1,\tilde{\tau}_1^\frown\tilde{q}_1)\leq(\tilde{\eta}_0,\tilde{\tau}_0^\frown\tilde{\eta}_1)$ and $(\overline{p},\overline{q})\Vdash(\tilde{p}_1,\tilde{\tau}_1^\frown\tilde{q}_1)$, which implies that $(\overline{p},\overline{q})\Vdash (\tilde{\eta_0},\tilde{\tau}_0^\frown \tilde{\eta}_1)\in\dot{\Gamma}$, and clause (5) above also holds.

Now we assume that $\alpha$ is a limit ordinal. Assume all the hypothesis and let $\{\beta_n:n\in\omega\}\subseteq\mathcal{M}\cap\alpha$ be a strictly increasing sequence cofinal in $\mathcal{M}\cap\alpha$, where $\beta_0$ is the ordinal from the hypothesis. Working in $\mathcal{M}$, let $\tilde{\lambda}$ be a $\mathbf{P}_0$-name for $\vert\dot{\mathbb{R}}\vert_{\dot{\Gamma}_{<\alpha}^0}$, and let $\tilde{\varphi}_0:\tilde{\lambda}\to\dot{\mathbb{R}}$ be a $\mathbf{P}_0$-name for a bijective function from $\tilde{\lambda}$ to $\dot{\mathbb{R}}_{\Gamma_{<\alpha}^0}$. Now let $\tilde{\varphi}_1:\dot{\mathbb{R}}_{\dot{\Gamma}_{<\alpha}^0}\to dom(\dot{\mathbb{R}})$ a function such that for each $r\in\dot{\mathbb{R}}_{\dot{\Gamma}_{<\alpha}^0}$, $r=\tilde{\varphi}_1(r)_{\dot{\Gamma}_{<\alpha}^0}$. Finally, let $\tilde{\varphi}_2=\tilde{\varphi}_1\circ\tilde{\varphi}_0$. We will use these functions to keep track of finite portions of the evaluation of $\dot{\mathbb{R}}\cap\mathcal{M}$, which we need to construct the set $\tilde{F}_0$ below stated.

To prove clauses (3) and (4), we need to find $(\overline{p},\overline{q})$ and construct a $\mathbf{P}_0^\alpha$-name  $\tilde{F}$ for an open dense subset of $\dot{\mathbb{R}}\cap\mathcal{M}$ such that
\begin{equation*}
    (\overline{p},\overline{q})\Vdash\tilde{F}_0\subseteq (D)^\divideontimes
\end{equation*}
We will focus on the construction of this set and then see that this can be used to prove clause (2). Note that actually it is enough to find a $\mathbf{P}_0^\alpha$-name $\tilde{F}_0$ for a dense subset (we don't need to worry about it being open) of $\dot{\mathbb{R}}_{\dot{\Gamma}_{<\alpha}^0}\cap\mathcal{M}[\Gamma_{<\alpha}^0]$ for which the previous forcing relation holds.

Let $\Gamma_{<\beta_0}$ be a $\mathbf{P}^{\beta_0}$-generic filter such that $(p_0,q_0)\in\Gamma_{<\beta_0}$. Since $(p_0,q_0)$ is $(\mathbf{P}_0^{\beta_0},\mathbf{P}^\beta,\mathcal{M})$-generic for $\mathcal{H}_{\beta_0}(D,\mathbf{P},\dot{\mathbb{R}})$ and $[\mathbf{P}*_{\mathbf{P}_0^\alpha}\dot{\mathbb{R}}:\mathbf{P}_0^{\beta_0}]$, there is a $\mathbf{P}^{\beta_0}_0$-name $\dot{E}^{0}$ for a dense subset of $[\mathbf{P}^{(*\beta_0)}*_{\mathbf{P}_0^\alpha}\dot{\mathbb{R}}:\mathbf{P}^{\beta_0}_0]\cap\mathcal{M}[\Gamma_{<\beta_0}^0]$ such that,
\begin{enumerate}
    \item[$(1_0)$] In $V[\Gamma_{<\beta_0}]$, $\dot{E}^{0}_{\Gamma_{<\beta_0}^0}\subseteq(\overline{H_0}^{\beta_0})^\divideontimes_{\Gamma_{<\beta_0}}$.
    \item[$(2_0)$] In $V[\Gamma_{<\beta_0}^0]$, $\dot{E}^{0}_{\Gamma_{<\beta_0}^0}$ is an open dense subset of $[\mathbf{P}^{(*\beta_0)}*_{\mathbf{P}_0^\alpha}\dot{\mathbb{R}}:\mathbf{P}^{\beta_0}_0]\cap\mathcal{M}[\Gamma_{<\beta_0}^0]$.
    \item[$(3_0)$] In $V[\Gamma_{<\beta_0}]$, for each $(t^\frown\dot{r},s)\in\dot{E}^{0}_{\Gamma_{<\beta_0}^0}$, there is $\varrho$ from $V$ such that $(t^\frown\dot{r},\varrho^\frown s)\in H_0$ and $(t\upharpoonright\beta_0,\varrho)\in\Gamma_{<\beta_0}$.

    \end{enumerate}
    and we also have,
    \begin{enumerate}
    \item[$(4_0)$] In $V[\Gamma_{<\beta_0}^0]$, $\tilde{\eta}_{\Gamma_{<\beta_0}^0}=(\eta_0,\eta_1)\in[\mathbf{P}^{(*\beta_0)}:\mathbf{P}^{\beta_0}_0]\cap\mathcal{M}[\Gamma_{<\beta_0}^0]$.
    \item[$(5_0)$] In $V[\Gamma_{<\beta_0}]$, there is $\nu$ such that $(\eta_0\upharpoonright\beta_0,\nu)\in\Gamma_{<\beta_0}\cap\mathcal{M}[\dot{\Gamma}_{<\beta_0}]$.
\end{enumerate}

In $V[\Gamma_{<\beta_0}^0]$, extend $\eta_0$ to a condition $\tilde{p}_0'\in[\mathbf{P}_0^\alpha:\mathbf{P}_0^{\beta_0}]\cap\mathcal{M}$ which decides $\tilde{\lambda}$, say $\tilde{p}_0'\Vdash \tilde{\lambda}=\kappa_0$ for a fixed $\kappa_0\in\mathcal{M}\cap\mathsf{Ord}$. Let $\{\gamma_n:n\in\omega\}$ be an enumeration of $\mathcal{M}\cap\kappa_0$. Now extend $\tilde{p}_0'$ to a condition $\tilde{p}_0''\in[\mathbf{P}_0^\alpha:\mathbf{P}_0^{\beta_0}]\cap\mathcal{M}$ which decides the value of $\tilde{\varphi}_2(\gamma_0)$, that is,
\begin{equation*}
    \tilde{p}_0''\Vdash \tilde{\varphi}_0(\gamma_0)=\tilde{\varphi}_2(\gamma_0)_{\dot{\Gamma}_{<\alpha}^0}
\end{equation*}
Let $\tilde{r}_0=\tilde{\varphi}_2(\gamma_0)$. Note that we can assume $\tilde{r}_0\in\mathcal{M}$. Now consider the condition $(\tilde{p}_0''^\frown\tilde{r}_0,\tilde{\eta}_1)\in[\mathbf{P}^{(*\beta_0)}*_{\mathbf{P}_0^\alpha}\dot{\mathbb{R}}:\mathbf{P}_0^{\beta_0}]\cap\mathcal{M}[\Gamma_{<\beta_0}^0]$. Since $\dot{E}^{0}_{\Gamma_{<\beta_0}^0}$ is an open dense subset of $[\mathbf{P}^{(*\beta_0)}*_{\mathbf{P}_0^\alpha}\dot{\mathbb{R}}:\mathbf{P}_0^{\beta_0}]\cap\mathcal{M}$ we can find $(a^\frown\dot{\rho},b)\in\dot{E}^0_{\Gamma_{<\beta_0}^0}$, such that $(a^\frown\dot{\rho},b)\leq(\tilde{p}_0''^\frown\tilde{r}_0,\tilde{\eta}_1)$. Let $(\tilde{p}_0^\frown\tilde{\rho}_0,\tilde{q}_0)$ be a $\mathbf{P}_0^{\beta_0}$-name for $(a^\frown\dot{\rho},b)$. Let us now go to $V[\Gamma_{<\beta_0}]$. Since $(\tilde{p}_0^\frown\tilde{\rho}_0,\tilde{q}_0)\in\dot{E}^{0}_{\Gamma_{<\beta_0}^0}\subseteq(\overline{H_0}^{\beta_0})^\divideontimes_{\Gamma_{<\beta_0}}$, we find $\tau_0$ such that:
\begin{enumerate}
    \item[$(A_0)$] $(\tilde{p}_0^\frown\tilde{\rho}_0,\tau_0^\frown\tilde{q}_0)\in H_0$.
    \item[$(B_0)$] $(\tilde{p}_0\upharpoonright\beta_0,\tau_0)\in\Gamma_{<\beta_0}$.
    \item[$(C_0)$] $(\tilde{p}_0,\tau_0^\frown\tilde{q}_0)\leq ({\eta}_0,{\nu}^\frown{\eta}_1)$.
\end{enumerate}
Now go to $V$ and let $\tilde{\tau}_0$ be a $\mathbf{P}^{\beta_0}$-name for $\tau_0$. We apply the induction hypothesis as follows:
\begin{enumerate}
    \item[$i_0)$] $\beta_0$ as it is and $\beta_1$ taking the place of $\alpha$.
    \item[$i_0)$] $\mathbf{P}^{\beta_1}$ taking the place of $\mathbf{P}$.
    \item[$iii_0)$] $[\mathbf{P}^{(*\beta_1)}*_{\mathbf{P}_0^\alpha}\dot{\mathbb{R}}:\mathbf{P}_0^{\beta_1}]$ taking the place of $\dot{\mathbb{R}}$.
    \item[$iv_0)$] $(p_0,q_0)$ as it is.
    \item[$v_0)$] $(\tilde{p}_0\upharpoonright\beta_1,\tilde{q}_0\upharpoonright\beta_1)$ taking the place of $\tilde{\eta}$.
    \item[$vi_0)$] The family $\mathcal{H}_{\beta_1}(D,\mathbf{P}^\beta,\dot{\mathbb{R}})$ taking the place of $\mathcal{H}_\alpha(D,\mathbf{P},\dot{\mathbb{R}})$.
\end{enumerate}
Thus, we find $(p_1,q_1)\in\mathbf{P}^{\beta_1}$ which is $(\mathbf{P}_0^{\beta_1},\mathbf{P}^{\beta_1},\mathcal{M})$-generic for $\mathcal{H}_{\beta_1}(D,\mathbf{P},\dot{\mathbb{R}})$ and $[\mathbf{P}^{(*\beta_1)}*_{\mathbf{P}_0^\alpha}\dot{\mathbb{R}}:\mathbf{P}^{\beta_1}_0]$. In particular, there is a $\mathbf{P}_0^{\beta_1}$-name $\dot{E}^1$ such that:
\begin{enumerate}
    \item[$\text{I}_0$)] $p_1\Vdash \dot{E}^1\subseteq[\mathbf{P}^{(*\beta_1)}*_{\mathbf{P}_0^\alpha}\dot{\mathbb{R}}:\mathbf{P}_0^{\beta_1}]\cap\mathcal{M}$ is an open dense set.
    \item[$\text{II}_0$)] $(p_1,q_1)\Vdash\dot{E}^1\subseteq(\overline{H_1}^{\beta_1})^\divideontimes$.
    \end{enumerate}
and we also have,
    \begin{enumerate}
    \item[$\text{III}_0$)] $(p_0,q_0)=(p_1\upharpoonright\beta_0,q_1\upharpoonright\beta_0)$.
     \item[$\text{IV}_0$)] $p_1\Vdash (\tilde{p}_0^\frown\tilde{\rho}_0,\tilde{q}_0*\beta_1)\in[\mathbf{P}^{(*\beta_1)}*_{\mathbf{P}_0^\alpha}\dot{\mathbb{R}}:\mathbf{P}_0^{\beta_1}]\cap\mathcal{M}$.
     
    \item[$\text{V}_0$)] There is a $\mathbf{P}^{\beta_0}$-name $\tilde{\tau}_0$ such that $$(p_1,q_1)\Vdash (\tilde{p}_0\upharpoonright\beta_1,\tilde{\tau}_0^\frown\tilde{q}_0\upharpoonright\beta_1)\in\Gamma_{<\beta_1}\cap\mathcal{M}$$ and $$(p_1,q_1)\Vdash(\tilde{p}_0^\frown\tilde{\rho}_0,\tilde{\tau}_0^\frown\tilde{q}_0)\in H_0\subseteq D$$

\end{enumerate}
Moreover, 
\begin{enumerate}
    \item [$\text{VI}_0$)] $p_1\Vdash\tilde{p}_0\Vdash\tilde{\rho}_0\leq\tilde{\varphi}_2(\gamma_0)$.
\end{enumerate}
This finishes the first step of the induction.

Suppose we are at step $n>0$ of the induction and we have the following objects:
\begin{enumerate}
    \item $(p_n,q_n)\in\mathbf{P}^{\beta_n}$ which is $(\mathbf{P}_0^{\beta_n},\mathbf{P}^{\beta_n},\mathcal{M})$-generic for $\mathcal{H}_{\beta_n}(D,\mathbf{P},\dot{\mathbb{R}})$ and $[\mathbf{P}^{(*\beta_n)}*_{\mathbf{P}_0^\alpha}\dot{\mathbb{R}}:\mathbf{P}_0^{\beta_n}]$.
    \item A $\mathbf{P}^{\beta_n}_0$-name $\dot{E}^n$.
    \item A $\mathbf{P}_0^{\beta_n}$-name $({\tilde{p}_{n-1}}^\frown\tilde{\rho}_{n-1},\tilde{q}_{n-1})$ and a $\mathbf{P}^{\beta_{n-1}}$-name $\tilde{\tau}_{n-1}$.
\end{enumerate}
such that:
\begin{enumerate}
    \item[$(1_n)$] $(p_n,q_n)\Vdash\dot{E}^n\subseteq (\overline{H_{n}}^{\beta_n})^\divideontimes$.
    \item[$(2_n)$] $p_n$ forces $\dot{E}^n$ to be an open dense subset of $[\mathbf{P}^{(*\beta_n)}*_{\mathbf{P}_0^\alpha}\dot{\mathbb{R}}:\mathbf{P}_0^{\beta_n}]\cap\mathcal{M}$.
    \item[$(3_n)$] $(p_n,q_n)$ forces that for each $(t^\frown\dot{r},s)\in\dot{E}^n$ there is $\varrho$ from $V$ such that $(t^\frown\dot{r},\varrho^\frown s)\in H_n$.
    \item[$(4_n)$] $p_n\Vdash ({\tilde{p}_{n-1}}^\frown\tilde{\rho}_{n-1},\tilde{q}_{n-1}*\beta_n)\in[\mathbf{P}^{(*\beta_n)}*_{\mathbf{P}_0^\alpha}\dot{\mathbb{R}}:\mathbf{P}_0^{\beta_n}]\cap\mathcal{M}$.
    \item[$(5_n)$] $(p_n,q_n)\Vdash(\tilde{p}_{n-1}\upharpoonright\beta_n,\tilde{\tau}_{n-1}^\frown\tilde{q}_{n-1}\upharpoonright\beta_{n})\in\dot{\Gamma}_{<\beta_n}\cap\mathcal{M}$.

    \item[$(6_n)$] $(p_n,q_n)\Vdash({\tilde{p}_{n-1}}^\frown\tilde{\rho}_{n-1},\tilde{\tau}_{n-1}^\frown\tilde{q}_{n-1})\in H_{n-1}$.
    \item[$(7_n)$] $(p_n,q_n)\Vdash (\tilde{p}_{n-1},\tilde{\tau}_{n-1}^\frown\tilde{q}_{n-1})\leq (\tilde{p}_{n-2},\tilde{\tau}_{n-2}^\frown\tilde{q}_{n-2})$.
    \item[$(8_n)$] $p_n\Vdash\tilde{p}_{n-1}\Vdash \tilde{\rho}_{n-1}\leq\tilde{\varphi}_2(\gamma_{n-1})$.
\end{enumerate}

Let $\Gamma_{<\beta_{n}}$ be a $\mathbf{P}^{\beta_n}$-generic filter such that $(p_n,q_n)\in\Gamma_{<\beta_n}$ and let us work first in $V[\Gamma_{<\beta_n}^0]$. Then we have, $({\tilde{p}_{n-1}}^\frown\tilde{\rho}_{n-1},\tilde{q}_{n-1}*\beta_n)\in[\mathbf{P}^{(*\beta_n)}*_{\mathbf{P}_0^\alpha}\dot{\mathbb{R}}:\mathbf{P}_0^{\beta_n}]\cap\mathcal{M}$. Extend $\tilde{p}_{n-1}$ to a condition $\tilde{p}_n'\in[\mathbf{P}_0^\alpha:\mathbf{P}_0^{\beta_n}]\cap\mathcal{M}$ which decides the value of $\tilde{\varphi}_2(\gamma_{n})$, that is, let $\tilde{r}_n\in dom(\dot{\mathbb{R}})$ be such that
\begin{equation*}
\tilde{p}_n'\Vdash\tilde{\varphi}_0(\gamma_{n})=\tilde{\varphi}_2(\gamma_{n})_{\Gamma_{<\beta_n}^0}=(\tilde{r}_n)_{\Gamma_{<\beta_n}^0}
\end{equation*}
Note that we can get $\tilde{r}_n\in\mathcal{M}$. Consider the condition $({\tilde{p}_n'}^\frown\tilde{r}_n,\tilde{q}_{n-1}*\beta_n)\in[\mathbf{P}^{(*\beta_n)}*_{\mathbf{P}_0^\alpha}\dot{\mathbb{R}}:\mathbf{P}_0^{\beta_n}]\cap\mathcal{M}$. Since $\dot{E}^n_{\Gamma_{<\beta_n}^0}$ is an open dense subset of $[\mathbf{P}^{(*\beta_n)}*_{\mathbf{P}_0^\alpha}\dot{\mathbb{R}}:\mathbf{P}_0^{\beta_n}]\cap\mathcal{M}$, we can find $(p^\frown\dot{\rho},q)\in \dot{E}^n_{\Gamma_{<\beta_n}^0}$ such that $(p^\frown\dot{\rho},q)\leq({\tilde{p}_n'}^\frown\tilde{r}_n,\tilde{q}_{n-1}*\beta_n)$. Let $({\tilde{p}_n}^\frown\tilde{\rho}_n,\tilde{q}_n)$ be a $\mathbf{P}_0^{\beta_n}$-name for $(p^\frown\rho,q)$. Now, going to $V[\Gamma_{<\beta_n}]$, since $({\tilde{p}_n}^\frown\tilde{\rho}_n,\tilde{q}_n)\in\dot{E}^n_{\Gamma_{<\beta_n}^0}\subseteq(\overline{H_{n}}^{\beta_n})^\divideontimes$, we find $\tau$ such that:
\begin{enumerate}
     \item[$(A_n)$] $({\tilde{p}_n}^\frown\tilde{\rho}_n,\tau^\frown\tilde{q}_n)\in H_{n}$.
    \item[$(B_n)$] $(\tilde{p}_n\upharpoonright\beta_n,\tau)\in\Gamma_{<\beta_n}$.
    \item[$(C_n)$] $(\tilde{p}_n,\tau^\frown\tilde{q}_n)\leq (\tilde{p}_{n-1},\tilde{\tau}_{n-1}^\frown\tilde{q}_{n-1})$.
\end{enumerate}
let $\tilde{\tau}_n$ be a $\mathbf{P}^{\beta_n}$-name for $\tau$. Now we apply the induction hypothesis as follows to find $(p_{n+1},q_{n+1})$:
\begin{enumerate}
    \item[$i_n)$] $\beta_n$ taking the place of $\beta_0$ and $\beta_{n+1}$ taking the place of $\alpha$.
    \item[$ii_n)$] $\mathbf{P}^{\beta_{n+1}}$ taking the place of $\mathbf{P}$.
    \item[$iii_n)$] $[\mathbf{P}^{(*\beta_{n+1})}*_{\mathbf{P}_0^\alpha}\dot{\mathbb{R}}:\mathbf{P}_0^{\beta_{n+1}}]$ taking the place of $\dot{\mathbb{R}}$.
    \item[$iv_n)$] $(p_n,q_n)$ taking the place of $(p_0,q_0)$.
    \item[$v_n)$] $(\tilde{p}_n\upharpoonright\beta_{n+1},\tilde{q}_n\upharpoonright\beta_{n+1})$ taking the place of $\tilde{\eta}$.
    \item[$vi_n)$] $\mathcal{H}_{\beta_{n+1}}(D,\mathbf{P},\dot{\mathbb{R}})$ taking the place of $\mathcal{H}_\alpha(D,\mathbf{P},\dot{\mathbb{R}})$.
\end{enumerate}
Therefore, we find $(p_{n+1},q_{n+1})\in\mathbf{P}^{\beta_{n+1}}$ which is $(\mathbf{P}_0^{\beta_{n+1}},\mathbf{P}^{\beta_{n+1}},\mathcal{M})$-generic for $\mathcal{H}_{\beta_{n+1}}(D,\mathbf{P},\dot{\mathbb{R}})$ and $[\mathbf{P}^{(*\beta_{n+1})}*_{\mathbf{P}_0^\alpha}\dot{\mathbb{R}}:\mathbf{P}^{\beta_{n+1}}_0]$, so in particular there is a $\mathbf{P}_0^{\beta_{n+1}}$-name $\dot{E}^{n+1}$ such that:
\begin{enumerate}
    \item[$\text{I}_n$)] $p_{n+1}\Vdash \dot{E}^{n+1}\subseteq[\mathbf{P}^{(*\beta_{n+1})}*_{\mathbf{P}_0^\alpha}\dot{\mathbb{R}}:\mathbf{P}_0^{\beta_{n+1}}]\cap\mathcal{M}$ is an open dense set.
    \item[$\text{II}_n$)] $(p_{n+1},q_{n+1})\Vdash\dot{E}^{n+1}\subseteq(\overline{H_{n+1}}^{\beta_{n+1}})^\divideontimes$.
   \end{enumerate}
and we also have,
    \begin{enumerate}
    \item[$\text{III}_n$)] $(p_n,q_n)=(p_{n+1}\upharpoonright\beta_n,q_{n+1}\upharpoonright\beta_n)$.
    \item[$\text{IV}_n$)] $p_{n+1}\Vdash ({\tilde{p}_n}^\frown\tilde{\rho}_{n},\tilde{q}_n*\beta_{n+1})\in[\mathbf{P}^{(*\beta_{n+1})}*_{\mathbf{P}_0^\alpha}\dot{\mathbb{R}}:\mathbf{P}_0^{\beta_{n+1}}]\cap\mathcal{M}$. 
    
    \item[$\text{V}_n$)] There is a $\mathbf{P}^{\beta_{n}}$-name $\tilde{\tau}_n$ such that $$(p_{n+1},q_{n+1})\Vdash (\tilde{p}_n\upharpoonright\beta_{n+1},\tilde{\tau}_n^\frown\tilde{q}_n\upharpoonright\beta_{n+1})\in\Gamma_{<\beta_{n+1}}\cap\mathcal{M}$$ and $$(p_{n+1},q_{n+1})\Vdash({\tilde{p}_n}^\frown\tilde{\rho}_n,\tilde{\tau}_n^\frown\tilde{q}_n)\in H_n\subseteq 
    D$$
\end{enumerate}
and additionally,
\begin{enumerate}
    \item[$\text{VI}_n$)] $(p_{n+1},q_{n+1})\Vdash (\tilde{p}_n,\tilde{\tau}_n^\frown \tilde{q}_n)\leq(\tilde{p}_{n-1},\tilde{\tau}_{n-1}^\frown \tilde{q}_{n-1})$.
    \item[$\text{VII}_n$)] $p_{n+1}\Vdash\tilde{r}_n=\tilde{\varphi}_2(\gamma_{n})$.
    \item[$\text{VIII}_n$)] $p_{n+1}\Vdash ({\tilde{p}_n}^\frown\tilde{\rho}_n,\tilde{q}_n)\leq({\tilde{p}_n}^\frown\tilde{r}_n,\tilde{q}_n)$.
    \item[$\text{IX}_n$)] $p_{n+1}\Vdash ({\tilde{p}_n}^\frown\tilde{\rho}_n,\tilde{q}_n)\in\dot{E}^n$.
    
\end{enumerate}
This finishes the inductive construction.
Assume the construction has been done and define
\begin{enumerate}
    \item $\overline{p}=\bigcup_{n\in\omega}p_n$.
    \item $\overline{q}=\bigcup_{n\in\omega}q_n$.
    \item $\tilde{F}_0=\{(\tilde{\rho}_k,\overline{p}):k\in \omega\}$.
    \item $\tilde{F}_1^{l}=\{(\tilde{p}_k^\frown\tilde{\rho}_k,\overline{p}):k\in\omega\land k\ge l\}$, for each $l\in\omega$.
\end{enumerate}

We claim that $(\overline{p},\overline{q})$ is $(\mathbf{P}_0,\mathbf{P},\mathcal{M})$-generic for $\mathcal{H}_\alpha(D,\mathbf{P},\dot{\mathbb{R}})$.

First, let us prove that $(\overline{p},\overline{q})\Vdash (\tilde{p}_n,\tilde{\tau}_n^\frown\tilde{q}_n)\in\dot{\Gamma}$, where $\Gamma$ is the $\mathbf{P}$-generic filter. By construction, for each $k\in\omega$,
\begin{equation*}
    (\overline{p},\overline{q})\Vdash (\tilde{p}_k\upharpoonright\beta_{k+1},\tilde{\tau}_k^\frown\tilde{q}_k\upharpoonright\beta_{k+1})\in\Gamma_{<\beta_{k+1}}
\end{equation*}
and for each $m\ge k$,
\begin{equation*}
    (\overline{p},\overline{q})\Vdash (\tilde{p}_m,\tilde{\tau}_m^\frown\tilde{q}_m)\leq(\tilde{p}_k,\tilde{\tau}_k^\frown\tilde{q}_k)
\end{equation*}
Therefore, for all $m> k$,
\begin{equation*}
    (\overline{p},\overline{q})\Vdash (\tilde{p}_k\upharpoonright\beta_{m},\tilde{\tau}_k^\frown\tilde{q}_k\upharpoonright\beta_{m})\in\Gamma_{<\beta_{m}}
\end{equation*}

Now extend $(\overline{p},\overline{q})$ to a condition $(p',q')$ which determines $(\tilde{p}_k,\tilde{q}_k)$ and $\tilde{\tau}_k$ to be $(p,q)$ and $\tau$, respectively. Then we also have that for each $m>k$
\begin{equation*}
    (p',q')\Vdash (p\upharpoonright\beta_m,\tau^\frown q\upharpoonright\beta_m)\in\dot{\Gamma}_{<\beta_m}
\end{equation*}
which implies $(p'\upharpoonright\beta_m,q'\upharpoonright\beta_m)\leq(p\upharpoonright\beta_m,\tau^\frown q\upharpoonright\beta_m)$, for each $m> k$. Finally note that $supp(p),supp(\tau^\frown q)\subseteq\bigcup_{l\in\omega}\beta_l$, since $(p,\tau^\frown q)\in\mathcal{M}$, so we can conclude that $(p',q')\leq(p,\tau^\frown q)$. Clearly this happens for any condition $(p',q')$ which determines $(\tilde{p}_k,\tilde{q}_k)$ and $\tilde{\tau}_k$, so we get $(\overline{p},\overline{q})\Vdash(\tilde{p}_k,\tilde{\tau}_k^\frown\tilde{q}_k)\in\dot{\Gamma}$.

On the other hand, by definition of $H_k$, we have $(\tilde{p}_k,\tilde{\tau}_k^\frown\tilde{q}_k)\in D_k$ (recall the construction at the beginning of this section). Therefore, $(\overline{p},\overline{q})$ is $(\mathbf{P},\mathcal{M})$-generic, and this implies that $(\overline{p},\overline{q})$ forces $\dot{\mathbb{R}}_{\Gamma_{<\alpha}^0}\cap\mathcal{M}[\Gamma_{<\alpha}^0]=\dot{\mathbb{R}}_{\Gamma_{<\alpha}^0}\cap\mathcal{M}[\Gamma]$. This will be assumed in the rest of the argument.

Let us now see that $\tilde{F}_0$ is forced by $(\overline{p},\overline{q})$ to be a dense subset of $\dot{\mathbb{R}}\cap\mathcal{M}[\dot{\Gamma}_{<\alpha}^0]$. Let $\Gamma$ be a $\mathbf{P}$-generic filter such that $(\overline{p},\overline{q})\in\Gamma$. Pick $r\in\dot{\mathbb{R}}_{\Gamma_{<\alpha}^0}\cap\mathcal{M}[\Gamma_{<\alpha}^0]$. Then there is $j\in\omega$ such that $r=\varphi_0(\gamma_j)$. Thus, by construction we have,
\begin{enumerate}
    \item $p_{j+1}\Vdash r=\tilde{r}_{j}=\tilde{\varphi}_2(\gamma_{j})$.
    \item $p_{j+1}\Vdash ({\tilde{p}_j}^\frown\tilde{\rho}_j,\tilde{q}_j)\leq({\tilde{p}_j}^\frown\tilde{r}_j,\tilde{q}_j)$.
    \item $p_{j+1}\Vdash({\tilde{p}_j}^\frown\tilde{\rho}_j,\tilde{q}_j)\in\dot{E}^j$.
\end{enumerate}
From $(\overline{p},\overline{q})\Vdash(\tilde{p}_j,\tilde{\tau}_j^\frown\tilde{q}_j)\in\Gamma$ we get $\overline{p}\Vdash\tilde{p}_j\in\Gamma_{<\alpha}^0$, so we conclude $(\tilde{\rho}_j)_{\Gamma_{<\alpha}^0}\leq(\tilde{r}_j)_{\Gamma_{<\alpha}^0}=r$. Finally note that $(\tilde{\rho}_j)_{\Gamma_{<\alpha}^0}\in(\tilde{F}_0)_{\Gamma_{<\alpha}^0}$. Thus, $(\tilde{F}_0)_{\Gamma_{<\alpha}^0}$ is a dense subset of $\dot{\mathbb{R}}_{\Gamma_{<\alpha}^0}\cap\mathcal{M}[\Gamma_{<\alpha}^0]$.

Let us prove that $(\overline{p},\overline{q})\Vdash\tilde{F}_0\subseteq (D)^\divideontimes$. Pick $r\in(\tilde{F}_0)_{\Gamma_{<\alpha}^0}$. By definition of $\tilde{F}_0$, there is $l\in \omega$ such that $r=(\tilde{\rho}_l)_{\Gamma_{<\alpha}^0}$. Then we have $(\tilde{p}_l,\tilde{\tau}_l^\frown\tilde{q}_l)\in\Gamma$, and $({\tilde{p}_l}^\frown\tilde{\rho}_l,\tilde{\tau}_l^\frown\tilde{q}_l)\in D$, from which it follows that $r=(\tilde{\rho}_l)_{\Gamma_{<\alpha}^0}\in (D)^\divideontimes_{\Gamma}$. Now we verify clause (2) from the conclusions by proving that for each $l\in\omega$, $(\tilde{F}_1^l)_{\Gamma_{<\alpha}^0}$ is predense in $[\mathbf{P}_0^\alpha*\dot{\mathbb{R}}:\mathbf{P}_0^\alpha]\cap\mathcal{M}[\Gamma]$ and $(\tilde{F}_1^l)_{\Gamma_{<\alpha}^0}\subseteq(\overline{H_l}^\alpha)^\divideontimes$. Assume $\Gamma$ is as before and pick an arbitrary condition $p^\frown\dot{r}\in[\mathbf{P}_0^\alpha*\dot{\mathbb{R}}:\mathbf{P}^\alpha_0]\cap\mathcal{M}[\Gamma]$, so we have $\dot{r}_{\Gamma_{<\alpha}^0}\in\dot{\mathbb{R}}_{\Gamma_{<\alpha}^0}\cap\mathcal{M}[\Gamma_{<\alpha}^0]$ and there is $n\ge l$ such that $(\tilde{\rho}_{n})_{\Gamma_{<\alpha}^0}\leq\dot{r}_{\Gamma_{<\alpha}^0}$, and actually, $(\tilde{p}_n^\frown\tilde{\rho}_n,\tilde{\tau}_n^\frown\tilde{q}_n)\in H_l$ (since $H_n\subseteq H_l$), which implies that $\tilde{p}_n^\frown\tilde{\rho}_n\in(\overline{H_l}^\alpha)^\divideontimes$. Since $\tilde{p}_n,p\in\Gamma_{<\alpha}^0$, we have that $\tilde{p}_n^\frown\tilde{\rho}_n$ and $p^\frown\dot{r}$ are compatible.

All it remains is to prove clause $(5)$: this follows from the facts $(\overline{p},\overline{q})\Vdash(\tilde{p}_j,\tilde{\tau}_j^\frown\tilde{q}_j)\in\dot{\Gamma}$, $(p_1,q_1)\Vdash(\tilde{p}_0,\tilde{\tau}_0^\frown\tilde{q}_0)\leq(\tilde{\eta}_0,\tilde{\nu}^\frown\tilde{\eta}_1)$ and $(\overline{p},\overline{q})\leq(p_1,q_1)$. 
\end{proof}

\begin{crl}
If $\mathbf{P}$ is a $\varsigma$-iteration of Cohen proper forcings, then $(\mathbf{P}_0,\mathbf{P})$ is Cohen proper.
\end{crl}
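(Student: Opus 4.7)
The plan is to verify the characterization of Cohen properness given in Theorem \ref{cohen_proper_equivalences}(4) for the pair $(\mathbf{P}_0^\alpha,\mathbf{P})$. That is, given a countable $\mathcal{M}\prec H(\theta)$ with $\mathbf{P},\mathbf{P}_0^\alpha\in\mathcal{M}$, a $\mathbf{P}_0^\alpha$-name $\dot{\mathbb{R}}\in\mathcal{M}$ for a forcing, an open dense set $D\subseteq(\mathbf{P}*_{\mathbf{P}_0^\alpha}\dot{\mathbb{R}})\cap\mathcal{M}$, and a condition $q=(q_0,q_1)\in\mathbf{P}\cap\mathcal{M}$, I will produce some $q^*\leq q$ and a $\mathbf{P}_0^\alpha$-name $\dot{E}$ so that $\pi(q^*)\Vdash\dot{E}$ is an open dense subset of $\dot{\mathbb{R}}\cap\mathcal{M}[\pi[\dot\Gamma]]$, and $q^*\Vdash\dot{E}\subseteq D^{\divideontimes}$.

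To produce these objects, I apply Theorem \ref{preservation_of_cohen_properness} with $\beta_0=0$. This choice harmlessly collapses the initial data: since $\mathbf{P}^{0}$ is the trivial (one-point) iteration, the starting condition $(p_0,q_0)$ may be taken as the maximum element, which is vacuously $(\mathbf{P}_0^{0},\mathbf{P}^{0},\mathcal{M})$-generic for any family of dense sets. The quotient $[\mathbf{P}^{(*0)}:\mathbf{P}^{0}_0]$ coincides with $\mathbf{P}$, so one can take $\tilde{\eta}=\check q$, i.e.\ $\tilde{\eta}_0=\check q_0$ and $\tilde{\eta}_1=\check q_1$; this is in $\mathcal{M}$ because $q\in\mathcal{M}$.

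The theorem then produces a condition $(\overline{p},\overline{q})\in\mathbf{P}$ and a $\mathbf{P}_0^\alpha$-name $\dot{E}$ satisfying clauses (1)--(5) of its conclusion. Clauses (3) and (4) state precisely that $\overline{p}\Vdash\dot{E}\subseteq\dot{\mathbb{R}}\cap\mathcal{M}[\dot\Gamma_{<\alpha}^0]$ is open dense and $(\overline{p},\overline{q})\Vdash\dot{E}\subseteq D^{\divideontimes}$, which are exactly the two forcing statements required by the Cohen properness characterization. Clause (5) furnishes a $\mathbf{P}^{0}$-name $\tilde{\tau}$ with $(\overline{p},\overline{q})\Vdash(\tilde{\eta}_0,\tilde{\tau}^\frown\tilde{\eta}_1)\in\dot\Gamma$; since $\tilde{\tau}$ must have domain contained in $\beta_0=\emptyset$, it is forced to be empty, so this becomes $(\overline{p},\overline{q})\Vdash q\in\dot\Gamma$, giving $(\overline{p},\overline{q})\leq q$. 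Setting $q^*=(\overline{p},\overline{q})$ completes the verification.

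All the genuine work has already been carried out inside Theorem \ref{preservation_of_cohen_properness}, so there is no substantive obstacle remaining for the corollary. The only points to check are bookkeeping: that the trivial instance $\beta_0=0$ satisfies all hypotheses of the preservation theorem, and that clause (5) correctly degenerates to the inequality $q^*\leq q$ in this case. Both are immediate from $\mathbf{P}^0$ being the one-point forcing, so no non-trivial $\mathbf{P}^{\beta_0}$-names arise.
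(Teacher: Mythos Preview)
Your proof is correct and is exactly the intended derivation: the paper states the corollary without proof, and applying Theorem \ref{preservation_of_cohen_properness} with $\beta_0=0$ (so that $\mathbf{P}^{\beta_0}$ and $\mathbf{P}_0^{\beta_0}$ are trivial and the hypotheses are vacuous) together with the characterization in Theorem \ref{cohen_proper_equivalences}(4) is precisely how it is meant to follow. One minor wording point: it is not $\tilde{\tau}$ itself but the object it names that has domain contained in $\beta_0=0$; the conclusion that $\tilde{\tau}^\frown\tilde{\eta}_1=\tilde{\eta}_1$ and hence $(\overline{p},\overline{q})\Vdash q\in\dot\Gamma$ is nonetheless correct.
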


\section{Getting all together.}\label{getting_all_together}

Throughout this section, $\mathbb{Q}_\kappa$ is the forcing from Definition \ref{Q_k_definition}. First let us see how the forcing $\mathbb{Q}_\kappa$ fits into the Cohen proper iterations framework.

\begin{lemma}
$\mathbb{Q}_\kappa$ is a Cohen proper iteration.
\end{lemma}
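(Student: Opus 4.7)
The plan is to present $\mathbb{Q}_\kappa$ as a $\varsigma$-iteration of length $\omega_2$. For each $\beta<\omega_2$, let the $\mathbb{P}_\beta$-name $\dot{\mathbb{Q}}_\beta^0$ denote the Sacks forcing $\dot{\mathbb{S}}$; set $\dot{\mathbb{Q}}_\beta^1=\dot{\mathbb{Q}}_\beta^0$ (with identity projection) when $\beta\notin D_0$, and $\dot{\mathbb{Q}}_\beta^1=\dot{\mathbb{Q}}_\beta^0\times\dot{\mathbb{P}}_\kappa(\dot{\mathcal{U}}_\beta)^{V[\dot G_\beta]}$ (with projection onto the Sacks factor) when $\beta\in D_0$. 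With this choice, the zero-coordinate iteration $\mathbf{P}_0^{\omega_2}$ is exactly the countable-support Sacks iteration $\mathbb{S}_{\omega_2}$; moreover, since $\dot{\mathbb{P}}_\kappa(\dot{\mathcal{U}}_\beta)^{V[\dot G_\beta]}$ is by construction a $\mathbb{S}_\beta$-name (it is defined in $V[\dot G_\beta]$ and does not depend on components added later or at earlier $\mathbb{P}_\kappa$-stages), the quotient $[\dot{\mathbb{Q}}_\beta^1:\dot{\mathbb{Q}}_\beta^0]$ is naturally represented by $\mathbb{S}_\beta$-names, in agreement with clause (1)(b) of Definition \ref{Q_k_definition}. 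Conditions of $\mathbb{Q}_\kappa$ correspond to conditions of this $\varsigma$-iteration via the reindexing $q\mapsto q'$ with $q'(\alpha_\beta)=q(\beta)$ for $\beta\in dom(q)$ and $q'$ trivial on $\omega_2\setminus\{\alpha_\beta:\beta\in dom(q)\}$, and the two orderings match verbatim.

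With this identification in place, it remains to verify that each pair $(\dot{\mathbb{Q}}_\beta^0,\dot{\mathbb{Q}}_\beta^1)$ is forced by $\mathbb{P}_\beta$ to be Cohen proper. When $\beta\notin D_0$ this is immediate since the extension is the identity and Sacks is $\sigma$-proper and trivially Cohen preserving. When $\beta\in D_0$, the very definition of $D_0$ ensures that $\mathbb{S}_\beta\Vdash\dot{\mathcal{U}}_\beta$ is a suitable filter in $V[\dot G_\beta]$; clause (1) of the final proposition of Section \ref{cohen_properness_notion}, applied inside $V[\dot G_\beta]$, then yields that $(\dot{\mathbb{S}},\dot{\mathbb{S}}\times\dot{\mathbb{P}}_\kappa(\dot{\mathcal{U}}_\beta)^{V[\dot G_\beta]})$ is Cohen proper, which is precisely what is required. (Alternatively, one may cite Theorem \ref{P_k_has_sacks} to obtain that $\dot{\mathbb{P}}_\kappa(\dot{\mathcal{U}}_\beta)^{V[\dot G_\beta]}$ is forced to be proper and to have the Sacks property, hence Cohen preserving, and then apply clause (2) of the same proposition.)

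The main obstacle is essentially notational: one must check that the restriction in Definition \ref{Q_k_definition}(1)(b), which requires $q(\beta)$ to be strictly a $\mathbb{S}_{\alpha_\beta}$-name rather than a general $\mathbb{P}_{\alpha_\beta}$-name, is compatible with the abstract $\varsigma$-iteration framework. This is precisely the reason the stage-$\beta$ extension in the $\varsigma$-iteration was chosen to be the product of Sacks with $\dot{\mathbb{P}}_\kappa(\dot{\mathcal{U}}_\beta)^{V[\dot G_\beta]}$ (rather than with some name that could depend on earlier $\mathbb{P}_\kappa$ components): since conditions of $\dot{\mathbb{P}}_\kappa(\dot{\mathcal{U}}_\beta)^{V[\dot G_\beta]}$ are $\mathbb{S}_\beta$-names, the restriction in Definition \ref{Q_k_definition}(1)(b) is automatic for the chosen pair $(\dot{\mathbb{Q}}_\beta^0,\dot{\mathbb{Q}}_\beta^1)$. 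All remaining points -- that $dom(q)$ is countable, that $(p,q)$ satisfies the forcing relation defining the quotient, and that the order agrees -- are routine bookkeeping checks against Definition \ref{Q_k_definition}.
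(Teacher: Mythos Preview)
Your proof is correct and follows essentially the same approach as the paper: both present $\mathbb{Q}_\kappa$ as a $\varsigma$-iteration of length $\omega_2$ with $\dot{\mathbb{Q}}_\beta^0=\dot{\mathbb{S}}$ everywhere and $\dot{\mathbb{Q}}_\beta^1=\dot{\mathbb{S}}\times\dot{\mathbb{P}}_\kappa(\dot{\mathcal{U}}_\beta)^{V[\dot G_\beta]}$ at stages $\beta\in D_0$, then invoke the Cohen properness of $(\mathbb{S},\mathbb{S}\times\mathbb{P}_\kappa(\mathcal{F}))$ for suitable $\mathcal{F}$. Your treatment is in fact more careful than the paper's on two points the paper leaves implicit: the reindexing $q'(\alpha_\beta)=q(\beta)$ needed to match the support conventions, and the observation that the product (rather than iteration) structure at stage $\beta$ is what guarantees the quotient is already a $\mathbb{S}_\beta$-name, so that clause~(1)(b) of Definition~\ref{Q_k_definition} is automatically satisfied.
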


\begin{proof}
We follow the same notation from section \ref{one_tukey_type_start}. It is not hard to see that any pair of the form $(\mathbb{S},\mathbb{S}\times\mathbb{P}_\kappa(\mathcal{F}))$ is Cohen proper, whenever $\mathcal{F}$ is a suitable filter.

Thus, we can define $\mathbb{Q}_0^0=\mathbb{S}$ and $\mathbb{Q}_0^1=\mathbb{S}*1$, where $1$  is the trivial forcing. Now, for $\gamma\notin D_0$, we make $\mathbb{S}_\gamma\Vdash\dot{\mathbb{Q}}_0^\gamma=\dot{\mathbb{S}}$ and $\mathbb{S}_\gamma\Vdash\dot{\mathbb{Q}}_1^\gamma=\dot{\mathbb{S}}*1$. If $\gamma\in D_0$, let $\beta\in\omega_2$ be such that $\gamma=\alpha_\beta$. Then make $\mathbb{S}_{\alpha_\beta}\Vdash\dot{\mathbb{Q}}_0^{\alpha_\beta}=\dot{\mathbb{S}}$ and $\mathbb{S}_{\alpha_\beta}\Vdash\dot{\mathbb{Q}}_1^{\alpha_\beta}=\dot{\mathbb{S}}\times\dot{\mathbb{P}}_\kappa(\dot{\mathcal{U}}_{\alpha_\beta})^{V[G_{\alpha_\beta}]}$. It is easy to see that this iteration is isomorphic to $\mathbb{Q}_\kappa$.
\end{proof}

\begin{crl}\label{Q_k_cohen_preserving}
$\mathbb{Q}_\kappa$ is Cohen preserving.
\end{crl}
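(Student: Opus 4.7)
The plan is to use the machinery developed in the previous sections, chiefly the equivalence of Cohen properness formulations in Theorem \ref{cohen_proper_equivalences} together with the preservation theorem for $\varsigma$-iterations (Theorem \ref{preservation_of_cohen_properness} and its corollary). I would first observe that under the natural identification exhibited in Lemma 9.1, the forcing $\mathbb{Q}_\kappa$ is (equivalent to) a $\varsigma$-iteration whose underlying countable support iteration $\mathbf{P}_0$ is precisely the Sacks iteration $\mathbb{S}_{\omega_2}$, and each step $(\dot{\mathbb{Q}}_0^\gamma, \dot{\mathbb{Q}}_1^\gamma)$ is Cohen proper (either trivially when $\gamma \notin D_0$, or by the version of Proposition 7.4(1) applied to $\dot{\mathcal{U}}_{\alpha_\beta}$ when $\gamma \in D_0$, since at that stage $\dot{\mathcal{U}}_{\alpha_\beta}$ is forced to be a suitable filter).

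Next, I would invoke the corollary to Theorem \ref{preservation_of_cohen_properness}, which says that any $\varsigma$-iteration of Cohen proper forcings gives rise to a Cohen proper pair $(\mathbf{P}_0, \mathbf{P})$. Applied here, this yields that $(\mathbb{S}_{\omega_2}, \mathbb{Q}_\kappa)$ is Cohen proper. By the equivalence $(1) \Longleftrightarrow (2)$ of Theorem \ref{cohen_proper_equivalences}, this in turn means that $\mathbb{S}_{\omega_2}$ is $\sigma$-proper and, more importantly for us,
\begin{equation*}
\mathbb{S}_{\omega_2} \Vdash [\mathbb{Q}_\kappa : \mathbb{S}_{\omega_2}] \text{ is Cohen preserving.}
\end{equation*}

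To finish, I would combine this with the fact that $\mathbb{S}_{\omega_2}$ itself is Cohen preserving. This follows because the countable support iteration of Sacks forcing has the Sacks property (Theorem \ref{pp_iterated} applied to the Sacks forcing, which has the Sacks property), and having the Sacks property implies being Cohen preserving (Lemma 4.13 in the excerpt). Thus $\mathbb{Q}_\kappa$ is forcing equivalent to the two-step iteration $\mathbb{S}_{\omega_2} * [\mathbb{Q}_\kappa : \mathbb{S}_{\omega_2}]$ where both factors preserve Cohen reals, and a routine verification (given a $\mathbb{Q}_\kappa$-name $\dot{D}$ for an open dense subset of $2^{<\omega}$, apply Cohen preservation in the quotient to obtain an $\mathbb{S}_{\omega_2}$-name $\dot{E}'$ for an open dense set below $\dot{D}$, then apply Cohen preservation of $\mathbb{S}_{\omega_2}$ to $\dot{E}'$ to extract a ground model open dense $E$) shows that the two-step composition is Cohen preserving.

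The main conceptual obstacle is simply packaging the identification of $\mathbb{Q}_\kappa$ as a $\varsigma$-iteration correctly so that the corollary to Theorem \ref{preservation_of_cohen_properness} applies cleanly; once that is done, the rest is a straightforward chain of implications. No genuinely new combinatorics is required at this point, since all the hard work has already been absorbed into Cohen properness of the individual steps and into the preservation theorem.
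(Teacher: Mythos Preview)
Your proposal is correct and follows essentially the intended route: Lemma 9.1 identifies $\mathbb{Q}_\kappa$ as a Cohen proper $\varsigma$-iteration, the corollary to Theorem~\ref{preservation_of_cohen_properness} yields that $(\mathbb{S}_{\omega_2},\mathbb{Q}_\kappa)$ is Cohen proper, and then Theorem~\ref{cohen_proper_equivalences} together with the Cohen-preservation of $\mathbb{S}_{\omega_2}$ (via the Sacks property) gives Cohen-preservation of the full two-step forcing. The paper states the corollary without proof precisely because these ingredients suffice.
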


The next two lemmas are easy to prove.

\begin{lemma}\label{sacks_two_step_decomposition}
Let $\mathbb{P}$ and $\mathbb{Q}$ be forcings such that $\mathbb{P}\lessdot\mathbb{Q}$. Assume $\mathbb{Q}$ is Cohen preserving. Then, $\mathbb{P}$ forces $[\mathbb{Q}:\mathbb{P}]$ is Cohen preserving.
\end{lemma}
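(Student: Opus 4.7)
The plan is to reduce Cohen preservation of $[\mathbb{Q}:\mathbb{P}]$ in the $\mathbb{P}$-extension to Cohen preservation of $\mathbb{Q}$ in the ground model, using the standard dense embedding $\mathbb{P}*[\mathbb{Q}:\mathbb{P}]\hookrightarrow \mathbb{Q}$, which (since the two forcings yield the same generic extensions) allows an unproblematic translation of names.

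First, I would verify the property directly. Let $\Gamma$ be $\mathbb{P}$-generic over $V$ and work in $V[\Gamma]$. Fix $q\in [\mathbb{Q}:\mathbb{P}]$ and a $[\mathbb{Q}:\mathbb{P}]$-name $\dot{D}$ with $q\Vdash_{[\mathbb{Q}:\mathbb{P}]}\dot{D}\subseteq 2^{<\omega}$ open dense. Back in $V$, pick a $\mathbb{P}$-name $\dot{q}$ for $q$, a $\mathbb{P}$-name $\dot{\dot{D}}$ for $\dot{D}$, and some $p_0\in\Gamma$ forcing $\dot{q}\in[\mathbb{Q}:\mathbb{P}]$ and $\dot{q}\Vdash_{[\mathbb{Q}:\mathbb{P}]}\dot{\dot{D}}$ is an open dense subset of $2^{<\omega}$.

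Second, translate to $\mathbb{Q}$. Under the dense embedding $\mathbb{P}*[\mathbb{Q}:\mathbb{P}]\hookrightarrow \mathbb{Q}$, the pair $(p_0,\dot{q})$ corresponds to some condition $q^*\in \mathbb{Q}$ with $\pi(q^*)\leq p_0$, and $\dot{\dot{D}}$ corresponds to a $\mathbb{Q}$-name $\dot{D}^*$ such that $q^*\Vdash_{\mathbb{Q}} \dot{D}^*$ is an open dense subset of $2^{<\omega}$. (Concretely, any $\mathbb{Q}$-generic $G$ with $q^*\in G$ induces a $\mathbb{P}$-generic $\Gamma_0=\pi[G]\ni p_0$ and a $[\mathbb{Q}:\mathbb{P}]^{V[\Gamma_0]}$-generic $H$ containing $\dot{q}_{\Gamma_0}=q^*$, and we set $\dot{D}^*_G:=\dot{\dot{D}}_{\Gamma_0,H}$.)

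Third, apply Cohen preservation of $\mathbb{Q}$ in $V$: there exist $q^{**}\leq_{\mathbb{Q}} q^*$ and an open dense $E\subseteq 2^{<\omega}$ in $V$ with $q^{**}\Vdash_{\mathbb{Q}}\check{E}\subseteq \dot{D}^*$. Setting $p_1=\pi(q^{**})$ and letting $\dot{q}_0$ be the canonical $\mathbb{P}$-name representing $q^{**}$ as an element of $[\mathbb{Q}:\mathbb{P}]$, the translation yields $p_1\Vdash_{\mathbb{P}}\dot{q}_0\leq_{[\mathbb{Q}:\mathbb{P}]}\dot{q}\ \land\ \dot{q}_0\Vdash_{[\mathbb{Q}:\mathbb{P}]}\check{E}\subseteq \dot{\dot{D}}$. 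By the density of conditions like $p_1$ below $p_0$ (over all choices of $q^{**}$), we obtain in $V[\Gamma]$ an extension $q_0\leq q$ in $[\mathbb{Q}:\mathbb{P}]$ and an $E\in V\subseteq V[\Gamma]$ open dense in $2^{<\omega}$ with $q_0\Vdash \check{E}\subseteq \dot{D}$, as required.

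The argument is essentially bookkeeping via the equivalence $\mathbb{Q}\simeq \mathbb{P}*[\mathbb{Q}:\mathbb{P}]$; the only mild subtlety is verifying that the witness $E$ produced by Cohen preservation of $\mathbb{Q}$ in $V$ lies in the correct ground model for Cohen preservation of $[\mathbb{Q}:\mathbb{P}]$, namely $V[\Gamma]$, which is automatic since $V\subseteq V[\Gamma]$.
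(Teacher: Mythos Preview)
Your argument is correct and is exactly the natural approach: use the forcing equivalence $\mathbb{Q}\simeq\mathbb{P}*[\mathbb{Q}:\mathbb{P}]$ to translate a $[\mathbb{Q}:\mathbb{P}]$-name for an open dense set into a $\mathbb{Q}$-name, apply Cohen preservation of $\mathbb{Q}$ in $V$, and translate back. The paper does not give a proof of this lemma (it simply declares it easy), so there is nothing to compare against beyond noting that your argument is the standard one the authors presumably had in mind.
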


\begin{lemma}\label{core_posets_regular_embeding}
For any $\gamma\in\omega_2$, $\mathbb{S}_{\alpha_{\gamma}}*\dot{\mathbb{P}}_{\kappa}(\dot{\mathcal{U}}_{\alpha_\gamma})^{V[\dot{G}_{\alpha_\gamma}]}$ is a regular subforcing of $\mathbb{Q}_\kappa$.
\end{lemma}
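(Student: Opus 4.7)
The plan is to define a natural embedding
$$e\colon\mathbb{S}_{\alpha_\gamma}*\dot{\mathbb{P}}_\kappa(\dot{\mathcal{U}}_{\alpha_\gamma})^{V[\dot{G}_{\alpha_\gamma}]}\to\mathbb{Q}_\kappa$$
by $e(s,\dot{\rho})=(\bar{s},\{(\gamma,\dot{\rho})\})$, where $\bar{s}$ is $s\in\mathbb{S}_{\alpha_\gamma}$ viewed as the element of $\mathbb{S}_{\omega_2}$ that is $s$ on $\alpha_\gamma$ and trivial elsewhere. This is well defined as a $\mathbb{Q}_\kappa$-condition by Definition \ref{Q_k_definition}(1), since $\bar{s}\upharpoonright\alpha_\gamma=s$ forces $\dot{\rho}\in\dot{\mathbb{P}}_\kappa(\dot{\mathcal{U}}_{\alpha_\gamma})^{V[\dot{G}_{\alpha_\gamma}]}$. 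Injectivity of $e$ is immediate because $\bar{s}\upharpoonright\alpha_\gamma=s$ recovers the first coordinate, and the value at $\gamma$ recovers $\dot{\rho}$.

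First I would verify that $e$ preserves order: if $(s_1,\dot{\rho}_1)\leq(s_0,\dot{\rho}_0)$, then $\bar{s}_1\leq\bar{s}_0$ in $\mathbb{S}_{\omega_2}$ (trivial above $\alpha_\gamma$, equal to $s_1\leq s_0$ on $\alpha_\gamma$), and for the only coordinate in the $q$-support, $\bar{s}_1\upharpoonright\alpha_\gamma=s_1\Vdash\dot{\rho}_1\leq\dot{\rho}_0$; this is precisely the condition in Definition \ref{Q_k_definition}(2). Next, to see that $e$ preserves incompatibility, suppose $(p,q)\leq e(s_i,\dot{\rho}_i)$ in $\mathbb{Q}_\kappa$ for $i=0,1$; then $\gamma\in dom(q)$ and the definition of $\leq$ in $\mathbb{Q}_\kappa$ yields $p\upharpoonright\alpha_\gamma\leq s_0,s_1$ in $\mathbb{S}_{\alpha_\gamma}$ and $p\upharpoonright\alpha_\gamma\Vdash q(\gamma)\leq\dot{\rho}_0,\dot{\rho}_1$; hence $(p\upharpoonright\alpha_\gamma,q(\gamma))$ witnesses compatibility of the two conditions in the source.

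The key remaining step is to exhibit a reduction: for every $(p,q)\in\mathbb{Q}_\kappa$ I would define
$$\pi(p,q)=\begin{cases}(p\upharpoonright\alpha_\gamma,q(\gamma))&\text{if }\gamma\in dom(q),\\(p\upharpoonright\alpha_\gamma,\check{1})&\text{if }\gamma\notin dom(q),\end{cases}$$
and show that every extension of $\pi(p,q)$ in the source is compatible with $(p,q)$. Given $(s,\dot{\rho})\leq\pi(p,q)$, form the condition $(p',q')\in\mathbb{Q}_\kappa$ by setting $p'=s^{\frown}(p\upharpoonright[\alpha_\gamma,\omega_2))$ and $q'=q\cup\{(\gamma,\dot{\rho})\}$ (overwriting $q(\gamma)$ with $\dot{\rho}$ if $\gamma\in dom(q)$). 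For $\beta\in dom(q')$ with $\beta\neq\gamma$, one has $p'\upharpoonright\alpha_\beta\leq p\upharpoonright\alpha_\beta$, so $p'\upharpoonright\alpha_\beta\Vdash q'(\beta)=q(\beta)\in\dot{\mathbb{P}}_\kappa(\dot{\mathcal{U}}_{\alpha_\beta})^{V[\dot{G}_{\alpha_\beta}]}$; and for $\beta=\gamma$, $p'\upharpoonright\alpha_\gamma=s$ forces $\dot{\rho}\in\dot{\mathbb{P}}_\kappa(\dot{\mathcal{U}}_{\alpha_\gamma})^{V[\dot{G}_{\alpha_\gamma}]}$. Thus $(p',q')\in\mathbb{Q}_\kappa$, and by construction $(p',q')\leq(p,q)$ and $(p',q')\leq e(s,\dot{\rho})$.

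The main (and essentially only) obstacle is handling the case where further coordinates $\beta>\gamma$ appear in $dom(q)$; the point is that the $\mathbb{S}_{\alpha_\beta}$-name $q(\beta)$ depends only on $p\upharpoonright\alpha_\beta$ and the forcing relation ``$q(\beta)\in\dot{\mathbb{P}}_\kappa(\dot{\mathcal{U}}_{\alpha_\beta})^{V[\dot{G}_{\alpha_\beta}]}$'' is preserved under strengthening the Sacks part. Because all of the $q$-data below $\gamma$ is irrelevant and the data at and above $\gamma$ in $(p,q)$ is left intact, no further compatibility issues arise. Combining the three verifications above yields $\mathbb{S}_{\alpha_\gamma}*\dot{\mathbb{P}}_\kappa(\dot{\mathcal{U}}_{\alpha_\gamma})^{V[\dot{G}_{\alpha_\gamma}]}\lessdot\mathbb{Q}_\kappa$.
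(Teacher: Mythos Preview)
Your argument is correct and is precisely the natural verification one would give; the paper itself omits the proof, declaring the lemma ``easy to prove,'' and your embedding $e$ together with the projection $\pi$ and the amalgamation $(p',q')=(s^{\frown}(p\upharpoonright[\alpha_\gamma,\omega_2)),\,q\!\upharpoonright\!(dom(q)\setminus\{\gamma\})\cup\{(\gamma,\dot\rho)\})$ is exactly the routine check the authors have in mind. The one point worth making explicit (you gesture at it in your last paragraph) is that for $\beta<\gamma$ in $dom(q)$ one also has $p'\upharpoonright\alpha_\beta=s\upharpoonright\alpha_\beta\leq p\upharpoonright\alpha_\beta$, so the clause $p'\upharpoonright\alpha_\beta\Vdash q'(\beta)\leq q(\beta)$ holds for all $\beta\in dom(q)$, not just $\beta\geq\gamma$.
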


\begin{lemma}[see \cite{Wimmers}]\label{chain_condition_subfilter}
Let $\mathbb{P}$ be a forcing with the $2^\omega$-c.c. Then if $\dot{\mathcal{U}}$ is a $\mathbb{P}$-name for an ultrafilter, there is a $2^\omega$-saturated filter in the ground model which is forced to be contained in $\dot{\mathcal{U}}$.    
\end{lemma}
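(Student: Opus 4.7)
The plan is to define the desired filter explicitly as the set of subsets of $\omega$ forced into $\dot{\mathcal{U}}$. Concretely, working in the ground model, I would let
\[
\mathcal{F} = \{A \subseteq \omega : \Vdash_{\mathbb{P}} \check{A} \in \dot{\mathcal{U}}\}.
\]
Since $\dot{\mathcal{U}}$ is forced to be an ultrafilter (hence closed under finite intersections and supersets), it is routine that $\mathcal{F}$ is a filter on $\omega$, and by construction $\Vdash_{\mathbb{P}} \check{\mathcal{F}} \subseteq \dot{\mathcal{U}}$.

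The heart of the argument is then to show that $\mathcal{F}$ is $2^\omega$-saturated. The key observation is to reinterpret $\mathcal{F}^+$: a set $A \subseteq \omega$ lies in $\mathcal{F}^+$ iff $\omega \setminus A \notin \mathcal{F}$, that is, iff it is not forced that $\omega \setminus A \in \dot{\mathcal{U}}$; since $\dot{\mathcal{U}}$ is forced to be an ultrafilter, this is equivalent to saying there exists some $p \in \mathbb{P}$ with $p \Vdash \check{A} \in \dot{\mathcal{U}}$. Similarly, $A \cap B \in \mathcal{F}^*$ iff $\Vdash_{\mathbb{P}} \check{A} \cap \check{B} \notin \dot{\mathcal{U}}$.

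Now I would suppose towards a contradiction that $\{A_\alpha : \alpha < 2^\omega\} \subseteq \mathcal{F}^+$ is such that $A_\alpha \cap A_\beta \in \mathcal{F}^*$ for all $\alpha \neq \beta$ (so the classes $[A_\alpha]$ form an antichain in $\mathcal{P}(\omega)/\mathcal{F}^*$ of size $2^\omega$). For each $\alpha$, pick $p_\alpha \in \mathbb{P}$ with $p_\alpha \Vdash \check{A}_\alpha \in \dot{\mathcal{U}}$, which exists by the observation above. I claim that $\{p_\alpha : \alpha < 2^\omega\}$ is an antichain in $\mathbb{P}$: if $p_\alpha$ and $p_\beta$ had a common extension $q$ for distinct $\alpha, \beta$, then $q \Vdash \check{A}_\alpha \cap \check{A}_\beta \in \dot{\mathcal{U}}$, contradicting $\Vdash_{\mathbb{P}} \check{A}_\alpha \cap \check{A}_\beta \notin \dot{\mathcal{U}}$ that is guaranteed by $A_\alpha \cap A_\beta \in \mathcal{F}^*$. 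This contradicts the $2^\omega$-c.c. of $\mathbb{P}$ and completes the proof.

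I do not anticipate any real obstacle here; the argument is essentially a bookkeeping exercise matching antichains in the quotient Boolean algebra $\mathcal{P}(\omega)/\mathcal{F}^*$ with antichains in $\mathbb{P}$ through the name $\dot{\mathcal{U}}$, and the chain condition on $\mathbb{P}$ is used in the most direct way possible.
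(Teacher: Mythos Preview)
Your proof is correct and follows essentially the same argument as the paper: define $\mathcal{F}$ as the collection of sets forced into $\dot{\mathcal{U}}$, then turn a putative large almost-disjoint family in $\mathcal{F}^+$ into an antichain in $\mathbb{P}$ via witnesses $p_\alpha\Vdash \check A_\alpha\in\dot{\mathcal{U}}$. (The paper's printed proof has a typo, writing $p_\alpha\Vdash A_\alpha\notin\dot{\mathcal{U}}$ where the subsequent sentence makes clear that $p_\alpha\Vdash A_\alpha\in\dot{\mathcal{U}}$ is intended, exactly as you have it.)
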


\begin{proof}
Define $\mathcal{F}=\{A\in[\omega]^\omega:\mathbb{P}\Vdash A\in \dot{\mathcal{U}}\}$. Assume $\mathcal{F}$ is not $2^{\omega}$-saturated and let $\{A_\alpha:\alpha\in 2^\omega\}\subseteq\mathcal{F}^+$ be such that for any different $\alpha,\beta\in 2^\omega$, $A_\alpha\cap A_\beta\in\mathcal{F}^*$. Then, for each $\alpha\in 2^\omega$, there is $p_\alpha\in\mathbb{P}$ such that $p_\alpha\Vdash A_\alpha\notin \dot{\mathcal{U}}$. Note that for different $\alpha,\beta\in 2^\omega$, we have $p_\alpha$ and $p_\beta$ are incompatible: assume otherwise and let $q$ be a common extension, so $q\Vdash A_\alpha\in\dot{\mathcal{U}}$ and $q\Vdash A_\beta\in\dot{\mathcal{U}}$, which implies $q\Vdash A_\alpha\cap A_\beta\in\dot{\mathcal{U}}$. On the other hand, we also have $\omega\setminus(A_\alpha\cap A_\beta)\in\mathcal{F}$, which means $\mathbb{P}\Vdash \omega\setminus(A_\alpha\cap A_\beta)\in\dot{\mathcal{U}}$, which implies $q\Vdash \omega\setminus(A_\alpha\cap A_\beta)\in\dot{\mathcal{U}}$, which is a contradiction. Therefore, $\mathcal{F}$ is $2^\omega$-saturated, and by definition is a subset of $\dot{\mathcal{U}}$ in the forcing extension by $\mathbb{P}$.
\end{proof}

\begin{lemma}\label{saturated_subfilter}
Let $G$ be a $\mathbb{S}_{\omega_2}$-generic filter over $V$. Let $\dot{\mathcal{U}}$ be a $[\mathbb{Q}_{\kappa}:\mathbb{S}_{\omega_2}]$-name for an ultrafilter. Then there is a $2^\omega$-saturated filter $\mathcal{F}$(in $V[G]$) such that $[\mathbb{Q}_{\kappa}:\mathbb{S}_{\omega_2}]\Vdash \mathcal{F}\subseteq\dot{\mathcal{U}}$.
\end{lemma}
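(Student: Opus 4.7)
The plan is to reduce this statement to a direct application of Lemma \ref{chain_condition_subfilter} to the quotient forcing $[\mathbb{Q}_\kappa:\mathbb{S}_{\omega_2}]$ working inside $V[G]$. To do this, I need to verify two things in $V[G]$: first, that $[\mathbb{Q}_\kappa:\mathbb{S}_{\omega_2}]$ has the $2^\omega$-c.c., and second, that the evaluation $2^\omega = \omega_2$ holds, so that the $\omega_2$-c.c.\ and the $2^\omega$-c.c.\ coincide. The filter will then be defined by
\begin{equation*}
\mathcal{F} = \{A \in [\omega]^\omega \cap V[G] : [\mathbb{Q}_\kappa:\mathbb{S}_{\omega_2}] \Vdash \check{A} \in \dot{\mathcal{U}}\},
\end{equation*}
and the existing argument of Lemma \ref{chain_condition_subfilter} applied in $V[G]$ to the quotient will give the $2^\omega$-saturation of $\mathcal{F}$.

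First I would argue that $[\mathbb{Q}_\kappa:\mathbb{S}_{\omega_2}]$ inherits the $\omega_2$-c.c.\ from $\mathbb{Q}_\kappa$. This is a standard fact: if $\{\dot{q}_\alpha : \alpha \in \omega_2\}$ is an antichain in $[\mathbb{Q}_\kappa:\mathbb{S}_{\omega_2}]$ in $V[G]$, then for each $\alpha$ we can pick a name $\dot{q}_\alpha$ and a condition $p_\alpha \in G$ forcing $\dot{q}_\alpha \in [\mathbb{Q}_\kappa:\mathbb{S}_{\omega_2}]$ and the antichain clause. Translating back to $\mathbb{Q}_\kappa$, the conditions $(p_\alpha, \dot{q}_\alpha)$ form an antichain in $\mathbb{Q}_\kappa$ of size $\omega_2$, contradicting the $\omega_2$-c.c.\ of $\mathbb{Q}_\kappa$ established earlier in the paper. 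Hence the quotient has $\omega_2$-c.c.\ in $V[G]$.

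Next, I would invoke the standard fact that the countable support iteration $\mathbb{S}_{\omega_2}$ of Sacks forcing, starting from $V \vDash \mathsf{GCH}$, forces $2^\omega = \omega_2$; this follows from $\omega_2$-c.c.\ of $\mathbb{S}_{\omega_2}$ together with the Sacks property (which in particular implies $\omega^\omega$-bounding, so no new reals are added beyond those forced by the iteration, and a counting of nice names gives $2^\omega \le \omega_2$, while the iteration clearly adds $\omega_2$ many Sacks reals). Therefore, in $V[G]$, the $\omega_2$-c.c.\ of $[\mathbb{Q}_\kappa:\mathbb{S}_{\omega_2}]$ is precisely the $2^\omega$-c.c. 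With both ingredients in place, the proof of Lemma \ref{chain_condition_subfilter} applies verbatim to $V[G]$ and the quotient forcing: if $\mathcal{F}$ as defined above failed to be $2^\omega$-saturated, we could extract $\omega_2$ pairwise incompatible conditions in $[\mathbb{Q}_\kappa:\mathbb{S}_{\omega_2}]$, contradicting the chain condition.

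There is no serious obstacle here; the only thing that needs any care is the antichain translation argument for the quotient's chain condition, and the appeal to the fact that $V[G] \vDash 2^\omega = \omega_2$, both of which are routine.
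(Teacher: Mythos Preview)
Your proposal is correct and follows essentially the same approach as the paper's proof: both argue that the quotient $[\mathbb{Q}_\kappa:\mathbb{S}_{\omega_2}]$ inherits the $\omega_2$-c.c.\ from $\mathbb{Q}_\kappa$, note that $2^\omega=\omega_2$ in $V[G]$, and then apply Lemma~\ref{chain_condition_subfilter} in $V[G]$. Your version simply spells out in more detail the antichain-lifting argument and the reason $V[G]\vDash 2^\omega=\omega_2$, which the paper states without elaboration.
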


\begin{proof}
Since $\mathbb{Q}_\kappa$ has the $\omega_2$-c.c and $G\subseteq\mathbb{S}_{\omega_2}$ is generic over $V$, then, in $V[G]$ we have that $[\mathbb{Q}_{\kappa}:\mathbb{S}_{\omega_2}]$ has the $\omega_2$-c.c. and  $2^\omega=\omega_2$. Then, by Lemma \ref{chain_condition_subfilter}, we have that in $V[G]$, there is a $2^\omega$-saturated filter $\mathcal{F}$ which is forced to be a subfilter of $\dot{\mathcal{U}}$.
\end{proof}

\begin{thm}\label{main_theorem}
Assume $V$ is a model of $\mathsf{ZFC}+\mathsf{GCH}+\diamondsuit(S)$, where $S=\{\alpha\in\omega_2:\mathsf{cof}(\alpha)=\omega_1\}$. Let $\kappa\ge \omega_2$ be a regular cardinal. Then there is a forcing extension which satisfies the following:
\begin{enumerate}
    \item $2^\omega=\kappa$.
    \item All ultrafilters are Tukey top.
    \item There is no $\mathsf{nwd}$-ultrafilter.
\end{enumerate}

\end{thm}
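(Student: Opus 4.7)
The plan is to force over $V$ with the poset $\mathbb{Q}_\kappa$ and verify the three clauses in the resulting extension. For clause (1), I would first invoke Cohen properness of $\mathbb{Q}_\kappa$ (from the iteration lemma of Section \ref{getting_all_together}), hence properness, so $\omega_1$ is preserved; the $\omega_2$-c.c.\ proved in Section \ref{one_tukey_type_start} preserves all larger cardinals. A standard nice-name count bounds $2^\omega\le\kappa$, while the $\kappa$-many generic functions $\dot{\varphi}_{\alpha_\beta}$ added by each $\mathbb{P}_\kappa(\dot{\mathcal{U}}_{\alpha_\beta})$-component provide $2^\omega\ge\kappa$.

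For clause (2), I would fix a $\mathbb{Q}_\kappa$-name $\dot{\mathcal{U}}$ for a non-principal ultrafilter and factor $\mathbb{Q}_\kappa=\mathbb{S}_{\omega_2}*[\mathbb{Q}_\kappa:\mathbb{S}_{\omega_2}]$. Since the quotient has the $\omega_2$-c.c., Lemma \ref{saturated_subfilter} furnishes, in the Sacks extension, a $2^\omega$-saturated (hence suitable) filter $\mathcal{F}$ forced into $\dot{\mathcal{U}}$. Lemma \ref{suitable_reflection} then produces an $\omega_1$-club $C\subseteq\omega_2$ on which $\mathcal{F}\cap V[G_\alpha]$ is a suitable filter of $V[G_\alpha]$. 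A standard $\diamondsuit(S)$ reflection argument now catches some $\alpha\in S\cap C$ for which $A_\alpha$ codifies an $\mathbb{S}_\alpha$-name for $\mathcal{F}\cap V[G_\alpha]$; thus $\alpha\in D_0$ and $\alpha=\alpha_\beta$ for some $\beta<\omega_2$, so $\dot{\mathcal{U}}_{\alpha_\beta}=\mathcal{F}\cap V[G_\alpha]$ and $\mathbb{Q}_\kappa$ genuinely incorporates $\mathbb{P}_\kappa(\mathcal{F}\cap V[G_\alpha])$ at its $\beta$-th coordinate.

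I would then decompose $\mathbb{Q}_\kappa$ as $(\mathbb{S}_\alpha*\dot{\mathbb{P}}_\kappa(\dot{\mathcal{U}}_{\alpha_\beta}))*\dot{\mathbb{R}}$, which is legitimate by Lemma \ref{core_posets_regular_embeding}. Combining Corollary \ref{Q_k_cohen_preserving} with Lemma \ref{sacks_two_step_decomposition} forces $\dot{\mathbb{R}}$ to be Cohen preserving, and properness of $\dot{\mathbb{R}}$ follows from the fact that the tail of a Cohen proper $\varsigma$-iteration is itself a Cohen proper $\varsigma$-iteration. Working inside $V[G_\alpha]$, Corollary \ref{two_steps_sacks_prop_Tukey_above} then applies to $\mathbb{P}_\kappa(\mathcal{F}\cap V[G_\alpha])*\dot{\mathbb{R}}$, so every ultrafilter extending $\mathcal{F}\cap V[G_\alpha]$ in the final model is Tukey above $[\kappa]^{<\omega}$. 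Since $\dot{\mathcal{U}}\supseteq\mathcal{F}\supseteq\mathcal{F}\cap V[G_\alpha]$, this gives $[\kappa]^{<\omega}\le_T\dot{\mathcal{U}}$.

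For clause (3), I would revisit what the proof of Corollary \ref{two_steps_sacks_prop_Tukey_above} actually delivers through Lemma \ref{lemma_323}: the generic $\dot{\varphi}:\omega\to 2^\kappa$ witnesses the stronger statement $\mathsf{nwd}_\divideontimes(\kappa)\le_K\dot{\mathcal{U}}^*$. Setting $\dot{\varphi}^*(n)=\dot{\varphi}(n)\upharpoonright\omega$ and using that $X_{\vec\nu}^\kappa\in\mathsf{nwd}_\divideontimes(\kappa)$ for every $\vec\nu\in\mathcal{C}_\omega$, the same restriction argument from the lemma relating $\mathsf{nwd}$ and $\mathsf{nwd}(\kappa)$ in Section \ref{basic_remarks} yields $\mathsf{nwd}\le_K\dot{\mathcal{U}}^*$, so no $\mathsf{nwd}$-ultrafilters survive. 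The principal obstacle, I expect, is justifying that $\dot{\mathbb{R}}$ is forced to be a proper Cohen-preserving forcing in $V[G_\alpha][H]$ (with $H$ being $\mathbb{P}_\kappa$-generic); this is where the machinery of Cohen proper $\varsigma$-iterations and the preservation theorem of Section \ref{A_preservation_theorem} are essential, since one must know both that Cohen preservation survives quotients and that the tail retains enough iteration structure to remain proper.
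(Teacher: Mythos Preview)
Your proposal is correct and follows the same route as the paper: force with $\mathbb{Q}_\kappa$, factor through $\mathbb{S}_{\omega_2}$, pull back a $2^\omega$-saturated subfilter via Lemma~\ref{saturated_subfilter}, reflect suitability via Lemma~\ref{suitable_reflection}, catch it with $\diamondsuit(S)$, and decompose through $\mathbb{S}_{\alpha_\beta}*\dot{\mathbb{P}}_\kappa(\dot{\mathcal{U}}_{\alpha_\beta})$ to invoke Corollary~\ref{two_steps_sacks_prop_Tukey_above}. Your treatments of clauses (1) and (3) are more explicit than the paper's but entirely in line with it.

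One remark on the obstacle you flag. The paper's first argument, like yours, tacitly uses properness of the tail $\dot{\mathbb{R}}$ to feed into Corollary~\ref{two_steps_sacks_prop_Tukey_above}, but the paper then gives an alternative argument that sidesteps this entirely. Rather than analysing the quotient, one applies Lemma~\ref{cohen_preserving_kappa} to $\mathbb{Q}_\kappa$ over $V$ (which \emph{is} proper and Cohen preserving by the Cohen-proper iteration machinery): any $x\in[\kappa]^\omega$ and $f\colon x\to 2$ in $V[G]$ have $\bigcap_{\alpha\in x}B^\alpha_{f(\alpha)}$ covered by some $X_{\vec\nu_0}$ with $\vec\nu_0\in\mathcal{C}_\kappa\cap V\subseteq\mathcal{C}_\kappa\cap V[H_{\alpha_\gamma}]$. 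Lemma~\ref{nwd_codes_ground_model_avoided}, applied over $V[H_{\alpha_\gamma}]$, then gives $\varphi_{\alpha_\gamma}^{-1}[X_{\vec\nu_0}]\in\mathcal{U}_{\alpha_\gamma}^*$, and this is a statement about fixed sets that persists unchanged to $V[G]$. Hence $\bigcap_{\alpha\in x}D^\alpha_{f(\alpha)}\in\mathcal{U}_{\alpha_\gamma}^*$ directly, and any ultrafilter extending $\mathcal{U}_{\alpha_\gamma}$ is Tukey top---no properness of the tail required.
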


\begin{proof}
Let $\mathbb{Q}_\kappa$ the forcing from Definition \ref{Q_k_definition} and $G$ a generic filter over $V$. Let $H=G\upharpoonright\mathbb{S}_{\omega_2}=\{p\in\mathbb{S}_{\omega_2}:(\exists(p_0,q_0)\in G)(p_0=p)\}$ be the $\mathbb{S}_{\omega_2}$-generic filter over $V$, and for each $\alpha\in\omega_2$, $H_\alpha$ the restriction of $H$ to $\mathbb{S}_\alpha$. Also, by Lemma \ref{core_posets_regular_embeding}, $\mathbb{S}_{\alpha_\gamma}*\dot{\mathbb{P}}_{\kappa}(\dot{\mathcal{U}}_{\alpha_\gamma})^{V[\dot{G}_{\alpha_\gamma}]}$ is a regular suborder of $\mathbb{Q}_\kappa$, so let $F_{\alpha_\gamma}$ be the projection of $G$ to $\mathbb{S}_{\alpha_\gamma}*\dot{\mathbb{P}}_{\kappa}(\dot{\mathcal{U}}_{\alpha_\gamma})^{V[\dot{G}_{\alpha_\gamma}]}$. To short the notation we write $\mathbb{Q}/F_{\alpha_\gamma}$ instead of $[\mathbb{Q}_\kappa:\mathbb{S}_{\alpha_\gamma}*\dot{\mathbb{P}}_\kappa(\dot{\mathcal{U}}_{\alpha_\gamma})]$, and $\mathbb{Q}_\kappa/H$ instead of $[\mathbb{Q}_\kappa:\mathbb{S}_{\beta_2}]$.

Note that by Corollary \ref{Q_k_cohen_preserving} and Lemma \ref{sacks_two_step_decomposition}, for each $\gamma\in\omega_2$, $\mathbb{Q}_\kappa/F_{\alpha_\gamma}$ is Cohen preserving over $V[F_{\alpha_{\gamma}}]$. Also note that $\mathbb{Q}_\kappa$ is forcing equivalent to $\mathbb{S}_{\alpha_\gamma}*\dot{\mathbb{P}}_\kappa(\dot{\mathcal{U}}_{\alpha_\gamma})^{V[\dot{G}_{\alpha_\gamma}]}*(\mathbb{Q}_\kappa/\tilde{F}_{\alpha_\gamma})$, so by Corollary \ref{two_steps_sacks_prop_Tukey_above}, in the forcing extension by $\mathbb{Q}_\kappa$, we have that any ultrafilter extending $\dot{\mathcal{U}}_{\alpha_\gamma}$ is Tukey top.

We work in $V[H]$. Let $\dot{\mathcal{U}}$ be a $(\mathbb{Q}_\kappa/H)$-name for an ultrafilter on $\omega$. Since $\mathbb{Q}_\kappa/H$ is $\omega_2$-c.c., by Lemma \ref{saturated_subfilter}, there is a $2^\omega$-saturated filter $\mathcal{F}$ which is forced, by $\mathbb{Q}_\kappa/H$, to be contained in $\dot{\mathcal{U}}$. By Lemma \ref{suitable_reflection}, there is an $\omega_1$-club subset of $\omega_2$ on which $\mathcal{F}$ reflects as a suitable filter. Since $\langle A_\alpha:\alpha\in S\rangle$ is a $\diamondsuit(S)$-guessing sequence, there is $\delta\in S$ such that $A_\delta$ codifies $\mathcal{F}\cap V[H_\delta]$ and $\mathcal{F}\cap V[H_\delta]$ is a suitable filter. Let $\gamma$ be such that $\delta=\alpha_\gamma$. Then we have that $\dot{\mathbb{P}}_\kappa(\dot{\mathcal{U}}_{\alpha_\gamma})^{V[H_{\alpha_\gamma}]}=\dot{\mathbb{P}}_\kappa(\mathcal{F}\cap V[H_{\alpha_\gamma}])^{V[H_{\alpha_\gamma}]}$.  Note that $\mathcal{U}_{\alpha_\gamma}\subseteq\mathcal{F}\subseteq\mathcal{U}$, so, by the previous paragraph, we have that $\mathcal{U}$ is Tukey top.

Alternatively, without the use of the first paragraph, we can argue as follows, which gives a more pictorial representation. Let us now go to the forcing extension V[G]. By the previous paragraph, we have $\mathcal{U}_{\alpha_\gamma}\subseteq\mathcal{F}\subseteq\mathcal{U}$. Let $\varphi_{\alpha_\gamma}:\omega\to 2^\kappa$ be the Kat\v{e}tov function introduced by $\dot{\mathbb{P}}_{\kappa}(\dot{\mathcal{U}}_{\alpha_\gamma})$. For each $\alpha\in\kappa$ and $i\in 2$, let $B^\alpha_i=\{x\in 2^\kappa:x(\alpha)=i\}$, and define $D^\alpha_i={\varphi}_{\alpha_\gamma}^{-1}[B_\alpha^i]$. Then we have that for each $\vec{\nu}\in\mathcal{C}_\kappa\cap V[H_{\alpha_\gamma}]$, ${\varphi}^{-1}_{\alpha_\gamma}[{X}_{\vec{\nu}}]\in{\mathcal{U}}_{\alpha_\gamma}^*$. In particular, we have that for each $\vec{\nu}\in\mathcal{C}_\kappa\cap V$, ${\varphi}^{-1}_{\alpha_\gamma}[{X}_{\vec{\nu}}]\in{\mathcal{U}}_{\alpha_\gamma}^*$. 

Let $x\subseteq\kappa$ be an infinite subset having order type $\omega$ and $f:x\to 2$. Since $\mathbb{Q}_\kappa$ is Cohen preserving, by Lemma \ref{cohen_preserving_kappa}  we find $\vec{\nu}\in\mathcal{C}_\kappa\cap V$ such that $\bigcap_{\alpha\in{x}} {B}^{\alpha}_{{f}(\alpha)}\subseteq{X}_{\vec{\nu}_0}$. By the last remark of the previous paragraph, ${\varphi}^{-1}_{\alpha_\gamma}[{X}_{\vec{\nu}_0}]\in{\mathcal{U}}_{\alpha_\gamma}^*$, which implies ${\varphi}^{-1}_{\alpha_\gamma}[\bigcap_{\alpha\in{x}} B^{\alpha}_{{f}(\alpha)}]\in{\mathcal{U}}_{\alpha_\gamma}^*$. Thus, $\bigcap_{\alpha\in{x}}{D}^\alpha_{{f}(\alpha)}\in{\mathcal{U}}_{\alpha_\gamma}^*$. Note that this implies that any ultrafilter extending $\mathcal{U}_{\alpha_\gamma}$ is Tukey top. Since $\mathcal{U}_{\alpha_\gamma}=\mathcal{F}\cap V[H_{\alpha_\gamma}]\subseteq\mathcal{F}\subseteq\mathcal{U}$, we have that $\mathcal{U}$ is Tukey top.
\end{proof}

\section{Squares and products}\label{squares_and_products}

In this section we present a different approach based on the existence of square
sequences. Note that, working in the Sacks model, the quotient forcing $[\mathbb{Q}_\kappa : \mathbb{S}_\kappa]$ is
naturally equivalent to a subposet of the countable support product $\bigotimes_{\beta\in S}\dot{\mathbb{P}}_\kappa(\dot{\mathcal{U}}_\beta)$.
In this section we provide a construction of a reduced product which keeps all the
relevant properties of $\mathbb{Q}_\kappa$, although it looks quite different in what the support of
the conditions refers. This construction is strongly based on Theorem \ref{thm_reduced_product} below, which is interesting by itself. The ingenious proof of Theorem \ref{thm_reduced_product} is due to the work of the second author. First we deal with the proof of this theorem and then we show how to apply it to construct a model where there all ultrafilters are Tukey top.

\begin{thm}\label{thm_reduced_product}
Suppose that $n\in\gw$ is a natural number and $\Box_m$ holds for all $m<n$ and $\cof(meager)=\aleph_1$ holds. Let $\langle \mathbb{P}_\ga\colon\ga\in\gw_n\rangle$ be a sequence of $\sigma$-proper, Cohen preserving posets of cardinality $\aleph_1$. Then there is a poset $\mathbf{P}$ which:

\begin{enumerate}
\item  preserves $\aleph_1$;
\item $\mathbf{P}$ is Cohen-preserving;
\item has $\aleph_2$-c.c.;
\item adds a generic filter for all $\mathbb{P}_\ga$ for $\ga\in\gw_n$.
\end{enumerate}
\end{thm}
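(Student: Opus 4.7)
My plan is to construct $\mathbf{P}$ as a carefully chosen suborder of the countable support product $\bigotimes_{\alpha<\omega_n}\mathbb{P}_\alpha$, using the square sequences to impose a closure condition on the supports of conditions. The first step would be to invoke Woodin's theorem on $L[S]$: since the sequence $\langle \mathbb{P}_\alpha : \alpha < \omega_n \rangle$ can be coded by a single sequence $S$ of ordinals, the inner model $L[S]$ contains all the forcings and satisfies $\Box_{\aleph_m}$ for every $m<n$. We may therefore fix, inside $L[S]$, coherent sequences $\vec{C}^m = \langle C^m_\alpha : \alpha < \omega_{m+1}^{L[S]}\rangle$ witnessing these principles, and work with them in $V$.

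The forcing $\mathbf{P}$ is then defined as follows. Using the square sequences, define a closure operator $\cl$ on countable subsets of $\omega_n$ which, starting from a set $X$, iteratively adjoins the ordinals appearing along the Todor\v{c}evi\'c walks from each element of $X$ through the $\vec{C}^m$. Coherence of $\Box$ guarantees that $\cl(X)$ remains countable. A condition of $\mathbf{P}$ is a partial function $p$ whose domain is of the form $\cl(X)$ for some countable $X\subseteq\omega_n$, with $p(\alpha)\in\mathbb{P}_\alpha$; conditions are ordered coordinatewise. To verify the advertised properties, the key observation is that $\mathbf{P}$ can be re-presented, by induction on $n$, as a regular suborder of a $\varsigma$-iteration of length $\omega_1$ in which each step is a countable subproduct of the $\mathbb{P}_\alpha$'s determined by the next level of the closure. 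Chain condition is then obtained via a $\Delta$-system argument applied to this stratified structure; preservation of $\aleph_1$ and Cohen-preservation follow from Theorem \ref{preservation_of_cohen_properness} applied to this iteration, after verifying that each stage is Cohen proper. The hypothesis $\cof(\meager)=\aleph_1$ enters at this last point, to guarantee that countable products of Cohen-preserving posets remain Cohen preserving (so that each $\varsigma$-stage can be recognized as Cohen proper over the preceding extension), and adding generic filters for each $\mathbb{P}_\alpha$ is automatic because the single-coordinate conditions embed $\mathbb{P}_\alpha$ regularly into $\mathbf{P}$.

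The main obstacle will be establishing the $\aleph_2$-c.c.\ when $n\geq 2$. A naive $\Delta$-system argument on the supports is insufficient, because the $\cl$-closed supports are genuinely unbounded below $\omega_n$ and two such supports may share infinitely many ordinals; what one actually needs is that the coherence of $\Box_m$ at every level $m<n$ cooperates to produce a \emph{compatible} common root rather than merely a set-theoretic root. I expect this to require an inductive argument on $n$ in which the walks through $\vec{C}^{n-1}$ are used to reduce $\aleph_2$-many conditions on $\omega_n$ to $\aleph_2$-many conditions on a bounded piece of $\omega_{n-1}$, after passing to a stationary refinement, and then the induction hypothesis takes over. Once this inductive step is in place, the chain condition, properness, and Cohen-preservation all follow by the uniform mechanism supplied by Theorem \ref{preservation_of_cohen_properness}, yielding the reduced product $\mathbf{P}$ with all four advertised properties.
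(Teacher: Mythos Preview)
Your proposal misses the central idea of the paper's construction, and the gap shows up precisely at the $\aleph_2$-c.c., which you correctly flag as the hardest point. Restricting only the \emph{supports} of conditions (via a closure operator built from walks) is not enough: with a fixed countable support $a$, the set of conditions with domain $a$ is essentially $\prod_{\alpha\in a}\mathbb{P}_\alpha$, which has size $\aleph_1^{\aleph_0}$. Nothing in the hypotheses bounds this by $\aleph_1$ (we do not assume $\mathsf{CH}$ in $V$; $\cof(\meager)=\aleph_1$ does not help here), so a $\Delta$-system or walk-based reduction on supports cannot by itself yield $\aleph_2$-c.c. The paper solves this by imposing a second, pointwise constraint: a system of transitive models $\langle M_b:b\in S\rangle$ is built, each satisfying $\mathsf{CH}$, and a condition $p$ is admitted only if $p\restriction b\in M_b$ for \emph{every} $b\in S$. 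This forces each $\mathbf{P}_b$ to have size $\aleph_1$, and the $\aleph_2$-c.c.\ then follows by a short tower-of-models argument.

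Relatedly, your use of Woodin's theorem is misdirected. The squares are already hypothesised in $V$; there is no need to pass to an inner model to obtain them. The paper invokes Woodin's theorem for a quite different purpose: each $M_b$ is of the form $L[\vec{B}_b]$ for an $\omega_1$-sequence $\vec{B}_b$ coding $K$, the (countable) set $S\cap\mathcal{P}(\cl(b))$, and the (countable) tuple $\langle\mathbb{P}_\alpha:\alpha\in b\rangle$; Woodin's theorem then yields $\mathsf{CH}$ in $M_b$. Applying the theorem to a code of the whole $\omega_n$-sequence $\langle\mathbb{P}_\alpha\rangle$ would give $\mathsf{GCH}$ only from $\aleph_n$ upward, which is useless here. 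Finally, the paper does \emph{not} reduce to a $\varsigma$-iteration or Theorem~\ref{preservation_of_cohen_properness}; Cohen preservation and $\aleph_1$-preservation are obtained by a direct induction on the wellfounded rank of $b\in S$ (Claim~\ref{keyclaim}), factoring $\mathbf{P}_{b,\bar a}$ over successive proper $c\in S$ below $b$. The hypothesis $\cof(\meager)=\aleph_1$ is used to place a cofinal family of open dense subsets of $\omega^{<\omega}$ into the fixed sequence $K$, so that every relevant name for a dense set can be witnessed inside the models $M_b$; it is not used to preserve Cohen-preservation under products.
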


\noindent The poset $\mathbf{P}$ is a suborder of the countable support product $\bigotimes_{\alpha\in\omega_n} \mathbb{P}_\ga$, and it is of cardinality $\max(\aleph_1,\aleph_n)$. For $n=0$ or $1$ the square assumption is either meaningless or provable in ZFC, so in such a circumstance the reduced product exists in ZFC plus $\cof(meager)=\aleph_1$. It is not clear whether the theorem can be generalized past $\aleph_\gw$.

The construction of the reduced product relies on two tools. The first one is a well-known consequence of the square assumption. To describe the tool properly, write $\kappa=\gw_n$, and for every countable set $a\subseteq\omega$, define $s(a)=\langle sup(a\cap\omega_m):m\leq n\rangle$ for each $m\leq n$.

\begin{lemma}\label{noetherian_stationary_set}
There is a set $S\subset [\omega_n]^{\omega}$ which is

\begin{enumerate}
\item stationary;
\item closed under intersections;
\item contains all singletons;
\item the function $s\restriction S$ is an injection.
\item $(S,\subseteq)$ is a wellfounded order.
\item For any countable $a\in S$, the set $\{b\in S:b\subseteq a\}$ is countable.
\item For any $a\in S$ and any finite $F\subseteq S\cap\mathcal{P}(a)$ such that $a\notin F$, $a\setminus \bigcup F\neq\emptyset$.
\end{enumerate}
\end{lemma}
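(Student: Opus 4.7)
The plan is to build $S$ by recursively attaching canonical countable subsets of $\omega_n$ to admissible sup-vectors via the coherent square sequences. First fix, for each $0<m<n$ and each limit $\alpha<\omega_{m+1}$, a club $C^m_\alpha\subseteq\alpha$ of order type at most $\omega_m$ witnessing $\Box_m$, satisfying the coherence condition $\beta\in\lim(C^m_\alpha)\Rightarrow C^m_\beta=C^m_\alpha\cap\beta$; also fix arbitrary cofinal $\omega$-sequences in all countable limit ordinals. Call $\vec\alpha=\langle\alpha_m:m\leq n\rangle$ with $\alpha_m\leq\omega_m$ \emph{admissible} if each $\alpha_m$ is $0$, a successor, or a limit of countable cofinality (with the convention that $\alpha_0$ may be $\omega$ itself). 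To each admissible $\vec\alpha$ I would recursively attach a countable $T(\vec\alpha)\subseteq\omega_n$ with $s(T(\vec\alpha))=\vec\alpha$, obtained by descending through $C^{n-1}_{\alpha_n},C^{n-2}_{\cdot},\ldots,C^0_{\cdot}$ and taking a canonical cofinal $\omega$-slice at each level, and then let $S$ be the closure under finite intersection of $\{\emptyset\}\cup\{\{\gamma\}:\gamma<\omega_n\}\cup\{T(\vec\alpha):\vec\alpha\text{ admissible}\}$.

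Properties (2) and (3) are built in. The recursive structure immediately yields (5) and (6): strict $\subseteq$-decrease of $T$-values forces a decrease of a natural rank attached to the generating vector, and only countably many admissible vectors feed into a given $T(\vec\alpha)$. Stationarity (1) is a standard closing-off argument: given a club $\mathscr C\subseteq[\omega_n]^\omega$, produce an admissible $\vec\alpha$ by reflecting an elementary chain inside $\mathscr C$, forcing $T(\vec\alpha)\in\mathscr C$.

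The delicate part is achieving (4) and (7) simultaneously with (2). The key technical step is that for admissible $\vec\alpha,\vec\beta$ the intersection $T(\vec\alpha)\cap T(\vec\beta)$ is again of the form $T(\vec\gamma)$ with $\vec\gamma$ canonically computable from $\vec\alpha,\vec\beta$; this is exactly where the coherence clause $\beta\in\lim(C^m_\alpha)\Rightarrow C^m_\beta=C^m_\alpha\cap\beta$ is indispensable, applied by induction on $n$, so that the canonical $\omega$-slices picked at each limit step agree on their intersection. With this in hand, injectivity of $s\restriction S$ (property (4)) follows because $\vec\alpha$ is reconstructible from $T(\vec\alpha)$ via the sup operation, and property (7) is established by locating inside $T(\vec\alpha)\setminus\bigcup F$ (for a finite $F\subseteq S\cap\mathcal P(T(\vec\alpha))$ not containing $T(\vec\alpha)$) one of the most recently added top-level elements from the $\omega$-sequence extracted at the outermost limit step of the recursion for $T(\vec\alpha)$---no strictly smaller admissible vector can accommodate arbitrarily many such elements, so finitely many proper subsets cannot cover them all. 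The main obstacle throughout is precisely this coherence bookkeeping: making sure the intersections of canonical $\omega$-slices are themselves canonical, which is the exact content of the $\Box_m$ assumptions and what drives the induction on $n$.
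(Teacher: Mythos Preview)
Your proposal takes a genuinely different route from the paper, and the route you sketch has real gaps.

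The paper does \emph{not} build $S$ by attaching a canonical set $T(\vec\alpha)$ to each admissible sup-vector and then closing under intersection. Instead it defines $S$ directly by \emph{closure properties}: fix bijections $f_\alpha\colon|\alpha|\to\alpha$ and the enumeration maps $g_\alpha\colon\mathrm{ot}(C_\alpha)\to C_\alpha$ of a simultaneous square sequence, and declare $a\in S$ iff $a$ contains $\omega\cup\{\omega_m:m<n\}$, is closed under successor, under $f_\alpha,f_\alpha^{-1}$ for each $\alpha\in a$, under $g_\alpha,g_\alpha^{-1}$ for each limit $\alpha\in a\cup\mathrm{rng}(s(a))$, and satisfies the cofinality condition $C_\alpha\cap a$ cofinal in $a\cap\alpha$ for each such $\alpha$. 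With this definition, closure under intersection is proved by showing (via the square coherence) that a common limit point of $a$ and $b$ is a limit point of $a\cap b$; injectivity of $s$ is proved by an induction showing $a\cap\gamma=b\cap\gamma$ for $\gamma\in a\cap b$, crucially using $a\cap\gamma=f_\gamma[a\cap|\gamma|]$; and stationarity is immediate because any countable elementary submodel containing the relevant data yields $N\cap\omega_n\in S$ automatically.

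Your approach is missing exactly the ingredient that makes all three of those work: the bijections $f_\alpha$. Without something forcing $a\cap\gamma$ to be determined by $a\cap|\gamma|$, there is no reason the ``canonical $\omega$-slices'' you extract from $C^{m}_{\alpha_{m+1}}$ will intersect to another canonical slice; square coherence alone tells you $C_\beta=C_\alpha\cap\beta$ for $\beta\in\lim(C_\alpha)$, but your $T(\vec\alpha)$ must also record countably many elements of $\omega_n$ \emph{outside} the top-level club, and you give no mechanism coordinating those. The claim ``$T(\vec\alpha)\cap T(\vec\beta)=T(\vec\gamma)$ for canonically computable $\vec\gamma$'' is the entire content of the lemma and you have only asserted it. Your stationarity argument is also broken as stated: producing an admissible $\vec\alpha$ from an elementary chain does not force the \emph{specific} set $T(\vec\alpha)$ to lie in the given club $\mathscr C$; in the paper's approach this is a non-issue precisely because $S$ is defined by closure conditions that any elementary submodel automatically satisfies.
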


\begin{proof}
First let us define some objects that will be useful. For each $\alpha\in\omega_n$, let $f:\vert\alpha\vert\to\alpha$ be a bijection. Let $\langle C_\alpha:\alpha\in \lim(\omega_{n})\rangle$ be a simultaneously square sequence for all $\omega_m$ for each $m< n$, that is for each limit ordinal $\alpha<\omega_n$, the following holds:
\begin{enumerate}
    \item $C_\alpha$ is a club subset of $\alpha$.
    \item If $\alpha\in [\omega_m,\omega_{m+1})$ for $m>0$ and $cof(\alpha)<\omega_m$, then $ot(C_\alpha)<\omega_m$.
    \item For each $\alpha\in \lim(\omega_n)$, if $\beta$ is a limit point of $C_\alpha$, then $C_\beta=C_\alpha\cap\beta$.
\end{enumerate}

For each $\alpha\in \lim(\omega_n)$, let $g_\alpha:ot(C_\alpha)\to C_\alpha$ be the increasing enumeration of $C_\alpha$. Then define $S$ to be the family of all countable sets $a\subset\omega_n$ such that $\omega\cup\{\omega_m:m< n\}\subseteq a$, and $a$ is closed under the successor function, the functions $f_\alpha$ and $f_\alpha^{-1}$ for each $\alpha \in a$, $g_\alpha$ and $g_\alpha^{-1}$ for each limit ordinal $\alpha\in a\cup rng(s(a))$, and additionally, for each limit ordinal $\alpha\in a\cup rng(s(a))$, $C_\alpha\cap a$ is cofinal in $a\cap \alpha$.

We prove that $S$ satisfies the lemma in a sequence of claims.

\begin{claim} If $a\in S$ and $\gamma$ is a limit point of $a$, then $C_\gamma\cap a$ is cofinal in $\gamma\cap a$ and $a$ is closed under $g_\gamma$.
\end{claim}

\begin{proof}
Let $\gamma$ be a limit ordinal of $a$. If $\gamma\in a\cup rng(s(a))$, it follows directly from the definition. So let us assume $\gamma\notin a\cup rng(s(a))$. Note that this implies that $\alpha=\min\{\delta\in a:\gamma<\delta\}$ is well defined and is a limit ordinal. Thus, $C_\alpha\cap \alpha$ is cofinal in $a\cap \alpha=a\cap \gamma$, which implies that $\gamma$ is a limit point of $C_\alpha$. By the coherence property of the square sequence, we have $C_\alpha\cap\gamma= C_\gamma$, so $C_\gamma$ is cofinal in $a\cap \gamma$. Also note that since $a$ is closed under $g_\alpha$ and $C_\gamma$ is an initial segment of $C_\alpha$, $a$ should be closed under $g_\gamma$.
\end{proof} 

\begin{claim} If $a, b\in S$ and $\gamma\in\omega_n$ is a limit point of both $a$ and $b$, then $\gamma$ is a limit point of $a\cap b$.
\end{claim}

\begin{proof}
The proof is by induction on $\gamma$. Let $\gamma$ be a limit point of both $a$ and $b$ and assume the claim is true for all $\delta <\gamma$ which are limit points of both $a$ and $b$. By the previous claim we have that $C_\gamma\cap a$ is cofinal in $a\cap\gamma$, and $C_\gamma\cap b$ is cofinal in $b\cap\gamma$. Also, both $a$ and $b$ are closed under $g_\gamma^{-1}:C_\gamma\to ot(C_\gamma)$. Let $\alpha=ot(C_\gamma)$. Since $a$ and $b$ are closed under $g_\gamma^{-1}$, we have that $\alpha$ is a limit point of both $a$ and $b$, and since $\alpha<\gamma$, by our induction hypothesis, $\alpha$ is a limit point of $a\cap b$, that is $a\cap b\cap \alpha$ is cofinal in $\alpha$. Since $a$ and $b$ are closed under $g_\gamma$, $a\cap b$ is closed under $g_\gamma$, so $g_\gamma[a\cap b\cap\alpha]\subseteq a\cap b$, but since $a\cap b$ is cofinal in $\alpha$, we have that $g_{\gamma}[a\cap b\cap\alpha]$ is cofinal in $\gamma$, so $\gamma$ is a limit point of $a\cap b$.
\end{proof}

\begin{claim} If $a, b\in S$ are countable sets such that $s(a)(m)=s(b)(m)$ for each $m\leq n$, then $a=b$.
\end{claim}

\begin{proof}
Note that the previous claim implies that $a\cap b$ is cofinal in both $a$ and $b$, so it is sufficient to prove that for each $\gamma\in a\cap b$ which is a limit ordinal of $a\cap b$, $a\cap\gamma=b\cap \gamma$ holds; we prove this by induction on $\gamma$. First note that by definition $\omega\subseteq a\cap b$, so we have $a\cap\omega=b\cap \omega$ and $\omega\in a\cap b$. Now, assume $\gamma=\beta+1$, $\beta\in a\cap b$ and it holds that $a\cap\beta=b\cap\beta$. Since both $a$ and $b$ are closed under the successor function, the induction hypothesis implies $\gamma\in a\cap b$, so $a\cap\gamma=b\cap \gamma$ follows. If $\gamma\in a \cap b$ is a limit ordinal of $a\cap b$, we have $a\cap\gamma=\bigcup_{\delta\in a\cap b\cap\gamma}a\cap\delta=\bigcup_{\delta\in a\cap b\cap\gamma}b\cap\delta=b\cap\gamma$. If $\gamma=\omega_m$ for some $m<n$, by hypothesis we have $s(a)(m)=s(b)(m)$, so our induction hypothesis implies again that $a\cap\omega_m=a\cap s(a)(m)=b\cap s(b)(m)=b\cap \omega_m$. Finally, if $\gamma\in a\cap b$ is a limit ordinal and $\gamma\notin \{\omega_m:m<n\}$, and $\gamma$ is not a limit point of $a\cap b$, then we have $a\cap\gamma= f_\gamma[a\cap\vert\gamma\vert]=f_\gamma[b\cap\vert\gamma\vert]=b\cap\gamma$.
\end{proof}

\begin{claim} For any $a,b\in S$, $a\cap b\in S$.
\end{claim}

\begin{proof}
Note that it is sufficient to prove that for any limit point $\gamma$ of $ a\cap b$, $C_\gamma\cap a\cap b$ is cofinal in $a\cap b\cap \gamma$. If $\gamma$ is a limit point of $a\cap b$, then it is a limit point of both $a$ and $b$, so they are closed under $g_{\gamma}$ and $g_\gamma^{-1}$. Note that this implies that $ot(C_\gamma)$ is a limit point of both $a$ and $b$, so $ot(C_\gamma)$ is a limit point of $a\cap b$. Since $a\cap b$ is closed under $g_\gamma$, and $a\cap b\cap ot(C_\gamma)$ is cofinal in $ot(C_\gamma)$, we have that $g_{\gamma}[a\cap b\cap ot(C_\gamma)]$ is cofinal in $\gamma$, but $g_\gamma[a\cap b]\subseteq a\cap b$, so $C_\gamma\cap a\cap b$ is cofinal in $a\cap b\cap \gamma$. 
\end{proof}

\begin{claim}\label{claim_87}
The set $S$ is stationary.
\end{claim}

\begin{proof}
Fix $h:\omega_n^{<\omega}\to\omega_n$. Let $\mathcal{M}\prec H(\omega_{n+2})$ be a countable elementary submodel such that containing all the following objects:
\begin{enumerate}
    \item $h$.
    \item $\{\omega_0,\ldots,\omega_{n}\}$.
    \item $\langle C_{\alpha}:\alpha\in\lim(\omega_{n+1})\rangle$.
    \item $\{f_\alpha:\alpha\in\omega_n\}$.
    \item $\{g_{\alpha}:\alpha\in \lim(\omega_{n+1})\}$.
\end{enumerate}
Let $a=\omega_n\cap\mathcal{M}$. We claim that $a\in S$. It is clear that for each $\alpha\in a$, $a$ is closed under $f_\alpha$ and $f_{\alpha}^{-1}$, and the same holds for $g_{\alpha}, g_{\alpha}^{-1}$ for any limit ordinal $\alpha\in a$. It is also clear that $a$ is closed under $h$. So it is sufficient to prove that for any limit ordinal $\gamma\in a\cup rng(s(a))$, $a\cap C_\gamma$ is cofinal in $a\cap \gamma$. If $\gamma\in a$ it follows from the fact that $C_\gamma\in \mathcal{M}$ and $\mathcal{M}$ knows that $C_\gamma$ is cofinal in $\gamma$. So let us assume that $\gamma=s(a)(m)$ for some $m\leq n$. By definition $s(a)(m)=\sup(a\cap\omega_m)$. We also have $\omega_m\in\mathcal{M}$, so $C_{\omega_m}\in\mathcal{M}$ and $C_{\omega_m}\cap\mathcal{M}$ is cofinal in $\omega_m\cap\mathcal{M}$. This implies that $s(a)(m)$ is a limit point of $C_{\omega_m}$, and by the coherence property, $C_{s(a)(m)}=C_{\omega_m}\cap s(a)({m})$. Thus, we have $C_{s(a)(m)}$ is cofinal in $a\cap s(a)({m})$.
\end{proof}

\begin{claim} $(S,\subseteq)$ is wellfounded.
\end{claim}
\begin{proof}
Let $\{a_k:k\in\omega\}\subseteq S$ be a $\subseteq$-decreasing sequence. For each $m\leq n$, we have that $\langle s(a_k)({m}):k\in\omega\rangle$ is a decreasing sequence of ordinals, so there is $j_m\in\omega$ such that $\langle s(a_k)({m}):k\ge j_m\rangle$ is constant. Let $j=\max\{j_0,\ldots,j_n\}$. Then the sequence $\langle\langle s(a_k)({m}):m\leq n\rangle:k\ge j\rangle$ is constant. Now, an application of Claim \ref{claim_87} shows that for all $k,k'\ge j$, $a_k=a_{k'}$.
\end{proof}

\begin{claim} For any $a\in S$, $\{b\in S:b\subseteq a\}$ is countable.
\end{claim}

\begin{proof}
For any $b\in S$, $b$ is uniquely determined by $s(b)$, and such $b$, has countably many limit points, so for any $a\in S$ such that $a\subseteq b$, $s(a)$ can take countably many values, which implies that $a$ can take countably many different values at most.
\end{proof}

The proof of the lemma is complete.
\end{proof}

Now we continue with the proof of Theorem \ref{thm_reduced_product}. Note that we can assume that each $\mathbb{P}_\alpha$ has $\omega_1$ as domain, $0$ is the maximum of $(\mathbb{P}_\alpha,\leq_\alpha)$, and for each limit ordinal $\delta\in\omega_1$, if $\dot{\tau}$ is a $(\mathbb{P}_\alpha\cap\delta,\leq_{\alpha}\upharpoonright\delta\times\delta)$-name for an open dense subset of $\omega^{<\omega}$, then for any condition $p\in \mathbb{P}_\alpha\cap\delta$ there is $q\in \mathbb{P}_\alpha$ such that $q\leq p$ and
\begin{equation*}
    \{t\in\omega^{<\omega}:q\Vdash \check{t}\in\dot{\tau}\}
\end{equation*}
 is open dense.

Let $K=\langle (f_\alpha,D_\alpha):\alpha\in\omega_1\rangle$ be a sequence such that for each $\alpha\in\omega_1$, $f:\alpha\to\omega$ is a bijection, and for any open dense subset $D\subseteq\omega^{<\omega}$ there is $\alpha\in\omega_1$ such that $D_\alpha\subseteq D$. This sequence will remain fixed for the rest of the section.

We are coming to the second tool needed for the construction of the reduced product. It is related to a deep theorem of Woodin: Theorem 7 from \cite{Woodin1996}. This theorem states that whenever $\vec{B}$ is an $\kappa$-sequence of ordinals, with $\kappa\ge\omega_1$, then $L[\vec{B}]$ is a generalized Prikry forcing extension of the relativized core model $K_A$, for some $A\subseteq\kappa$, and satisfies $\mathsf{GCH}$ for cardinals $\lambda\ge\kappa$. We need the particular case for $\vec{S}$ being an $\omega_1$-sequence of ordinals, so $L[\vec{B}]$ is a model of $\mathsf{ZFC}+\mathsf{GCH}$. Below we state the portion of the theorem we need; the complete statement of the theorem can be found in \cite{Woodin1996}.

\begin{thm}[W. H. Woodin, see \cite{Woodin1996}]\label{woodin_theorem}
If $V=L[\vec{B}]$ and $\vec{B}$ is an $\omega_1$-sequence of ordinals, then  $\mathsf{GCH}$ holds.
\end{thm}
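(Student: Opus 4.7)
The plan is to adapt Gödel's proof of $\mathsf{GCH}$ in $L$ to the relativized setting $L[\vec{B}]$, using condensation as the main tool. Recall the schema: if $M \prec L_\gamma[\vec{B}]$ is an elementary submodel whose Mostowski collapse $\pi \colon M \to N$ fixes $\vec{B}$ pointwise, then $N = L_\beta[\vec{B}]$ where $\beta = M \cap \mathrm{Ord}$. Combined with a Löwenheim--Skolem argument, this will let us locate subsets of a cardinal $\kappa$ inside $L_{\kappa^+}[\vec{B}]$.

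For $\kappa \ge \aleph_1$ I would argue as follows. Fix $X \subseteq \kappa$ with $X \in L[\vec{B}]$, so $X \in L_\gamma[\vec{B}]$ for some ordinal $\gamma$. Form an elementary $M \prec L_\gamma[\vec{B}]$ of cardinality $\kappa$ containing $\kappa \cup \{X,\vec{B}\}$ and also containing the range of $\vec{B}$ as a subset. Since $|\mathrm{rng}(\vec{B})| \le \aleph_1 \le \kappa$, closing $M$ under these parameters does not increase its cardinality. One then wants to arrange that the transitive collapse $\pi$ fixes every ordinal in $\mathrm{rng}(\vec{B})$, so that $\pi(\vec{B}) = \vec{B}$; this requires absorbing all relevant ordinals into $M$, which is possible precisely because $|\mathrm{dom}(\vec{B})| = \aleph_1 \le \kappa$. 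Condensation then gives $X \in L_\beta[\vec{B}]$ with $\beta < \kappa^+$, whence $(2^\kappa)^{L[\vec{B}]} \le |L_{\kappa^+}[\vec{B}]| = \kappa^+$.

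The genuinely hard case is $\kappa = \aleph_0$: establishing $\mathsf{CH}$ in $L[\vec{B}]$. A countable elementary submodel cannot absorb $\mathrm{rng}(\vec{B})$, so the naive condensation argument breaks down; the Mostowski collapse will in general send $\vec{B}$ to some other countable sequence $\vec{B}'$, and $X$ will only land inside $L[\vec{B}']$ rather than $L[\vec{B}]$. To handle this, I would invoke the deeper structural analysis from \cite{Woodin1996}: there exists $A \subseteq \omega_1$ and a generalized Prikry-type forcing $\mathbb{P}$ definable over the relativized core model $K_A$ so that $\vec{B}$ is $\mathbb{P}$-generic over $K_A$ and $L[\vec{B}] = K_A[\vec{B}]$. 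The core model $K_A$ satisfies $\mathsf{GCH}$ by its standard fine-structural analysis, and the generalized Prikry forcing preserves cardinals and does not add subsets of $\omega$ beyond those captured by a mild extension, yielding the bound on $2^{\aleph_0}$.

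The main obstacle is precisely this $\mathsf{CH}$ case, which cannot be reduced to elementary condensation alone and requires the covering-type analysis of $L[\vec{B}]$ over a relativized core model. The $\mathsf{GCH}$ instances at $\kappa \ge \aleph_1$ should in contrast be a relatively mechanical adaptation of Gödel's classical argument, once one tracks carefully how $\mathrm{rng}(\vec{B})$ is absorbed by the Skolem hulls.
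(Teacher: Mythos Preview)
The paper does not prove this theorem; it is quoted as a black box from \cite{Woodin1996}, and the surrounding discussion makes clear that the full strength of Woodin's structural analysis (that $L[\vec B]$ is a generalized Prikry extension of a relativized core model $K_A$) is being invoked. So there is no ``paper's own proof'' to compare against; the relevant question is whether your sketch is a viable outline of Woodin's argument.

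Your treatment of the case $\kappa\ge\aleph_1$ contains a genuine gap. You write that the transitive collapse $\pi$ will fix every ordinal in $\rng(\vec B)$ once you ``absorb all relevant ordinals into $M$'', and you justify this by $|\dom(\vec B)|=\aleph_1\le\kappa$. But for $\pi$ to fix an ordinal $\gamma$ you need $\gamma\subseteq M$, not merely $\gamma\in M$. The sequence $\vec B$ is an $\omega_1$-sequence of \emph{arbitrary} ordinals; nothing prevents $\vec B(0)$ from being, say, $\aleph_{\omega_{17}}$. To make the collapse fix such a value you would need a hull of size at least $|\vec B(0)|$, which may vastly exceed $\kappa$. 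Consequently the naive condensation argument only handles $\kappa\ge\sup(\rng(\vec B))$, and the entire interval $\aleph_1\le\kappa<\sup(\rng(\vec B))$ is left untreated. The obstruction you correctly diagnose at $\kappa=\aleph_0$ --- that the collapse sends $\vec B$ to some other $\vec B'$ --- is in fact present at all $\kappa$ below $\sup(\rng(\vec B))$, and this is exactly why the theorem is not an exercise in G\"odel-style condensation. The core-model/covering analysis you invoke for $\mathsf{CH}$ is what does the real work throughout; your division into an ``elementary'' high case and a ``hard'' low case is not the right decomposition.
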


To state the next lemma, for every countable set $b\subset\kappa$ write $\cl(b)$ for the inclusion-smallest element of $S$ containing all ordinals in $b$ and all limit points of $b$ as well. This set exists by the well-foundedness of $S$ and its closure under intersections.

\begin{lemma}\label{models_system}
There is a system $\langle M_a, \prec_a\colon a\in S\rangle$ such that for each $a\in S$, $M_a$ is a transitive model of ZFC containing the following:

\begin{enumerate}
\item $S\cap\power(\cl(a))$;
\item $K$ and the function $\langle\langle \ga, \mathbb{P}_\ga\rangle\colon\ga\in a\rangle$;
\item $\prec_a$, which in $M_a$ is a well-ordering of $\gw^\gw$ of order type $\gw_1$;
\item the function $\{\langle b, (M_b\cap V_{\omega_n+5}, \prec_b)\rangle\colon b\subseteq a, b\in S\}$.
\end{enumerate}
\end{lemma}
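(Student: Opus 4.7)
The natural approach is a recursion along the wellfounded relation $(S,\subsetneq)$ provided by Lemma \ref{noetherian_stationary_set}. Since each downward cone $\{b\in S:b\subsetneq a\}$ is countable, the recursion is essentially countable at each stage. The guiding idea is that every $M_a$ should be of the form $L[\vec{B}_a]$ for some $\omega_1$-sequence of ordinals $\vec{B}_a$, so that Theorem \ref{woodin_theorem} immediately supplies $\mathsf{GCH}$ inside $M_a$; the well-order $\prec_a$ can then be taken to be the canonical $L[\vec{B}_a]$-least well-ordering of $\gw^\gw$ of order type $\gw_1^{M_a}$.

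For the recursive step, assume $(\vec{B}_b, M_b, \prec_b)$ have been built for every $b\subsetneq a$ in $S$, with $M_b=L[\vec{B}_b]$, and fix a bijection $k\mapsto b_k$ of $\gw$ with $\{b\in S:b\subsetneq a\}$. I build $\vec{B}_a$ as an $\gw_1$-sequence of ordinals which codes, via a suitable bijection $\gw\times\gw_1\cong\gw_1$, the following $\gw_1$-amount of data: (i) all previously constructed $\vec{B}_{b_k}$; (ii) the fixed master sequence $K$, itself of length $\gw_1$; (iii) the countable collection $\langle\mathbb{P}_\ga:\ga\in a\rangle$, each $\mathbb{P}_\ga$ having domain $\gw_1$; (iv) the countable collection $S\cap\mathcal{P}(\cl(a))$, which is countable by clauses (6) and (7) of Lemma \ref{noetherian_stationary_set} applied to $\cl(a)\in S$. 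Setting $M_a:=L[\vec{B}_a]$, Theorem \ref{woodin_theorem} yields $\mathsf{GCH}$ in $M_a$, and $\prec_a$ is defined accordingly.

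The verification of clauses (1)--(3) is immediate from the content of $\vec{B}_a$ and the application of Woodin's theorem. For clause (4), observe that for any $b\subsetneq a$ in $S$, $\vec{B}_b\in M_a$ via (i); hence the inner model $M_b=L[\vec{B}_b]$ is uniformly definable inside $M_a$, so $V_{\gw_n+5}^{M_b}=M_b\cap V_{\gw_n+5}$ is a set in $M_a$, as is $\prec_b$. The case $b=a$ is trivial. Since $\{b\in S:b\subseteq a\}$ is countable and coded by (i), the whole function can be assembled as a single set in $M_a$.

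The main delicacy lies in the coding step. One must package countably many length-$\gw_1$ sequences $\vec{B}_{b_k}$ together with the remaining $\gw_1$-worth of auxiliary data into a genuine $\gw_1$-sequence of ordinals, and arrange the bookkeeping so that $\gw_1$ is preserved on passing from $V$ to $M_a$ (otherwise Theorem \ref{woodin_theorem} would only grant $\mathsf{GCH}$ above the length of $\vec{B}_a$ as computed in $M_a$, not at $\anought$ itself). Once standard $L[\cdot]$-coding techniques supply this preservation, every clause of the lemma follows.
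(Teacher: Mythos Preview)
Your approach is essentially correct but takes a genuinely different route from the paper. The paper avoids recursion altogether: it sets $M_a = L[K][S\cap\mathcal{P}(\cl(a))][\langle\mathbb{P}_\gamma:\gamma\in a\rangle]$ directly, by a uniform formula. Clause~(4) then falls out because whenever $b\subseteq a$ in $S$, the model $M_a$ can compute $\cl(b)$ internally (as the inclusion-least element of $S\cap\mathcal{P}(\cl(a))$ containing $b$ and its limit points), and hence can reconstruct $M_b$ by the very same formula, all parameters being already present. No previously-built objects need to be fed in.

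Your recursive construction works in principle, but it creates bookkeeping obligations you only gesture at: the enumeration $k\mapsto b_k$ and the interleaving of (i)--(iv) must be chosen canonically and recorded in $\vec{B}_a$ so that $M_a$ can recover the map $b\mapsto\vec{B}_b$ (not just the unordered collection of codes); otherwise clause~(4) does not follow. Your flagged ``delicacy'' about $\omega_1$-preservation is not actually delicate once $K$ is in the predicate: $K$ contains a bijection $f_\alpha:\alpha\to\omega$ for every $\alpha<\omega_1$, so $\omega_1^{M_a}=\omega_1^V$ automatically. The paper uses exactly this observation.

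The advantage of the paper's direct definition is that coherence between stages is automatic---the parameters defining $M_b$ are downward-monotone in $b$, so $M_b$-inside-$M_a$ comes for free. Your approach trades this for a recursion whose uniformity must be separately argued.
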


\begin{proof}
The model $M_a$ is $L[K][S\cap\mathcal{P}(\cl(a))][\langle\langle\ga, \mathbb{P}_\ga\rangle\colon \ga\in a\rangle]$. One point here is that this is a model of ZFC, since the set $S\cap\mathcal{P}(\cl(a))$ is injectively mapped into $Ord^{n+1}$ by the function $s$ and thereby canonically well-ordered. Items (1, 2) are obvious. For item (3), note that $S\cap\mathcal{P}(cl(a))$ is a countable set, so $M_a=L[\vec{B}_a]$ for some $\gw_1$-sequence $\vec{B}_a$ of ordinals and it computes $\aleph_1$ correctly by the choice of the sequence $K$. It follows from \ref{woodin_theorem} that $M_a$ is a model of $\mathsf{CH}$. In the model $M_a$, let $\prec_a$ be the first well-ordering of the reals of ordertype $\gw_1$ in the natural constructibility well-order. For the last item, note that if $b\subseteq a$ are sets in $S$, then $M_a$ can compute $\cl(b)$ as the inclusion-smallest set in $S\cap\mathcal{P}(\cl(a))$ containing all points of $b$ and all limit points of $b$.
\end{proof}

\noindent We are now ready to define the reduced product poset.

\begin{dfn}\label{reduced_product_definition}
Under the assumptions of Theorem \ref{thm_reduced_product} and Lemma \ref{models_system}, we define a reduced product $\mathbf{P}$ as follows:  for a condition $p\in\bigotimes_{\alpha\in\omega_n}\mathbb{P}_\alpha$, declare $p\in \mathbf{P}$ if and only if the following conditions hold:
\begin{enumerate}
    \item $dom(p)\subseteq\kappa$ is a countable set and a boolean combination of elements of $S$.
    \item For every $\ga\in dom(p)$, $p(\ga)\in \mathbb{P}_\ga$.
    \item For every set $b\in S$, the function $p\restriction b$ belongs to $M_b$.
\end{enumerate}
The ordering is by coordinatewise strengthening.
\end{dfn}

\noindent It is clear that each condition $p\in \mathbf{P}$ can be extended to one whose domain is in $S$, but more complicated domains will be notationally useful below. Since the set $S$ is closed under intersections, the following claim provides a practical criterion for membership in the reduced product.

\begin{claim}
For a function $p$ in the full countable support product, the following are equivalent:

\begin{enumerate}
\item $p\in \mathbf{P}$;
\item $dom(p)$ is a boolean combination of elements of $S$, and there exists $a\in S$ such that $dom(p)\subseteq a$ and for every $b\subseteq a$ in the set $S$, $p\restriction b\in M_b$.
\end{enumerate}
\end{claim}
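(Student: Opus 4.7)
The plan is to prove each direction of the equivalence separately. The implication $(1)\Rightarrow(2)$ is essentially a repackaging of Definition \ref{reduced_product_definition}, while $(2)\Rightarrow(1)$ is the content of the claim and reduces, thanks to the intersection closure of $S$ and the particular wording of Lemma \ref{models_system}(4), to a rank computation in the cumulative hierarchy.

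For $(1)\Rightarrow(2)$, I would assume $p\in\mathbf{P}$. By Definition \ref{reduced_product_definition}(1), $dom(p)$ is a countable boolean combination of elements of $S$. Since $S$ is stationary in $[\omega_n]^\omega$ by Lemma \ref{noetherian_stationary_set}(1), and the family of countable supersets of any fixed countable set is a club, $S$ meets it; pick any witness $a\in S$ with $dom(p)\subseteq a$. The remaining conjunct of (2) is then literally an instance of Definition \ref{reduced_product_definition}(3). For $(2)\Rightarrow(1)$, only the condition $p\restriction b\in M_b$ for arbitrary $b\in S$ (not necessarily contained in $a$) needs verification. Given such $b$, Lemma \ref{noetherian_stationary_set}(2) yields $b\cap a\in S$, and since $dom(p)\subseteq a$ one has $p\restriction b=p\restriction(b\cap a)$. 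Applying the hypothesis of (2) to $b\cap a\subseteq a$, one obtains $p\restriction(b\cap a)\in M_{b\cap a}$. I then invoke Lemma \ref{models_system}(4) with the model $M_b$: since $b\cap a\in S$ and $b\cap a\subseteq b$, the set $M_{b\cap a}\cap V_{\omega_n+5}$ belongs to $M_b$, and so does each of its elements of sufficiently low rank.

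The main subtlety I expect is the rank check: the object $p\restriction(b\cap a)$ must lie in $V_{\omega_n+5}$ for the argument to go through. Using the convention fixed just before the construction that each $\mathbb{P}_\gamma$ has underlying set $\omega_1$, a single pair $(\gamma,p(\gamma))$ has rank well below $\omega_n+3$, and since $b\cap a$ is countable, the whole function $p\restriction(b\cap a)$ has rank well below $\omega_n+5$. This is precisely why Lemma \ref{models_system}(4) records the fragments $M_c\cap V_{\omega_n+5}$ rather than the full models $M_c$: it is exactly the bookkeeping needed to let $M_b$ recognise restrictions coming from submodels $M_{b\cap a}$.
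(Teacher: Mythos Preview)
Your proof is correct and follows exactly the line the paper signals with its remark that ``since the set $S$ is closed under intersections, the following claim provides a practical criterion'': the intersection $b\cap a\in S$ lets you pull back to the hypothesis of (2), and Lemma~\ref{models_system}(4) together with transitivity of $M_b$ pushes $p\restriction(b\cap a)$ from $M_{b\cap a}$ into $M_b$. The rank bookkeeping you spell out is precisely why the truncation at $V_{\omega_n+5}$ appears in Lemma~\ref{models_system}(4).
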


\begin{dfn}
Let $d\subseteq \kappa$ be a set which is a boolean combination of elements of $S$. Define $P_d$ as the poset of all conditions in $P$ whose domain is a subset of $d$.
\end{dfn}

\begin{lemma}
For each $d$ which is a boolean combination of elements of $S$, $\mathbf{P}_d$ is a regular subposet of $\mathbf{P}$.
\end{lemma}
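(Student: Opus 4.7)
The plan is to verify the two standard conditions for a regular subposet: that $\mathbf{P}_d$ inherits the ordering (trivial, since $\leq$ is coordinatewise in both posets) and that every $p\in\mathbf{P}$ admits a reduction to $\mathbf{P}_d$. The natural candidate for the reduction of $p\in\mathbf{P}$ is $p\restriction d$, so the first task is to check that $p\restriction d\in\mathbf{P}_d$, and the second task is to check that any $q\in\mathbf{P}_d$ with $q\leq p\restriction d$ is compatible with $p$ in $\mathbf{P}$.

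First I would verify that $p\restriction d\in\mathbf{P}_d$. Its domain is $\dom(p)\cap d$, which is a boolean combination of elements of $S$ (the boolean combinations form an algebra, and both $\dom(p)$ and $d$ lie in it). For every $b\in S$, I must show $(p\restriction d)\restriction b=p\restriction(b\cap d)$ lies in $M_b$. Write $d$ as a boolean combination of $A_1,\dots,A_k\in S$; then $b\cap d$ is the same boolean combination of the sets $b\cap A_i$. By closure of $S$ under intersections, each $b\cap A_i\in S$, and $b\cap A_i\subseteq b\subseteq \cl(b)$, so $b\cap A_i\in S\cap\mathcal{P}(\cl(b))\subseteq M_b$ by clause (1) of Lemma \ref{models_system}. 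Hence $b\cap d\in M_b$. Since $p\restriction b\in M_b$ by the defining clause (3) for $\mathbf{P}$, it follows that $p\restriction(b\cap d)=(p\restriction b)\restriction(b\cap d)\in M_b$, as required.

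For the reduction, take $p\in\mathbf{P}$ and $q\in\mathbf{P}_d$ with $q\leq p\restriction d$. Define $r$ on $\dom(r)=\dom(p)\cup\dom(q)$ by $r(\alpha)=q(\alpha)$ for $\alpha\in\dom(q)$ and $r(\alpha)=p(\alpha)$ for $\alpha\in\dom(p)\setminus\dom(q)$. On $\dom(p)\cap\dom(q)\subseteq\dom(p)\cap d$, the hypothesis $q\leq p\restriction d$ gives $r(\alpha)=q(\alpha)\leq_\alpha p(\alpha)$, so $r$ strengthens both $p$ and $q$ coordinatewise. It remains to show $r\in\mathbf{P}$. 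The domain is again a boolean combination of elements of $S$. For each $b\in S$,
\[
r\restriction b=(q\restriction(b\cap d))\cup(p\restriction(b\setminus d)),
\]
and this object is computed in $M_b$ from $p\restriction b$, $q\restriction b$ (both in $M_b$ by clause (3) applied to $p$ and $q$) together with the set $b\cap d\in M_b$ from the previous paragraph. Thus $r\restriction b\in M_b$, completing the argument.

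The main point — and really the only content — is the observation that $M_b$ recognizes $b\cap d$ for any $b\in S$ and any boolean combination $d$ of elements of $S$; this rests entirely on clause (1) of Lemma \ref{models_system} together with the closure of $S$ under intersection. Everything else is straightforward bookkeeping about coordinatewise products.
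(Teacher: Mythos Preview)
Your proof is correct and follows essentially the same approach as the paper: show that $p\restriction d$ is a reduction by verifying that the amalgam $r=q\cup(p\restriction(\kappa\setminus d))$ lies in $\mathbf{P}$, splitting $r\restriction b$ into its $d$-part and its complement. You are more explicit than the paper about the key point that $b\cap d\in M_b$ (the paper simply asserts $p\restriction(e\cap b)\in M_b$ without justifying why $M_b$ sees the set $e\cap b$), and your derivation of this from closure of $S$ under intersections together with clause~(1) of Lemma~\ref{models_system} is exactly the missing step.
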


\noindent Whenever $d\subset\kappa$ is a set which is a boolean combination of elements of $S$, the poset $\mathbf{P}_d$ of all conditions $p\in P$ whose domain is a subset of $d$ is a regular subposet of $\mathbf{P}$.

\begin{claim}
\label{claim1}
For every set $d\subset\kappa$ which is a boolean combination of elements of $S$, $\mathbf{P}_d$ is a regular subposet of $\mathbf{P}$.
\end{claim}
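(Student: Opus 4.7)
The plan is to verify the standard criterion for $\mathbf{P}_d\lessdot\mathbf{P}$: that $\mathbf{P}_d$ sits inside $\mathbf{P}$ with the inherited ordering (immediate from Definition~\ref{reduced_product_definition}), that incompatibility is preserved, and that every condition $p\in\mathbf{P}$ admits a reduction to $\mathbf{P}_d$, the natural candidate being $p\upharpoonright d$. The heart of the argument is amalgamating an extension of $p\upharpoonright d$ inside $\mathbf{P}_d$ with the part of $p$ sitting outside $d$. The one technical ingredient powering everything is the observation that if $d$ is a boolean combination of elements of $S$ and $b\in S$, then $b\cap d\in M_b$; once this is in hand, the rest is bookkeeping.

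To see that localization, write $d=F(a_1,\dots,a_k)$ with $a_i\in S$ and $F$ a finite boolean combination (using $\cap,\cup$ and complement in $\kappa$). Closure of $S$ under intersections (Lemma~\ref{noetherian_stationary_set}(2)) gives $a_i\cap b\in S$, and since $a_i\cap b\subseteq\cl(b)$, item~(1) of Lemma~\ref{models_system} places $a_i\cap b\in M_b$. Relativizing each complement $\kappa\setminus a_i$ to $b\setminus(a_i\cap b)$, we conclude $b\cap d\in M_b$, and hence also $b\setminus d\in M_b$.

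With this in hand I would first check $p\upharpoonright d\in\mathbf{P}_d$: its domain $\dom(p)\cap d$ is a boolean combination of elements of $S$, and for each $b\in S$,
$$(p\upharpoonright d)\upharpoonright b \;=\; (p\upharpoonright b)\upharpoonright(b\cap d)\in M_b,$$
since both $p\upharpoonright b$ and $b\cap d$ belong to $M_b$. For the reduction property, given $q\in\mathbf{P}_d$ with $q\leq p\upharpoonright d$, set
$$r \;=\; q\,\cup\,\bigl(p\upharpoonright(\dom(p)\setminus d)\bigr).$$
Coordinatewise $r\leq p$ and $r\leq q$ are immediate, and to see $r\in\mathbf{P}$ I note that $\dom(r)$ is a boolean combination of elements of $S$, and for every $b\in S$,
$$r\upharpoonright b \;=\; \bigl(q\upharpoonright(b\cap d)\bigr)\,\cup\,\bigl(p\upharpoonright(b\setminus d)\bigr),$$
whose two summands lie in $M_b$ by the localization observation together with $q\upharpoonright b,\,p\upharpoonright b\in M_b$. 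Preservation of incompatibility is then automatic: any common extension $r\in\mathbf{P}$ of two conditions of $\mathbf{P}_d$ restricts to a common extension $r\upharpoonright d\in\mathbf{P}_d$ (membership again follows from the same localization argument used for $p\upharpoonright d$). No step is a serious obstacle; the only conceptual content is the localization of boolean combinations of elements of $S$ into the models $M_b$, after which the amalgamation $r=q\cup(p\upharpoonright(\dom(p)\setminus d))$ essentially writes itself.
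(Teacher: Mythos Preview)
Your proof is correct and follows essentially the same route as the paper: both restrict $p$ to $d$ for the reduction and amalgamate via $r=q\cup(p\restriction(\kappa\setminus d))$, checking $r\restriction b\in M_b$ by splitting into the $d$-part and its complement. You have been more explicit than the paper in isolating the localization step $b\cap d\in M_b$ (which the paper's line ``$p\restriction e\cap b\in M_b$ holds since $p\in\mathbf{P}$'' silently uses) and in verifying incompatibility preservation, but the underlying argument is the same.
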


\begin{proof}
Fix one of such sets $d$ and let $p\in \mathbf{P}$ be a condition. Then $p\restriction d$ is a condition in $\mathbf{P}$ and in $\mathbf{P}_d$ as well. It is enough to show that any condition $q\in \mathbf{P}_d$ such that $q\leq p\restriction d$, $q$ and $p$ are compatible. Let $e=\kappa\setminus d$ and define $r=q\cup p\restriction e$. It is enough to prove that $r\in \mathbf{P}$, so we have to prove that for each $b\in S$, $r\restriction b\in M_b$ holds. This is clear though:
on the one side, $r\restriction d\cap b=q\restriction b\in M_b$ holds since $q\in \mathbf{P}$; on the other side, $r\restriction e\cap b=p\restriction e\cap b\in M_b$ holds since $p\in \mathbf{P}$. Thus, $r\upharpoonright b=(q\upharpoonright b)\cup(p\upharpoonright e\cap b)\in M_b$.
\end{proof}

\noindent The following is immediate from the definitions.

\begin{claim}
Let $b\in S$ be any set. The sequence $\langle \mathbf{P}_c\colon c\in S\cap\mathcal{P}(b)\rangle$ belongs to $M_b$.
\end{claim}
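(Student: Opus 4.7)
The plan is to verify that $M_b$ contains enough information to reconstruct $\mathbf{P}_c$ uniformly in $c\in S\cap\power(b)$, by unpacking the defining clauses of Definition \ref{reduced_product_definition} and matching them against the items listed in Lemma \ref{models_system}. First I would rewrite the definition of $\mathbf{P}_c$ in the form: a countable function $p$ with $\dom(p)\subseteq c$ a boolean combination of elements of $S$, values $p(\ga)\in \mathbb{P}_\ga$ for each $\ga\in\dom(p)$, and for every $d\in S$ the restriction $p\restriction d$ lying in $M_d$. Since $\dom(p)\subseteq c$, the last clause only actually uses $d\in S\cap\power(c)$.

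Next I would check that each of the three ingredients is already available to $M_b$. Because $c\subseteq b\subseteq \cl(b)$, we have $S\cap\power(c)\subseteq S\cap\power(\cl(b))$, and the latter belongs to $M_b$ by clause (1) of Lemma \ref{models_system}. The relevant sequence of posets is the restriction to $c$ of $\langle\langle \ga,\mathbb{P}_\ga\rangle\colon\ga\in b\rangle$, which belongs to $M_b$ by clause (2). The crucial point is the membership test $p\restriction d\in M_d$. Recall that each $\mathbb{P}_\ga$ was assumed to have underlying set $\gw_1$, so any condition $p\restriction d$ is a countable set of ordered pairs of ordinals below $\gw_n$; in particular its rank is far below $\gw_n+5$. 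Hence
\[
p\restriction d\in M_d \iff p\restriction d\in M_d\cap V_{\gw_n+5},
\]
and the family $\{\langle d,M_d\cap V_{\gw_n+5}\rangle\colon d\subseteq b,\ d\in S\}$ is exactly what clause (4) of Lemma \ref{models_system} provides to $M_b$.

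Putting this together, inside $M_b$ the definition reads: $\mathbf{P}_c$ is the collection of all countable $p$ with $\dom(p)\subseteq c$ a boolean combination of elements of $S\cap\power(c)$, such that $p(\ga)\in \mathbb{P}_\ga$ for every $\ga\in\dom(p)$, and such that for every $d\in S\cap\power(c)$, $p\restriction d\in M_d\cap V_{\gw_n+5}$. Ordered coordinatewise, this yields a poset, and the definition is uniform in the parameter $c\in S\cap\power(b)$, referring only to objects of $M_b$. Therefore the map $c\mapsto \mathbf{P}_c$ is definable in $M_b$ from parameters available there, so the whole sequence $\langle \mathbf{P}_c\colon c\in S\cap\power(b)\rangle$ is an element of $M_b$.

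The only mildly subtle point, and the one I would double-check in writing up the proof, is the rank bound justifying the replacement of $M_d$ by $M_d\cap V_{\gw_n+5}$ in the membership test; this relies on the earlier harmless normalization that each $\mathbb{P}_\ga$ has underlying set $\gw_1$, so that conditions $p\restriction d$ have rank $\ll \gw_n+5$.
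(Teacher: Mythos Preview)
Your argument is correct and is exactly the unpacking that the paper has in mind: the paper gives no proof at all, simply declaring the claim ``immediate from the definitions,'' and your proposal supplies precisely the verification (via clauses (1), (2), and (4) of Lemma~\ref{models_system}, together with the rank bound coming from the normalization that each $\mathbb{P}_\ga$ has underlying set $\gw_1$) that makes this immediacy explicit.
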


\noindent Towards the proof that the reduced product preserves $\aleph_1$, consider a countable elementary submodel $N\prec H(\theta)$ such that $\mathbf{P}, \langle\mathbf{P}_c:c\in S\rangle, S\in N$ and $N\cap\omega_n\in S$. It is necessary to investigate several subposets of $\mathbf{P}\cap N$ and of $\mathbf{P}$. Let $b\subseteq N\cap\omega_n$ be an element of $S$, let $\bar a$ be a finite tuple of subsets of $N\cap\omega_n$ which are in addition in $S$ (none of these need to be in $N$, and that is one of the key difficulties). Write $\mathbf{P}_{b, \bar a}$ for the poset $\mathbf{P}_{b\setminus\bigcup \bar a}$ and $\mathbf{Q}_{b, \bar a}$ for the poset $\mathbf{P}_{b, \bar a}\cap N$. These posets satisfy natural factorization properties:

\begin{claim}
If $c\subseteq b$ is in $S$ then 

\begin{enumerate}
\item $\mathbf{Q}_{b, \bar a}$ is naturally isomorphic to the product $\mathbf{Q}_{c, \bar a}\times\mathbf{Q}_{b, \bar a^\smallfrown c}$;
\item  $\mathbf{P}_{b, \bar a}$ is naturally isomorphic to the product $\mathbf{P}_{c, \bar a}\times \mathbf{P}_{b, \bar a^\smallfrown c}$.
\end{enumerate}
\end{claim}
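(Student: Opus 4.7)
Both parts share a single natural map, and the plan is to exhibit it once and check the relevant constraints. Given $p\in\mathbf{P}_{b,\bar a}$ I would set $\phi(p)=(p\upharpoonright c,\; p\upharpoonright(b\setminus c))$, with inverse $(p_0,p_1)\mapsto p_0\cup p_1$; the coordinatewise strengthening in the product matches componentwise strengthening in $\mathbf{P}_{b,\bar a}$, so order-preservation is automatic once the bijection is verified. The domains line up cleanly: $dom(p\upharpoonright c)\subseteq c\setminus\bigcup\bar a$, and $dom(p\upharpoonright(b\setminus c))\subseteq b\setminus(\bigcup\bar a\cup c)$.

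The substantive content is that restrictions to $c$ and to $b\setminus c$ stay inside $\mathbf{P}$, and that unions of matched pairs do as well. Fix $d\in S$ and consider the first coordinate: $(p\upharpoonright c)\upharpoonright d=p\upharpoonright(c\cap d)$, and $c\cap d\in S$ by closure of $S$ under intersections (Lemma \ref{noetherian_stationary_set}). Since $p\in\mathbf{P}$, $p\upharpoonright(c\cap d)\in M_{c\cap d}$, and Lemma \ref{models_system}(4) (applied with $c\cap d\subseteq d$) places $M_{c\cap d}\cap V_{\omega_n+5}$ as an element of $M_d$; transitivity of $M_d$ together with the fact that a countable partial function has rank well below $\omega_n+5$ then yields $p\upharpoonright(c\cap d)\in M_d$. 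For the second coordinate, write $e=b\cap d\in S$ and compute
\[
p\upharpoonright((b\setminus c)\cap d)=p\upharpoonright(e\setminus c)=(p\upharpoonright e)\setminus (p\upharpoonright(c\cap e)),
\]
whose two terms both lie in $M_d$ by the same argument, and $M_d$ is a ZFC model so it computes the difference. The inverse direction is symmetric: $(p_0\cup p_1)\upharpoonright d=(p_0\upharpoonright(c\cap d))\cup (p_1\upharpoonright d)$, and both summands are in $M_d$. This settles (2).

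Part (1) is obtained by restricting $\phi$ to $N$-elements. The inverse map preserves membership in $N$ trivially, as $N$ is closed under pair and union. The forward direction is the delicate step and the place I expect the main obstacle: showing that $p\upharpoonright c$ and $p\upharpoonright(b\setminus c)$ lie in $N$ whenever $p\in N$, despite $c$ possibly not belonging to $N$. My plan here is to exploit that $p\in N$ forces $dom(p)\in N$ as a specific countable subset of $N\cap\omega_n$, and that by elementarity $N$ inherits the coded system $\langle (M_b\cap V_{\omega_n+5},\prec_b)\colon b\in S\cap\mathcal{P}(N\cap\omega_n)\rangle$ from $H(\theta)$. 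The restriction $p\upharpoonright(c\cap d)$ is uniquely pinned down as the element of $M_{c\cap d}$ agreeing with $p$ on $c\cap d$, and via the canonical well-ordering $\prec_{c\cap d}$ from Lemma \ref{models_system}(3) this element can be singled out inside $N$. Once both coordinates are verified to lie in $N$, the bijection from part (2) restricts to the required isomorphism, completing the claim.
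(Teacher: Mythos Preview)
Your treatment of part~(2) is correct and runs parallel to the paper's Claim~10.10, just spelled out in more detail; the paper simply cites that claim and moves on. (A small simplification: rather than routing through Lemma~10.6(4), you can note directly that $c\cap d\in S\cap\mathcal{P}(\cl(d))\subseteq M_d$ by Lemma~10.6(1), so $(p\upharpoonright d)\upharpoonright(c\cap d)$ is computed inside $M_d$.)

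Part~(1) has a genuine gap. You correctly isolate the difficulty---$c$ need not lie in $N$---but your proposed fix via the well-orderings $\prec_{c\cap d}$ does not close it. To ``single out $p\upharpoonright c$ inside $N$'' using $\prec_c$ (or $\prec_{c\cap d}$) you would need that well-ordering, hence $c$ or $c\cap d$ itself, to be accessible from parameters in $N$. Your assertion that ``$N$ inherits the coded system $\langle(M_b\cap V_{\omega_n+5},\prec_b):b\in S\cap\mathcal{P}(N\cap\omega_n)\rangle$'' is precisely the point at issue: not every $b\in S$ with $b\subseteq N\cap\omega_n$ is an \emph{element} of $N$, and elementarity alone does not place the restriction of the system to such indices into $N$.

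The missing ingredient is the \emph{local countability} of $S$, Lemma~10.2(6), which is exactly what the paper invokes. Since $p\in N$ and $dom(p)$ is a boolean combination of elements of $S$, elementarity produces some $a\in S\cap N$ with $dom(p)\subseteq a$. The set $\{b'\in S:b'\subseteq a\}$ is countable and definable from $a,S\in N$, hence lies in $N$ and is therefore a subset of $N$. In particular $c\cap a\in N$, and then $p\upharpoonright c=p\upharpoonright(c\cap a)\in N$ since both $p$ and $c\cap a$ are elements of $N$. The second coordinate $p\setminus(p\upharpoonright c)$ follows at once. No appeal to the well-orderings $\prec_b$ is needed here.
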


\begin{proof}
It is sufficient to prove the first item, as the second follows from Claim~\ref{claim1}. Consider any $p\in \mathbf{Q}_{b, \bar a}$ and its two projections $p_0=p\restriction c$ and $p_1=p\setminus p_0$. The main point is that even though $c\in N$ does not have to hold, by the local countability assumption the condition $p_0$ does belong to $N$, and so does $p_1$. It is not difficult to check that if $p_0'\leq p_0$ and $p_1'\leq p_1$ are strengthenings in the appropriate posets, then $p_0'\cup p_1'\in \mathbf{Q}_{b, \bar a}$ is a condition stronger than $p$.
\end{proof}

\begin{claim}
The model $M_b$ contains the posets $\mathbf{P}_b\cap N$ and $\mathbf{Q}_{b, \bar a}$.
\end{claim}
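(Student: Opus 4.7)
The argument splits into two parts: first, show that $\mathbf{P}_{b,\bar{a}}$ itself (the analogous object without the $\cap N$) belongs to $M_b$, which is essentially a direct computation from the data in Lemma \ref{models_system}; second, pass to the $N$-trace, which is where the real subtlety lies.

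For the first part, $M_b$ contains $S\cap\cP(\cl(b))$, the sequence $\langle\langle\gamma,\mathbb{P}_\gamma\rangle:\gamma\in b\rangle$, and the fragment $M_c\cap V_{\omega_n+5}$ for every $c\in S$ with $c\subseteq b$. Each $a_i\in\bar{a}$ satisfies $a_i\cap b\in S$ by closure of $S$ under intersection, and $a_i\cap b\subseteq b\subseteq\cl(b)$, so $a_i\cap b\in S\cap\cP(\cl(b))\in M_b$; thus $b\setminus\bigcup\bar{a}=b\setminus\bigcup_i(a_i\cap b)\in M_b$. A condition $p\in\mathbf{P}$ has countable domain, so each restriction $p\restriction c$ has hereditary cardinality at most $\aleph_1$ and hence lies in $V_{\omega_n+5}$ (the cases $n=0,1$ are trivial since the square assumption is vacuous or provable); consequently $p\restriction c\in M_c$ iff $p\restriction c\in M_c\cap V_{\omega_n+5}$, which $M_b$ can verify from its data. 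Assembling these ingredients inside $M_b$ mirrors Definition \ref{reduced_product_definition} and yields $\mathbf{P}_{b,\bar{a}}\in M_b$, with $\mathbf{P}_b$ the special case $\bar{a}=\emptyset$.

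For the second part, note that each $p\in\mathbf{P}_b\cap N$ is a countable element of $N$, and because $N$ recognizes $p$ as a function, every element of $p$ lies in $N$; in particular $\dom(p)\subseteq N\cap\omega_n=:a$ and $\rng(p)\subseteq N\cap\omega_1=:\delta$ (using the convention that each $\mathbb{P}_\gamma$ has underlying set $\omega_1$). The main obstacle I anticipate is identifying the countable set $\mathbf{P}_b\cap N$ from inside $M_b$, which does not a priori contain $N$. My plan is to use the well-order $\prec_b\in M_b$, which enumerates $(\omega^\omega)^{M_b}$ in type $\omega_1^{M_b}$, to assign each $p\in\mathbf{P}_b$ a $\prec_b$-rank via a canonical real coding, and then to characterize the $\prec_b$-ranks of elements of $\mathbf{P}_b\cap N$ in terms of the reflection data $\langle(M_c\cap V_{\omega_n+5},\prec_c):c\subseteq b,\,c\in S\rangle\in M_b$ together with the parameters $\delta$ and $a$. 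Once this characterization succeeds, $\mathbf{P}_b\cap N\in M_b$, and the corresponding result for $\mathbf{Q}_{b,\bar{a}}=(\mathbf{P}_b\cap N)\cap\mathbf{P}_{b,\bar{a}}$ follows from the first part.
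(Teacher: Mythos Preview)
Your first part is fine and essentially matches what the paper does implicitly. The second part, however, has a genuine gap.

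You propose to assign each $p\in\mathbf{P}_b$ a $\prec_b$-rank and then characterize which ranks correspond to elements of $N$. The difficulty is that $\prec_b$ need not belong to $N$: in the key case $b=N\cap\omega_n$, certainly $b\notin N$, and there is no reason for the $M_b$-canonical well-order $\prec_b$ to be an element of $N$. Consequently, even if $p\in N$ and you code $p$ by a real $r\in M_b\cap N$, the $\prec_b$-rank of $r$ is not constrained to lie below $\delta=N\cap\omega_1$; elementarity gives you nothing here. So ``characterize the $\prec_b$-ranks of elements of $\mathbf{P}_b\cap N$'' is exactly the step that fails, and your vague appeal to the reflection data does not repair it.

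The paper's argument avoids this by working not with $\prec_b$ but with well-orders $\prec'_c$ of $\mathbf{P}_c$ (of type $\omega_1$, built uniformly inside $M_c$ from $\prec_c$ and $K$) for each $c\in S\cap\mathcal{P}(b)$. The point is that when $c\in N$, the entire system is in $N$, hence $\prec'_c\in N$ by elementarity, and then $p\in\mathbf{P}_c$ lies in $N$ iff its $\prec'_c$-rank is below $\delta$. Thus
\[
\mathbf{P}_b\cap N=\bigl\{p\in\mathbf{P}_c:\prec'_c\text{-rank}(p)<\delta\bigr\}\quad\text{as }c\text{ ranges over }N\cap S\cap\mathcal{P}(b).
\]
For $M_b$ to evaluate this, it must identify $N\cap S\cap\mathcal{P}(b)$. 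This is the second ingredient you are missing: $M_b$ does not see your parameter $a=N\cap\omega_n$, only $N\cap\cl(b)=(N\cap\omega_n)\cap\cl(b)$, which lies in $S\cap\mathcal{P}(\cl(b))\subseteq M_b$ because $S$ is closed under intersections. From this, using the injectivity of $s$, one recovers $N\cap S\cap\mathcal{P}(\cl(b))=\{c\in S\cap\mathcal{P}(\cl(b)):\rng(s(c))\subseteq N\cap\cl(b)\}$ inside $M_b$, and the argument goes through.
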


\begin{proof}
One after another, we prove that the following objects belong to the
model $M_b$:
\begin{itemize}
    \item $S\cap\mathcal{P}(\cl(b))$. This is by the choice of the model $M_b$ (Lemma \ref{models_system}(1));
    \item $N\cap\cl(b)$. Since $N\cap\omega_n\in S$ and $S$ is closed under intersections, we have $N\cap \cl(b)\in S\cap\mathcal{P}(\cl(b))$. The previous cluase and transitivity of $M_b$ imply $N\cap\cl(b)\in M_b$;
    \item $N\cap S\cap\mathcal{P}(\cl(b))$. By elementarity of the model $N$, this set is equal to $\{c\in S\cap\power(b)\colon rng(s(c))\subseteq N\cap\cl(b)\}$, which is an element of $M_b$, by the previous item;
    \item a sequence $\langle \prec'_c:c\in S\cap\mathcal{P}(b)\rangle$ such that for every $c\in S\cap\mathcal{P}(b)$, $\prec_c'\in M_c$ is a well ordering of the poset $\mathbf{P}_c$ of order type $\omega_1$. To find such a system in the model $M_b$, just in each model $M_c$ define $\prec_c'$ from $\prec_c$ and the collapse maps on the sequence $K$. Note that each model $M_c$ satisfies the Continuum Hypothesis by Lemma \ref{models_system}(3). One can easily produce such a definition which is uniform in $c$, and then the sequence $\langle \prec'_c:c\in S\cap\mathcal{P}(b)\rangle$ will belong to $M_b$;
    \item $N\cap \mathbf{P}_b$. To see this, in $M_b$ observe that for a condition $p \in \mathbf{P}_b$, $p \in N$ if and only if there is $c \in N \cap S \cap \mathcal{P}(b)$ such that $dom(p) \subseteq c$ and $p$ is $\delta$-th element of $\mathbf{P}_c$ in the $\prec_c'$-order, for some $\delta \in N \cap \omega_1$. This provides a definition of the set $N \cap \mathbf{P}_b$ in the model $M_b$;
    \item $\mathbf{Q}_{b,\bar{a}}$. This poset is obtained from $\mathbf{P}_b \cap N$ by restriction of the conditions in $\mathbf{P}_b \cap N$ to $b\setminus\bigcup\bar{a}$.

\end{itemize}
This completes the proof of the claim.

\end{proof}

\noindent The following is a key claim justifying the whole construction.

\begin{claim}
\label{keyclaim}
For every $b, \bar a$ in $S$ which are subsets of $N\cap\omega_n$ and such that $b\setminus\bigcup\bar{a}\neq\emptyset$, for every condition $q\in \mathbf{Q}_{b, \bar a}$ and for every $\mathbf{Q}_{b, \bar a}$-name $\dot C$ for an open dense subset of $\omega^{<\omega}$ there is a condition $p\in \mathbf{P}_{b, \bar a}$ and an open dense subset $D\subset\omega^{<\omega}$ such that $p\leq q$ and $p\Vdash\check D\subset\dot C$.
\end{claim}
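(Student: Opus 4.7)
The plan is to proceed by induction on the well-founded rank of $b$ in $(S, \subseteq)$, using the structural information encoded by the models $\langle M_a : a \in S\rangle$ and their canonical well-orderings.

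In the base case, $b$ is minimal in $S$, which by closure under intersection together with clause (3) forces $b = \{\alpha\}$ for a single ordinal. The hypothesis $b \setminus \bigcup\bar{a} \neq \emptyset$ then yields $\alpha \notin \bigcup\bar{a}$, so $\mathbf{P}_{b, \bar{a}} = \mathbb{P}_\alpha$ and $\mathbf{Q}_{b, \bar{a}} = \mathbb{P}_\alpha \cap N$. Using $\sigma$-properness of $\mathbb{P}_\alpha$, first extend $q$ to a $(\mathbb{P}_\alpha, N)$-generic condition $p'$; below $p'$ the $\mathbb{P}_\alpha \cap N$-generic over $V$ coincides with the restriction of the $\mathbb{P}_\alpha$-generic to $N$, so $\dot{C}$ may be treated as a genuine $\mathbb{P}_\alpha$-name. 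Cohen preservation of $\mathbb{P}_\alpha$ then delivers $p \leq p'$ together with a ground-model open dense $D \subseteq \omega^{<\omega}$ such that $p \Vdash \check{D} \subseteq \dot{C}$.

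For the inductive step, by clause (6) the family $S \cap N \cap \mathcal{P}(b)$ is countable; enumerate those of strictly smaller rank than $b$ as $\langle c_n : n \in \omega\rangle$. Enumerate the dense subsets of $\mathbf{Q}_{b, \bar{a}}$ lying in $N$ as $\langle A_n : n \in \omega\rangle$ and enumerate $\omega^{<\omega}$ as $\langle t_n : n \in \omega\rangle$. Working inside $M_b$, which by Lemma~\ref{models_system} contains $\mathbf{P}_b \cap N$, the well-orderings $\prec_c$, and the tagged sequence $\langle M_c \cap V_{\omega_n+5}, \prec_c : c \in S \cap \mathcal{P}(b)\rangle$, build an interleaved descending sequence $q = q_0 \geq q_1 \geq \cdots$ in $\mathbf{P}_{b, \bar{a}}$ together with elements $s_n \in \omega^{<\omega}$ with $t_n \sqsubseteq s_n$, alternating between even stages $q_{2n} \in \mathbf{Q}_{b, \bar{a}}$ that meet $A_n$ and force $\check{s}_n \in \dot{C}$, and odd stages $q_{2n+1}$ whose restriction to $c_n$ is the $\prec_{c_n}$-canonical master extension supplied by the inductive hypothesis applied to $(c_n, \bar{a})$ against the natural $\mathbf{Q}_{c_n, \bar{a}}$-name extracted from $\dot{C}$. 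Let $p$ be the coordinatewise limit and $D = \{s \in \omega^{<\omega} : \exists n\, s_n \sqsubseteq s\}$, which is open dense since the enumeration $\langle t_n\rangle$ exhausts $\omega^{<\omega}$.

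The main obstacle I anticipate is verifying that $p$ is indeed a condition in $\mathbf{P}_{b, \bar{a}}$, i.e.\ $p \restriction c \in M_c$ for every $c \in S \cap \mathcal{P}(b)$, including those $c \notin N$. The crucial leverage here is Lemma~\ref{models_system}(4): the tagged data $\langle M_d \cap V_{\omega_n+5}, \prec_d : d \in S,\, d \subseteq c\rangle$ lies inside $M_c$, and combined with injectivity of $s$ on $S$ (clause (4) of Lemma~\ref{noetherian_stationary_set}), this renders the whole fusion recipe absolute between $M_b$ and $M_c$: executed inside $M_c$ using only the data visible there, it produces exactly $p \restriction c$, placing it in $M_c$ as required. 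The concluding $p \Vdash \check{D} \subseteq \dot{C}$ is then immediate from the choices made at the even stages.
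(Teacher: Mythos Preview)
Your inductive step has a genuine structural gap. You describe a single decreasing sequence $q_0 \geq q_1 \geq q_2 \geq \cdots$ in $\mathbf{P}_{b,\bar a}$ with the even terms $q_{2n}$ lying in $\mathbf{Q}_{b,\bar a}=\mathbf{P}_{b,\bar a}\cap N$ and the odd terms having their $c_n$-restriction replaced by a master condition from the inductive hypothesis. But that master condition for $(c_n,\bar a)$ lives in $\mathbf{P}_{c_n,\bar a}$ and in general is \emph{not} an element of $N$; once $q_{2n+1}\restriction c_n\notin N$, no later $q_{2n+2}\in N$ can extend $q_{2n+1}$. So the alternation you describe cannot be carried out as a decreasing sequence. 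Relatedly, applying the hypothesis to $(c_n,\bar a)$ rather than $(c_n,\bar a\cup\{c_0,\dots,c_{n-1}\})$ lets the domains of successive master conditions overlap, so there is no reason they cohere.

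Your proposed verification that $p\restriction c\in M_c$ via absoluteness of the recipe is also not valid: the recipe at every stage consults the entire condition in $\mathbf{Q}_{b,\bar a}$, the dense sets $A_n$ of $\mathbf{Q}_{b,\bar a}$, and the name $\dot C$, none of which need lie in $M_c$ for a proper $c\subsetneq b$. Lemma~\ref{models_system}(4) only gives $M_c$ access to the models $M_d$ for $d\subseteq c$, not to the ambient data coming from $b$ and $N$.

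The paper avoids both problems by \emph{peeling off} the $c_n$'s with growing $\bar a_n=\bar a^\frown c_0^\frown\cdots^\frown c_{n-1}$. Working in $M_b$, it first replaces the name $\dot C$ by an open dense $D_0\subseteq\mathbf{Q}_{b,\bar a}\times\omega^{<\omega}$ in $M_b$ (using $K$ and the countability of $\mathbf{Q}_{b,\bar a}$), then at stage $n$ applies the inductive hypothesis once and for all on the disjoint slice $c_{n-1}\setminus\bigcup\bar a_{n-1}$ to obtain $p_n\in\mathbf{P}_{c_{n-1},\bar a_{n-1}}$ together with a ground-model open dense $D_n\subseteq\mathbf{Q}_{b,\bar a_n}\times\omega^{<\omega}$; further work at stage $n+1$ touches only $b\setminus\bigcup\bar a_{n+1}$ and never revisits $c_{n-1}$. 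The final $p_\omega=\bigcup_n p_n$ is then automatically a condition: for any $d\in S$, the set $d\cap b$ is covered by finitely many $c_l$'s together with $\bigcup\bar a$, so $p_\omega\restriction d$ is a finite union of restrictions $p_l\restriction d$, each in $M_d$ since each $p_l$ is already a full $\mathbf{P}$-condition. No absoluteness of the construction is needed.
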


\begin{proof}
This is proved by transfinite induction on rank of $b$ in the inclusion ordering of $S$. If $b$ is a singleton, this is part of the initial assumptions on the posets $\mathbb{P}_\ga$. Now, suppose that $b\in S$ is arbitrary and $b\subseteq N\cap\omega_n$, and the statement has been proved for all sets in $S$ of lower rank. Let $\bar a$ be a finite tuple of sets in $S$ such that $\bar a\subseteq N\cap \omega_n$. We will show that for every $\mathbf{Q}_{b,\bar a}$-name $\dot C$ for an open dense subset of $\omega^{<\omega}$ and $q\in \mathbf{Q}_{b,\bar a}$, there is a condition $p\leq q$ and a dense subset $D\subset\omega^{<\omega}$ such that $p\Vdash\check D\subset\dot C$.

Fix a condition $q\in \mathbf{Q}_{b, \bar a}$ and let $\dot C$ be a $\mathbf{Q}_{b, \bar a}$-name for a dense open subset of $\omega^{<\omega}$.
Define $E(\dot{C})=\{\langle r, t\rangle\colon r\in \mathbf{Q}_{b, \bar a}\land r\Vdash t\in\dot C\}$, so $E({\dot{C}})\subseteq \mathbf{Q}_{b, \bar a}\times\omega^{<\omega}$ is open dense, and it has an open dense subset $D_0\in M_b$ by the initial assumptions on $M_b$. 

Now we work in $M_b$. Let $\{c_n\colon n\in\gw\}$ be an enumeration of all proper subsets of $b$ which are in $S$. Define $\bar{a}_0=\bar{a}$ and for $n>0$, $\bar{a}_{n+1}=\bar{a}_n^\frown c_n$. By going to a subsequece if necessary, we can assume that:
\begin{enumerate}
    \item For each $n\in\omega$, $c_{n}\setminus\bigcup\bar{a}_{n}\neq\emptyset$.
    \item For each $c\in S$ subset of $b$ such that $c\setminus\bigcup\bar{a}\neq\emptyset$, if $c$ does not appear in $\{c_n:n\in\omega\}$ then there is $k\in\omega$ such that $c\subseteq\bigcup_{l\leq k}c_l\cup\bigcup\bar{a}$.
\end{enumerate}
Now, for each $n\in\omega$ and $r\in \mathbf{Q}_{b,\bar{a}_n}$, let $\pi^n_0(r)=r\upharpoonright c_n$ and $\pi_1^n(r)=r\setminus\pi_0^n(r)$, so $$\pi_n:\mathbf{Q}_{b,\bar{a}_n}\to\mathbf{Q}_{c_n,\bar{a}_n}\times\mathbf{Q}_{b,\bar{a}_{n+1}}$$ given by $\pi_n(p)=(\pi_0^n(p),\pi_1^n(p))$ is an order isomorphism. Let $\{t_n\colon n\in\gw\}$ be an enumeration of $\omega^{<\omega}$.
Now we build sequences $\langle\dot{C}_n,D_n, p_n,q_n,\rho_n,s_n:n\in\omega\rangle$ as follows:

\begin{enumerate}
\item Let $q_0=q$, $\dot{C}_0=\dot{C}$, $D_0$ is the open dense $E(\dot{C})$ set above defined, and $(\rho_0,s_0)\in D_0$ is such that $(\rho_0,s_0)\leq(q_0,t_0)$. Now define $p_0=\emptyset$.

\item For $n>0$, assume everything is defined up to $n-1$. Let $q_n=\pi_1^{n-1}(\rho_{n-1})$. Define $\dot{C}_{n}$ to be the $\mathbf{Q}_{c_{n},\bar{a}_{n-1}}$-name,
\begin{equation*}
    \dot{C}_{n}=\{\langle(\pi_1^{n-1}(r),\check{s}),\pi_0^{n-1}(r)\rangle:(r,s)\in D_{n-1}\}
\end{equation*}
Note that $\dot{C}_{n}$ is a $\mathbf{Q}_{c_n,\bar{a}_{n-1}}$-name for an open dense subset of $\mathbf{Q}_{b,\bar{a}_{n}}\times\omega^{<\omega}$, and $\pi_0^{n-1}(\rho_{n-1})\in \mathbf{Q}_{c_n,\bar{a}_{n-1}}$, so by induction hypothesis we can find a condition $p_n\in \mathbf{P}_{c_{n-1},\bar{a}_{n-1}}$ and an open dense $D_{n}\subseteq \mathbf{Q}_{b,\bar{a}_{n}}\times\omega^{<\omega}$ such that $p_n\leq \pi_0^{n-1}(\rho_{n-1})$ and $p_n\Vdash D_{n}\subseteq\dot{C}_{n}$. Finally, let $(\rho_{n},s_{n})\in D_{n}$ be such that $(\rho_n,s_n)\leq(q_{n},t_n)$.

\end{enumerate}

After the construction is finished, define $p_\omega=\bigcup_{n\in\omega}p_n$. It follows from the construction that $p_\omega\in \mathbf{P}_{b,\bar{a}}$ and forces $D_\omega=\{s_n:n\in\omega\}$ to be a subset of $\dot{C}$. Clearly $D_\omega$ is a dense subset of $\omega^{<\omega}$.
\end{proof}

Now we finish the proof of Theorem \ref{thm_reduced_product}. For item (2) of the theorem, suppose $q\in \mathbf{P}$ is a condition and $\tau$ is a $\mathbf{P}$-name such that $q\Vdash\tau\subset\omega^{<\omega}$ is open dense. Use the correctness assumption on the true ground to find a countable elementary submodel $N$ of a large structure containing $\mathbf{P},q, \tau$ such that $N\cap \omega_n\in S$. Let $\sigma=\{\langle \check t,r\rangle\colon r\in \mathbf{P}\cap N, t\in\omega^{<\omega}, r\Vdash_{\mathbf{P}}\check t\in\tau\}$. Then $\sigma$ is a $\mathbf{P}\cap N$-name and $q\Vdash\sigma\subset\omega^{<\omega}$ is open dense by elementarity of the model $N$. Apply Claim~\ref{keyclaim} with $\bar a=\emptyset$ and $b=N\cap\omega_n$ to find a condition $p\leq q$ in $\mathbf{P}$ and an open dense subset $D\subset\omega^{<\omega}$ such that $p\Vdash_\mathbf{P}\check D\subset\sigma$. Then $p\Vdash_\mathbf{P}\check D\subseteq\tau$ as desired.

For item (1) of the theorem, suppose that $q\in \mathbf{P}$ is a condition and $\dot{\tau}$ is a $\mathbf{P}$-name such that $q\Vdash\dot{\tau}\colon\gw\to\check\gw_1$ is a surjective function. Since In $V$ there are $\omega_1$ open dense subsets of $\omega^{<\omega}$ whose complements generate the ideal $\mathsf{nwd}$, $q$ should force the existence of an open dense set which diagonalizes each dense set from $V$. Let $\dot{D}$ be a $\mathbf{P}$-name for such set. In particular, we have that for all open dense set $E\subseteq\omega^{<\omega}$ in $V$, $q\Vdash E\setminus \dot{D}$ is infinite. Let $N$ be a countable elementary submodel such that $\mathbf{P}, q, \dot{D}\in N$ and $b=N\cap \omega_n\in S$. Then, as before, we can find $p\leq q$ and  an open dense set $D_0$ in $V$ such that $p\Vdash D_0\subseteq \dot{D}$, which is a contradiction.

Item (3) of the theorem is independent of the previous work. Suppose that $A=\langle p_\ga\colon\ga\in\gw_2\rangle$ is a collection of conditions in $\mathbf{P}$; strengthening each of these conditions we may assume that each of them has domain in $S$. We must find two distinct and compatible elements of the set $A$. Let $\langle N_\gb\colon\gb\in\gw_1\rangle$ be an $\in$-tower
of countable elementary submodels of a large structure containing A such that
for each $\beta\in\omega_1$, $N_\beta\cap\omega_n\in S$ holds. Let $\alpha\in\omega_2\setminus\bigcup_{\beta\in\omega_1}N_\beta$ be an ordinal, and consider the condition $p_\alpha$. The countable set $c = dom(p_\alpha)\cap \bigcup_{\beta\in \omega_1}N_\beta$ is a subset
of one of the models on the tower; let $\beta\in\omega_1$ be such that $c\subseteq N_\beta$, and write
$d = N_\beta \cap\omega_n \in S$. Note that $p_\alpha\upharpoonright c \in \mathbf{P}_d$ holds. The poset $\mathbf{P}_d$ has cardinality $\omega_1$
by the all-important assumption that $M_d$ satisfies $\mathsf{CH}$; therefore, $\mathbf{P}_d\subseteq\bigcup_{\beta\in\omega_1}N_\beta$. It follows that there must be an ordinal $\gamma\in\omega_1$ such that $p_\alpha\upharpoonright c\in N_\gamma$. By
elementarity of the model $N_\gamma$ there must be an ordinal $\alpha'\in N_\gamma$ such that $p_{\alpha'}$
contains $p_\alpha\upharpoonright c$ as a subset. Then $p_\alpha$ and $p_{\alpha'}$ are compatible elements of the set
$A$ as required.
\\

Now we direct our attention to show how to use Theorem \ref{thm_reduced_product} to prove the main theorem. In what follows we extend the meaning of $\mathbb{P}_{\kappa}(\mathcal{U})$ to allow $\kappa$ to be an ordinal and not just a cardinal, that is, for $\gamma\in\mathsf{Ord}$ which is not a cardinal, we think of $\mathbb{P}_{\gamma}(\mathcal{U})$ as
\begin{equation*}
    \mathbb{P}_{\gamma}(\mathcal{U})=\{p\in\mathbb{P}_{\vert\gamma\vert^+}(\mathcal{U}): dom(p)\subseteq\gamma\}
\end{equation*}

Let $V$ be a model of $\mathsf{ZFC}+V=L$, and for each uncountable $\gamma\in\omega_2$, let $h_\gamma:\omega_1\to\gamma$ be a bijection such that $h_\gamma(0)=0$, letting $h_{\omega_1}$ to be the identity. In $V$, let $\mathbb{S}_{\omega_2}$ be the countable support iteration of the Sacks forcing and let $G_{\omega_2}$ be a $\mathbb{S}_{\omega_2}$-generic filter. Now, going to $V[G_{\omega_2}]$, following Definition \ref{Q_k_definition}, let $\{\dot{\mathcal{U}}_{\alpha_\beta}:\beta\in\omega_2\}$ be the family of suitable filters as there chosen. Now, in $V[G_{\omega_2}]$, define $h_\gamma^*:\mathbb{P}_{\omega_1}(\dot{\mathcal{U}}_{\alpha_\beta})^{V[G_{\alpha_\beta}]}\to\mathbb{P}_{\gamma}(\dot{\mathcal{U}}_{\alpha_\beta})^{V[G_{\alpha_\beta}]}$ as:
\begin{enumerate}
    \item $dom(h_\gamma^*(p))=\{h_{\gamma}(\delta):\delta\in dom(p)\}$.
    \item For each $\delta\in dom(h_\gamma^*(p))$, $h_\gamma^*(p)(\delta)=p(h_\gamma^{-1}(\delta))$
\end{enumerate}

Note that $h_{\gamma}^*$ is actually an order isomorphism. Now, in $V[G_{\omega_2}]$, for each $\beta<\omega_2$, let $\mathbb{P}_\beta=\mathbb{P}_{\omega_1}(\dot{\mathcal{U}}_{\alpha_\beta})^{V[G_{\alpha_\beta}]}$. Let $S\subseteq[\omega_2]^\omega$ be the stationary set from Lemma \ref{noetherian_stationary_set}. Now we apply Lemma \ref{models_system} to get a system $\langle(M_b,\prec_b):b\in S\rangle$. Note that for each $b\in S$, $\beta\in b$ and each ordinal $\delta\in[\omega_1,\omega_2)$, in $M_b$, the forcing $\mathbb{P}_{\delta}(\dot{\mathcal{U}}_{\alpha_\beta})^{V[G_{\alpha_\beta}]}$ can be recovered from $\mathbb{P}_{\omega_1}(\dot{\mathcal{U}}_{\alpha_\beta})^{V[G_{\alpha_\beta}]}$, as the former can be written as:
\begin{equation*}
    \mathbb{P}_{\delta}(\dot{\mathcal{U}}_{\alpha_\beta})^{V[G_{\alpha_\beta}]}=h_\delta^*[\mathbb{P}_{\omega_1}(\dot{\mathcal{U}}_{\alpha_\beta})^{V[G_{\alpha_\beta}]}]
\end{equation*}
Moreover, $\mathbb{P}_{\omega_2}(\dot{\mathcal{U}}_{\alpha_\beta})^{V[G_{\alpha_\beta}]}$ can be recovered in $M_b$, since we also have,
\begin{equation*}
    \mathbb{P}_{\omega_2}(\dot{\mathcal{U}}_{\alpha_\beta})^{V[G_{\alpha_\beta}]}=\bigcup_{\gamma\in[\omega_1,\omega_2)}h_\gamma^*[\mathbb{P}_{\omega_1}(\dot{\mathcal{U}}_{\alpha_\beta})^{V[G_{\alpha_\beta}]}]
\end{equation*}

Thus, $\mathbb{P}_{\omega_2}(\dot{\mathcal{U}}_{\alpha_\beta})^{V[G_{\alpha_\beta}]}\in M_b$. This implies that Definition \ref{reduced_product_definition} can be extended to the $\omega_2$ case, and more generally, for each uncountable $\delta\leq\omega_2$, we define the following:

\begin{dfn}\label{reduced_product_definition_omega_2_case}
Under the current assumptions, for each $\delta\in[\omega_1,\omega_2]$, define $\overline{\mathbf{P}}^{\delta}$ as follows:  for a condition $p\in\bigotimes_{\beta\in\omega_2}\mathbb{P}_\delta(\dot{\mathcal{U}}_{\alpha_\beta})^{V[G_{\alpha_\beta}]}$, declare $p\in \overline{\mathbf{P}}^{\delta}$ if and only if the following hold:
\begin{enumerate}
    \item $dom(p)\subseteq\omega_2$ is a countable set and a boolean combination of elements of $S$.
    \item For every $\beta\in dom(p)$, $p(\beta)\in \mathbb{P}_\delta(\dot{\mathcal{U}}_{\alpha_\beta})^{V[G_{\alpha_\beta}]}$.
    \item For every set $b\in S$, the function $p\restriction b$ belongs to $M_b$.
\end{enumerate}
The ordering is by coordinatewise strengthening.
\end{dfn}

An immediate consequence of Theorem \ref{thm_reduced_product} is that for any $\delta\in\omega_2$, $\overline{\mathbf{P}}^{\delta}$ is $\aleph_{2}$-c.c. On the other hand, we need to prove that $\overline{\mathbf{P}}^{\omega_2}$ is $\aleph_2$-c.c. 

\begin{claim}\quad
\begin{enumerate}
    \item For each $\delta\in [\omega_1,\omega_2)$, $\overline{\mathbf{P}}^{\delta}$ is isomorphic to $\overline{\mathbf{P}}^{\omega_1}$.
    \item $\overline{\mathbf{P}}^{\omega_2}=\bigcup_{\gamma\in[\omega_1,\omega_2)}\overline{\mathbf{P}}^{\gamma}$.  
\end{enumerate}
\end{claim}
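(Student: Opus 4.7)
The plan is to dispatch (2) by a direct bounding argument and to construct for (1) an explicit isomorphism $\Phi_\delta\colon\overline{\mathbf{P}}^{\omega_1}\to\overline{\mathbf{P}}^{\delta}$ via pointwise application of $h_\delta^*$; the only subtle point will be preservation of clause (3) of Definition \ref{reduced_product_definition_omega_2_case}.

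For (2), the inclusion $\supseteq$ is immediate: if $p\in\overline{\mathbf{P}}^{\gamma}$ for some $\gamma\in[\omega_1,\omega_2)$, then for each $\beta\in\dom(p)$ one has $p(\beta)\in\mathbb{P}_\gamma(\dot{\mathcal{U}}_{\alpha_\beta})\subseteq\mathbb{P}_{\omega_2}(\dot{\mathcal{U}}_{\alpha_\beta})$ (under the paper's convention that $\mathbb{P}_\gamma(\mathcal{U})$ consists of conditions in $\mathbb{P}_{\omega_2}(\mathcal{U})$ whose support lies in $\gamma$), and clauses (1) and (3) are literally unaffected. For $\subseteq$, given $p\in\overline{\mathbf{P}}^{\omega_2}$, the set $C=\bigcup_{\beta\in\dom(p)}\dom(p(\beta))$ is a countable union of countable sets, hence countable, so it is bounded by some $\gamma\in[\omega_1,\omega_2)$; then $p(\beta)\in\mathbb{P}_\gamma(\dot{\mathcal{U}}_{\alpha_\beta})$ for every $\beta\in\dom(p)$, placing $p$ in $\overline{\mathbf{P}}^{\gamma}$.

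For (1), I would set $\dom(\Phi_\delta(p))=\dom(p)$ and $\Phi_\delta(p)(\beta)=h_\delta^*(p(\beta))$ for every $\beta\in\dom(p)$. Since each $h_\delta^*$ is an order isomorphism $\mathbb{P}_{\omega_1}(\dot{\mathcal{U}}_{\alpha_\beta})\to\mathbb{P}_\delta(\dot{\mathcal{U}}_{\alpha_\beta})$ and $\Phi_\delta$ does not alter domains, it preserves the coordinatewise ordering; its candidate inverse is the coordinatewise application of $(h_\delta^*)^{-1}$. Clauses (1) and (2) of Definition \ref{reduced_product_definition_omega_2_case} transfer at once, and order preservation on both sides is clear once we check that the image really lies in the target reduced product.

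The main obstacle is clause (3): verifying $\Phi_\delta(p)\upharpoonright b\in M_b$ for every $b\in S$. This reduces to showing that $h_\delta^*\upharpoonright\mathbb{P}_{\omega_1}(\dot{\mathcal{U}}_{\alpha_\beta})\in M_b$ for every $\beta\in b$. Since we begin with $V=L$ and each $h_\gamma$ was chosen inside $L$, we have $h_\delta\in L\subseteq M_b$; moreover $\mathbb{P}_{\omega_1}(\dot{\mathcal{U}}_{\alpha_\beta})\in M_b$ for every $\beta\in b$ by Lemma \ref{models_system}(2). Thus $M_b$ already has the data to compute $\langle h_\delta^*(p(\beta))\colon\beta\in b\cap\dom(p)\rangle$ from $p\upharpoonright b\in M_b$, granting clause (3) for $\Phi_\delta(p)$; the symmetric argument applied to $(h_\delta^*)^{-1}$ shows $\Phi_\delta$ is onto, completing the isomorphism.
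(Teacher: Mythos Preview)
Your proof is correct and follows essentially the same approach as the paper: for (1) the paper defines the map $\overline{h}_\delta$ exactly as your $\Phi_\delta$ (coordinatewise application of $h_\delta^*$), and for (2) the paper invokes the countability of the domain together with $\mathbb{P}_{\omega_2}(\dot{\mathcal{U}}_{\alpha_\beta})=\bigcup_{\gamma}h_\gamma^*[\mathbb{P}_{\omega_1}(\dot{\mathcal{U}}_{\alpha_\beta})]$. Your treatment is in fact more careful than the paper's, since you explicitly verify clause (3) of Definition~\ref{reduced_product_definition_omega_2_case} via $h_\delta\in L\subseteq M_b$, a point the paper leaves implicit.
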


\begin{proof}
(1) Recall that $h_\delta^*:\mathbb{P}_{\omega_1}(\dot{\mathcal{U}}_{\alpha_\beta})^{V[G_{\alpha_\beta}]}\to\mathbb{P}_{\delta}(\dot{\mathcal{U}}_{\alpha_\beta})^{V[G_{\alpha_\beta}]}$ is an order isomorphism for each $\delta\in[\omega_1,\omega_2)$. This isomorphism naturally lifts to an isomorphism between $\overline{\mathbf{P}}^{\omega_1}$ and $\overline{\mathbf{P}}^{\delta}$ as follows: for $p\in\overline{\mathbf{P}}^{\omega_1}$, define $\overline{h}_\delta(p)$ as:
\begin{enumerate}
    \item[i)] $dom(\overline{h}_\delta(p))=dom(p)$.
    \item[ii)] For $\gamma\in dom(\overline{h}_\delta(p))$, define $\overline{h}_\delta(p)(\gamma)=h_\delta^*(p(\gamma))\in\mathbb{P}_\delta(\dot{\mathcal{U}}_{\alpha_\gamma})^{V[G_{\alpha_\gamma}]}$.
\end{enumerate}

\noindent (2) Follows from the fact that $\mathbb{P}_{\omega_2}(\dot{\mathcal{U}}_{\alpha_\beta})^{V[G_{\alpha_\beta}]}=\bigcup_{\gamma\in[\omega_1,\omega_2)}h_\gamma^*[\mathbb{P}_{\omega_1}(\dot{\mathcal{U}}_{\alpha_\beta})^{V[G_{\alpha_\beta}]}]$, that each condition in $\overline{\mathbf{P}}^{\omega_2}$ has countable domain, and that $M_b$ satisfies $\mathsf{CH}$ for all $b\in S$.
\end{proof}

\begin{dfn}
For each set $b$ which is a boolean combination of elements of $S$, and $\delta\in[\omega_1,\omega_2]$, define $\overline{\mathbf{P}}_b^{\delta}$ as the family of all conditions $p\in\overline{\mathbf{P}}^\delta$ whose domain is a subset of $b$. The order on $\overline{\mathbf{P}}^{\delta}_b$ is inherited from the order on $\overline{\mathbf{P}}^\delta$.
\end{dfn}

First, we are aiming to prove  that for all $\delta\in\omega_2$, $\overline{\mathbf{P}}^{\delta}\lessdot\overline{\mathbf{P}}^{\omega_2}$, from which it will follow easily that $\overline{\mathbf{P}}^{\omega_2}$ preserves $\omega_1$, is $\aleph_2$-c.c. and Cohen preserving.

\begin{lemma}\label{regular_restriction}
Fix $\beta\in\omega_2$ and $\delta<\omega_2$. Working in $V[G_{\alpha_\beta}]$, the following property holds:
\begin{enumerate}
    \item [($\star$)] For any simple condition $q\in\mathbb{P}_{\omega_2}(\dot{\mathcal{U}}_{\alpha_\beta})^{V[G_{\alpha_\beta}]}$,  
    $q\upharpoonright\delta$ is a condition in $\mathbb{P}_\delta(\dot{\mathcal{U}}_{\alpha_\beta})^{V[G_{\alpha_\beta}]}$ and for any $p\leq q\upharpoonright\delta$, 
    $p\in\mathbb{P}_{\delta}(\dot{\mathcal{U}}_{\alpha_\beta})^{V[G_{\alpha_\beta}]}$, $p$ and $q$ are compatible (in $\mathbb{P}_{\omega_2}(\dot{\mathcal{U}}_{\alpha_\beta})^{V[G_{\alpha_\beta}]}$).
\end{enumerate}
\end{lemma}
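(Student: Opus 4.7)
The plan is to verify $(\star)$ by an explicit construction of a common refinement. The first clause, that $q\upharpoonright\delta\in\mathbb{P}_\delta(\dot{\mathcal{U}}_{\alpha_\beta})^{V[G_{\alpha_\beta}]}$, is immediate: $q$ is simple with $0\in dom(q)$ and $\delta\geq\omega_1>0$, so $0\in dom(q\upharpoonright\delta)$, and the required structural identity $\vec{E}_{q(\alpha)}=\vec{E}_{q(0)}*F^q_\alpha$ is inherited for each $\alpha\in dom(q)\cap\delta$.

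For the compatibility claim, I fix an arbitrary $p\leq q\upharpoonright\delta$ in $\mathbb{P}_\delta$ and write $\langle\alpha_j:j\in\omega\rangle$ for the canonical enumeration of $dom(q)$, so $\vec{E}_{q(\alpha_j)}=\vec{E}_{q(0)}*j$. I will build a common extension $r\in\mathbb{P}_{\omega_2}(\dot{\mathcal{U}}_{\alpha_\beta})^{V[G_{\alpha_\beta}]}$ with $dom(r)=dom(p)\cup(dom(q)\setminus\delta)$, setting $r(\alpha)=p(\alpha)$ for $\alpha\in dom(p)$ and defining $r(\alpha_j)$ for each $\alpha_j\in dom(q)\setminus\delta$ as follows. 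The hypothesis $p(0)\leq q(0)$ gives $\vec{E}_{p(0)}\sqsubseteq\vec{E}_{q(0)}$, so every $\vec{E}_{q(0)}$-block meeting $dom(\vec{E}_{p(0)})$ is contained in a unique $\vec{E}_{p(0)}$-block, which I call $E^{p(0)}_{k(i)}$. Letting $G_j=\{k(i):i<j\text{ and }E^{q(0)}_i\cap dom(\vec{E}_{p(0)})\neq\emptyset\}$, a finite set, I set $\vec{E}_{r(\alpha_j)}:=\vec{E}_{p(0)}*G_j$, which is of the required form $\vec{E}_{r(0)}*F$; a direct check gives $\vec{E}_{r(\alpha_j)}\sqsubseteq\vec{E}_{q(\alpha_j)}$, since the surviving $\vec{E}_{p(0)}$-blocks are precisely those that are unions of $\vec{E}_{q(0)}$-blocks of index $\geq j$.

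The function $h_{r(\alpha_j)}$ will equal $h_{q(\alpha_j)}$ wherever the latter is defined, and I will extend it to the rest of its domain by assigning an arbitrary constant value on each $\vec{E}_{q(0)}$-block of index $\geq j$ that lies in $\bigcup_{k\in G_j}E^{p(0)}_k$ but is not already covered by $h_{q(\alpha_j)}$. The main technical subtlety I anticipate is precisely here: a removed block $E^{p(0)}_k$ with $k\in G_j$ may contain $\vec{E}_{q(0)}$-blocks both of index $<j$ (on which $h_{q(\alpha_j)}$ is already defined, possibly non-constantly across them) and of index $\geq j$ (which I freshly assign). One might fear this forces a single constant on all of $E^{p(0)}_k$; however, clause (3) in the order of $\mathbb{P}_0(\mathcal{U})$ demands constancy only on blocks of the coarser partition $\vec{E}_{q(\alpha_j)}=\vec{E}_{q(0)}*j$, never on blocks of $\vec{E}_{p(0)}$, so distinct $\vec{E}_{q(0)}$-sub-blocks of $E^{p(0)}_k$ may be assigned different constants. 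Hence $r(\alpha_j)\leq q(\alpha_j)$ in $\mathbb{P}_0(\dot{\mathcal{U}}_{\alpha_\beta})$.

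The remaining verifications will be mechanical: $dom(r)$ is countable, $0\in dom(r)$, and each partition $\vec{E}_{r(\alpha)}$ has the form $\vec{E}_{r(0)}*F$, so $r\in\mathbb{P}_{\omega_2}(\dot{\mathcal{U}}_{\alpha_\beta})^{V[G_{\alpha_\beta}]}$; $r\leq p$ holds by definition on $dom(p)$; and $r\leq q$ follows, on $dom(q)\cap\delta\subseteq dom(p)$, from $r(\alpha)=p(\alpha)\leq (q\upharpoonright\delta)(\alpha)=q(\alpha)$, and on $dom(q)\setminus\delta$ from the construction of $r(\alpha_j)$. The only delicate step throughout is the one isolated in the previous paragraph.
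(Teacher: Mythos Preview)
Your approach is the same as the paper's: construct a common extension $r$ by keeping $p$ on $dom(p)$ and, at each $\alpha_j\in dom(q)\setminus\delta$, replacing $\vec{E}_{q(\alpha_j)}$ by a suitable coarsening of $\vec{E}_{p(0)}$ and extending $h_{q(\alpha_j)}$ accordingly. The paper uses a tail $\vec{E}_{p(0)}*k_n$ (with $k_n$ chosen so that $\bigcup_{i\le n}E^{q(0)}_i\subseteq dom(h_{p(0)})\cup\bigcup_{l<k_n}E^{p(0)}_l$) and then simply applies the operation $q(\alpha_j)^{(*,\vec{E}_{p(0)}*k_n)}$, which fills in all new values of $h$ with $0$. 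Your choice $\vec{E}_{p(0)}*G_j$ is tighter but equivalent in spirit.

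There is one small omission in your description of $h_{r(\alpha_j)}$. You say you extend $h_{q(\alpha_j)}$ on each $\vec{E}_{q(0)}$-block of index $\geq j$ lying in $\bigcup_{k\in G_j}E^{p(0)}_k$, but the set $dom(h_{r(\alpha_j)})\setminus dom(h_{q(\alpha_j)})$ also contains those $E^{q(0)}_i$ with $i\geq j$ that lie entirely in $\omega\setminus dom(\vec{E}_{p(0)})$ (these exist whenever $dom(\vec{E}_{p(0)})\subsetneq dom(\vec{E}_{q(0)})$). On such blocks you must also assign a constant, for the same reason. This is a one-line fix (assign $0$, say), and the paper's use of the $(*,\cdot)$ operation handles it automatically. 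With that addition your argument is complete and matches the paper's.
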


\begin{proof}
That $q\upharpoonright\delta$ is a condition in $\mathbb{P}_\delta(\dot{\mathcal{U}}_{\alpha_\beta})^{V[G_{\alpha_\beta}]}$ is immediate from the definition of the forcing. Let $p\in\mathbb{P}_\delta(\dot{\mathcal{U}}_{\alpha_\beta})^{V[G_{\alpha_\beta}]}$ be a condition extending $q\upharpoonright\delta$. Let $\langle\gamma_n:n\in\omega\rangle$ be the canonical enumeration of $dom(q)$ and define $B=\{n\in\omega:\gamma_n\notin\delta\}$. For each $n\in B$, let $k_n\in\omega$ be such that
\begin{equation*}
\bigcup_{j\leq n}E^{q(0)}_j\subseteq dom(h_{p(0)})\cup\bigcup_{l<k_n} E^{p(0)}_l    
\end{equation*}
note that such $k_n$ exists because $q$ is a simple condition. Note that for each $n\in\omega$, $\vec{E}_{p(0)}*k_n\sqsubseteq \vec{E}_{q(0)}*(n+1)=\vec{E}_{q(\gamma_n)}$. Now define a condition $r\in\mathbb{P}_{\omega_2}(\dot{\mathcal{U}}_{\alpha_\beta})^{V[G_{\alpha_\beta}]}$ as follows:
\begin{enumerate}
    \item For $\gamma\in dom(p)$, let $r(\gamma)=p(\gamma)$.
    \item For $\gamma\in dom(q)\setminus \delta$, let $n\in B$ be such that $\gamma=\gamma_n$, and define $r(\gamma)=(h(q(\gamma_n)))^{(*,\vec{E}_{p(0)}*k_n)}$.
\end{enumerate}
It is clear from the definition that $r$ is a condition, and extends both $p$ and $q$.
\end{proof}

\begin{crl}\label{crl_regular_restriction_transitive_models}
For any $\beta\in\omega_2$, $\delta\in[\omega_1,\omega_2)$ and $b\in S$, the following holds:
\begin{enumerate}
    \item The property $(\star)$ in Lemma \ref{regular_restriction} holds in any transitive model of $\mathsf{ZFC}$ which contains $\mathbb{P}_{\omega_2}(\dot{\mathcal{U}}_{\alpha_\beta})^{V[G_{\alpha_\beta}]}$, its order relation and $\delta$.
    \item $V[G_{\omega_2}]\vDash\mathbb{P}_{\delta}(\dot{\mathcal{U}}_{\alpha_\beta})^{V[G_{\alpha_\beta}]}\lessdot\mathbb{P}_{\omega_2}(\dot{\mathcal{U}}_{\alpha_\beta})^{V[G_{\alpha_\beta}]}$.
    \item For any $\beta\in b$, $M_b\vDash\mathbb{P}_{\delta}(\dot{\mathcal{U}}_{\alpha_\beta})^{V[G_{\alpha_\beta}]}\lessdot\mathbb{P}_{\omega_2}(\dot{\mathcal{U}}_{\alpha_\beta})^{V[G_{\alpha_\beta}]}$.
\end{enumerate}
\end{crl}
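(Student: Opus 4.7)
The plan is to prove (1) by a direct absoluteness analysis of the construction given in the proof of Lemma \ref{regular_restriction}, and then derive (2) and (3) as easy consequences of (1) combined with density of simple conditions (Lemma \ref{simple_conditions_dense}), which is itself absolute between transitive models containing the relevant forcing objects.

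For (1), let $N$ be a transitive model of $\mathsf{ZFC}$ containing $\mathbb{P}=\mathbb{P}_{\omega_2}(\dot{\mathcal{U}}_{\alpha_\beta})^{V[G_{\alpha_\beta}]}$, its order relation and $\delta$. Fix in $N$ a simple $q\in\mathbb{P}$ and $p\leq q\upharpoonright\delta$ with $p\in\mathbb{P}_\delta(\dot{\mathcal{U}}_{\alpha_\beta})^{V[G_{\alpha_\beta}]}$; note that $\mathbb{P}_\delta(\dot{\mathcal{U}}_{\alpha_\beta})^{V[G_{\alpha_\beta}]}$ is definable from $\mathbb{P}$ and $\delta$ as the set of $r\in\mathbb{P}$ with $dom(r)\subseteq\delta$, so it belongs to $N$. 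The construction of the common extension $r$ in the proof of Lemma \ref{regular_restriction} proceeds by: enumerating $dom(q)$ as $\langle\gamma_n:n\in\omega\rangle$, picking for each $n$ with $\gamma_n\notin\delta$ the least $k_n$ with $\bigcup_{j\leq n}E^{q(0)}_j\subseteq dom(h_{p(0)})\cup\bigcup_{l<k_n}E^{p(0)}_l$, and then defining $r(\gamma)=p(\gamma)$ for $\gamma\in dom(p)$ and $r(\gamma_n)=(h(q(\gamma_n)))^{(*,\vec{E}_{p(0)}*k_n)}$ for $\gamma_n\notin\delta$. Every ingredient of this procedure is definable from $p$, $q$, $\mathbb{P}$, $\leq$, $\delta$ and the absolute arithmetic of finite sequences, and the verification that the resulting $r\in\mathbb{P}$ and $r\leq p,q$ reduces to membership checks in $\mathbb{P}$ (which is transitively contained in $N$) and finite combinatorics. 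Therefore $r\in N$ and the construction goes through inside $N$.

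For (2), apply (1) in the ambient model $N=V[G_{\omega_2}]$, which contains $\mathbb{P}$, $\leq$ and $\delta$. To pass from $(\star)$ to regularity, we check the two clauses. First, if two conditions in $\mathbb{P}_\delta$ are compatible in $\mathbb{P}$ via a common extension $s\in\mathbb{P}$, then $s\upharpoonright\delta\in\mathbb{P}_\delta$ is a common extension in $\mathbb{P}_\delta$, since the restriction preserves conditionhood (the requirement $\vec{E}_{s(\alpha)}=\vec{E}_{s(0)}*F_\alpha^s$ is hereditary). Second, every $q\in\mathbb{P}$ admits a reduction to $\mathbb{P}_\delta$: by Lemma \ref{simple_conditions_dense} extend $q$ to a simple $q'\leq q$; then $q'\upharpoonright\delta\in\mathbb{P}_\delta$ and $(\star)$ guarantees that every extension of $q'\upharpoonright\delta$ in $\mathbb{P}_\delta$ is compatible with $q'$ in $\mathbb{P}$, hence with $q$. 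For (3), the same argument is carried out inside $M_b$: by the discussion preceding the corollary, $\mathbb{P}\in M_b$ and, since $\delta<\omega_2$, also $\delta\in M_b$; by (1), $(\star)$ holds in $M_b$; the proof of Lemma \ref{simple_conditions_dense} is an absolute construction (simple conditions are produced from an arbitrary condition by an explicit countable recursion using the partition structure in $\mathcal{U}$, and membership in $\mathcal{U}$ is absolute for transitive models containing $\mathcal{U}$), so simple conditions are dense in $\mathbb{P}$ as computed in $M_b$. The derivation in the previous sentence then yields $M_b\vDash\mathbb{P}_\delta\lessdot\mathbb{P}$.

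The main potential obstacle is the absoluteness of the verification that the constructed $r$ is a valid condition, in particular that each $\vec{E}_{r(\gamma)}$ is an $\dot{\mathcal{U}}_{\alpha_\beta}$-partition in the sense of $N$ (or $M_b$). This is harmless because $\dot{\mathcal{U}}_{\alpha_\beta}$ is a subset of $\mathbb{P}$'s data (encoded into the very membership in $\mathbb{P}$) and transitivity of $N$ forces the notions of $\mathcal{U}$-partition, finer partition, and compatibility to coincide with their external meaning. Everything else is a routine bookkeeping inside a transitive model.
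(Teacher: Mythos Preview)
Your proposal is correct and follows essentially the same approach as the paper. The only cosmetic difference is in the verification of regularity for (2) and (3): the paper argues via ``every maximal antichain in $\mathbb{P}_\delta$ remains maximal in $\mathbb{P}_{\omega_2}$'' (take $q$, extend to simple $q_0$, find a compatible antichain element via $(\star)$), whereas you argue via ``every $q$ has a reduction to $\mathbb{P}_\delta$'' (take $q$, extend to simple $q'$, and $q'\upharpoonright\delta$ is a reduction by $(\star)$); these are equivalent standard formulations and both rest on the same two ingredients, namely the absoluteness of $(\star)$ and the density of simple conditions.
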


\begin{proof}
(1) Follows from the definition of the order on $\mathbb{P}_{\omega_2}(\dot{\mathcal{U}}_{\alpha_\beta})^{V[G_{\alpha_\beta}]}$, the transitivity of the model and the fact that $\mathbb{P}_{\delta}(\dot{\mathcal{U}}_{\alpha_\beta})^{V[G_{\alpha_\beta}]}$ is definible from $\delta$ and $\mathbb{P}_{\omega_2}(\dot{\mathcal{U}}_{\alpha_\beta})^{V[G_{\alpha_\beta}]}$.

(2) Let $\mathcal{A}\subseteq\mathbb{P}_\delta(\dot{\mathcal{U}}_{\alpha_\beta})^{V[G_{\alpha_\beta}]}$ be a maximal antichain and $q\in\mathbb{P}_{\omega_2}(\dot{\mathcal{U}}_{\alpha_\beta})^{V[G_{\alpha_\beta}]}$ any condition. Extend $q$ to a simple condition $q_0$. By (1), for any condition $p\in \mathbb{P}_{\delta}(\dot{\mathcal{U}}_{\alpha_\beta})^{V[G_{\alpha_\beta}]}$ such that $p\leq q_0\upharpoonright\delta$, $p$ and $q_0$ are compatible. Since $\mathcal{A}$ is a maximal antichain, there is $p_0\in\mathcal{A}$ compatible with $q_0\upharpoonright\delta$. Let $p_1\in\mathbb{P}_\delta(\dot{\mathcal{U}}_{\alpha_\beta})^{V[G_{\alpha_\beta}]}$ be a common extension of $p_0$ and $q_0\upharpoonright\beta$. It follows that $p_0$ and $q$ are compatible. Thus $\mathcal{A}$ is a maximal antichain in $\mathbb{P}_{\omega_2}(\dot{\mathcal{U}}_{\alpha_\beta})^{V[G_{\alpha_\beta}]}$.

(3) This is identical to (2).
\end{proof}

Following former notation, $\overline{\mathbf{P}}_{b,\overline{a}}^\delta$ denotes the poset $\overline{\mathbf{P}}_{b\setminus\bigcup\overline{a}}^\delta$.

\begin{lemma}\label{factor_P}
Let $b\in S$ and $\overline{a}\subseteq S$ be such that $\overline{a}$ is finite and $b\setminus \bigcup\overline{a}\neq\emptyset$. Let $c\in S$ be a proper subset of $b$. Then, for each $\delta\in [\omega_1,\omega_2]$, $\overline{\mathbf{P}}^\delta_{b,\overline{a}}$ is isomorphic to $\overline{\mathbf{P}}_{c,\overline{a}}^\delta\times\overline{\mathbf{P}}_{b,\overline{a}^\frown c}^\delta$.
\end{lemma}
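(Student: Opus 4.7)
The natural candidate for the isomorphism is the coordinate-splitting map
\[
\varphi \colon p \longmapsto \bigl(p\upharpoonright c,\ p\upharpoonright (b\setminus(\textstyle\bigcup\overline{a}\cup c))\bigr),
\]
with inverse $(q_0,q_1)\mapsto q_0\cup q_1$ (noting that the two pieces have disjoint domains). Order preservation in both directions is then immediate, since the order on $\overline{\mathbf{P}}^\delta$ is coordinatewise. So the plan reduces to checking that $\varphi$ and its inverse land in the claimed posets, which in turn reduces to verifying clause (3) of Definition \ref{reduced_product_definition_omega_2_case}: for every $d\in S$, the relevant restriction lies in $M_d$.

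The key observation is that $S$ is closed under intersections (Lemma \ref{noetherian_stationary_set}(2)), so for any $d\in S$, the set $c\cap d$ is again in $S$ and is a subset of $d$. Hence for $p\in\overline{\mathbf{P}}^\delta_{b,\overline{a}}$, the function $(p\upharpoonright c)\upharpoonright d = p\upharpoonright (c\cap d)$ is already known to lie in $M_{c\cap d}$ by the membership assumption on $p$. Since $p\upharpoonright(c\cap d)$ is a countable function whose values are conditions in posets of sufficiently bounded rank, it belongs to $V_{\omega_n+5}$, and therefore lies in $M_{c\cap d}\cap V_{\omega_n+5}$. By Lemma \ref{models_system}(4) this set is an element of $M_d$, and transitivity of $M_d$ delivers $p\upharpoonright(c\cap d)\in M_d$ as required. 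The symmetric fact $(p\setminus p\upharpoonright c)\upharpoonright d\in M_d$ then follows, since it equals $p\upharpoonright d\setminus p\upharpoonright(c\cap d)$, a set-theoretic difference of two elements of $M_d$.

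For the inverse direction, given $(q_0,q_1)\in \overline{\mathbf{P}}^\delta_{c,\overline{a}}\times \overline{\mathbf{P}}^\delta_{b,\overline{a}^\frown c}$, the union $q_0\cup q_1$ has domain a boolean combination of elements of $S$ (since it is the union of two such sets) contained in $b\setminus\bigcup\overline{a}$, and for each $d\in S$ we have $(q_0\cup q_1)\upharpoonright d = (q_0\upharpoonright d)\cup(q_1\upharpoonright d)\in M_d$ as the union of two elements of the transitive ZFC-model $M_d$. Thus $q_0\cup q_1\in \overline{\mathbf{P}}^\delta_{b,\overline{a}}$, and $\varphi\circ\varphi^{-1}$ and $\varphi^{-1}\circ\varphi$ are both the identity by construction.

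I do not expect any serious obstacle: the entire content of the lemma rests on the closure of $S$ under intersections together with the parameter encoded in clause (4) of Lemma \ref{models_system}, which together allow the model $M_d$ to see the restriction $p\upharpoonright(c\cap d)$ even when $c$ itself is not a subset of $d$. The remainder is routine bookkeeping about coordinatewise order and disjoint unions.
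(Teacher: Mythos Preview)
Your proof is correct and follows the same coordinate-splitting approach that the paper gestures at (the paper merely says the lemma ``follows similar lines to the proof of Claim~\ref{claim1}''). One small simplification: your detour through Lemma~\ref{models_system}(4) and the rank bound $V_{\omega_n+5}$ is not needed, since $p\upharpoonright d\in M_d$ directly by the defining clause for $p$, and $c\cap d\in S\cap\mathcal{P}(\cl(d))\subseteq M_d$ by Lemma~\ref{models_system}(1), so $(p\upharpoonright d)\upharpoonright(c\cap d)$ can be computed inside $M_d$---indeed you implicitly use this simpler route in the very next sentence when handling $p\setminus p\upharpoonright c$.
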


\begin{proof}
This follows similar lines to the proof of Claim \ref{claim1}.    
\end{proof}

\begin{lemma}\label{regular_restriction_reduced_product}
Fix $\delta<\omega_2$, $b\in S$ and $\overline{a}\subseteq S$ such that $b\setminus \bigcup\overline{a}\neq\emptyset$. Then, for any condition $q\in\overline{\mathbf{P}}_{b,\overline{a}}^{\omega_2}$, the following holds:
\begin{enumerate}
    \item $q\upharpoonright\upharpoonright\delta=\langle q(\gamma)\upharpoonright\delta:\gamma\in dom(q)\rangle$ is a condition in $\overline{\mathbf{P}}^{\delta}_{b,\overline{a}}$.
    \item If $q(\beta)$ is a simple condition for all $\beta\in dom(q)$, then, any condition $p\in\overline{\mathbf{P}}^{\delta}_{b,\overline{a}}$ extending $q\upharpoonright\upharpoonright\delta$ is compatible with $q$ (in $\overline{\mathbf{P}}^{\omega_2}_{b,\overline{a}}$).
\end{enumerate}
 
\end{lemma}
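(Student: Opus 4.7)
The plan is to lift Lemma \ref{regular_restriction} and Corollary \ref{crl_regular_restriction_transitive_models} coordinate-wise through the reduced product, with the additional bookkeeping that the restrictions to every $c\in S$ must remain inside $M_c$. The two parts require slightly different arguments, but both rest on absoluteness of the constructions of Lemma \ref{regular_restriction}.

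For part (1), I would verify the three clauses of Definition \ref{reduced_product_definition_omega_2_case} with $\delta$ in place of $\omega_2$. The domain of $q\upharpoonright\upharpoonright\delta$ is literally $dom(q)$, hence a countable boolean combination of elements of $S$ contained in $b\setminus\bigcup\overline{a}$. For every $\gamma\in dom(q)$ the value $q(\gamma)\upharpoonright\delta$ belongs to $\mathbb{P}_\delta(\dot{\mathcal{U}}_{\alpha_\gamma})^{V[G_{\alpha_\gamma}]}$ by Corollary \ref{crl_regular_restriction_transitive_models}(1). The only point that needs checking is the membership $(q\upharpoonright\upharpoonright\delta)\upharpoonright c\in M_c$ for each $c\in S$: since $q\upharpoonright c\in M_c$, the ordinal $\delta$ and the poset $\mathbb{P}_{\omega_2}(\dot{\mathcal{U}}_{\alpha_\gamma})^{V[G_{\alpha_\gamma}]}$ are available inside $M_c$ (by Lemma \ref{models_system} and absoluteness of ordinals), so the map $q(\gamma)\mapsto q(\gamma)\upharpoonright\delta$ is uniformly definable in $M_c$ and the restricted sequence is computed there.

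For part (2), given $p\in\overline{\mathbf{P}}^{\delta}_{b,\overline{a}}$ with $p\leq q\upharpoonright\upharpoonright\delta$, we have $dom(p)\supseteq dom(q)$ and, for each $\gamma\in dom(q)$, $p(\gamma)\leq q(\gamma)\upharpoonright\delta$ in $\mathbb{P}_\delta(\dot{\mathcal{U}}_{\alpha_\gamma})^{V[G_{\alpha_\gamma}]}$. Since every $q(\gamma)$ is a simple condition, the explicit construction from the proof of Lemma \ref{regular_restriction} produces a common extension $r(\gamma)\in\mathbb{P}_{\omega_2}(\dot{\mathcal{U}}_{\alpha_\gamma})^{V[G_{\alpha_\gamma}]}$ of $p(\gamma)$ and $q(\gamma)$. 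I would define $r$ by setting $r(\gamma)$ to be this common extension when $\gamma\in dom(q)$, and $r(\gamma)=p(\gamma)$ otherwise. Then $dom(r)=dom(p)\cup dom(q)$ is a boolean combination of elements of $S$, and coordinate-wise $r\leq p$ and $r\leq q$ in the appropriate factor posets.

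The main obstacle is to check that $r\upharpoonright c\in M_c$ for every $c\in S$, for only then does $r$ belong to $\overline{\mathbf{P}}^{\omega_2}_{b,\overline{a}}$. This is where Corollary \ref{crl_regular_restriction_transitive_models}(1) pays off: the entire construction from the proof of Lemma \ref{regular_restriction} is given by a formula, uniform in $\gamma$, using only $\delta$, $p(\gamma)$, $q(\gamma)$ and the poset $\mathbb{P}_{\omega_2}(\dot{\mathcal{U}}_{\alpha_\gamma})^{V[G_{\alpha_\gamma}]}$, all of which lie in $M_c$. Since $p\upharpoonright c$ and $q\upharpoonright c$ both belong to $M_c$, the sequence $r\upharpoonright c$ is definable in $M_c$ from parameters in $M_c$ and therefore lies in $M_c$ as required. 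This yields $r\in\overline{\mathbf{P}}^{\omega_2}_{b,\overline{a}}$ witnessing compatibility of $p$ and $q$.
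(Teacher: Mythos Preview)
Your proposal is correct. Part (1) coincides with the paper's argument. For part (2), however, you take a genuinely different and more direct route than the paper.

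The paper proves part (2) by transfinite induction on the rank of $b$ in the well-founded order $(S,\subseteq)$: working inside $M_b$, it enumerates the proper subsets $c_n\subset b$ lying in $S$, uses the factorization $\overline{\mathbf{P}}^{\omega_2}_{b,\overline{a}}\cong\overline{\mathbf{P}}^{\omega_2}_{c_n,\overline{a}_n}\times\overline{\mathbf{P}}^{\omega_2}_{b,\overline{a}_{n+1}}$ from Lemma \ref{factor_P}, and applies the induction hypothesis on each factor $\overline{\mathbf{P}}^{\omega_2}_{c_n,\overline{a}_n}$ to build partial amalgams $r_n$ whose union is the desired common extension. This mirrors the architecture of Claim \ref{keyclaim} and treats the coordinate-wise amalgamation from Lemma \ref{regular_restriction} as a black box invoked only at the base case.

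Your approach instead exploits the explicit content of the proof of Lemma \ref{regular_restriction}: the common extension $r(\gamma)$ of $p(\gamma)$ and $q(\gamma)$ is given by a concrete recipe (canonical enumeration of $dom(q(\gamma))$, the least $k_n$ with the stated covering property, and the operation $(\,\cdot\,)^{(*,\vec E_{p(0)}*k_n)}$), uniformly definable from $p(\gamma)$, $q(\gamma)$, $\delta$, and the poset $\mathbb{P}_{\omega_2}(\dot{\mathcal{U}}_{\alpha_\gamma})^{V[G_{\alpha_\gamma}]}$. Since $p\upharpoonright c$, $q\upharpoonright c$, and the sequence of factor posets indexed by $c$ all lie in $M_c$, the sequence $r\upharpoonright c$ is definable in $M_c$ and hence belongs to it. This bypasses the rank induction entirely. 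To make the argument airtight you should state explicitly that you take the \emph{least} admissible $k_n$ at each stage, so that no choice is involved. Your route is shorter; the paper's route has the advantage of not depending on the internal details of how Lemma \ref{regular_restriction} produces its witness.
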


\begin{proof}
(1) Let $\delta$, $b$ and $\overline{a}$ as in the hypothesis. Pick a condition $q\in\mathbf{P}^{\omega_2}_{b,\overline{a}}$. 
By definition of $\overline{\mathbf{P}}^{\omega_2}$, for each $c\in S$, $q\upharpoonright c\in M_c$, from which it follows that $\langle q(\beta)\upharpoonright\delta:\beta\in c\cap dom(q)\rangle\in M_c$. Thus, $q\upharpoonright\upharpoonright\delta=\langle q(\beta)\upharpoonright\delta:\beta\in dom(q)\rangle$ is a condition in $\overline{\mathbf{P}}^{\delta}_{b,\overline{a}}$. 

(2) The proof is by induction on the rank of $b\in S$. If $b=\{\beta\}$, this is Corollary \ref{crl_regular_restriction_transitive_models}. Fix $b\in S$ and assume the assertion holds for all $d\in S$ having smaller rank than the rank of $b$. We work in $M_b$. Let $\{c_n:n\in\omega\}$ be an enumeration of $\{c\in S:c\subseteq b\land c\neq b\}$.  Define $\overline{a}_0=\{d\cap b:d\in \overline{a}\}$, and for $n>0$, $\overline{a}_n={\overline{a}_{n-1}}^\frown c_{n-1}$. By going to a subsequece if necessary, we can assume that:
\begin{enumerate}
    \item For each $n\in\omega$, $c_{n}\setminus\bigcup\bar{a}_{n}\neq\emptyset$.
    \item For each $c\in S$ subset of $b$ such that $c\setminus\bigcup\bar{a}\neq\emptyset$, if $c$ does not appear in $\{c_n:n\in\omega\}$ then there is $k\in\omega$ such that $c\subseteq\bigcup_{l\leq k}c_l\cup\bigcup\bar{a}$.
\end{enumerate}

Let $p\in\overline{\mathbf{P}}_{b,\overline{a}}^{\delta}$ be a condition stronger than $q\upharpoonright\upharpoonright\delta$. Now construct sequences $\langle p_n,q_n,r_n:n\in\omega\rangle$ as follows:
\begin{enumerate}
    \item First define $p_{-1}=p$, $q_{-1}=q$.
    \item For all $n\in\omega\cup\{-1\}$, $p_n\in\overline{\mathbf{P}}^\delta_{b,\overline{a}_{n+1}}$, $q_n\in\overline{\mathbf{P}}^{\omega_2}_{b,\overline{a}_{n+1}}$ and $p_n\leq q_n\upharpoonright\upharpoonright\delta$.
    \item By Lemma \ref{factor_P}, we have $q_{n-1}\upharpoonright c_n\in\overline{\mathbf{P}}^{\omega_2}_{c_n,\overline{a}_n}$, $p_{n-1}\upharpoonright c_n\in \overline{\mathbf{P}}^{\delta}_{c_n,\overline{a}_n}$, and $p_{n-1}\upharpoonright c_n\leq (q_{n-1}\upharpoonright c_n)\upharpoonright\upharpoonright \delta$. By induction hypothesis we can find a condition $r\in\overline{\mathbf{P}}_{c_n,\overline{a}_n}^{\omega_2}$ extending both $p_{n-1}\upharpoonright c_n$ and $q_{n-1}\upharpoonright c_n$; let $r_n$ be one of such conditions. Now define $p_{n}=p_{n-1}\upharpoonright (\omega_2\setminus c_n)$ and $q_{n}=q_{n-1}\upharpoonright(\omega_2\setminus c_n)$. Note that $p_{n}\in\overline{\mathbf{P}}_{b,\overline{a}_n^\frown c_n}^\delta$, $q_{n}\in\overline{\mathbf{P}}^{\omega_2}_{b,\overline{a}_n^\frown c_n}$ and $p_n\leq q_n\upharpoonright\upharpoonright\delta$.
\end{enumerate}

Now define $p_\omega=\bigcup_{n\in\omega}r_n$. Let us see that $p_\omega$ is a condition in $\overline{\mathbf{P}}^{\omega_2}_{b,\overline{a}}$. Pick any $d\in S$. If $d\cap(b\setminus\bigcup\overline{a})=\emptyset$ then $p_\omega\upharpoonright d=\emptyset\in M_d$. Assume otherwise and let $c=d\cap b$. By definition of the enumeration $\{c_n:n\in\omega\}$, there is $n\in\omega$ such that $c\subseteq\bigcup_{l\leq n}c_l\cup\bigcup\overline{a}$, so we have that $p_\omega\upharpoonright d=p_\omega\upharpoonright c=\bigcup_{j\leq n}r_j\upharpoonright c$. By induction hypothesis, each $r_j$ is a condition in $\overline{\mathbf{P}}_{c_j,\overline{a}_j}^{\omega_2}\subseteq\overline{\mathbf{P}}^{\omega_2}$, which implies that $r_j\upharpoonright c\in M_{c}$; thus, $p_\omega\upharpoonright c=\bigcup_{j\leq n}r_j\upharpoonright c\in M_{c}$. Therefore, $p_\omega$ is a condition in $\overline{\mathbf{P}}^{\omega_2}$. Since $dom(p_\omega)\subseteq b\setminus\bigcup\overline{a}$, we have that $p_\omega$ is a condition in $\overline{\mathbf{P}}^{\omega_2}_{b,\overline{a}}$. By construction we have that $p_\omega$ extends both $p$ and $q$.

\end{proof}

\begin{crl}
For any $\delta\in[\omega_1,\omega_2)$ and $b\in S$, $M_b\vDash\overline{\mathbf{P}}^{\delta}_b\lessdot\overline{\mathbf{P}}^{\omega_2}_b$ and $V[G_{\omega_2}]\vDash\overline{\mathbf{P}}^{\delta}_b\lessdot\overline{\mathbf{P}}^{\omega_2}_b$.
\end{crl}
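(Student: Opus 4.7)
The plan is to verify the reduction criterion for regular embedding: given $q\in\overline{\mathbf{P}}^{\omega_2}_b$, I will produce a condition $q'\in\overline{\mathbf{P}}^{\delta}_b$ such that every $\overline{\mathbf{P}}^{\delta}_b$-extension of $q'$ is compatible with $q$ in $\overline{\mathbf{P}}^{\omega_2}_b$, and I will verify that incompatibility between members of $\overline{\mathbf{P}}^{\delta}_b$ is the same whether tested in $\overline{\mathbf{P}}^{\delta}_b$ or in $\overline{\mathbf{P}}^{\omega_2}_b$. The preservation of incompatibility is almost immediate: if $p_0,p_1\in\overline{\mathbf{P}}^{\delta}_b$ share a common extension $r\in\overline{\mathbf{P}}^{\omega_2}_b$, then by Lemma \ref{regular_restriction_reduced_product}(1) the ``vertical restriction'' $r\upharpoonright\upharpoonright\delta$ belongs to $\overline{\mathbf{P}}^{\delta}_b$, and since $dom(p_i(\beta))\subseteq\delta$ for each $\beta$, $r(\beta)\upharpoonright\delta\leq p_i(\beta)$ in $\mathbb{P}_\delta(\dot{\mathcal{U}}_{\alpha_\beta})^{V[G_{\alpha_\beta}]}$, making $r\upharpoonright\upharpoonright\delta$ a common lower bound in $\overline{\mathbf{P}}^{\delta}_b$.

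The core of the argument is the construction of the reduction $q'$. The obstacle is that Lemma \ref{regular_restriction_reduced_product}(2) requires all coordinates $q(\beta)$ to be simple conditions in the sense preceding Lemma \ref{simple_conditions_dense}, while an arbitrary $q$ need not have this form. The fix is to replace $q$ by a coordinate-wise strengthening $q_*$ chosen by a uniformly definable rule: for each $\beta\in dom(q)$, let $q_*(\beta)$ be the $\prec_{\{\beta\}}$-least simple condition extending $q(\beta)$ in $\mathbb{P}_{\omega_2}(\dot{\mathcal{U}}_{\alpha_\beta})^{V[G_{\alpha_\beta}]}$. This is well defined: $\{\beta\}\in S$ by Lemma \ref{noetherian_stationary_set}(3), simple extensions exist by Lemma \ref{simple_conditions_dense}, and $(M_{\{\beta\}}\cap V_{\omega_n+5},\prec_{\{\beta\}})$ supplies both the relevant forcing and the wellordering. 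Then I set $q'=q_*\upharpoonright\upharpoonright\delta$ and apply Lemma \ref{regular_restriction_reduced_product}(1) to get $q'\in\overline{\mathbf{P}}^{\delta}_b$ and Lemma \ref{regular_restriction_reduced_product}(2) to conclude that every $p\leq q'$ in $\overline{\mathbf{P}}^{\delta}_b$ is compatible with $q_*$, and hence with $q$, in $\overline{\mathbf{P}}^{\omega_2}_b$.

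The main verification, and the place where I expect the subtlety to live, is showing that $q_*$ is actually a condition of $\overline{\mathbf{P}}^{\omega_2}_b$, i.e. that $q_*\upharpoonright c\in M_c$ for every $c\in S$ with $c\subseteq b$. This is where the uniformity of the definition pays off. By hypothesis $q\upharpoonright c\in M_c$, and by Lemma \ref{models_system}(4) the function $\beta\mapsto(M_{\{\beta\}}\cap V_{\omega_n+5},\prec_{\{\beta\}})$ restricted to $c\cap dom(q)$ is in $M_c$; since the rule $q(\beta)\mapsto q_*(\beta)$ depends only on $q(\beta)$ and on this parameter, the map $\beta\mapsto q_*(\beta)$ on $c\cap dom(q)$ is definable inside $M_c$ from objects in $M_c$, hence $q_*\upharpoonright c\in M_c$.

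Finally, the entire argument relativizes without change to $M_b$, giving the ``$M_b\vDash$'' half of the corollary: every object invoked above --- the sequence $\langle(M_c\cap V_{\omega_n+5},\prec_c)\colon c\subseteq b,\ c\in S\rangle$, the posets $\mathbb{P}_{\omega_2}(\dot{\mathcal{U}}_{\alpha_\beta})^{V[G_{\alpha_\beta}]}$ for $\beta\in b$, Lemma \ref{simple_conditions_dense}, and Lemma \ref{regular_restriction_reduced_product} --- is available in $M_b$ by Lemma \ref{models_system}, so both the construction of $q_*$ and the appeal to $r\upharpoonright\upharpoonright\delta$ go through verbatim inside $M_b$.
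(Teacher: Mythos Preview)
Your argument follows the same route as the paper's: reduce an arbitrary $q\in\overline{\mathbf{P}}^{\omega_2}_b$ to its vertical restriction $q\upharpoonright\upharpoonright\delta$ after first arranging that every coordinate $q(\beta)$ is simple, and then invoke Lemma~\ref{regular_restriction_reduced_product}. The paper simply asserts ``we can assume that for each $\beta\in dom(q)$, $q(\beta)$ is a simple condition''; you supply the missing density argument by choosing simple extensions via a rule uniform in $\beta$, which is exactly the point that needs justification for $q_*$ to remain a condition of the reduced product.

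One small technical correction: as stated in Lemma~\ref{models_system}(3), $\prec_{\{\beta\}}$ is only a well-ordering of $(\omega^\omega)^{M_{\{\beta\}}}$, not of $\mathbb{P}_{\omega_2}(\dot{\mathcal{U}}_{\alpha_\beta})^{V[G_{\alpha_\beta}]}$, and conditions in the latter carry a countable subset of $\omega_2$ as part of their data, so they are not literally coded by reals. The fix is immediate: for each $\beta\in c$ the model $M_c$ can reconstruct the inner model $M_{\{\beta\}}=L[K][S\cap\mathcal{P}(\cl(\{\beta\}))][\mathbb{P}_\beta]$ (it has all the defining parameters, as in the proof of Lemma~\ref{models_system}(4)), and hence can use the canonical constructibility well-ordering of $M_{\{\beta\}}$ to select the least simple extension of $q(\beta)$. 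With that replacement your uniformity argument goes through verbatim and the proof is complete.
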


\begin{proof}
Let $\mathcal{A}\subseteq\overline{\mathbf{P}}^\delta_b$ be a maximal antichain and $q\in\overline{\mathbf{P}}^{\omega_2}_b$ a condition. We can assume that for each $\beta\in dom(q)$, $q(\beta)$ is a simple condition. By Lemma \ref{regular_restriction_reduced_product}(1) (applied with $\overline{a}=\emptyset$), we have $q\upharpoonright\upharpoonright\delta\in\overline{\mathbf{P}}^\delta_b$ so we can find a condition $p\in\mathcal{A}$ compatible with $q\upharpoonright\upharpoonright \delta$. Let $r\in\overline{\mathbf{P}}^\delta_b$ be a common extension of $q\upharpoonright\upharpoonright\delta$ and $p$. By an application of Lemma \ref{regular_restriction_reduced_product}(2), $r$ and $q$ are compatible.
\end{proof}

\begin{lemma}\label{reduced_prod_delta_reg_sub_red_prod_total}
For any $\delta\in [\omega_1,\omega_2)$, $\overline{\mathbf{P}}^{\delta}\lessdot\overline{\mathbf{P}}^{\omega_2}$.
\end{lemma}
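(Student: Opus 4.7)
The goal is to show that any maximal antichain $\mathcal{A} \subseteq \overline{\mathbf{P}}^{\delta}$ remains maximal (equivalently, predense) in $\overline{\mathbf{P}}^{\omega_2}$. I would fix such an $\mathcal{A}$ and an arbitrary $q \in \overline{\mathbf{P}}^{\omega_2}$ and exhibit a condition in $\mathcal{A}$ compatible with $q$. The plan is to convert the problem to the ``bounded domain'' setting already handled by the previous corollary and by Lemma \ref{regular_restriction_reduced_product}(2).

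First I would extend $q$ (without changing its name) so that its domain lies in $S$ and so that for every $\beta \in \dom(q)$ the coordinate $q(\beta)$ is a simple condition of $\mathbb{P}_{\omega_2}(\dot{\mathcal{U}}_{\alpha_\beta})^{V[G_{\alpha_\beta}]}$; this is legitimate since simple conditions are dense and $S$ is cofinal in $[\omega_2]^{\omega}$ under $\subseteq$ (being stationary). Write $a = \dom(q)$. By Lemma \ref{regular_restriction_reduced_product}(1), the componentwise restriction $q\restriction\restriction\delta$ is a condition in $\overline{\mathbf{P}}^{\delta}_a \subseteq \overline{\mathbf{P}}^{\delta}$. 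Since $\mathcal{A}$ is maximal in $\overline{\mathbf{P}}^{\delta}$, there exist $p \in \mathcal{A}$ and a common extension $r \in \overline{\mathbf{P}}^{\delta}$ of $p$ and $q\restriction\restriction\delta$; in particular $r \leq q\restriction\restriction\delta$.

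The countable set $\dom(q) \cup \dom(r)$ is contained in some $b \in S$, again using that $S$ is cofinal in $[\omega_2]^{\omega}$ (a consequence of stationarity and the closure properties of $S$ in Lemma \ref{noetherian_stationary_set}). Then $q \in \overline{\mathbf{P}}^{\omega_2}_{b}$ with each $q(\beta)$ simple, and $r \in \overline{\mathbf{P}}^{\delta}_{b}$ with $r \leq q\restriction\restriction\delta$. Applying Lemma \ref{regular_restriction_reduced_product}(2) to $b$ with $\overline{a} = \emptyset$, the conditions $r$ and $q$ are compatible in $\overline{\mathbf{P}}^{\omega_2}_{b}$, hence also in $\overline{\mathbf{P}}^{\omega_2}$. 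Since $r \leq p$, any common extension of $r$ and $q$ witnesses that $p \in \mathcal{A}$ is compatible with $q$, establishing that $\mathcal{A}$ is predense in $\overline{\mathbf{P}}^{\omega_2}$.

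The only non-routine point is the bookkeeping to fit both $q$ and the witness $r$ into a single $\overline{\mathbf{P}}_{b}^{\cdot}$ so that the ``localised'' regularity statement of Lemma \ref{regular_restriction_reduced_product}(2) can be invoked; this is handled by enlarging $b$ using the cofinality of $S$ in $[\omega_2]^{\omega}$. Everything else follows by direct reference to the already-proved lemmas, and no new combinatorics on the $M_b$ hierarchy is required.
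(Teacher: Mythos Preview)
Your argument is correct and follows the same overall strategy as the paper: reduce to the bounded case governed by Lemma~\ref{regular_restriction_reduced_product}. The only difference is cosmetic. The paper keeps $b=\dom(q)$, applies Lemma~\ref{regular_restriction_reduced_product}(2) to the restriction $p^*\restriction b$ of the common extension $p^*$, and then glues the resulting witness with $p^*\restriction(\omega_2\setminus b)$ to recover a full common extension in $\overline{\mathbf{P}}^{\omega_2}$. You instead enlarge $b$ (using the cofinality of $S$, which is indeed a consequence of stationarity) so that both $\dom(q)$ and $\dom(r)$ already live inside $b$, and thereby avoid the final gluing step. Both routes are equally short; yours trades the explicit product decomposition $\overline{\mathbf{P}}^{\omega_2}_b\times\overline{\mathbf{P}}^{\omega_2}_{\omega_2\setminus b}$ for one more appeal to the cofinality of $S$.
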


\begin{proof}
Let $\mathcal{A}\subseteq\overline{\mathbf{P}}^{\delta}$ be a maximal antichain. Pick an arbitrary condition $q_0\in\overline{\mathbf{P}}^{\omega_2}$. Extend $q_0$ to a condition $q_1$ such that $dom(q_1)\in S$ and let $b=dom(q_1)$. We can assume that for each $\beta\in b$, $q_1(\beta)$ is a simple condition in $\mathbb{P}_{\omega_2}(\dot{\mathcal{U}}_{\alpha_\beta})^{V[G_{\alpha_\beta}]}$.

By Lemma \ref{regular_restriction_reduced_product}(1), $q_1\upharpoonright\upharpoonright\delta$ is a condition in $\overline{\mathbf{P}}^{\delta}_b$, which implies that $q_1\upharpoonright\upharpoonright\delta$ is a condition in $\overline{\mathbf{P}}^{\delta}$. Since $\mathcal{A}$ is a maximal antichain, there is $p'\in\mathcal{A}$ compatible with $q_1\upharpoonright\upharpoonright\delta$, so we find a condition $p^*\in\overline{\mathbf{P}}^\delta$ stronger than both $q_1\upharpoonright\upharpoonright\delta$ and $p'$. We can assume that $dom(p^*)\in S$.

Note that $p^*\upharpoonright b\in\overline{\mathbf{P}}^\delta_b$ and $p^*\upharpoonright b\leq q_1\upharpoonright\upharpoonright \delta$, so $p^*\upharpoonright b$ and $q_1$ are compatible in $\overline{\mathbf{P}}^{\omega_2}_b$. Let $p^{**}\in\overline{\mathbf{P}}^{\omega_2}_b$ be a common extension of $p^*\upharpoonright b$ and $q_1$. It is not difficult to see that $p^{**}\cup p^*\upharpoonright(\omega_2\setminus b)$ is a condition in $\overline{\mathbf{P}}^{\omega_2}$ extending $q_1$ and $p^*$, which implies that $q_0$ and $p'\in\mathcal{A}$ are compatible.
\end{proof}

\begin{lemma}
$\overline{\mathbf{P}}^{\omega_2}$ is $\aleph_2$-c.c.
\end{lemma}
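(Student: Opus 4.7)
The plan is to argue by contradiction via a $\Delta$-system argument combined with a pigeonhole application inside the model $M_R$ attached to the root $R$. Suppose toward contradiction that $\{p_\alpha : \alpha < \omega_2\}$ is an antichain in $\overline{\mathbf{P}}^{\omega_2}$. For each $\alpha$, define the full support $\mathrm{sp}(p_\alpha) = dom(p_\alpha) \cup \bigcup_{\beta \in dom(p_\alpha)} dom(p_\alpha(\beta))$, which is a countable subset of $\omega_2$, and note $0 \in \mathrm{sp}(p_\alpha)$ because each $p_\alpha(\beta) \in \mathbb{P}_{\omega_2}(\dot{\mathcal{U}}_{\alpha_\beta})$ has $0$ in its domain. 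Using stationarity of $S$ (Lemma \ref{noetherian_stationary_set}(1)), I would pick $a_\alpha \in S$ with $\mathrm{sp}(p_\alpha) \subseteq a_\alpha$, and then apply the $\Delta$-system lemma to $\{a_\alpha : \alpha < \omega_2\}$ to refine to an $\omega_2$-sized subfamily whose supports form a $\Delta$-system with root $R$. Closure of $S$ under intersections (Lemma \ref{noetherian_stationary_set}(2)) gives $R = a_\alpha \cap a_{\alpha'} \in S$, and $0 \in R$.

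Next, consider the double restrictions $p_\alpha^* = \{(\beta, p_\alpha(\beta) \upharpoonright R) : \beta \in R \cap dom(p_\alpha)\}$, which are conditions with both outer and inner supports contained in $R$. Since $p_\alpha \upharpoonright R \in M_R$ by the very definition of $\overline{\mathbf{P}}^{\omega_2}$ and $R \in S \cap \mathcal{P}(\mathrm{cl}(R)) \in M_R$, the inner-coordinate restriction is definable in $M_R$, so $p_\alpha^* \in M_R$. By Lemma \ref{models_system}(3), $M_R$ satisfies $\mathsf{CH}$, and each $\mathbb{P}_0(\dot{\mathcal{U}}_{\alpha_\beta})$ has cardinality $2^\omega = \omega_1$ in $M_R$ (it is built from countable sequences of subsets of $\omega$). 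Hence the number of conditions in $M_R$ with both supports contained in the countable set $R$ is at most $\omega_1^\omega = \omega_1^{M_R} = \omega_1$. Pigeonhole applied to $\{p_\alpha^* : \alpha < \omega_2\}$ then produces distinct $\alpha \neq \alpha'$ with $p_\alpha^* = p_{\alpha'}^*$.

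The final step is to combine $p_\alpha$ and $p_{\alpha'}$ into a common extension $q$ with $dom(q) = dom(p_\alpha) \cup dom(p_{\alpha'})$ and $q(\beta) = p_\alpha(\beta) \cup p_{\alpha'}(\beta)$ coordinatewise (with only one side present when $\beta$ lies outside the intersection). The $\Delta$-system disjoints the non-root parts of the inner supports, while the equality $p_\alpha^* = p_{\alpha'}^*$ together with $0 \in R$ forces $p_\alpha(\beta)$ and $p_{\alpha'}(\beta)$ to share their base $E$-partition and to agree on all common inner coordinates; thus each $q(\beta) \in \mathbb{P}_{\omega_2}(\dot{\mathcal{U}}_{\alpha_\beta})$ is well defined. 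The principal bookkeeping obstacle I anticipate is verifying $q \in \overline{\mathbf{P}}^{\omega_2}$, and specifically that $q \upharpoonright b \in M_b$ for every $b \in S$. This should follow because $q \upharpoonright b$ is obtained by a definable combination of $p_\alpha \upharpoonright b$ and $p_{\alpha'} \upharpoonright b$ (both in $M_b$ by hypothesis) together with the intersection $dom(p_\alpha) \cap dom(p_{\alpha'}) \cap b \in M_b$. Once this is checked, $q$ extends both $p_\alpha$ and $p_{\alpha'}$, contradicting the antichain assumption and establishing the $\aleph_2$-c.c.
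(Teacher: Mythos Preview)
Your approach has a genuine gap at the very first step: the $\Delta$-system lemma is not available here. Recall that the forcing $\overline{\mathbf{P}}^{\omega_2}$ lives in $V[G_{\omega_2}]$, the Sacks model, where $2^\omega=\omega_2$. Consequently $\omega_1^{\omega}=2^\omega=\omega_2$, and the hypothesis needed to extract an $\omega_2$-sized $\Delta$-system from $\omega_2$ many \emph{countable} sets (namely $\omega_1^{<\omega_1}<\omega_2$, equivalently $\mathsf{CH}$) fails. So you cannot refine $\{a_\alpha:\alpha<\omega_2\}$ to a $\Delta$-system of size $\omega_2$, and the rest of the argument never gets off the ground. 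This is exactly the obstacle that forces the paper to take a different route.

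The paper replaces the $\Delta$-system step by an $\omega_1$-tower $\langle N_\beta:\beta<\omega_1\rangle$ of countable elementary submodels with $N_\beta\cap\omega_2\in S$, and picks $\alpha_0\notin N_{\omega_1}=\bigcup_\beta N_\beta$. The restriction $q=p_{\alpha_0}\upharpoonright(N_{\omega_1}\cap\omega_2)$ then lands in some $\overline{\mathbf{P}}^{\gamma_0}_B$ with $B\in S\cap N_{\omega_1}$ and $\gamma_0\in N_{\omega_1}$; since $M_B\models\mathsf{CH}$, this poset has size $\omega_1$, so it is contained in $N_{\omega_1}$ and in particular $q\in N_{\omega_1}$. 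Elementarity then produces $\alpha_1\in N_{\omega_1}$ with $q\subseteq p_{\alpha_1}$, and compatibility follows. The point is that the cardinality bound is applied to a single poset $\overline{\mathbf{P}}^{\gamma_0}_B$ known to the tower, not obtained via a global $\Delta$-system refinement. Your pigeonhole-in-$M_R$ idea is morally the same use of $\mathsf{CH}$ in the small models, but it cannot be reached without first producing the root $R$, and that is precisely where your argument breaks.
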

\begin{proof}
We proceed as in the $\omega_1$-case. Fix $\{p_\alpha:\alpha\in\omega_2\}\subseteq\overline{\mathbf{P}}^{\omega_2}$. We can assume that for each $\alpha$, $dom(p_\alpha)\in S$. Let now $\langle N_\alpha:\alpha\in\omega_1\rangle$ be an $\in$-increasing sequence of countable elementary submodels such that $S,\omega_1,\omega_2,\overline{\mathbf{P}}^{\omega_2},\{p_\alpha:\alpha\in\omega_2\}\in N_0$, and for each $\alpha\in\omega_1$, $N_\alpha\cap\omega_2\in S$. Define $N_{\omega_1}=\bigcup_{\beta\in\omega_1}N_\beta$. First note that $\overline{\mathbf{P}}^{\omega_2}\cap N_{\omega_1}=\bigcup_{\gamma\in[\omega_1,\omega_2)\cap N_{\omega_1}}\overline{\mathbf{P}}^{\gamma}\cap N_{\omega_1}$.

Pick $\alpha_0\in\omega_2\setminus N_{\omega_1}$, let $A=dom(p_{\alpha_0})\cap N_{\omega_1}$ and define $q=p_{\alpha_0}\upharpoonright A$. Since $A$ is countable, there is $\gamma\in\omega_1$ such that $A\subseteq N_\gamma$. Since $N_\gamma\cap\omega_2\in S$, we have that $A= dom(q_{\alpha_0})\cap B$, where $B=\omega_2\cap N_\gamma$. Moreover, since $S,N_\gamma\in N_{\gamma+1}$, we have $B\in N_{\gamma+1}$. Note that $q\in \overline{\mathbf{P}}^{\omega_2}$ and $q\subseteq N_{\omega_1}$ both hold true. Also, we can find $\gamma_0\in[\omega_1,\omega_2)\cap N_{\omega_1}$ such that for each $\delta\in A$, $q(\delta)\in \mathbb{P}_{\gamma_0}(\dot{\mathcal{U}}_{\alpha_\delta})^{V[G_{\alpha_\delta}]}$. For such $\gamma_0$, $\overline{\mathbf{P}}^{\gamma_0}\in N_{\omega_1}$ holds true, and we also have that $q\in\overline{\mathbf{P}}^{\gamma_0}_B\in N_{\omega_1}$. However, $\overline{\mathbf{P}}^{\gamma_0}_B$ has cardinality $\omega_1$ and $N_{\omega_1}$ knows this, which implies $\overline{\mathbf{P}}^{\gamma_0}_B\subseteq N_{\omega_1}$, from which it follows that $q\in N_{\omega_1}$. By elementarity, we can find $\alpha_1\in\omega_2$ such that $p_{\alpha_1}\in N_{\omega_1}$ and  $q\subseteq p_{\alpha_1}$. Therefore, $p_{\alpha_0}$ and $p_{\alpha_1}$ are compatible.

\end{proof}

\begin{lemma}
$\overline{\mathbf{P}}^{\omega_2}$ is Cohen preserving.
\end{lemma}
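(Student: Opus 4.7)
The plan is to reduce the question to the Cohen preservation of $\overline{\mathbf{P}}^{\omega_1}$. That poset is nothing but the $\mathbf{P}$ of Theorem \ref{thm_reduced_product} applied with $n=2$ and the $\aleph_1$-sized, $\sigma$-proper, Cohen preserving posets $\mathbb{P}_\beta = \mathbb{P}_{\omega_1}(\dot{\mathcal{U}}_{\alpha_\beta})^{V[G_{\alpha_\beta}]}$, and is therefore Cohen preserving by item (2) of that theorem.

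Given a condition $q \in \overline{\mathbf{P}}^{\omega_2}$ and a $\overline{\mathbf{P}}^{\omega_2}$-name $\dot{D}$ such that $q \Vdash \dot{D}\subseteq 2^{<\omega}$ is open dense, the first step will be to recast $\dot{D}$ as a $\overline{\mathbf{P}}^{\delta}$-name for some $\delta < \omega_2$. For each $t \in 2^{<\omega}$, pick an antichain $A_t \subseteq \overline{\mathbf{P}}^{\omega_2}$ that is maximal among conditions forcing $\check{t} \in \dot{D}$; by the $\aleph_2$-c.c.\ from the preceding lemma we have $|A_t| \leq \aleph_1$, so the set $X = \{q\} \cup \bigcup_{t \in 2^{<\omega}} A_t$ has cardinality at most $\aleph_1$. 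Since $\overline{\mathbf{P}}^{\omega_2} = \bigcup_{\gamma \in [\omega_1,\omega_2)} \overline{\mathbf{P}}^{\gamma}$ and $\cof(\omega_2) = \omega_2 > \omega_1$, there is $\delta \in [\omega_1,\omega_2)$ with $X \subseteq \overline{\mathbf{P}}^{\delta}$. Put $\dot{D}' = \{(\check{t},p) : t \in 2^{<\omega},\, p \in A_t\}$; then $\dot{D}'$ is a $\overline{\mathbf{P}}^{\delta}$-name, and Lemma \ref{reduced_prod_delta_reg_sub_red_prod_total} guarantees $\dot{D}_G = \dot{D}'_{G \cap \overline{\mathbf{P}}^{\delta}}$ for every $\overline{\mathbf{P}}^{\omega_2}$-generic $G$.

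In particular $q \Vdash_{\overline{\mathbf{P}}^{\delta}} \dot{D}'\subseteq 2^{<\omega}$ is open dense. I would then transport the situation across the isomorphism $\overline{\mathbf{P}}^{\delta} \cong \overline{\mathbf{P}}^{\omega_1}$ from the claim following Definition \ref{reduced_product_definition_omega_2_case} and apply Theorem \ref{thm_reduced_product}(2) to find $p \leq q$ in $\overline{\mathbf{P}}^{\delta}$ together with an open dense $E \subseteq 2^{<\omega}$ in the ground model such that $p \Vdash_{\overline{\mathbf{P}}^{\delta}} \check{E} \subseteq \dot{D}'$. Using $\overline{\mathbf{P}}^{\delta} \lessdot \overline{\mathbf{P}}^{\omega_2}$ and the identity of evaluations $\dot{D}_G = \dot{D}'_{G \cap \overline{\mathbf{P}}^\delta}$ above, this same $p$ and $E$ witness $p \Vdash_{\overline{\mathbf{P}}^{\omega_2}} \check{E} \subseteq \dot{D}$, which is the required Cohen preservation property.

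The substantive content lies entirely in the reduction to a $\overline{\mathbf{P}}^{\delta}$-name; the rest is just exporting Cohen preservation from $\overline{\mathbf{P}}^{\omega_1}$ via an isomorphism. Both ingredients for the reduction---the $\aleph_2$-c.c.\ of $\overline{\mathbf{P}}^{\omega_2}$ and the regularity of the inclusions $\overline{\mathbf{P}}^{\delta} \lessdot \overline{\mathbf{P}}^{\omega_2}$---have already been established in the previous two lemmas, so I do not expect a genuine obstacle.
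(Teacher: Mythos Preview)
Your argument is correct and follows essentially the same route as the paper: use the $\aleph_2$-c.c.\ to collect the relevant antichains into some $\overline{\mathbf{P}}^{\delta}$ with $\delta<\omega_2$, pass through the isomorphism $\overline{\mathbf{P}}^{\delta}\cong\overline{\mathbf{P}}^{\omega_1}$ to invoke Theorem~\ref{thm_reduced_product}(2), and then lift the resulting ground-model open dense set back via $\overline{\mathbf{P}}^{\delta}\lessdot\overline{\mathbf{P}}^{\omega_2}$. The only cosmetic difference is that the paper takes maximal antichains deciding ``$s\in\dot D$'' rather than maximal antichains of conditions forcing it, but both choices yield an equivalent $\overline{\mathbf{P}}^{\delta}$-name.
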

\begin{proof}
Let $\dot{D}$ be a $\overline{\mathbf{P}}^{\omega_2}$-name for an open dense subset of $\omega^{<\omega}$ and $p\in\overline{\mathbf{P}}^{\omega_2}$ an arbitrary condition. For each $s\in \omega^{<\omega}$, let $A_s$ be a maximal antichain in $\overline{\mathbf{P}}^{\omega_2}$ of conditions deciding the formula $``s\in\dot{D}"$. Each one of these antichains has cardinality $\omega_1$ so we can find $\delta\in\omega_2$ such that $\{p\}\cup\bigcup_{s\in\omega^{<\omega}}A_s\subseteq\overline{\mathbf{P}}^{\delta}$. Thus, we can think of $\dot{D}$ as a $\overline{\mathbf{P}}^{\delta}$-name. $\overline{\mathbf{P}}^\delta$ is isomorphic to $\overline{\mathbf{P}}^{\omega_1}$, so, by an application of Theorem \ref{thm_reduced_product}, we can find a condition $q\leq p$, $q\in\overline{\mathbf{P}}^\delta$, and an open dense set $E\subseteq\omega^{<\omega}$ such that $q\Vdash_{\overline{\mathbf{P}}^\delta} E\subseteq\dot{D}$. Finally, from Lemma \ref{reduced_prod_delta_reg_sub_red_prod_total}, we have that $\overline{\mathbf{P}}^{\delta}$ is a regular suborder of $\overline{\mathbf{P}}^{\omega_2}$, from which it follows that $q\Vdash_{\overline{\mathbf{P}}^{\omega_2}} E\subseteq\dot{D}$
\end{proof}

\begin{lemma}
$\overline{\mathbf{P}}^{\omega_2}$ preserves $\omega_1$.    
\end{lemma}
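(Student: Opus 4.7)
The plan is to adapt the $\omega_1$-preservation argument from the proof of Theorem~\ref{thm_reduced_product}(1) to $\overline{\mathbf{P}}^{\omega_2}$, leveraging the just-established Cohen preservation of this forcing.

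First I would assume, toward contradiction, that some $q \in \overline{\mathbf{P}}^{\omega_2}$ forces a name $\dot\tau$ to be a surjection from $\omega$ onto $\check\omega_1$. Working in $V[G_{\omega_2}]$, I would fix a family $\{E_\alpha : \alpha < \omega_1\}$ of open dense subsets of $\omega^{<\omega}$ whose complements generate $\mathsf{nwd}$; then, in the generic extension, enumerate this family as $\langle E_{\dot\tau(n)} : n < \omega \rangle$ via the surjection $\dot\tau$, and recursively construct a $\overline{\mathbf{P}}^{\omega_2}$-name $\dot D$ for an open dense subset of $\omega^{<\omega}$ satisfying $q \Vdash E_{\dot\tau(n)} \not\subseteq \dot D$ for every $n$. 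The surjectivity of $\dot\tau$ then yields $q \Vdash E_\alpha \not\subseteq \dot D$ for all $\alpha < \omega_1$.

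Next I would invoke the Cohen preservation of $\overline{\mathbf{P}}^{\omega_2}$ applied to $\dot D$ to obtain an extension $p \leq q$ and an open dense $E \subseteq \omega^{<\omega}$ in $V[G_{\omega_2}]$ with $p \Vdash E \subseteq \dot D$. By the cofinal property of the family, $E \supseteq E_\alpha$ for some $\alpha < \omega_1$, and so $p \Vdash E_\alpha \subseteq \dot D$, directly contradicting the defining property of $\dot D$. This is essentially the same final step used in Theorem~\ref{thm_reduced_product}(1), transported to the present setting via the preceding lemma.

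The hard part will be securing the family $\{E_\alpha : \alpha < \omega_1\}$ in the Sacks extension $V[G_{\omega_2}]$, equivalently the assumption $\cof(\mathsf{meager}) = \aleph_1$ in that model, which is exactly the hypothesis required by Theorem~\ref{thm_reduced_product}. I would try to derive it from the Sacks property of the iteration, which absorbs each new nowhere-dense code by one coded at an earlier stage, together with $\mathsf{GCH}$ in the innermost ground model $V = L$ providing only $\aleph_1$ codes initially; a careful bookkeeping along the $\omega_2$-iteration should then extract an $\aleph_1$-indexed cofinal subfamily. Verifying this property in the Sacks model is the delicate technical step on which the whole argument hinges; once it is in hand, the diagonalization-plus-Cohen-preservation scheme above gives the contradiction.
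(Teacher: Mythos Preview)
Your argument is correct, but the paper takes a different and somewhat more modular route. Rather than repeating the diagonalization-plus-Cohen-preservation argument from Theorem~\ref{thm_reduced_product}(1) directly for $\overline{\mathbf{P}}^{\omega_2}$, the paper exploits the $\aleph_2$-c.c.\ already established: given a name $\dot f$ for a function $\omega\to\omega_1$, each maximal antichain deciding $\dot f(n)$ has size at most $\aleph_1$, so the countably many antichains together with $p$ all lie in some $\overline{\mathbf{P}}^\delta$ with $\delta<\omega_2$. The name $\dot f$ can then be treated as a $\overline{\mathbf{P}}^\delta$-name, and since $\overline{\mathbf{P}}^\delta\cong\overline{\mathbf{P}}^{\omega_1}$ preserves $\omega_1$ by Theorem~\ref{thm_reduced_product}, one finds $q\leq p$ in $\overline{\mathbf{P}}^\delta$ bounding the range of $\dot f$; the regular embedding $\overline{\mathbf{P}}^\delta\lessdot\overline{\mathbf{P}}^{\omega_2}$ (Lemma~\ref{reduced_prod_delta_reg_sub_red_prod_total}) then transfers this to $\overline{\mathbf{P}}^{\omega_2}$.

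Your approach avoids this reduction and instead uses Cohen preservation of the full forcing directly, which is perfectly valid and arguably more self-contained. The paper's approach has the advantage that it does not need to re-verify $\cof(\mathsf{meager})=\aleph_1$ in $V[G_{\omega_2}]$ at this point (it was already used inside Theorem~\ref{thm_reduced_product}), and it illustrates a reusable pattern: $\aleph_2$-c.c.\ plus regular embeddings into $\aleph_1$-sized factors lets one pull down any countably-generated name. Your worry about establishing $\cof(\mathsf{meager})=\aleph_1$ in the Sacks model is well-placed but not a genuine obstacle: it is a standard consequence of the Sacks property (hence Cohen preservation) of the iteration together with $\mathsf{CH}$ in the ground model, exactly along the lines you sketch.
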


\begin{proof}
Let $\dot{f}$ be a $\overline{\mathbf{P}}^{\omega_2}$-name and $p\in\overline{\mathbf{P}}^{\omega_2}$ a condition such that $p\Vdash\dot{f}:\omega\to\omega_1$. For each $n\in\omega$, let $A_n\subseteq\overline{\mathbf{P}}^{\omega_2}$ be a maximal antichain of conditions deciding the value of $\dot{f}(n)$. Each $A_n$ has cardinality at most $\omega_1$, so we can find $\delta\in\omega_2$ such that $\{p\}\cup\bigcup_{n\in\omega}A_n\subseteq\overline{\mathbf{P}}^{\delta}$. Again, we can see $\dot{f}$ as a $\overline{\mathbf{P}}_{\delta}$-name, and we can find $q\leq p$, $q\in\overline{\mathbf{P}}^{\delta}$ and $\gamma\in\omega_1$ such that $q\Vdash_{\overline{\mathbf{P}}^{\delta}} rng(\dot{f})\subseteq\gamma$. As in the previous lemma, this implies that $q\Vdash_{\overline{\mathbf{P}}^{\omega_2}} rng(\dot{f})\subseteq\gamma$.

\end{proof}

\section{Remarks and open questions.}

Let us recall that the Katowice problem asks whether the boolean algebras $\mathcal{P}(\omega)/fin$ and $\mathcal{P}(\omega_1)/fin$ can be isomorphic. Assuming the existence of such isomorphism, there are many implications which are known to be consistent with $\mathsf{ZFC}$; here we present one more. In \cite{hrusak_combinatics}, M. Hru\v{s}\'ak pointed out that the existence of such an isomorphism implies that the class of Tukey types of ultrafilters in $\omega$ is the same as the class of Tukey types of ultrafilters on $\omega_1$. We remark this is consistent.

\begin{prp}
Assume all ultrafilters on $\omega$ are Tukey top and $2^\omega=2^{\omega_1}$. Then all  ultrafilters on $\omega_1$ are Tukey top. Moreover, for any cardinal $\kappa$ such that $2^{\kappa}=2^\omega$, all ultrafilters on $\kappa$ have maximal cofinal type.\footnote{The consistency of all uniform ultrafilters on $\omega_1$ being Tukey top was obtained before the present version of the paper by J. Cruz-Chapital (see \cite{chapital_barely}), and also, independently, by J. T. Benhamou, T. Moore and  L. Serafin. (to appear, \cite{benhamou_justin_serafin}).}
\end{prp}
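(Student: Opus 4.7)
The plan is to reduce an arbitrary non-principal ultrafilter $\mathcal{V}$ on $\kappa$ to a non-principal ultrafilter on $\omega$ by a pushforward map. Since $2^\kappa=2^\omega=\mathfrak{c}$, Tukey topness for $\mathcal{V}$ means $[\mathfrak{c}]^{<\omega}\leq_T\mathcal{V}$, which is the \emph{same} cardinal invariant as Tukey topness on $\omega$. So it suffices to produce a non-principal ultrafilter $\mathcal{U}$ on $\omega$ with $\mathcal{U}\leq_T\mathcal{V}$; then the hypothesis that $\mathcal{U}$ is Tukey top transfers immediately by transitivity of $\leq_T$. The $\omega_1$ case of the proposition is just the instance $\kappa=\omega_1$ of the moreover clause.

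First I would verify that $\mathcal{V}$ is necessarily $\sigma$-incomplete. If not, Ulam's theorem produces a measurable cardinal $\lambda\leq\kappa$. But measurable cardinals are strongly inaccessible, so $\lambda>2^\omega$; combined with $\lambda\leq\kappa<2^\kappa=2^\omega$ one gets $\lambda>2^\omega>\lambda$, a contradiction. (In the $\omega_1$ case one can alternatively just invoke that $\omega_1$ is never measurable in $\mathsf{ZFC}$.) From $\sigma$-incompleteness one extracts a countable partition $\{B_n:n\in\omega\}$ of $\kappa$ with $B_n\notin\mathcal{V}$ for every $n$: take a $\subseteq$-decreasing sequence $A_0\supseteq A_1\supseteq\cdots$ in $\mathcal{V}$ with $\bigcap_n A_n\notin\mathcal{V}$, and let the pieces be the sets $A_n\setminus A_{n+1}$ together with $\bigcap_n A_n$ and $\kappa\setminus A_0$.

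Define $f:\kappa\to\omega$ by $f(\alpha)=n$ iff $\alpha\in B_n$. The pushforward $f_*(\mathcal{V})=\{C\subseteq\omega:f^{-1}(C)\in\mathcal{V}\}$ is a non-principal ultrafilter on $\omega$, since the condition $\{n\}\in f_*(\mathcal{V})$ would be exactly $B_n\in\mathcal{V}$. I would then check that the image map $\varphi:\mathcal{V}\to f_*(\mathcal{V})$, $B\mapsto f(B)$, witnesses $f_*(\mathcal{V})\leq_T\mathcal{V}$: given a filter base $\mathcal{B}\subseteq\mathcal{V}$ and $C\in f_*(\mathcal{V})$, one has $f^{-1}(C)\in\mathcal{V}$, so some $B\in\mathcal{B}$ satisfies $B\subseteq f^{-1}(C)$ and hence $\varphi(B)=f(B)\subseteq C$; thus $\varphi[\mathcal{B}]$ is a base for $f_*(\mathcal{V})$. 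Combining this with the hypothesis $[\mathfrak{c}]^{<\omega}\leq_T f_*(\mathcal{V})$ yields $[\mathfrak{c}]^{<\omega}\leq_T\mathcal{V}$, as required.

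The argument is essentially a clean pushforward once one has the right conceptual observation, and the only real ``obstacle'' is the cardinal-arithmetic point ruling out $\sigma$-completeness; this is handled by the Ulam dichotomy above and uses the hypothesis $2^\kappa=2^\omega$ crucially.
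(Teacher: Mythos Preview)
Your proof is correct and follows essentially the same approach as the paper: push the ultrafilter forward along a partition map $f:\kappa\to\omega$ to obtain a non-principal ultrafilter on $\omega$, then transfer Tukey topness back. The paper phrases the final step by pulling back a witnessing family $\{A_\alpha:\alpha<\mathfrak{c}\}$ via $f^{-1}$ rather than invoking the abstract reduction $f_*(\mathcal{V})\leq_T\mathcal{V}$, and it obtains the partition by simply citing that $\omega_1$ is not measurable (saying the general $\kappa$ case is ``similar''), but these are the same argument.
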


\begin{proof}
Let $\mathcal{U}$ be an ultrafilter on $\omega_1$. We can assume that $\mathcal{U}$ is an uniform ultrafilter. Since $\omega_1$ is not measurable, there is a partition $\{P_n:n\in\omega\}$ of $\omega_1$ such that $P_n\notin\mathcal{U}$ for each $n\in\omega$. Define $f:\omega_1\to\omega$ as $f(\alpha)=n$ if and only if $\alpha\in P_n$, and let $\mathcal{F}=\{A\subseteq\omega:f^{-1}[A]\in\mathcal{U}\}$. Then $\mathcal{F}$ is an ultrafilter on $\omega$. Let $\{A_\alpha:\alpha\in 2^\omega\}\subseteq\mathcal{F}$ be a family witnessing $\mathcal{F}$ is Tukey top. Then $\{f^{-1}[A_\alpha]:\alpha\in 2^\omega\}$ witnesses $\mathcal{U}$ is Tukey top. The case for arbitrary $\kappa$ such that $2^\kappa=2^\omega$ is similar.
\end{proof}

This proposition and Theorem \ref{main_theorem} clearly imply the following:

\begin{crl}
It is consistent that $\omega$ and $\kappa$ have the same Tukey types of ultrafilters for any $\kappa$ such that $2^\omega=2^\kappa$.     
\end{crl}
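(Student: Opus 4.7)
The plan is to obtain the Corollary by directly combining the two immediately preceding results: Theorem \ref{main_theorem}, which supplies a model in which every non-principal ultrafilter on $\omega$ is Tukey top with $2^\omega$ prescribable to be any regular $\kappa\geq\omega_2$, and the Proposition just above, which transfers the ``all Tukey top'' conclusion from $\omega$ to any cardinal $\lambda$ with $2^\lambda=2^\omega$. So the first step is simply to fix a regular cardinal $\mu\geq\omega_2$ (any desired target for the continuum) and apply Theorem \ref{main_theorem} to produce a generic extension $V[G]$ in which $2^\omega=\mu$ and every non-principal ultrafilter on $\omega$ has cofinal type $[\mathfrak{c}]^{<\omega}$.

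The second step is to invoke the Proposition inside $V[G]$ for every cardinal $\lambda$ satisfying $2^\lambda=2^\omega$: the Proposition yields that every uniform ultrafilter on $\lambda$ is Tukey top of type $[\mathfrak{c}]^{<\omega}$. One minor point worth flagging in the write-up: the Proposition's proof uses only that $\lambda$ is not measurable (so that $\lambda$ splits into countably many pieces each of which can be placed outside a given uniform ultrafilter). Since the model is obtained by set forcing over a model of $\mathsf{ZFC}+V=L$ (or at least by forcing that does not create measurable cardinals), no measurable cardinals appear, and the Proposition applies to every such $\lambda$ simultaneously.

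The third and concluding step is to observe that the class of Tukey types of ultrafilters on a set $X$ always contains the trivial one-point type (realized by principal ultrafilters), and that under the equality $2^\omega=2^\lambda$ the maximal cofinal type $[2^\omega]^{<\omega}$ for ultrafilters on $\omega$ coincides as a directed poset with the maximal cofinal type $[2^\lambda]^{<\omega}$ for uniform ultrafilters on $\lambda$. Combined with the first two steps, this means that in $V[G]$ both $\omega$ and $\lambda$ realize exactly the same two Tukey types: the trivial one and $[\mathfrak{c}]^{<\omega}$.

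There is no real technical obstacle: the genuine work has already been carried out in Theorem \ref{main_theorem} (the forcing construction driving the whole paper) and in the short transfer Proposition. The only care needed is notational, to record that a \emph{single} extension $V[G]$ simultaneously witnesses the coincidence of Tukey types on $\omega$ and on every $\lambda$ with $2^\lambda=2^\omega$, since both ``all non-principal ultrafilters on $\omega$ are Tukey top'' and the transfer argument hold uniformly in that model.
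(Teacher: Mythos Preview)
Your proposal is correct and follows the same approach as the paper, which simply records that the corollary is an immediate consequence of the preceding Proposition together with Theorem~\ref{main_theorem}. Your added remarks about principal ultrafilters and non-measurability are fine but not strictly needed: any $\lambda$ with $2^\lambda=2^\omega$ satisfies $\lambda\le 2^\omega$ and hence cannot be strongly inaccessible, so measurability is automatically excluded.
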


However, it is not clear that in any of the models here presented there is an isomorphism between the aforementioned boolean algebras, and there is no reason to believe there is, since the models are not constructed for such thing to happen.

We recall the two questions of section \ref{basic_remarks}:

\begin{question}
Is it consistent that there is no $\mathsf{nwd}$-ultrafilter but there is an ultrafilter not Tukey above $[\omega_1]^{<\omega}$?
\end{question}

\begin{question}
Is there a non-Tukey top ultrafilter in Shelah's model for no $\mathsf{nwd}$-ultrafilters? A basically generated ultrafilter?
\end{question}

The characterization of $[\kappa]^{<\omega}\leq_T\mathcal{U}$ given by \ref{critical_ideal} opens the possibility of characterizing the generic existence of ultrafilters non-Tukey above $[\kappa]^{<\omega}$, so our first question is:

\begin{question}
How can the generic existence of ultrafilters non-Tukey above $[\kappa]^{<\omega}$ be characterized? 
\end{question}

Let us recall that J. Brendle and J. Fla\v{s}kov\'a have investigated the generic existence of $\mathscr{I}$-ultrafilters where $\mathscr{I}$ is an ideal on $\omega$, and introduced the cardinal invariant $\mathfrak{ge}(\mathscr{I})$ for a given ideal $\mathscr{I}$. They proved that generic existence of $\mathscr{I}$-ultrafilters is equivalent to the equality $\mathfrak{ge}(\mathscr{I})=\mathfrak{c}$. Under the current context, ultrafilters which are not Tukey above $[\kappa]^{<\omega}$ are characterized as exactly the same class as $\tau_\kappa$-ultrafilters, so the natural question is whether there is a similar characteri\-zation of generic existence in terms of some relation between appropriated cardinal invariants.

\begin{question}
Is there a nice characterization of generic existence of Tukey top ultrafilters?
\end{question}

Regarding Question 11.4, it is possible to prove that in the side by side Silver model, Tukey top ultrafilters exist generically, and actually, for each filter $\mathcal{F}$ of character less than $2^\omega$, it is possible to add $\omega_1$-many sets to $\mathcal{F}$ to make sure that any ultrafilter extending $\mathcal{F}$ is Tukey top.

\begin{question}
Is any basically generated ultrafilter Sacks indestructible?
\end{question}

\begin{question}
Is there a non-Tukey top ultrafilter in the model from Theorem \ref{thm1}?
\end{question}

\begin{question}
Is it consistent that all ultrafilters are Tukey Top and there is a $\mathsf{nwd}$-ultrafilter?
\end{question}

It may be possible that the following questions are still far away to be reached just from the current consistency tools and machinery. Despite this, they are still interesting. Let us define the \emph{high spectrum} of the Tukey types of ultrafilters on $\omega$ as the set of cardinals $\kappa\ge\omega_1$ for which there is an ultrafilter such that $\mathcal{U}\equiv[\kappa]^{<\omega}$,
\begin{equation*}
    \mathsf{spec}^+(\leq_T,\omega)=\{\kappa:(\exists \mathcal{U}\in\beta\omega^*)(\mathcal{U}\equiv[\kappa]^{<\omega})\}
\end{equation*}

\begin{question}
How complex can the high spectrum be?
\end{question}

\begin{question}
Is the conjunction of the next assertions consistent?
\begin{enumerate}
    \item There are non-Tukey top ultrafilters.
    \item For each ultrafilter $\mathcal{U}$ there is $\kappa\leq \mathfrak{c}$ such that $\mathcal{U}\equiv[\kappa]^{<\omega}$?
\end{enumerate}
\end{question}

Let us say that the high spectrum is convex if all infinite cardinals not bigger that $2^\omega$ belong to $\mathsf{spec}^+(\leq_T,\omega)$. 

\begin{question}
Is it consistent that the high spectrum is not convex?
\end{question}

\begin{question}
What is the high spectrum in the side-by-side Sacks model?
\end{question}

\begin{question}
Is it consistent that the cardinality of $\mathsf{spec}^+(\leq_T,\omega)$ can be any positive natural number?    
\end{question}

\begin{question}
Is it consistent that the cardinality of $\mathsf{spec}^+(\leq_T,\omega)$ can be $\aleph_0$?    
\end{question}

More generally, 

\begin{question}
What are the possible cardinalities of $\mathsf{spec}^+(\leq_T,\omega)$?
\end{question}

Assume $\mathsf{MA}+2^\omega>\omega_1$, and let $\kappa\in[\omega_1,2^\omega)$ be a cardinal. Define $\tau(\kappa,\omega)$ as:
\begin{equation*}
\tau(\kappa,\omega)=\{\mathcal{U}\in\beta\omega^*:[\kappa]^{<\omega}\leq_T\land [\kappa^+]^{<\omega}\nleq_T\mathcal{U}\}
\end{equation*}

and give $\tau(\kappa,\omega)$ the Tukey ordering,
\begin{question}
How complex is the structure of $(\tau(\kappa,\omega),\leq_T)$?
\end{question}

\begin{question}
How different is $\tau(\kappa,\omega)$ from $\tau(\lambda,\omega)$ when $\kappa\neq\lambda$?
\end{question}

\section{Acknowledgments.} 
The first author was supported by the ``Programme to support prospective human resources – post Ph.D. candidates" of the Czech Academy of Sciences, project L100192251, and by the Czech Academy of Sciences CAS (RVO 67985840), and also supported by
 Universidad Nacional Autónoma de México Postdoctoral
 Program (POSDOC). The research for this paper started when the first author was a postdoc at Institute of Mathematics of Czech Academy of Sciences, and concluded after he became a postdoc in the Universidad Aut\'onoma de M\'exico (UNAM), under the posdoctoral fellowship program ``Elisa Acu\~na". He is grateful to both institutions. The second author was supported by NSF grant DMS 2348371.

\bibliographystyle{alpha}
\bibliography{references}

\begin{thebibliography}{BTMS25}

\bibitem[Bre17]{brendle_problems_continuum_large}
Jörg Brendle.
\newblock Some problems in forcing theory : large continuum and generalized cardinal invariants ({I}nfinite {C}ombinatorics and {F}orcing {T}heory).
\newblock {\em RIMS Kokyuroku}, 2042:1--16, 07 2017.

\bibitem[BTMS25]{benhamou_justin_serafin}
T.~Benhamou, J.~Tatch~Moore, and L.~Serafin.
\newblock The cofinality of ultrafilter on $\aleph_1$.
\newblock {\em To appear}, 2025.

\bibitem[BW]{tom_fanxin}
Tom Benhamou and Fanxin Wu.
\newblock Diamond principles and tukey-top ultrafilters on a countable set.
\newblock \href{https://arxiv.org/abs/2404.02379}{arXiv: math/2404.02379}.

\bibitem[CF93]{union_of_resolvable}
W.~W. Comfort and Li~Feng.
\newblock The union of resolvable spaces is resolvable.
\newblock {\em Math. Japon.}, 38(3):413--414, 1993.

\bibitem[CFG]{FS}
David {Ch}odounsk\'{y}, Vera Fischer, and Jan Greb\'{i}k.
\newblock Free sequences in ${{P}}(\omega)$.
\newblock to appear in Arch. Math. Logic.

\bibitem[CFG19]{free_sequences}
David Chodounsk\'{y}, Vera Fischer, and Jan Greb\'{\i}k.
\newblock Free sequences in {$\mathcal{P}(\omega)/{\rm fin}$}.
\newblock {\em Arch. Math. Logic}, 58(7-8):1035--1051, 2019.

\bibitem[CG19]{silver_p_points}
David Chodounsk\'{y} and Osvaldo Guzm\'{a}n.
\newblock There are no {P}-points in {S}ilver extensions.
\newblock {\em Israel J. Math.}, 232(2):759--773, 2019.

\bibitem[Cha25]{chapital_barely}
Jorge Antonio~Cruz Chapital.
\newblock $\kappa$-barely independent families and tukey types of ultrafilters, 2025.

\bibitem[DMT17]{dobrinen_trujillo_mijares}
Natasha Dobrinen, Jos\'{e}~G. Mijares, and Timothy Trujillo.
\newblock Topological {R}amsey spaces from {F}ra\"{\i}ss\'{e} classes, {R}amsey-classification theorems, and initial structures in the {T}ukey types of p-points.
\newblock {\em Arch. Math. Logic}, 56(7-8):733--782, 2017.

\bibitem[Dob15]{dobrinen_2}
Natasha Dobrinen.
\newblock Survey on the {T}ukey theory of ultrafilters.
\newblock {\em Zb. Rad. (Beogr.)}, 17(25):53--80, 2015.

\bibitem[Dob20]{dobrinen_1}
Natasha Dobrinen.
\newblock Continuous and other finitely generated canonical cofinal maps on ultrafilters.
\newblock {\em Fund. Math.}, 249(2):111--147, 2020.

\bibitem[DT11]{dt1}
Natasha Dobrinen and Stevo Todorcevic.
\newblock Tukey types of ultrafilters.
\newblock {\em Illinois J. Math.}, 55(3):907--951 (2013), 2011.

\bibitem[DT14]{dobrinen_todorcevic_2}
Natasha Dobrinen and Stevo Todorcevic.
\newblock A new class of {R}amsey-classification theorems and their application in the {T}ukey theory of ultrafilters, {P}art 1.
\newblock {\em Trans. Amer. Math. Soc.}, 366(3):1659--1684, 2014.

\bibitem[DT15]{dobrinen_todocevic_3}
Natasha Dobrinen and Stevo Todorcevic.
\newblock A new class of {R}amsey-classification theorems and their applications in the {T}ukey theory of ultrafilters, {P}art 2.
\newblock {\em Trans. Amer. Math. Soc.}, 367(7):4627--4659, 2015.

\bibitem[DZ99]{dow_zhou_noetherian}
Alan Dow and Jinyuan Zhou.
\newblock Two real ultrafilters on {$\omega$}.
\newblock volume~97, pages 149--154. 1999.
\newblock Special issue in honor of W. W. Comfort (Curacao, 1996).

\bibitem[GFH12]{ferreira_hrusak_resolvability_extraresolvability}
Salvador Garcia-Ferreira and Michael Hru\v{s}\'{a}k.
\newblock On resolvability and extraresolvability.
\newblock {\em Topology Proc.}, 39:235--250, 2012.

\bibitem[Hru11]{hrusak_combinatics}
Michael Hru\v{s}\'ak.
\newblock Combinatorics of filters and ideals.
\newblock In {\em Set theory and its applications}, volume 533 of {\em Contemp. Math.}, pages 29--69. Amer. Math. Soc., Providence, RI, 2011.

\bibitem[HZ17]{hong_zhang_relations}
Jianyong Hong and Shuguo Zhang.
\newblock Relations between the {$\mathcal{I}$}-ultrafilters.
\newblock {\em Arch. Math. Logic}, 56(1-2):161--173, 2017.

\bibitem[Isb65]{Isbell}
John~R. Isbell.
\newblock The category of cofinal types. {II}.
\newblock {\em Trans. Amer. Math. Soc.}, 116:394--416, 1965.

\bibitem[Mil08]{milovich}
David Milovich.
\newblock Tukey classes of ultrafilters on {$\omega$}.
\newblock volume~32, pages 351--362. 2008.
\newblock Spring Topology and Dynamics Conference.

\bibitem[RS17]{dilip_shelah}
Dilip Raghavan and Saharon Shelah.
\newblock On embedding certain partial orders into the {P}-points under {R}udin-{K}eisler and {T}ukey reducibility.
\newblock {\em Trans. Amer. Math. Soc.}, 369(6):4433--4455, 2017.

\bibitem[RT12]{dilip_stevo}
Dilip Raghavan and Stevo Todorcevic.
\newblock Cofinal types of ultrafilters.
\newblock {\em Ann. Pure Appl. Logic}, 163(3):185--199, 2012.

\bibitem[She92]{con_i_u}
Saharon Shelah.
\newblock ${C}on(\mathfrak{u}>\mathfrak{i})$.
\newblock {\em Archive for Mathematical Logic}, pages 433--443, 11 1992.

\bibitem[She98a]{proper}
Saharon Shelah.
\newblock {\em Proper and improper forcing}.
\newblock Perspectives in Mathematical Logic. Springer-Verlag, Berlin, second edition, 1998.

\bibitem[She98b]{shelah_nwd}
Saharon Shelah.
\newblock {There may be no nowhere dense ultrafilter}.
\newblock In {\em {Logic Colloquium '95 (Haifa)}}, volume~11 of {\em Lecture Notes Logic}, pages 305--324. Springer, Berlin, 1998.
\newblock \href{https://arxiv.org/abs/math/9611221}{arXiv: math/9611221}.

\bibitem[Wim82]{Wimmers}
Edward~L. Wimmers.
\newblock The {S}helah {$P$}-point independence theorem.
\newblock {\em Israel J. Math.}, 43(1):28--48, 1982.

\bibitem[Woo96]{Woodin1996}
W.~Hugh Woodin.
\newblock The universe constructed from a sequence of ordinals.
\newblock {\em Archive for Mathematical Logic}, 35(5-6):371--383, 1996.

\end{thebibliography}

\end{document}